\documentclass[reqno,11pt]{amsart}
\usepackage{amsmath,amssymb,mathrsfs,amsthm,amsfonts}
\usepackage[inline]{enumitem}
\usepackage[usenames,dvipsnames]{xcolor}
\usepackage{hyperref}
\usepackage[percent]{overpic}
\usepackage{comment}
\usepackage{stmaryrd}
\usepackage{algorithm}
\usepackage{algorithmic}
\usepackage{pgfplots}
\usepackage{subcaption}
\usepackage{tikz,todonotes}
\usetikzlibrary{calc}
\usetikzlibrary{arrows.meta,backgrounds}
\usepackage{multirow,array,longtable,booktabs}
\usetikzlibrary{arrows}
\usepackage{bm}
\usepackage{graphicx}

\hypersetup{
	colorlinks=true, linkcolor=blue,
	citecolor=ForestGreen
}
\usepackage[paper=letterpaper,margin=1in]{geometry}
\DeclareMathAlphabet{\mathpzc}{OT1}{pzc}{m}{it}
\DeclareMathOperator*{\esssup}{ess\,sup}

\newtheorem{theorem}{Theorem}[section]

\newtheorem{lemma}[theorem]{Lemma}
\newtheorem{proposition}[theorem]{Proposition}
\newtheorem{corollary}[theorem]{Corollary}
\newtheorem{definition}[theorem]{Definition}

\newtheorem{remark}[theorem]{Remark}
\numberwithin{equation}{section}
\allowdisplaybreaks
\usepackage{acronym}

\acrodef{KPZ}{Kardar--Parisi--Zhang}
\acrodef{SHE}{Stochastic Heat Equation}
\acrodef{LDP}{Large Deviation Principle}

\newcommand{\be}{\begin{equation}}
\newcommand{\ee}{\end{equation}}

\makeatletter

\newcommand{\R}{\mathbb{R}} 

\newcommand{\e}{\varepsilon}

\newcommand{\ep}{\varepsilon}

\newcommand{\T}{\mathbb{T}}

\renewcommand{\L}{\mathcal{L}}

\newcommand{\nd}{\noindent}

\renewcommand{\hat}{\widehat}

\renewcommand{\bar}{\overline}

\usepackage{comment}

\newcounter{hyp}
\title[Homogenization of HJ equations in the Wasserstein space]{Quantitative homogenization of convex Hamilton--Jacobi equations in The Wasserstein space}

\author[Z.\ Ding]{Zhiyan Ding}
\address{Z.\ Ding,
	Department of Mathematics, University of Michigan,
	\newline\hphantom{\quad \ \ Z. Ding}
	530 Church St, Ann Arbor, MI 48109, USA}
\email{zyding@umich.edu}

\author[I.\ Ekren]{Ibrahim Ekren}
\thanks{I. Ekren is partially supported by the NSF grant DMS-2406240.}
\address{I.\ Ekren,
	Department of Mathematics, University of Michigan,
	\newline\hphantom{\quad \ \ I. Ekren}
	530 Church St, Ann Arbor, MI 48109, USA}
\email{iekren@umich.edu}

\author[Y. Han]{Yuxi Han}
\address{Y.\ Han,
	Department of Mathematics, Purdue University,
	\newline\hphantom{\quad \ \ Y. Han}
    150 N. University Street, West Lafayette, IN 47907, USA
	}
\email{han891@purdue.edu}

\author[A.\ Zitridis]{Antonios Zitridis}
\address{A.\ Zitridis,
	Department of Mathematics, University of Michigan,
	\newline\hphantom{\quad \ \ A. Zitridis}
	530 Church St, Ann Arbor, MI 48109, USA}
\email{zitridis@umich.edu}

\thanks{}

\begin{document}
\null
\vspace{-4mm}
    \maketitle

\vspace{-10mm}
\begin{abstract}
We study a homogenization problem for first-order Hamilton--Jacobi equations in the Wasserstein space with a convex Hamiltonian. We show that the solution $U^\e$, which is the value function of a mean field control problem, converges uniformly as $\e \to 0$ to the solution of a limiting Hamilton--Jacobi equation whose Hamiltonian is obtained through a suitable cell problem. Furthermore, we establish quantitative rates of convergence. Under general assumptions with multiscale dependence, we prove that the rate of convergence is $O(\sqrt{\e})$.  When the Hamiltonian depends only on the fast variable and the momentum, we establish the sharp convergence rate $O(\e)$. To the best of our knowledge,
this is the first quantitative convergence result extending the optimal rate for first-order Hamilton--Jacobi equations in finite dimensions to the Wasserstein space.
Finally, we show that our analysis extends to dynamic optimal transport problems, where the terminal condition imposes a constraint on the final distribution.
\end{abstract}

\tableofcontents

    \section{Introduction}
    \subsection{Motivation and setup of the problem}
    Let $\e,T>0$ and $d\in \mathbb{N}^*$. This paper is devoted to studying the behavior as $\e\to 0$ of the solution
    of the first-order Hamilton--Jacobi (HJ) equation in the Wasserstein space of Borel probability measures on $\R^d$ with finite second moment (denoted by $\mathcal{P}_2(\R^d)$):
    \be\label{HJBe}\tag{$\mathrm{HJB}_\e$}
    \begin{cases}
        -\partial_tU^\e +\int_{\R^d}H(\frac{x}{\e},x,D_mU^\e(t,m,x),m)m(dx)=0, &(t,m)\in [0,T)\times \mathcal{P}_2(\R^d),\\
        U^\e(T,m)= G(m), & m\in\mathcal{P}_2(\R^d),
    \end{cases}
    \ee
    where $H\colon \T^d\times \R^d\times \R^d\times\mathcal{P}_2(\R^d)\to \R$ is a given Hamiltonian and $G\colon \mathcal{P}_2(\R^d)\to \R$ is the terminal constraint. Here $\T^d=\R^d/\mathbb{Z}^d$ is the $d$-dimensional torus and the derivative
    $D_mU^\e\colon[0,T]\times\mathcal{P}_2(\R^d)\times \R^d\to \R$ of the function $U^\e\colon [0,T]\times\mathcal{P}_2(\R^d)\to \R$ is the Wasserstein derivative \footnote{We refer to \cite[Section 5]{carmona2018probabilistic}.}.
    \vspace{2mm}

    \nd
    Hamilton--Jacobi equations posed on spaces of probability measures have attracted considerable attention in recent years. Such equations arise in several areas including mean field games, mean field control, optimal transport and filtering. For a sample of recent works, we refer to \cite{cardaliaguet2019master} for mean field games and mean field control, \cite{bertucci2024stochastic} for optimal transport, \cite{martini2023kolmogorov,BayraktarEkrenHeZhang2025SC} for filtering.
    In particular, equations of the form \eqref{HJBe} (especially when the Hamiltonian $H$ does not have the $x/\e$ argument) arise as dynamic programming equations associated with mean field control problems. These problems study the collective behavior of a large population of interacting agents/particles who cooperate to minimize a common cost functional that might depend on the distribution of the population; see \cite{carmona2018probabilistic, cardaliaguet2019master} for surveys on the topic. More specifically in our setup, assuming that $p\mapsto H(x/\e,x,p,m)$ is convex, then under mild  assumptions on $H,\;G$, \eqref{HJBe}
    is the dynamic programming equation \cite{djete2022mckean1} satisfied by the value function $U^\e\colon [0,T]\times\mathcal{P}_2(\R^d)\to \R$ of the mean field control problem
    \begin{equation}\label{eqn:U_value_function}
        U^\e(t,m)=\inf_{\alpha\in\mathcal{A}}\left\{\mathbb{E}\left[\int^T_{t}L\left(\frac{X_s}{\varepsilon}, X_s, \alpha_s, \L\left(X_s\right)\right)ds \right]+G(\mathcal{L}(X_T))\right\},
    \end{equation}
    where we have considered a probability space $(\Omega,\mathcal{F},\mathbb{F}=(\mathcal{F}_t)_{t\ge 0},\mathbb{P})$ with no atoms that supports an $\mathbb{F}$-Brownian motion $B$, $\mathcal{L}(X_s)$ is the law of the state process $X$ at time $s\in [t,T]$, and the random variable $X_t$ is assumed to be independent of the Brownian motion $B$ and such that $\L(X_t)=m\in \mathcal{P}_2(\R^d)$. The control set $\mathcal{A}$ is \begin{equation}\label{eqn:cs}
    \mathcal{A}:=\left\{\alpha\colon [t,T]\times \Omega\to \R^d:\;\alpha\text{ is }\mathbb{F}\text{-progressively measurable and } \mathbb{E}\left[ \int_t^T|\alpha_s|^2ds\right]<\infty  \right\},
    \end{equation}
    the dynamics of the state process $X$ are determined from the control $\alpha$ via $dX_s=\alpha_sds$, and $L:\T^d\times \R^d\times \R^d\times\mathcal{P}_2(\R^d)\to \R$ is the Legendre transform of the Hamiltonian $H$:
    \be\label{Lagr}
    L(y,x,v,m) := \sup_{p \in \R^d} \big\{ -p \cdot v - H(y,x,p,m) \big\}.
    \ee
    The dependence of $H$ and $L$ on the fast variable $x/\e$ models the situation where the agents in the mean field control problem \eqref{eqn:U_value_function} evolve in an environment that exhibits rapid spatial variations.
    The parameter $\e>0$ is the scale of these variations. From an application point of view, this problem models a system of interacting agents/particles moving in a heterogeneous landscape, or having fluctuating cost.
    \vspace{2mm}

    \nd
    Our first goal in this paper is to establish the asymptotic behavior of the value function $U^\e$, defined in \eqref{eqn:U_value_function}, as $\e\to0$. Intuitively, when the scale of the heterogeneities becomes very small ($\e\to 0$), the oscillations of the Hamiltonian average out, and one expects $U^\e$ to converge to a function $U\colon [0,T]\times\mathcal{P}_2(\R^d)\to \R$ that can be described as the solution of a limiting HJ equation on the Wasserstein space with an effective Hamiltonian, in analogy with classical homogenization results for HJ equations in finite dimensions (see \cite{LionsPapaVara1987} and subsequent works, for instance, \cite{capuzzo2001rate,MitakeTranYu2019, TranYu2021}).
    In this work, we rigorously prove this convergence under general assumptions on the Hamiltonian $H$ and the terminal cost $G$. Furthermore, we establish a quantitative convergence rate for general multiscale settings and obtain the optimal rate in the non-multiscale setting. To our knowledge, this is the first quantitative result for HJ equations in the Wasserstein space
    and it extends previously known results for HJ equations in finite dimensions; we refer to \cite{tran_hamilton-jacobi_2021} for an overview of results in finite dimensions.
    \vspace{2mm}

    \nd
    As is the case in finite dimensions, HJ equations in $\mathcal{P}_2(\R^d)$ do not necessarily have classical solutions (we present an example in Remark \ref{rem1}(i) for \eqref{HJBe}), therefore a large number of recent works have been devoted to developing an appropriate notion of viscosity sub/super-solution and to establishing a comparison principle. We refer to \cite{conforti2023hamilton,daudin2024comparison,bayraktarqiu2025viscosity,bayraktar2025comparison,BayraktarEkrenHeZhang2025SC,cheung2025viscosity,daudin2025well,zhou2024viscosity, soner2024viscosity,mete2023viscosity, bertucci2024stochastic}, among others. Unlike in finite dimensions \cite{crandall1992user}, there are several reasonable definitions of viscosity solutions for equations posed in the Wasserstein space $\mathcal{P}_2(\R^d)$. Most relevant to the PDE \eqref{HJBe} that we are interested in are the works \cite{gangbo2019differentiability,gangbo2021finite}, which consider viscosity solutions in the ``lifted'' sense of P.L. Lions.
    In this paper,  we adopt this notion of viscosity solutions and we refer to them as  $L$-viscosity solutions, or simply as viscosity solutions if there is no confusion. We defer their precise definition to Section \ref{viscositysolutions} Definition~\ref{visc.}.
    \vspace{2mm}

    \nd
    The second goal of this paper is to show that the above homogenization result for \eqref{HJBe} implies a homogenization result for dynamic optimal transport or mean field planning problems. In this case, the terminal condition $G$ is allowed to take the value $+\infty$ and imposes a constraint on the final distribution of agents. In particular, suppose that $\nu\in \mathcal{P}_2(\R^d)$ and $G(m)=+\infty$ if $m\neq \nu$ and $0$ otherwise, in \eqref{eqn:U_value_function}. Then, $U^\e$ becomes a dynamic optimal transport problem $U_{ot}^\e\colon[0,T]\times\mathcal{P}_2(\R^d)\times\mathcal{P}_2(\R^d)\to \R\cup\{+\infty\}$ with
     \begin{equation}\label{eq:value-function}
        U_{ot}^\varepsilon(t,\mu,\nu)
        \;:=\;
        \inf_{\alpha\in \mathcal{A}_{t,\mu,\nu}}\;
        \mathbb E\Bigg[\int_t^T L\Big(\frac{X_s}{\varepsilon},X_s,\alpha_s,\mathcal{L}(X_s)\Big)\,ds\Bigg],
    \end{equation}
    where the associated state process $X=(X_s)_{s\in [t,T]}$ is such that $X_t$ is independent of the Brownian motion $B$ with $\mathcal{L}(X_t)=\mu$, its dynamics are determined by the control $\alpha$ through $dX_s=\alpha_s\,ds,$ and the infimum is taken over all controls in the set
    \be\label{controlsetot}
    \mathcal{A}_{t,\mu,\nu}:=\left\{\alpha\colon[t,T]\times\Omega\to\R^d:\; \alpha\in\mathcal{A},\;\exists X_t,\;\mathcal{L}(X_t)=\mu,\;\mathcal{L}(X_t+\int_t^T\alpha_sds)=\nu\right\}.
    \ee
    There is extensive literature in the finite dimensional setting on the qualitative behavior ($\Gamma$-limit theory) as $\e\to 0$ of functionals of the form
    \be\label{intfunctionalfd}
    \int_t^Tl(\gamma(s)/\e,\dot{\gamma}(s))ds,
    \ee
    where $l\colon \T^d\times \R^d\to \R$ and $\gamma\colon[t,T]\to \R^d$ is an absolutely continuous curve with derivative $\dot{\gamma}$ (for example, \cite{Weinan1991} and references therein); we refer to the books \cite{braides1998homogenization, jikov2012homogenization} and the references therein for a detailed account of homogenization of integral functionals. In this paper, our second goal is to prove that $U^\e_{ot}$ from \eqref{eq:value-function} has a limit as $\e\to 0$ and derive a rate of convergence. Besides being quantitative, the presence of the measure argument $\mathcal{L}(X_s)$ in \eqref{eq:value-function} yields that our result also provides an extension of previously established qualitative results for finite dimensional problems such as \eqref{intfunctionalfd} to infinite dimensions (for instance, see \cite{capuzzo2001rate, TranYu2021}). A similar qualitative result in infinite dimensions was obtained in the work of Gangbo and Tudorascu \cite{GangboTudorascu2012}, which considers curves $(X_s)_{s\in [t,T]}$ in $L^p,\; p\ge 1$.
    \vspace{2mm}

    \nd
    In the remainder of this introduction, we first review the existing results for HJ equations in finite dimensions and then state precisely our main results. We then turn to a comparison of our results with the existing literature on homogenization of HJ equations in the Wasserstein space, and give an overview of our proof techniques.

    \subsection{Description and statement of main results} Our main results depend on two different sets of assumptions for the Hamiltonian $H=H(y,x,p,m)\colon \T^d\times\R^d\times\R^d\times \mathcal{P}_2(\R^d)\to \R$.
    The first one considers the multiscale homogenization problem, where $H$ is a general function satisfying the conditions in Assumption (A1)
    (section \ref{assumptionssec}). The second assumes that $H(y,x,p,m)$, where $y\in\T^d,\; x\in \R^d,\; p\in \R^d$ and $m\in \mathcal{P}_2(\R^d)$, does not depend on the $x$ and $m$ variables (Assumption (A1')) (no multiscale effect). In the latter case, when there is no confusion, we write $H(y,p)$ instead of $H(y,x,p,m)$.
    \vspace{2mm}

    \nd
\nd
To better explain our results and where they place in the literature, we
first focus on the simpler case where $H=H(y,p),\; (y,p)\in \T^d\times \R^d$, does not depend on $(x,m)\in \R^d\times \mathcal{P}_2(\R^d)$,
and we describe the analogous finite-dimensional homogenization problem. Its dynamic programming equation is the classical first-order HJ equation
\be\label{fdHJBe}
\begin{cases}
    -\partial_tu^\e(t,x)+H\left(\frac{x}{\e},D_xu^\e(t,x)\right)=0,&(t,x)\in [0,T)\times\R^d,\\
    u^\e(T,x)=g(x),& x\in \R^d,
\end{cases}
\ee
where $g\colon \R^d\to \R$ is a continuous terminal cost. The function
$u^\e$ admits the variational representation
\[
u^\e(t,x)\;=\;\inf_{\alpha}\,
\left\{\int_t^T L\!\left(\frac{X_s}{\e},\alpha_s\right)ds+g(X_T)\right\},
\]
with state dynamics $dX_s=\alpha_s\,ds$, $X_t=x$, and $L$ the Legendre
transform of $H$ in the momentum variable. This is the deterministic,
single-particle analogue of the mean-field control problem
\eqref{eqn:U_value_function}: a single agent in $\R^d$ replaces a population
distributed according to $m\in\mathcal{P}_2(\R^d)$, and the terminal cost
$g$ replaces the measure-dependent terminal cost $G$.
\vspace{2mm}

\nd
Under coercivity and continuity assumptions on $H$ and continuity of $g$,
Lions--Papanicolaou--Varadhan \cite{LionsPapaVara1987} showed that
$u^\e\xrightarrow{\e\to 0} u$ locally uniformly, where $u\colon [0,T]\times \R^d\to \R$
is the viscosity solution to the effective problem
\be\label{fdeffHJB}\nonumber
\begin{cases}
    -\partial_tu(t,x)+\overline{H}(D_xu(t,x))=0,&(t,x)\in [0,T)\times\R^d,\\
    u(T,x)=g(x),&x\in \R^d.
\end{cases}
\ee
Here, $\overline{H}\colon \R^d\to \R$ is called the effective Hamiltonian;
$\overline{H}(p)$ is defined as the unique constant such that the cell problem
\be\label{cellintro}
    H(y,p+D_y v(y;p))=\overline{H}(p),\;\; y\in \T^d,
\ee
admits a continuous viscosity solution $v=v(\cdot;p)\colon\T^d\to \R$,
called a corrector. We refer to \cite[Section 3]{tran_hamilton-jacobi_2021}
or Proposition \ref{effHm} for the properties of the effective Hamiltonian
inherited from $H$.
\vspace{1mm}

\nd
From the definition of $\overline{H}$, we can heuristically write the expansion
\be\label{heuristicexp}
u^\e(t,x)\approx u(t,x)+\e v(x/\e;D_xu(t,x)),
\ee
which suggests a rate of convergence for $u^\e\xrightarrow{\e\to 0}u$ of $O(\e)$. Therefore, following the qualitative homogenization result of Lions--Papanicolaou--Varadhan, the question of quantitative homogenization for \eqref{fdHJBe} was addressed. Under the assumption that $g$ is Lipschitz and $H$ is coercive and locally Lipschitz, Capuzzo--Dolceta--Ishii \cite{capuzzo2001rate} obtained a rate of $O(\e^{1/3})$ using the perturbed test function method. Under the additional assumption that $H$ is convex in $p$, Mitake--Tran--Yu \cite{MitakeTranYu2019} proved a rate of $O(\e)$ in dimensions $d=2$ and a rate of $O(\e^{1/2})$ in dimensions $d\ge 3$ using weak KAM theory. Finally, Tran--Yu \cite{TranYu2021} obtained the optimal rate $O(\e)$ in the setting of \cite{MitakeTranYu2019} for any finite dimension $d$, which completes the study in the framework of convex Hamiltonians depending only on the fast variable and the momentum. For further developments and extensions in various directions, we refer to \cite{HanJang2023, HanJingMitakeTran2025, MitakeNi2026, HanTu2025, MitakeSato2023, MitakeNi2025, HuTuZhang2025, MitakeNiTran2025}. For earlier nearly optimal convergence rates in the convex setting, we refer to \cite{Son2020, Cooperman2022} and the references therein.
\vspace{1mm}

\nd
The assumptions on $H$ and $g$ in \cite{MitakeTranYu2019,TranYu2021} help by giving additional structure to the solution $u^\e$ of \eqref{fdHJBe} in the form of the control formulation:
\be\label{controlfd}
u^\e(t,x)=\inf_{\gamma\in AC([t,T];\R^d),\;\gamma(t)=x}\left\{\int_t^TL\left(\frac{\gamma(s)}{\e},\dot{\gamma}(s)\right)ds +g(\gamma(T))\right\},\;\;(t,x)\in [0,T]\times\R^d,
\ee
where $AC([t,T];\R^d)$ is the set of all absolutely continuous curves in $\R^d$, $\gamma(t)=x$ is the initial position and $\dot{\gamma}(s)$ is the speed of $\gamma$. A control formulation can be written for the solution of equation \eqref{HJBe}. More specifically, our assumptions from Section \ref{assumptionssec} ((A1')--(A2) and (A1)--(A2)) guarantee that \eqref{eqn:U_value_function} is the unique viscosity solution of \eqref{HJBe}\footnote{See \cite[Theorem 7.4]{gangbo2021finite} for the uniqueness and \cite{djete2022mckean1} for the dynamic programming principle that leads to the viscosity solution property.}.
\vspace{2mm}

\nd
The above heuristic connection\footnote{A more precise connection between \eqref{HJBe} and HJ equations in finite dimensions will be given in section \ref{viscositysolutions}.} between our problem and homogenization of HJ equations in finite dimensions suggests that an analogous expansion to \eqref{heuristicexp} can be expected for the solution $U^\e$ to \eqref{HJBe}. Our first result is that $U^\e$ converges with a rate of $O(\e)$ to a solution of
\be\label{HJB}\tag{$\mathrm{HJB1}$}
\begin{cases}
    -\partial_tU+\int_{\R^d}\bar{H}(D_mU(t,m,x))m(dx)=0, &(t,m)\in [0,T)\times \mathcal{P}_2(\R^d),\\
    U(T,m)= G(m), & m\in\mathcal{P}_2(\R^d),
\end{cases}
\ee
where $\overline{H}$ is the analog of \eqref{cellintro} in the Wasserstein space. In particular, we have the following theorem, which extends the above works to the infinite dimensional setting $\mathcal{P}_2(\R^d)$.

\begin{theorem}\label{thm:main1}
Assume that $H,G$ satisfy assumptions (A1') and (A2) in Section \ref{assumptionssec}. Then,  $p\mapsto\overline{H}(p)$ is convex         and \eqref{HJB} has a unique viscosity solution in the sense of Definition \ref{visc.}. In addition, for each $\varepsilon>0$, let $U^\varepsilon \colon [0,T]\times\mathcal{P}_2(\R^d) \to \mathbb{R}$ be the viscosity solution to \eqref{HJBe}, and let $U \colon [0,T]\times\mathcal{P}_2(\R^d) \to \mathbb{R}$ be the viscosity solution to \eqref{HJB}. Then, $U^\varepsilon$ converges uniformly to $U$ on $[0,T]\times\mathcal{P}_2(\R^d)$ and there exists a constant $C = C\big(d, C_{1,H}, C_{2,H}, K_0, C_G\big) > 0$
such that
\be\label{rate1}
\bigl| U^\varepsilon(t,m) - U(t,m) \bigr| \le C\,\varepsilon,
\quad \text{for all } t\in[0,T],\; m\in\mathcal{P}_2(\R^d).
\ee
Here, $C_{1,H}, C_{2,H}, K_0, C_G$ are the constants that appear in assumptions (A1') and (A2).
\end{theorem}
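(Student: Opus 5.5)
The plan is to reduce the statement to the finite-dimensional optimal-rate result of Tran--Yu \cite{TranYu2021}. Under (A1') the Lagrangian $L(y,v)$ depends on neither $x$ nor the measure, so the mean field control problem \eqref{eqn:U_value_function} splits into independent single-particle problems glued together by a transport plan.

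\textbf{Preliminaries.} Convexity of $\overline{H}$ and its coercivity and local Lipschitz bounds are those of Proposition \ref{effHm}, inherited from $H$ through the cell problem \eqref{cellintro}. Existence and uniqueness of a viscosity solution of \eqref{HJB} in the sense of Definition \ref{visc.} then follow from the comparison principle of \cite[Theorem 7.4]{gangbo2021finite}, applied with the Hamiltonian $\overline{H}$, which satisfies the same structural hypotheses as $H$. In the same way, \eqref{eqn:U_value_function} is the unique viscosity solution of \eqref{HJBe}, and the analogous control problem with Lagrangian $\overline{L}=\overline{H}^*$ is the unique viscosity solution of \eqref{HJB}. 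It therefore suffices to compare the two value functions.

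\textbf{Step 1: Hopf--Lax-type representation.} For $s>0$ and $x,y\in\R^d$ define the cost
\[
c^\e_s(x,y):=\inf\Big\{\int_0^s L\big(\tfrac{\gamma(r)}{\e},\dot\gamma(r)\big)\,dr:\ \gamma(0)=x,\ \gamma(s)=y\Big\},
\qquad \bar c_s(x,y):=s\,\overline{L}\big(\tfrac{y-x}{s}\big).
\]
I would show that
\[
U^\e(t,m)=\inf_{\nu\in\mathcal{P}_2(\R^d)}\Big\{\inf_{\pi\in\Pi(m,\nu)}\int c^\e_{T-t}(x,y)\,\pi(dx,dy)+G(\nu)\Big\},
\]
with the same formula for $U$ using $\bar c_{T-t}$.

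For ``$\ge$'': given any admissible $\alpha$, each path $X_\cdot(\omega)$ is absolutely continuous. Hence the pathwise cost is at least $c^\e_{T-t}(X_t,X_T)$, and $\pi=\mathcal{L}(X_t,X_T)$ is a coupling of $m$ and $\mathcal{L}(X_T)$.

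For ``$\le$'': given $\pi$, I would choose measurably in $(x,y)$ an $\eta$-optimal curve $\gamma^{x,y}$, using a measurable selection argument on the lower semicontinuous action functional. I would then realize $(X_t,X_T)\sim\pi$ on the atomless space and set $\alpha_s=\dot\gamma^{X_t,X_T}(s)$. The Brownian motion plays no role.

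\textbf{Step 2: Confining velocities.} Using the Lipschitz bound $|G(\nu)-G(\nu')|\le C_G W_2(\nu,\nu')$ from (A2) together with superlinearity of $L$, a near-optimal plan may be taken supported in $\{|y-x|\le K(T-t)\}$, where $K$ depends only on $C_G,C_{1,H},C_{2,H}$. Indeed, replacing long displacements by partial moves lowers the cost. More crudely, I would compare with the plan $\nu=m$ to bound $\int c\,d\pi$ and then truncate. This confinement step needs care and is one of the two delicate points.

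\textbf{Step 3: The key estimate, and the main obstacle.} I would invoke (or re-prove via its metric-problem formulation) the Tran--Yu estimate
\[
|c^\e_s(x,y)-\bar c_s(x,y)|\le C\e
\]
for all $s>0$ and all $x,y$ with $|y-x|\le Ks$, where $C=C(d,C_{1,H},C_{2,H},K_0,K)$ is independent of $s$. The estimate must be uniform over all horizons $s\in(0,T]$ and over the velocity ball, and must hold with both signs. This is the content of \cite{TranYu2021}: the upper bound comes from curve-cutting and periodicity (subadditivity), and the lower bound from their optimal-rate argument. I would need to check carefully that their constants depend only on the listed quantities.

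\textbf{Conclusion.} Integrating the estimate of Step 3 against any coupling supported in the confined region, and taking infima on both sides of the Step 1 representation, gives $|U^\e-U|\le C\e$ uniformly in $(t,m)$. In particular $U^\e\to U$ uniformly.
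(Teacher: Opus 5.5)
Your overall architecture is sound, but Step 2 fails as stated. Near-optimal plans cannot in general be confined to $\{|y-x|\le K(T-t)\}$ with $K$ depending only on $C_G,C_{1,H},C_{2,H}$. The reason is that $G$ is nonlinear in $\nu$: moving a small mass $\theta$ a distance $D$ can change ${\bf d}_{2-r}$ by $\theta^{1/(2-r)}D\gg\theta D$, so "partial moves" need not lower the cost.

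Here is a concrete counterexample. Take $H=|p|^2/2$, so $L=|v|^2/2$. Take $m=\delta_0$, $T-t=1$, $p=2-r$, and
$G(\nu)=-C_G\min\{1,(\int(|y|-R)_+^{p}\,\nu(dy))^{1/p}\}$.
This $G$ satisfies (A2) with the same $C_G$ for every $R>0$.
\begin{itemize}
\item Every $\nu$ supported in $\overline{B}_R$ has $G(\nu)=0$, hence total cost $\ge 0$.
\item The measure $\nu_\theta=(1-\theta)\delta_0+\theta\delta_{2Re_1}$ has total cost $2R^2\theta-C_GR\theta^{1/p}$, which is negative for small $\theta$.
\item Hence no $\eta$-optimal plan with $\eta<|U(t,\delta_0)|$ has all displacements $\le R$, and $R$ is arbitrary.
\end{itemize}
Your truncation fallback does not rescue this. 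Altering the plan on the long-displacement set moves $\nu$ by an amount controlled only by $\bigl(\int_{\{|y-x|>K(T-t)\}}|y-x|^2\,d\pi\bigr)^{1/2}$, which is not $O(\e)$. Likewise, a Step 3 constant that merely "depends on $K$" cannot be used with $K=K(\e)\to\infty$.

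The repair is to drop confinement altogether. The Tran--Yu estimate holds for all $x,y$, with an error quadratic in the velocity:
$|h^\e(t,T,x,y)-\bar h(t,T,x,y)|\le C\bigl(1+|x-y|^2/(T-t)^2\bigr)\e$
(Proposition~\ref{prop:subsuperm}(3)). Your comparison with $\nu=m$, together with the quadratic growth of $L$ (resp.\ $\overline{L}$) and the Lipschitz bound on $G$, already gives $\int|x-y|^2\,d\pi\le M_0(T-t)^2$ for near-optimal plans of either problem (Propositions~\ref{prop:Uepm2new} and \ref{prop:Uformula}). Integrating the estimate against such a plan gives $C(1+M_0)\e$. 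This is exactly the paper's Theorem~\ref{thm: unif conv}.

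With that fix, your route is valid and differs from the paper mainly in how the limit is identified. You use the control representation of the solution of \eqref{HJB} and the identity $\bar h(t,T,x,y)=(T-t)\overline{L}\bigl((y-x)/(T-t)\bigr)$. The paper avoids both facts: it defines $\bar U$ through $\bar h$ and verifies the viscosity property by a perturbed-test-function argument in $\mathcal{H}$ (subsection~\ref{charoflimit}). It does, however, invoke both facts later in Section~\ref{sec:generall}.

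One precision point in Step 1: the pair $(X_t,X_T)\sim\pi$ must be realized $\mathcal{F}_t$-measurably and independently of $B$, for instance as functions of the Gaussian $\xi\in\mathcal{F}_0$. Only then is your control progressively measurable; it lies in $\mathcal{A}_2\subset\mathcal{A}$.
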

\vspace{2mm}

\nd
The above theorem extends the previously known qualitative/quantitative homogenization results for the finite dimensional control problem $u^\e$ in \eqref{controlfd} to infinite dimensions and achieve the optimal convergence rate. In particular, a random initial position ($X_t$ is random) and a random speed $\alpha$ are allowed, as well as a terminal cost $G$ that depends on the law of $X_T$ and not necessarily its value. The proof is developed in Theorem \ref{thm: unif conv} for the rate of convergence, in section \ref{charoflimit} for the characterization of the limit, and in Proposition \ref{prop:opex} for the example verifying the optimality of the rate.

\vspace{2mm}

    \nd
    Our second result extends Theorem \ref{thm:main1} to the more general setting of assumptions (A1)–(A2) from Section \ref{assumptionssec}, where the Hamiltonian may depend on both the macroscopic variable $x\in \R^d$ and the probability measure $m\in \mathcal{P}_2(\R^d)$. In this case, for given $(x,p,m)\in \R^d\times\R^d\times \mathcal{P}_2(\R^d)$ the cell problem takes the form
    \be\label{cellintrogen}
        H(y,x,p+D_y v(y;x,p,m),m)=\overline{H}(x,p,m),\;\;y\in\T^d,
    \ee
    where $\overline{H}\colon \R^d\times\R^d\times\mathcal{P}_2(\R^d)\to \R$ is the effective Hamiltonian\footnote{We still denote the effective Hamiltonian by $\overline{H}$, because if $H$ in \eqref{cellintrogen} is independent of $x,m$, then $\overline{H}(x,p,m)$ is the effective Hamiltonian constructed in \eqref{cellintro}.}. We expect that $U^\e$ in this case will converge to a viscosity solution of
    \be\label{HJB4}\tag{$\mathrm{HJB2}$}
\begin{cases}
    -\partial_tU+\int_{\R^d}\bar{H}(x, D_mU(t,m,x),m)m(dx)=0, &(t,m)\in [0,T)\times \mathcal{P}_2(\R^d),\\
    U(T,m)= G(m), & m\in\mathcal{P}_2(\R^d),
\end{cases}
\ee
    and we further establish an $O(\sqrt{\e})$ convergence rate. The following is proved in Section \ref{sec:generall}.

\begin{theorem}\label{thm:multiscale}
Assume that $H,G$ satisfy assumptions (A1) and (A2). Then, \eqref{HJB4} has a unique viscosity solution. For each $\varepsilon>0$, let $U^\varepsilon \colon [0,T]\times\mathcal{P}_2(\R^d) \to \mathbb{R}$ be the viscosity solution to \eqref{HJBe}, and let $U\colon [0,T]\times\mathcal{P}_2(\R^d) \to \mathbb{R}$ be the viscosity solution to \eqref{HJB4}. Then, $U^\ep$ converges uniformly to $U$ on $[0,T]\times \mathcal{P}_2(\R^d)$ as $\ep\to 0$. Moreover, there exists a constant $C=C\left(d, C_{1, H}, C_{2,H}, C_H, C_G, K_0\right)>0$ such that for any $(t, m)\in [0,T] \times \mathcal{P}_2(\R^d)$,
\[
\left|U^\varepsilon(t,m)-U(t,m)\right| \le
\begin{cases}
C(T-t)\sqrt{\varepsilon}, & T-t \ge \sqrt{\varepsilon},\\[4pt]
C\min\{T-t,\varepsilon\}, & 0\leq T-t<\sqrt{\varepsilon}.
\end{cases}
\]
\end{theorem}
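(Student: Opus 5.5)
We will prove Theorem \ref{thm:multiscale} through the control representations: $U^\e$ is given by \eqref{eqn:U_value_function}, and $U$ is given by the same formula with $L$ replaced by the effective Lagrangian $\overline{L}(x,v,m)=\sup_p\{-p\cdot v-\overline{H}(x,p,m)\}$, averaged over the law of $X_s$. Well-posedness of \eqref{HJB4} should follow from the regularity of $\overline{H}$ inherited from (A1) via the cell problem \eqref{cellintrogen}: continuity and Lipschitz bounds in $(x,m)$, coercivity and convexity in $p$. With these we can invoke the comparison principle of \cite{gangbo2021finite}, and the dynamic programming principle shows that the value function with $\overline{L}$ is a viscosity solution. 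The rate then reduces to two one-sided estimates between value functions.

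The key tool is to \emph{freeze the slow variables} on time blocks of length $\tau$, to be chosen later (we will take $\tau=\sqrt{\e}$). On a block $[s_k,s_{k+1}]$ we freeze $(x,m)$ at $(X_{s_k},\mathcal{L}(X_{s_k}))$. Controls can be taken bounded: optimal velocities are bounded by a constant depending on $C_{1,H},C_{2,H},C_G$ through the Lipschitz bound on $G$. Hence both $X$ and its law move by $O(\tau)$ in $L^2$ and $W_2$ on each block. Freezing therefore costs $O(\tau)$ per unit time, i.e. $O((T-t)\tau)$ in total, for both $L$ and $\overline{L}$.

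Within a frozen block we use the finite-dimensional cell problem pathwise. The Hamiltonian $y\mapsto H(y,x_0,p,m_0)$ with frozen $(x_0,m_0)$ falls in the setting of \cite{TranYu2021,MitakeTranYu2019}. For each $\omega$, the metric problem (or Lagrangian formulation of the effective Hamiltonian) gives
\[
\Big|\inf_{\gamma(0)=a,\ \gamma(\tau)=b}\int_0^\tau L(\gamma/\e,x_0,\dot\gamma,m_0)\,ds-\tau\,\overline{L}\big(x_0,(b-a)/\tau,m_0\big)\Big|\le C\e .
\]
This is the cell-problem estimate (the weaker $O(\e)$ bound per block suffices here, with a constant depending only on the structural constants).

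\textbf{Upper bound $U^\e\le U+C$err.} Take a near-optimal control for $U$. By convexity of $\overline{L}$, replacing it on each block by its block-average velocity $v_k$ (a random variable) does not increase the cost up to the freezing error. We then replace the straight segment by a near-minimizing $\e$-oscillating curve with the same endpoints, selected measurably in $\omega$. The endpoint of each block, and hence the law at block times, is preserved, so $G(\mathcal{L}(X_T))$ is unchanged.

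\textbf{Lower bound $U\le U^\e+C$err.} Given a control for $U^\e$, we replace each block by the straight segment between its endpoints. The per-block inequality $\int L\ge \tau\overline{L}(\cdot)-C\e$ gives the reverse comparison, again with the same endpoint laws.

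Summing over the $(T-t)/\tau$ blocks, the total error is $C(T-t)(\tau+\e/\tau)$. Choosing $\tau=\sqrt{\e}$ gives $C(T-t)\sqrt{\e}$ when $T-t\ge\sqrt{\e}$. When $T-t<\sqrt{\e}$, we use a single block $\tau=T-t$: freezing costs $(T-t)^2\le(T-t)\sqrt\e$, the cell error is $C\e$, and trivially $|U^\e-U|\le C(T-t)$ since both costs are bounded by $C(T-t)$ near $G$. This yields $C\min\{T-t,\e\}$ after absorbing $(T-t)^2\le \e$ terms, which hold because $(T-t)^2<\e$ in this regime.

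The main obstacle is the \textbf{uniform per-block cell estimate with error $O(\e)$ independent of $\tau$ and of the endpoint velocity}, together with measurable selection in $\omega$. We would first try the subadditive metric-problem bound $|m_\e(a,b)-\tau\overline{L}|\le C\e$ from weak KAM theory, which holds for bounded velocities. Since every constant is uniform over the frozen parameters $(x_0,m_0)$ by (A1), it can be applied $\omega$-wise. Measurability is handled by discretizing the endpoints on an $\e$-grid, at an extra cost of $O(\e)$ per block.
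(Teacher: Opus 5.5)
Your architecture matches the paper's. You freeze $(x,m)$ on blocks of length $\tau\approx\sqrt{\varepsilon}$, compare the frozen oscillating cost with $\tau\bar L$ through the Tran--Yu bound (Proposition~\ref{prop:rocmu}), use straight segments and Jensen for the effective problem (Lemma~\ref{lem:lbarpart}), and balance $(T-t)^2/N$ against $N\varepsilon$.

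\textbf{Main gap: the modified controls are not admissible.} On $[s_k,s_{k+1}]$, both the straight segment and the measurably selected near-minimizer of $h^\varepsilon$ are functions of the endpoint $X_{s_{k+1}}$. So the new velocity at $s<s_{k+1}$ depends on a variable that is only $\mathcal{F}_{s_{k+1}}$-measurable. The modified control anticipates and is not in the progressively measurable class $\mathcal{A}$ that defines $U^\varepsilon$ in \eqref{eqn:U_value_function} and $U$ in \eqref{formula}.

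Let $U_{\mathrm{rel}}$ and $U^\varepsilon_{\mathrm{rel}}$ denote the infima over all $\mathcal{B}([t,T])\otimes\mathcal{F}$-measurable random paths. Your two comparisons only prove $U_{\mathrm{rel}}\le U^\varepsilon+\mathrm{err}$ and $U^\varepsilon_{\mathrm{rel}}\le U+\mathrm{err}$. To conclude you need $U\le U_{\mathrm{rel}}$ and $U^\varepsilon\le U^\varepsilon_{\mathrm{rel}}$, which are the nontrivial directions; the easy inequalities go the other way.

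This is what Section~\ref{section4} is for. Proposition~\ref{prop:U1eqUeqU2} proves $U^\varepsilon=U^\varepsilon_1=U^\varepsilon_2$ in three moves:
\begin{enumerate}
\item The frozen cost depends only on the joint law of the finitely many block endpoints $(\gamma_{t_0},\dots,\gamma_{t_N})$.
\item That law is realized as $(f_0(\xi),\dots,f_N(\xi))$ for an independent $\mathcal{F}_0$-measurable Gaussian $\xi$.
\item Measurable near-minimizers are concatenated, so the control is a deterministic function of $(s,\xi)$ and hence admissible.
\end{enumerate}
Proposition~\ref{prop:Uformula} records the analogue for $U$. Your proposal needs some such randomization or superposition step; without it neither inequality is justified.

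\textbf{Second issue: optimal velocities need not be bounded.} A $\mathbf{d}_{2-r}$-Lipschitz $G$ only bounds $D_mG(m,\cdot)$ in $L^q(m)$ with $q=(2-r)/(1-r)<\infty$, not pointwise. Individual particles of a near-optimal flow can therefore move arbitrarily fast. What you actually have is the energy bound $\mathbb{E}\int_t^T|\dot\gamma_s|^2\,ds\le M_1(T-t)$ of Proposition~\ref{prop:Uepm2new}.

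Correspondingly, the per-block cell estimate is not uniform in velocity; it reads $|h^\varepsilon-\bar h|\le C(1+|b-a|^2/\tau^2)\varepsilon$. This is repairable as in the paper:
\begin{itemize}
\item Since $\sum_k\mathbb{E}|X_{s_{k+1}}-X_{s_k}|^2/\tau^2\le\tau^{-1}\mathbb{E}\int_t^T|\dot X_s|^2\,ds\le M_1N$, the cell errors still sum to $CN\varepsilon$.
\item With $A_k$ the energy on block $k$, the freezing error there is at most $C(\tau A_k+\tau^{3/2}A_k^{1/2})$. By Cauchy--Schwarz this sums to $C(T-t)^2/N$.
\item You also need an energy bound for the modified oscillating path. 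Lemma~\ref{lem:fixest}(2) obtains it by comparing that path's frozen cost with the original path's.
\end{itemize}
The well-posedness and short-time ($T-t<\sqrt{\varepsilon}$) parts of your argument are fine.
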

\vspace{2mm}

\nd
The obtained rate of convergence is closely related to the multiscale result in finite-dimensional spaces established in \cite[Theorem 1.1]{HanJang2023}, where the rate is shown to be optimal, at least for $T-t \in [0,\varepsilon]$. Although we are unable to establish optimality in the present setting, our result may be viewed as an extension of the finite-dimensional convergence rate to the infinite-dimensional setting.
\vspace{1mm}

\nd
It is worth noting that under Assumption (A1) (resp. (A1')) on $H$, $\overline{H}$ satisfies certain convexity, coercivity and local Lipschitz assumptions (Proposition \ref{effHm}) which are enough to characterize the unique viscosity solution $U$ of \eqref{HJB4} (resp. \eqref{HJB}) in terms of the optimal control problem
\begin{align}
U(t,m)=\inf_{\alpha\in\mathcal{A}}&\left\{\mathbb E\left[\int_t^T\overline{L}(X_s,\alpha_s,\mathcal{L}(X_s))ds\right]+G(\mathcal{L}(X_T))\right\}\label{formula}\\
&\hspace{2cm}\left(\text{resp. } U(t,m)=\inf_{\alpha\in\mathcal{A}}\left\{\mathbb{E}\left[\int_t^T\overline{L}(\alpha_s)ds\right]+G(\mathcal{L}(X_T))\right\}\right).\nonumber
\end{align}
Here, $\mathcal{A}$ is given in \eqref{eqn:cs} and $\overline{L}=\overline{L}(x,v,m)\colon\R^d\times\R^d\times\mathcal{P}_2(\R^d)\to \R$ (resp. $\overline{L}\colon\R^d\to \R$) is the Legendre transform with respect to the $p$ variable of the effective Hamiltonian $\overline{H}(x,p,m)$ (resp. $\overline{H}(p)$)
\be\label{effLagr}
\bar{L}(x,v,m) := \sup_{p \in \R^d} \big\{ -p \cdot v - \bar{H}(x,p,m) \big\}\;\;\big(\text{resp. } \bar{L}(v) := \sup_{p \in \R^d} \big\{ -p \cdot v - \bar{H}(p) \big\}   \big).
\ee
\vspace{2mm}

\nd
Finally, going beyond the case where $G(m)<\infty$ in \eqref{eqn:U_value_function}, we obtain a quantitative homogenization result for the dynamic optimal transport problem \eqref{eq:value-function}. By using similar arguments as in the proofs of Theorem \ref{thm:main1} or Theorem \ref{thm:multiscale}-- depending on whether $H$ satisfies (A1) or (A1')
-- we have for $t\in [0,T]$ and $\mu,\nu\in\mathcal{P}_2(\R^d)$
$$U^\e_{ot}(t,\mu,\nu)\xrightarrow{\e\to 0}U_{ot}(t,\mu,\nu),$$
where
\be\label{DOTlimit}
U_{ot}(t,\mu,\nu)=\inf_{\alpha\in \mathcal{A}_{t,\mu,\nu}}\left\{\mathbb{E}\left[\int_t^T\overline{L}(X_s,\alpha_s,\mathcal{L}(X_s))ds   \right]\right\}
\ee
is the dynamic optimal transport problem with cost $\overline{L}$.

\begin{theorem}\label{main3}
    Suppose that assumptions (A1)--(A2) are in place. For $\e>0$ consider the function $U^\e_{ot}\colon [0,T]\times\mathcal{P}_2(\R^d)\times\mathcal{P}_2(\R^d)\to \R$ defined in \eqref{eq:value-function} and fix $(t,\mu,\nu)\in [0,T)\times\mathcal{P}_2(\R^d)\times\mathcal{P}_2(\R^d)$. There exists a constant $C$ depending on  $d, C_{1, H}, C_{2,H}, C_H, C_G$ and $K_0$ such that the following inequality holds
    $$|U^\e_{ot}(t,\mu,\nu)-U_{ot}(t,\mu,\nu)|\le C
\left(T-t+\frac{{\bf d}_2^2(\mu,\nu)}{T-t}  \right)\sqrt{\e},$$
    where $U_{ot}$ is defined in \eqref{DOTlimit}. Furthermore, if Assumption (A1) is replaced by the simpler (A1'), then the estimate can be improved to
    $$|U^\e_{ot}(t,\mu,\nu)-U_{ot}(t,\mu,\nu)|\le C \left(1+\frac{{\bf d}_2^2(\mu,\nu)}{(T-t)^2}\right)\e.$$
\end{theorem}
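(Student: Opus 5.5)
We prove the two inequalities separately, working with the Lagrangian formulations \eqref{eq:value-function} and \eqref{DOTlimit}. The strategy is to transfer admissible controls for one problem into admissible controls for the other, at a cost of $O(\sqrt{\e})$ (resp.\ $O(\e)$) per unit time. A constraint-restoring correction is then added on a short terminal interval, and this correction is what produces the factor ${\bf d}_2^2(\mu,\nu)/(T-t)$. The basic tool is the finite-dimensional homogenization of the action functional along a single curve. For a fixed path $\gamma$ with endpoints $x,y$ over time $\tau$, the cell problem \eqref{cellintrogen} and the weak KAM/metric-problem estimates underlying Theorems \ref{thm:main1} and \ref{thm:multiscale} give
\[
\Big|\inf_{\gamma(0)=x,\gamma(\tau)=y}\int_0^\tau L(\gamma/\e,\gamma,\dot\gamma,m_s)\,ds-\inf\int_0^\tau \overline{L}(\gamma,\dot\gamma,m_s)\,ds\Big|\lesssim \Big(\tau+\tfrac{|x-y|^2}{\tau}\Big)\sqrt{\e}.
\]
Here the frozen flow $m_s$ is handled by the Lipschitz dependence in (A1). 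In the (A1') case the same bound holds with the right-hand side replaced by $(1+|x-y|^2/\tau^2)\e$, as in Tran--Yu.

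\textbf{Upper bound $U^\e_{ot}\le U_{ot}+\dots$.} Take an (almost) optimal control $\bar\alpha$ for \eqref{DOTlimit}, and let $\bar X$ be the resulting flow, with $\mathcal{L}(\bar X_t)=\mu$ and $\mathcal{L}(\bar X_T)=\nu$. By convexity of $\overline{L}$ and a Jensen/Benamou--Brenier argument, we may replace $\bar\alpha$ by a feedback along which $\mathbb{E}\int_t^T|\bar\alpha_s|^2ds\lesssim (T-t)+{\bf d}_2^2(\mu,\nu)/(T-t)$; this follows from coercivity, comparing with the straight-line interpolation. Next, split $[t,T]$ into intervals of length $h$. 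On each interval, and for each realization of $\bar X$, replace the effective straight segment by an $\e$-oscillating path with the same endpoints. Such a path is built pathwise from the finite-dimensional result above, with a measurable selection that is possible because the problem is randomized and the probability space has no atoms. The endpoint laws are then preserved exactly, so the new control lies in $\mathcal{A}_{t,\mu,\nu}$. Summing the errors gives the cost bound $C\,\mathbb{E}\int (1+|\bar\alpha|^2)\sqrt{\e}$. The measure argument $\mathcal{L}(X_s)$ changes by $W_2\lesssim\e$ along the new paths, and this costs only $O(\e)$ by the Lipschitz continuity of $L$ in $m$.

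\textbf{Lower bound.} Take an almost optimal $\alpha^\e$ for \eqref{eq:value-function}. Pathwise, the finite-dimensional lower bound (the reverse inequality in the display) shows that the $\e$-action dominates the effective action of the same curve, up to the same error. The effective action of that curve is $\int\overline{L}$ of a curve with the same endpoints, so after taking expectations we get a competitor for \eqref{DOTlimit} with the same marginals at times $t$ and $T$. We also need an a priori bound $\mathbb{E}\int|\alpha^\e|^2\lesssim (T-t)+{\bf d}_2^2/(T-t)$. It follows from coercivity of $L$ together with the upper bound already proved, since the upper bound controls $U^\e_{ot}$ by $C((T-t)+{\bf d}_2^2/(T-t))$.

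\textbf{Main obstacle.} The delicate step is getting the scale-correct pathwise estimate, i.e.\ the factor $|x-y|^2/\tau$ or $|x-y|^2/\tau^2$, uniformly in the unbounded velocities. In the (A1') case this requires the optimal-rate argument of Theorem \ref{thm:main1}, applied to the one-particle problem with the terminal constraint replaced by a penalty $\lambda|\gamma(T)-y|^2$, followed by sending $\lambda\to\infty$. The sharp $\e$ bound for fixed endpoints should then follow from the superlinear growth of $\overline{L}$ and the rescaling $\gamma\mapsto\gamma(\tau\,\cdot)$, which turns speed $|x-y|/\tau$ into an error $(1+|x-y|^2/\tau^2)\e$. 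For (A1), I would instead mimic the $\sqrt{\e}$ freezing argument of Theorem \ref{thm:multiscale} on intervals of length $\sqrt{\e}$.
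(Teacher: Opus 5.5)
Your skeleton matches the paper's Steps~2--3. You compare the two actions through the endpoint costs $h^\varepsilon$ and $\bar h$ on a partition of mesh $\sqrt{\varepsilon}$ (a single interval under (A1')). You use the velocity bound $\mathbb{E}\int_t^T|\dot\gamma_s|^2\,ds\lesssim (T-t)+{\bf d}_2^2(\mu,\nu)/(T-t)$, which comes from coercivity and the straight-line comparison of Lemma~\ref{naive estimate}. It is this bound, not a terminal ``constraint-restoring'' correction, that produces the factor ${\bf d}_2^2/(T-t)$. However, two steps fail as written.

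\textbf{Admissibility.} The bridge you insert on $[t_k,t_{k+1}]$ is a function of $(\bar X_{t_k},\bar X_{t_{k+1}})$, and $\bar X_{t_{k+1}}$ is only $\mathcal{F}_{t_{k+1}}$-measurable. So the new control is anticipative: it belongs to $\mathcal{A}^1_{t,\mu,\nu}$, not to $\mathcal{A}_{t,\mu,\nu}$. Preserving the endpoint laws says nothing about progressive measurability, and non-atomicity of $\Omega$ does not address it. The same objection applies to the effective competitor in your lower bound. The paper handles this in Step~1 by proving $U^\varepsilon_{ot}=U^\varepsilon_{1,ot}$ (and likewise for $U_{ot}$), using Proposition~\ref{prop:U1eqUeqU2}, the continuity of $\nu\mapsto U^\varepsilon_{i,ot}$ (Proposition~\ref{measure lip2}) and Lemma~\ref{infeq}. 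In your scheme a shorter repair is available, because only finitely many grid values enter. Realize the joint law of $(\bar X_{t_0},\dots,\bar X_{t_N})$ as $(f_0(\xi),\dots,f_N(\xi))$ with $\xi$ $\mathcal{F}_0$-measurable and independent of $B$, as in \eqref{eqn:coupling_gamma}, and set $X_t=f_0(\xi)$. The whole new path is then $\sigma(\xi)$-measurable and has the same cost. One of these two arguments must be supplied.

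\textbf{The measure argument.} The claim ${\bf d}_2(\mathcal{L}(X^{\mathrm{new}}_s),\mathcal{L}(\bar X_s))\lesssim\varepsilon$ is unjustified: near-minimizers of the oscillatory action between fixed endpoints need not stay $O(\varepsilon)$-close to the segment. Your lower bound has a worse problem. You compare pathwise on all of $[t,T]$ with $m_s=\mathcal{L}(X^\varepsilon_s)$ frozen as exogenous data. But \eqref{DOTlimit} charges the selected effective curves with their own law flow, which can be $O(1)$ away from $m_s$ at intermediate times. Also, ``the $\varepsilon$-action dominates the effective action of the same curve'' is false pathwise; only the endpoint-cost comparison $\int L\ge h^\varepsilon\ge \bar h-\mathrm{err}$ holds.

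What works in both directions is the mechanism of Lemmas~\ref{lem:fixest} and \ref{lem:lbarpart}. Old and new paths have the same joint law at the grid times, so you freeze $(x,m)$ at $(\gamma_{t_k},\mathcal{L}(\gamma_{t_k}))$. You then bound the freezing error separately for each path, which requires an $L^2$ velocity bound for the new near-minimizers as well (the constant $\tilde M$). This gives an error $C(1+M)(T-t)^2/N$, balanced against $CN\varepsilon\bigl(1+{\bf d}_2^2/(T-t)^2\bigr)$.

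The same accounting shows that your single-interval estimate with error $(\tau+|x-y|^2/\tau)\sqrt{\varepsilon}$ cannot hold for $\tau\ll\sqrt{\varepsilon}$: for $x=y$ and $\tau\ll\varepsilon$ the discrepancy can be of order $\tau$. The additive $(1+|x-y|^2/\tau^2)\varepsilon$ per subinterval is what forces the mesh $\sqrt{\varepsilon}$.

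Finally, under (A1') the penalty-and-$\lambda\to\infty$ route through Theorem~\ref{thm:main1} does not work. That theorem's constant depends on $C_G$, and the penalty is neither bounded nor Lipschitz. It is also unnecessary. Proposition~\ref{prop:subsuperm}(3) is exactly the fixed-endpoint estimate. Applying it to near-optimal couplings, for which $\mathbb{E}|X-Y|^2\lesssim (T-t)^2+{\bf d}_2^2(\mu,\nu)$, yields the $O(\varepsilon)$ bound.
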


\nd
The above result is proved in Section \ref{DOT}.

    \subsection{Proof outline, methods, and a Corollary}
    To show Theorem \ref{thm:main1}, unlike the classical approach for the finite dimensional problem \eqref{fdHJBe}, where one first proves qualitative convergence via the doubling of variables method and then derives a rate, we first extend the proof techniques developed in~\cite{TranYu2021} to prove $u^\varepsilon$ uniformly converges to a limit function with a quantitative rate $O(\varepsilon)$. Then, we use the doubling variable techniques to prove the limiting function must be a solution to the limiting equation~\eqref{HJB}.


    \vspace{1mm}

    \nd
    The main reason we are unable to work directly with \eqref{HJBe} is the topology of $\mathcal{P}_2(\R^d)$. Even though, under our assumption from section \ref{assumptionssec}, we are able to show that the family $(U^\e)_{\e>0}$ is uniformly equicontinuous (see Corollary \ref{cor:lipschitzestimate}), in order to construct and characterize its limit via the doubling of variables method we require $\mathcal{P}_2(\R^d)$ to be locally compact-- something that is not true. However, this difficulty can be overcome if someone knows apriori that $(U^\e)_{\e>0}$ converges uniformly to its limit; we refer to the main proof in subsection \ref{charoflimit} and the comment before it.
    \vspace{1.5mm}

    \nd
    To show the uniform convergence, as in the works \cite{MitakeTranYu2019,TranYu2021} mentioned above, our argument builds on the optimal control representation of $U^\varepsilon$ (see \eqref{eqn:U_value_function}). In particular, we show that the control set $\mathcal{A}$ in \eqref{eqn:U_value_function} can be extended without changing the value function and this new optimal control problem admits some equivalent representations that reveal its behavior as $\e \to 0$. The main steps are summarized below in the case of Theorem \ref{thm:main1}. In what follows $(t,m)\in [0,T]\times\mathcal{P}_2(\R^d)$ denote the initial time and distribution, respectively, and $H,L$ are defined under the simplified assumption (A1') described above (see also section \ref{assumptionssec}).
    \vspace{3mm}

    \nd
    \textit{Step 1. (Extension of the control set)}\\
    We first extend the control set $\mathcal{A}$ in \eqref{eqn:U_value_function} to
    \[
    \mathcal{A}_1:=\left\{\alpha:[t,T]\times \Omega\rightarrow \mathbb{R}^d: \alpha \text{ is } \mathcal{B}([t,T])\otimes \mathcal{F} \text{-measurable and } \mathbb{E}\left[ \int_t^T|\alpha_s|^2 \,ds \right] <\infty\right\}\]
    and we show that new value function $U^\e_1$ is in fact equal to $U^\e$ (see Proposition \ref{prop:U1eqUeqU2}).
    \vspace{2mm}

    \nd
    \textit{Step 2. (Alternative representation of the value function $U^\e$)}\\
    We derive an alternative formulation of $U^\e(t,m)=U_1^\varepsilon(t,m)$. Rather than working directly with the admissible controls $\alpha\in \mathcal{A}_1$ and the stochastic trajectories $(X_s)_{s\in [t,T]}$, with $dX_s=\alpha_sds$, we consider:
    \begin{enumerate}
    \item general target measures $\nu \in \mathcal{P}_2(\R^d)$;
    \item random variables $X,Y\colon \Omega\to \R^d$ such that $\mathcal{L}(X)=m$ and $\mathcal{L}(Y)=\nu$, and
    \item random paths $\gamma:[t,T]\times\Omega\to\mathbb{R}^d$ joining $X$ to $Y$
    \footnote{In the sense that $\gamma\colon [t,T]\times \Omega\to \R^d$ is a measurable map such that $\gamma(\cdot,\omega)$ is absolutely continuous for any $\omega\in \Omega$, and $\gamma(t,\omega)=X(\omega)$,  $\gamma(T,\omega)=Y(\omega)$, for any $\omega\in \Omega$. We write $\gamma(t)=X$ and $\gamma(T)=Y$ for the last two equalities whenever there is no confusion.},
    \end{enumerate}
    $$U_1^\e(t,m)=\inf_{\nu\in \mathcal{P}_2(\R^d)}\inf_{\substack{X,Y\in L^2(\Omega;\R^d)\\ \mathcal{L}(X)=m,\;\mathcal{L}(Y)=\nu}}\inf_{ \gamma(t)=X,\;\gamma(T)=Y}\left\{\mathbb{E}_\omega\left[\int_t^TL\left(\frac{\gamma(s,\omega)}{\e},\dot{\gamma}(s,\omega)   \right)ds +G(\nu)\right]\right\}.$$
    In this representation, we show that the last infimum can be exchanged with the expectation $\mathbb{E}$ without changing the value (see Lemma \ref{lem:Uep1costh}). After this, for each fixed $\omega\in \Omega$, the running cost inside the expectation becomes the minimal deterministic cost of transporting $x=X(\omega)$ to $y=Y(\omega)$ and is written as
    \be\label{cost}
    h^\e(t,T,x,y):= \inf_{\substack{\gamma\in \text{AC}([t,T];\R^d)\\ \gamma(t)=x,\;\gamma(T)=y}} \int_t^TL\left(\frac{\gamma(s)}{\e},\dot{\gamma}(s)\right)ds,\;\;\text{for }x=X(\omega)\text{ and } y=Y(\omega).
    \ee
    More specifically,
    \be\label{importanteq}
    U^\e_1(t,m)=\inf_{\nu\in \mathcal{P}_2(\R^d)}\inf_{\substack{X,Y\in L^2(\Omega;\R^d)\\ \mathcal{L}(X)=\mu,\;\mathcal{L}(Y)=\nu}}\left\{ \mathbb{E}[h^\e(t,T,X,Y)]+G(\nu)  \right\}.
    \ee
    This running cost $h^\e\colon [0,T]\times [0,T]\times\R^d\times\R^d\to \R$ was used in \cite{TranYu2021} to study the quantitative homogenization problem \eqref{fdHJBe}. A key technical ingredient in justifying the above formula is a uniform $L^2$ velocity bound for nearly optimal paths $\gamma$, ensuring that the constant $C$ in Theorem~\ref{thm:main1} depends only on the structural parameters of the problem.

    \vspace{2mm}

    \nd
    \textit{Step 3. (Definition of the effective value function)}\\
    We introduce the limiting value function $\bar{U}$, in which the running cost is given by the expectation of the averaged cost $\bar{h}$ associated with $h^\e$. Using the subadditivity and superadditivity of $h^\e$, we obtain both the uniform convergence $U^\varepsilon \xrightarrow{\e\to 0} \bar{U}$ on $[0,T]\times\mathcal{P}_2(\R^d)$ and
    the quantitative estimate $O(\varepsilon)$ for the convergence rate. This step is proved in Theorem \ref{thm: unif conv}.
    \vspace{2mm}

    \nd
    \textit{Step 4. (Identification of the limit via the doubling variable method)}\\
    Finally, we use a doubling of variables argument to verify that $\bar{U}$ is an $L$-viscosity solution of the effective Hamilton--Jacobi equation \eqref{HJB}. The uniqueness then yields $\bar{U}=U$, completing the proof of Theorem~\ref{thm:main1}. The proof of the viscosity property of $\overline{U}$ is in subsection \ref{charoflimit} and the proof of Theorem \ref{thm:main1} is in subsection \ref{proofmain1}.
    \vspace{1mm}

    \begin{remark}[A Corollary of our proof methods]
        (i) Even though, a posteriori, \eqref{importanteq} is true for $U^\e$ in place of $U^\e_1$, we are unable to show it directly for $U^\e$ using the outline in step 2 due to the controls in $\mathcal{A}$ being progressively measurable.\\[0.5ex]
        (ii) It is worth noting that the optimal control problem can be equivalently written in the familiar Markovian form
        $$U^\e(t,m)=\inf\left\{\mathbb{E}\left[\int_t^TL\left(\frac{X_s}{\e},X_s,\alpha(s,X_s),\mathcal{L}(X_s)\right)ds+G(\mathcal{L}(X_T))       \right]\right\},$$
        where the infimum is taken over all measurable $\alpha\colon [t,T]\times\R^d\to \R$  such that $dX_s=\alpha(s,X_s)ds,$ for $s\in[t,T]$, $\int_t^T\mathbb{E}[|\alpha(s,X_s)|^2]ds<\infty$ and $X_t\in L^2(\Omega)$ with $\mathcal{L}(X_t)=m$. We refer to \cite[Theorem 8.1]{lacker2022superposition} or \cite[Lemma 6.2]{CecchinDaudinJacksonMartini2025}.\\[0.5ex]
        (iii) In order to show the equality in step 1, we compare the value $U^\e(t,m)$ with the value functions of the optimal control problem \eqref{eqn:U_value_function} when the control set is extended (this is the case of $\mathcal{A}_1$ above) or restricted in the sense that the controls $\alpha$ are not just $\mathbb{F}$-progressively measurable but $\alpha(s,\cdot)$ is $\mathcal{F}_t$-measurable for any $s\in [t,T]$ (see $\mathcal{A}_2$ in section \ref{altcontrset}). In both cases we show that the value function remains the same (see section \ref{equivalence}) the main reason being that the randomness generated by the Brownian motion allows us to produce enough paths $(X_s)_{s\in [t,T]}$ and, in case $m$ has atoms, for the mass to split. It is natural to consider the even further restricted control set
        $$\widetilde{\mathcal{A}}=\left\{\alpha\colon [t,T]\times\Omega\to \R^d: \alpha\in \mathcal{A},\; \alpha\text{ is }\mathcal{B}([t,T])\otimes\sigma(X_t)\text{-measurable}\right\},$$
        where $\sigma(X_t)$ is the $\sigma$-algebra generated by the $\mathcal{F}_t$-measurable random variable $X_t$, and compare it with the value function of \eqref{eqn:U_value_function}. As a consequence of the methods developed in this paper, we obtain the following Corollary.
\end{remark}

\begin{corollary}\label{aeequality}
    Assume that (A1')-(A2) are in place and $L$ is defined as in \eqref{Lagr}. For $(t,m)\in [0,T]\times\mathcal{P}_2(\R^d)$ we define
            \be\nonumber
\widetilde{U}^\e(t,m)=\inf_{\alpha\in\widetilde{\mathcal{A}}}\left\{ \mathbb{E}\left[ \int_t^TL\left( \frac{X_s}{\e},\alpha_s\right)ds+G(\mathcal{L}(X_T))   \right]\right\},
            \ee
where $dX_s=\alpha_sds,\;s\in[t,T],\; \mathcal{L}(X_t)=m$.
Then the following statements hold.\\
(1) $\widetilde{U}^\e(t,m)=U^\e(t,m)$, for any $m\in\mathcal{P}_2(\R^d)$ which is non-atomic. As a result, $\widetilde{U}^\e(t,m)$ converges to $U(t,m)$, where $U$ is the unique solution of \eqref{HJB}, with rate $O(\e)$ for any $m\in\mathcal{P}_2(\R^d)$ non-atomic.
\\
(2) Let $g:\R^d\to\R$ be bounded and Lipschitz, and let $u^\e$ be the solution to \eqref{fdHJBe}.
Suppose that
    \[
    G(m)=\int_{\R^d} g(x)\,m(dx),
    \qquad m\in\mathcal{P}_2(\R^d).
    \]
 Then, for every $x\in\R^d$, $U^\e(t,\delta_x)=\widetilde{U}^\e(t,\delta_x)=u^\e(t,x)$.
 Consequently, Theorem~\ref{thm:main1} implies that
 \[
 \left|u^\e(t,x)-u(t,x)\right| \leq C \e, \quad \text{for all } t\in [0,T], \,x \in \R^d,
 \]
 recovering \cite[Theorem 1.1]{TranYu2021}.
\end{corollary}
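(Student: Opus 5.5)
We prove (1) by a sandwich argument. Since $\widetilde{\mathcal{A}}\subset\mathcal{A}\subset\mathcal{A}_1$, we always have $\widetilde U^\e(t,m)\ge U^\e(t,m)=U^\e_1(t,m)$ (Proposition \ref{prop:U1eqUeqU2}), so only $\widetilde U^\e\le U^\e_1$ needs proof. For this we use the representation \eqref{importanteq}. Fix $\delta>0$ and choose $\nu$ and a coupling $\pi\in\Pi(m,\nu)$ with
\[
\int h^\e(t,T,x,y)\,\pi(dx,dy)+G(\nu)\le U^\e_1(t,m)+\delta .
\]
Because $m$ is non-atomic, the optimal transport is well behaved. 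Approximate $\pi$ (in the sense of $\mathbf d_2$ on $\R^{2d}$) by a Monge-type coupling $(\mathrm{id},S)_\#m$ with $S$ a Borel map, so that $\nu'=S_\#m$ is $\mathbf d_2$-close to $\nu$. This is the classical density of transport maps among couplings when the first marginal is atomless (Pratelli's theorem).

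The cost changes little under this approximation. By the local Lipschitz/growth estimates on $h^\e$ that the paper obtains from the bounds on nearly optimal paths (uniform in $\e$, e.g.\ $h^\e(t,T,x,y)\le C(1+|x-y|^2/(T-t))$, with local Lipschitz continuity in $(x,y)$), together with the Lipschitz continuity of $G$ in $\mathbf d_2$ from (A2), the cost of $(\mathrm{id},S)_\#m$ is at most $U^\e_1+2\delta$. If a direct continuity estimate of $\int h^\e\,d\pi$ under weak/$\mathbf d_2$ approximation is awkward, we instead first truncate to compact sets and use the uniform integrability supplied by the quadratic bound.

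Next we select paths measurably. Choose a measurable selection $(x,y)\mapsto\gamma_{x,y}\in AC([t,T];\R^d)$ of $\delta$-optimal paths for $h^\e(t,T,x,y)$. This is possible by a measurable selection theorem, since $h^\e$ is continuous and the path cost is lower semicontinuous on a Polish space of paths with bounded $H^1$ norm. Then set $\alpha_s(\omega)=\dot\gamma_{X_t(\omega),S(X_t(\omega))}(s)$. This control is $\mathcal B([t,T])\otimes\sigma(X_t)$-measurable, hence lies in $\widetilde{\mathcal A}$, and it produces $X_T=S(X_t)\sim\nu'$. Its cost is at most $U^\e_1+3\delta$, and letting $\delta\to0$ gives equality. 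The rate $O(\e)$ then follows from Theorem \ref{thm:main1}.

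For (2), let $m=\delta_x$. The chain $U^\e(t,\delta_x)\le\widetilde U^\e(t,\delta_x)$ holds as before. Under $\widetilde{\mathcal A}$, $\sigma(X_t)$ is trivial, so controls are deterministic and
\[
\widetilde U^\e(t,\delta_x)=\inf_\gamma\Bigl\{\int_t^T L\bigl(\gamma/\e,\dot\gamma\bigr)\,ds+g(\gamma(T))\Bigr\}=u^\e(t,x)
\]
by \eqref{controlfd}. Conversely, by \eqref{importanteq} with the linear $G$,
\[
U^\e_1(t,\delta_x)=\inf_\nu\int\bigl(h^\e(t,T,x,y)+g(y)\bigr)\,\nu(dy)\ge\inf_y\bigl\{h^\e(t,T,x,y)+g(y)\bigr\}=u^\e(t,x).
\]
This proves the three equalities, and the estimate then follows from Theorem \ref{thm:main1} after checking that $u(t,x)=U(t,\delta_x)$. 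That identity follows from the same computation applied to $\overline L$: in \eqref{formula} the infimum over laws is again attained at Diracs by linearity of $G$, and Jensen's inequality with convex $\overline L$ gives the Hopf--Lax formula.

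The main obstacle is the approximation step in (1): proving that replacing a general coupling by a map-induced coupling changes $\int h^\e\,d\pi$ and $G$ only slightly, uniformly enough. The non-atomic hypothesis is used exactly there.
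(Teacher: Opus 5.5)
Your proposal is correct. Part (1) follows the paper's argument, and part (2) takes a somewhat different route.

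\textbf{Part (1).} The paper also sandwiches $U^\e=U^\e_1$ between two formulations. The lower one is the Kantorovich value $\inf_\nu\{\inf_{\pi\in\Pi(m,\nu)}\iint h^\e\,d\pi+G(\nu)\}$. The upper one is the Monge value $\inf_\nu\{\inf_{f_\#m=\nu}\int h^\e(t,T,x,f(x))\,m(dx)+G(\nu)\}$, which equals $\widetilde U^\e$ by the measurable-selection technique of Proposition \ref{prop:UeqU2}. The paper then closes the gap by citing Pratelli / Beiglb\"ock--Lacker for continuous, lower-bounded costs with atomless first marginal. You unpack that citation into two steps: density of map-induced plans in $\Pi(m,\nu)$, and continuity of $\pi\mapsto\iint h^\e\,d\pi$ under $\mathbf d_2$. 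One small simplification: the density theorem gives maps with $S_\#m=\nu$ exactly. Weak convergence with fixed marginals in $\mathcal P_2$ is already $\mathbf d_2$-convergence, so you do not need to perturb $\nu$ to $\nu'$ or use the Lipschitz bound on $G$.

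\textbf{Part (2).} The paper argues by contradiction through $U^\e_2$. If $U^\e(t,\delta_x)<\widetilde U^\e(t,\delta_x)$, some control $\alpha(s,\xi)$, randomized by the Gaussian $\xi$, has expected cost below $\widetilde U^\e$. Then some realization $\omega_0$ has pathwise cost below $\widetilde U^\e$, and the deterministic control $\alpha(\cdot,\xi(\omega_0))\in\widetilde{\mathcal A}$ gives the contradiction.

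You argue directly instead. You use the $h^\e$-representation of $U^\e_1$ and the fact that $\Pi(\delta_x,\nu)=\{\delta_x\otimes\nu\}$. With linear $G$, the value becomes an average of $h^\e(t,T,x,\cdot)+g$ against $\nu$, hence is at least $\inf_y\{h^\e(t,T,x,y)+g(y)\}=u^\e(t,x)$. From Lemma \ref{lem:Uep1costh} you only need the inequality $\ge$, which holds even though that lemma restricts couplings to $\mathcal E^\ast_{m,\nu}$.

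Both proofs rest on the same observation: linearity of $G$ turns the mean-field cost into an average of single-path costs, and an average dominates the pathwise infimum. Your version avoids the randomized set $\mathcal A_2$ and gets the identification with $u^\e$ in a single chain of inequalities. You also supply a step the paper leaves implicit for the final rate: the identity $U(t,\delta_x)=u(t,x)$, obtained by the same pathwise argument applied to $\overline L$.
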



        \nd
        \begin{remark}
        Note that in the setting of part $(2)$, with proper regularity assumptions, one can prove that $U^\e(t,m)=\int_{\R^d} u^\e(t,x)m(dx)$ for any $(t, m) \in [0,T] \times \mathcal{P}_2(\R^d)$. Formally, let $W(t,m)=\int_{\R^d} u^\e(t,x)m(dx)$. We notice $D_mW(t,m)=\nabla_x \frac{\delta W}{\delta m}=D_x u^\e(t,x)$. Because $u^\e$ solves \eqref{fdHJBe}, we have
        \[
        \partial_tu^\e(t,x)+H(x/\e,x,D_xu^\e(t,x))=0,\quad \forall (t,x) \in [0,T) \times \R^d.
        \]
        Integrating $x$ with $m$ gives
        \[
        \partial_t \int u^\e (t,x)m(dx)+\int H(x/\e,x,D_x u^\e(t,x))m(dx)=0\,.
        \]
        The rigorous argument follows by generalizing the proof of Corollary \ref{aeequality} $(2)$, where only the special case $m=\delta_{x_0}$ is considered in this paper. More precisely, using the optimal control representation, one shows that $U^\e(t,m)=\widetilde U^\e(t,m)=\int_{\mathbb{R}^d} u^\e(t,x) m(dx)$,
        and the desired identity follows. Since this is not the main focus of our work, we omit the detailed proof for simplicity.

        However, for general terminal costs $G$, as in part (1), the function $\widetilde{U}^\e(t,\cdot)$ might not be continuous everywhere, hence the equality $\widetilde{U}^\e=U^\e$ is not expected to hold everywhere.
        A case of strict inequality $U^\e(t,m)<\widetilde{U}^\e(t,m)$, with $G(m)={\bf d}_1(m, \text{Leb}|_{[0,1]^d})$, was presented in Cecchin-Daudin-Jackson-Martini \cite[Corollary 6.10]{CecchinDaudinJacksonMartini2025}. We give the proof of Corollary \ref{aeequality} and an example of a discontinuous $\widetilde{U}^\e$ in Appendix \ref{Corapp}.
    \end{remark}



\nd
The proof of Theorem \ref{thm:multiscale}, which assumes the more general Assumption (A1), follows a similar strategy to that of multiscale problems in $\R^d$ (see \cite{HanJang2023}), with an additional difficulty arising from the need to handle the measurability of admissible paths and the dependence of the cost $L$ on the law $\mathcal{L}(X_s)$. The main idea is to break the time period $[t,T]$ into $N\in \mathbb{N}$ evenly spaced subintervals $[t_i,t_{i+1}],\;i\in \{0,\ldots,N-1\}$. For each subinterval, we fix the argument of the macroscopic variable $X_s$ and the probability measure $\mathcal{L}(X_s)$ at the value of initial time $t_i$, while retaining the dependence on $X_s/\e$. We may then apply the result of Theorem \ref{thm:main1} for each subinterval and we also estimate the approximation error for fixing the macroscopic and measure arguments. The final step is to optimize with respect to $N$ in order to minimize the error.
\vspace{2mm}

\nd
To prove Theorem \ref{main3} we first show that the functions $U^\e_{ot}(t,\mu,\nu),\; U_{ot}(t,\mu,\nu)$ from \eqref{eq:value-function} and \eqref{DOTlimit}, maintain their values if the control set $\mathcal{A}_{t,\mu,\nu}$ is extended to controls in $\mathcal{A}_1$ so that the state process $(X_s)_{s\in [t,T]}$, with $dX_s=\alpha_sds,\;\mathcal{L}(X_t)=\mu$, satisfies $\mathcal{L}(X_T)=\nu$. This requires us to explore certain continuity properties of dynamic optimal transport problems $U^\e_{ot}$ under different sets of controls. We then apply the techniques developed in Theorems \ref{thm:main1} and \ref{thm:multiscale} to obtain the rate of convergence.



\subsection{Related literature and outlook}
To the best of our knowledge, the quantitative homogenization results in Theorems \ref{thm:main1} and \ref{thm:multiscale} are new. Homogenization of first order HJ equations in infinite dimensions was studied from a qualitative point of view in Gangbo-Tudorascu \cite[Section 3.3]{GangboTudorascu2012}, and in the recent \cite{park2026periodic} from a quantitative point of view, where, in the latter work, the main first-order HJ equation is posed in $L^2$ and additional rearrangement and periodicity assumptions are made to compensate for the lack of compactness\footnote{We note that similar compactness properties can be obtained in our setting if $H(y,\cdot,p,\cdot),\; G(\cdot)$ are apriori defined on $\T^d\times\mathcal{P}_2(\T^d)$ (resp. $\mathcal{P}_2(\R^d))$ instead of $\R^d\times \mathcal{P}_2(\R^d)$ (resp. $\mathcal{P}_2(\T^d))$ and hence \eqref{HJBe} is posed in $[0,T]\times\mathcal{P}_2(\T^d)$ instead of $[0,T]\times\mathcal{P}_2(\R^d)$.}.
An analogous qualitative version of Theorem \ref{main3} was obtained also in \cite{GangboTudorascu2012}, where the authors proved the $\Gamma$-convergence of the integral functional, and extended classical results (e.g. Weinan \cite[Section 7]{Weinan1991})  to infinite dimensions. To obtain a quantitative version of the result in \cite{GangboTudorascu2012} in the present paper, we use the quantitative convergence of transport cost functions proved in~\cite{TranYu2021}. This result controls the convergence of the cost of transporting $x_1$ to $x_2$ with oscillatory running cost $L(x/\e,\cdot)$ to the effective cost associated with the homogenized running cost $\overline{L}$. We combine this estimate with a suitable partition of the optimal transport path to handle the multiscale dependence and nonlinear dependence on the law (general mean-field interactions).


\vspace{1.5mm}

\nd
Explicit examples of effective Hamiltonians $\overline{H}$ and effective Lagrangians $\overline{L}$ are sparse in the literature. We refer to \cite[Section 4]{tran_hamilton-jacobi_2021} for some of them. A computation verifying Theorem \ref{main3} has appeared in \cite[Section 5.2]{Gao_Yip_2025}, where the homogenized optimal transport distance $U_{ot}(t,\mu,\nu)$ is compared with the metric corresponding to a homogenized gradient flow in the Wasserstein space.
\vspace{0.5mm}

\vspace{-3mm}

\nd
A natural follow up question is whether the results presented in this paper can be extended to the case where the dynamics of $X_s$ in \eqref{eqn:U_value_function} depend on the Brownian motion $B$ (case with idiosyncratic noise), or on a common noise $B^0$ (and the distributions in the cost functional are subject to the common noise), or both; we refer to \cite{djete2022mckean1,djete2022mckean} for the precise setup. In the presence of noise, questions of homogenization have only been studied in the periodic case where measures are defined on the compact space $\mathcal{P}_2(\T^d)$ and the PDE techniques from the theory of finite dimensional HJ equations are more easily accessible. For related singular perturbation problems involving second-order Hamilton--Jacobi equations in the Wasserstein space and coupled systems of controlled conditional slow-fast McKean--Vlasov SDEs, we refer to \cite{zitridis2025singular,Zitridis2023}. To our knowledge, our proof methods, especially the formulations developed in section \ref{section4} (and  allowed us to circumvent the difficulties created by the lack of local compactness of $\mathcal{P}_2(\R^d)$) cannot be directly applied to the case where there is idiosyncratic or common noise. We intend to investigate this extension in a future work.

    \vspace{4mm}

    \nd
    The rest of the paper is organized as follows. In section 2, we state our assumptions on $H,\; G$ and some preliminary results about the effective Hamiltonian $\overline{H}$ and its dual $\overline{L}$ defined in \eqref{effLagr}. In section 3, we discuss the definition of $L$-viscosity solutions for \eqref{HJBe}, \eqref{HJB} and \eqref{HJB4}, as well as a relation between these equations and their finite dimensional counterparts. In section 4, we present and prove the equivalent formulations of $U^\e$ from \eqref{eqn:U_value_function}. In section 5, we study the behavior of $U^\e$ under the stronger Assumption (A1') (Theorem \ref{thm:main1}). In section 6, we prove Theorem \ref{thm:multiscale}. Section 7 is devoted to the proof of Theorem \ref{main3}. Finally, we use the appendix to show some technical propositions and Corollary \ref{aeequality}.

\section{Notation, Assumptions and Preliminary Results}\label{assumptionssec}

\subsection{Notation} Throughout the paper, for $2\ge p\ge 1$, we use the notation $\mathcal{P}_p=\mathcal{P}_p(\R^d)$ for the space of probability measures over $\R^d$ with finite second moment endowed with the ${\bf d}_p$ Wasserstein metric, which is given by
     $${\bf d}_p(\mu,\nu):=\inf_{\pi\in \Pi(\mu,\nu)}\left( \iint_{\R^d\times\R^d} |x-y|^p\pi(dx,dy)\right)^{1/p},\;\;\mu,\nu\in \mathcal{P}_p,$$
     where $\Pi(\mu,\nu)$ is the set of probability measures over $\R^d\times\R^d$ with marginals $\mu$ and $\nu$. For $x\in \R^d$, $\delta_x\in\mathcal{P}_2$ is the Dirac delta measure at $x$. For $N\in\mathbb{N}$ and $\bm x=(x_1,\ldots,x_N)\in (\R^d)^N$, we denote by $m^N_{\bm x}=\frac{1}{N}\sum_{i=1}^N\delta_{x_i}$ the empirical measure. $|\cdot|$ is the usual euclidean norm in $\R^d$ and $\|\cdot\|_p$ the $p$-norm; that is $\|\cdot\|_2=|\cdot|$. For $A\subseteq\R$ we use the symbol $\mathcal{B}(A)$ for the Borel sigma algebra over $A$. For a path $\gamma:[t,T] \times \Omega \to \mathbb{R}^d$, we denote by $\gamma_s := \gamma(s,\cdot)$ the associated random variable at time $s$. Given a Hilbert space $\mathcal{H}$, we use the symbol $\langle\cdot,\cdot\rangle$ for the inner product and $\|\cdot\|_2$ for the norm.\\
     For the rest of the paper, $\mathbb{E}$ or $\mathbb{E}_\omega$ will be the expectation defined for the probability space $(\Omega,\mathcal{F},\mathbb{F},\mathbb{P})$. For $\mu\in \mathcal{P}_2$ and $f\colon \R^d\to \R^d$, we also use the notation $\mathbb{E}_{\mu}[f]=\int f(x)dx$. For a random variable $X\colon \Omega\to\R^d$ with $X\in L^2(\Omega;\R^d)$ we write $\mathcal{L}(X)=\mu\in\mathcal{P}_2$ for its law or simply $X\sim \mu$. If the random variables $X,Y$ have the same law, we write $X\stackrel{d}{=}Y$. We also write $\sigma(X,Y)$ for the sigma algebra generated by the random variables $X,Y$.\\
     For functions $u(t,x)=u\colon[0,T]\times\R^d\to \R$ we denote by $\dot{u}$ or $\partial_tu$ its time derivative and by $D_xu$ or simply $Du$ (whenever there is no confusion) its spacial derivative.

\subsection{Assumptions}We impose the following assumptions on the Hamiltonian $H$ and the terminal cost $G$. Let $r\in (0,1)$ be given.
\vspace{3mm}

\nd
(A1) Let $H:\T^d\times\R^d\times\R^d \times \mathcal{P}_2 \rightarrow \R$. There exist constants  $C_{1,H},C_{2,H},K_0,C_H>0 $ such that :
\begin{align}
&\hspace{5cm}H(y,x,p,m) \text{ is convex in }p, \\
    &\hspace{3cm}C_{1, H}|p|^{2} -K_0\le H(y,x,p,m) \leq C_{2, H}|p|^{2}+K_0, \label{quadgrowthHm}\\
    &|H(y,x,p,m)-H(y',x',p',m')|\le C_H(1+|p|+|p'|)(|x-x'|+|y-y'|+|p-p'|+{\bf d}_{2-r}(m,m')),\label{ineqHm}
\end{align}
for any $y, y'\in \T^d, x, x',p,p'\in \R^d$, and $m,m'\in \mathcal{P}_2$.
\vspace{1mm}

\nd
(A2) The terminal cost $G:\mathcal{P}_2\rightarrow \R$ is bounded and ${\bf d}_{2-r}$-Lipschitz; that is, there exists a constant $C_G$ such that
\be\label{terminalassumption}
|G(m)|\le C_G\quad\text{and}\quad |G(m_1)-G(m_2)|\le C_G {\bf d}_{2-r}(m_1,m_2), \text{ for every }m,m_1,m_2\in\mathcal{P}_2.
\ee

\vspace{3mm}

To establish the convergence rate of $U^\ep$ to $U$ under these assumptions, we first consider a simplified setting in which the Hamiltonian does not depend (or is constant) on the macroscopic variable $x$ and the probability measure $m$. In particular, in Section~\ref{sec:rocresult} we replace (A1) with the reduced assumption (A1') to obtain quantitative convergence results in this case, and then return to the full assumption (A1) in Section~\ref{sec:generall}.

\vspace{3mm}

\nd
(A1') The Hamiltonian $H:\T^d\times\R^d \rightarrow \R$ satisfies the following assumptions:
\begin{align}
H(y,p) &\text{ is convex in }p, \\
    C_{1, H}|p|^{2} -K_0&\le H(y,p) \leq C_{2, H}|p|^{2}+K_0,\;\text{ for some constants } C_{1, H}, C_{2, H}, K_0>
    0, \label{quadgrowth}\\
    |H(y,p)-H(y',p')|&\le C_H(1+|p|+|p'|)(|y-y'|+|p-p'|),\label{ineq}
\end{align}
for some constants $C_H>0$ and for any $y,y'\in \R^d\; ,p,p'\in \R^d$.
\vspace{2mm}

    The assumptions imposed on $H,G$ guarantee the existence and the uniqueness of an $L$-viscosity solution of \eqref{HJBe}. We refer to section \ref{viscositysolutions} for a more detailed discussion about the definition, the solution and its properties.


\subsection{Preliminary Results}
\subsubsection{Properties of Lagrangians }
Under (A1) and (A2), we collect several properties of $L$, the Legendre transform of $H$ (defined in \eqref{Lagr}), the effective Hamiltonian $\bar{H}$, and its Legendre transform $\bar{L}$. The proofs of Propositions~\ref{prop:Lmprop} and \ref{effHm} are given in Appendix \ref{proofofprelim}, while the proof of Proposition~\ref{prop:Lbarmprop} is similar and therefore omitted. The same properties still hold if (A1) is replaced by (A1'), that is, when $H(y,x,p,m)$ is independent of the variables $x$ and $m$.

\begin{proposition}\label{prop:Lmprop}
 Let $L:\T^d\times \R^d \times \R^d \times \mathcal{P}_2\rightarrow \mathbb{R}$ be the Legendre tranform of $H$, defined by
\[
L(y,x,v,m) := \sup_{p \in \R^d} \big\{ -p \cdot v - H(y,x,p,m) \big\}.
\]
Then, the following statements hold.
\begin{enumerate}
    \item[$\rm{(1)}$] \(L\) is finite everywhere on \(\T^d\times \R^d \times \R^d \times \mathcal{P}_2 \).
    \item[$\rm{(2)}$] For any \(y \in \mathbb{T}^d, x \in \R^d\), and \(m \in \mathcal{P}_2\), the map \(v \mapsto L(y,x,v, m)\) is convex.
    \item[$\rm{(3)}$] For any \(y \in \mathbb{T}^d, x\in \R^d ,v \in \mathbb{R}^d\), and \(m \in \mathcal{P}_2\),
    \begin{equation}\label{eqn:Lmquad}
    \frac{|v|^2}{4C_{2,H}} -K_0 \leq L(y,x, v, m) \leq \frac{|v|^2}{4C_{1,H}} +K_0.       \end{equation}

    \item[$\rm{(4)}$] There exists a constant $C_L=C_L(C_{1,H},C_H,K_0)>0$ such that for all
    $y,y'\in \T^d$, $x,x', v,v'\in \R^d$, and $m,m'\in \mathcal{P}_2$,
    \begin{equation}\label{eqn:LmLip}
    |L(y,x,v,m)-L(y',x',v',m')|\le C_L(1+|v|+|v'|)(|x-x'|+|y-y'|+|v-v'|+{\bf d}_2(m,m')).
    \end{equation}
\end{enumerate}
\end{proposition}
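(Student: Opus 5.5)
Throughout, fix $(y,x,m)$ and write $H(p)=H(y,x,p,m)$ and $L(v)=\sup_p\{-p\cdot v-H(p)\}$. Parts (1)--(3) come from the two-sided quadratic bound \eqref{quadgrowthHm}; part (4) comes from localizing the supremum and then using \eqref{ineqHm}.

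For (2), $L$ is a supremum of affine functions of $v$, so it is convex. For (3), the Legendre transform reverses inequalities, so \eqref{quadgrowthHm} gives
\[
\sup_p\{-p\cdot v-C_{2,H}|p|^2-K_0\}\le L(v)\le \sup_p\{-p\cdot v-C_{1,H}|p|^2+K_0\}.
\]
Both sides can be computed explicitly, since $\sup_p\{-p\cdot v-c|p|^2\}=|v|^2/(4c)$. This yields \eqref{eqn:Lmquad}, and (1) follows at once.

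For (4), the first step is to localize the maximizers. For $|p|$ large the quantity $-p\cdot v-H(p)\le |p||v|-C_{1,H}|p|^2+K_0$ falls below the lower bound $-K_0$ of $L(v)$ obtained from $p=0$. Hence the supremum is attained on the ball
\[
|p|\le R(v):=C(1+|v|),\qquad C=C(C_{1,H},K_0).
\]
Write $L(y,x,v,m)-L(y',x',v',m')$ and pick a maximizer $p$ for the first variables, so that $|p|\le C(1+|v|)$. Then
\[
L(y,x,v,m)-L(y',x',v',m')\le -p\cdot(v-v')-H(y,x,p,m)+H(y',x',p,m').
\]
By \eqref{ineqHm} with $p'=p$, the right-hand side is at most
\[
|p||v-v'|+C_H(1+2|p|)\bigl(|x-x'|+|y-y'|+{\bf d}_{2-r}(m,m')\bigr).
\]
Using $|p|\le C(1+|v|)$ and ${\bf d}_{2-r}\le {\bf d}_2$ (Hölder/Jensen), this is bounded by $C_L(1+|v|)(\cdots)$. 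Exchanging the roles of the two sets of variables gives the same bound with $|v'|$ in place of $|v|$, so together they give \eqref{eqn:LmLip} with factor $(1+|v|+|v'|)$.

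The main obstacle is the localization step, specifically making the radius $R(v)$ linear in $|v|$ with constants depending only on $C_{1,H}$ and $K_0$. The constant $C_{2,H}$ would enter only through the lower bound, and we avoid it by comparing with $p=0$. Finiteness and attainment of the supremum come from coercivity together with continuity of $H$ in $p$.
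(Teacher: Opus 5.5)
Your proposal is correct and follows essentially the same route as the paper. Part (4) matches the paper's argument step for step: localize a maximizer in a ball of radius $O(1+|v|)$ by comparing with $p=0$, use it as a competitor for the primed variables, apply \eqref{ineqHm} with $p'=p$ together with ${\bf d}_{2-r}\le{\bf d}_2$, and symmetrize. The only difference is minor: the paper obtains (1) from that same localization and cites a reference for (2)--(3), whereas you prove (3) directly by comparing the Legendre transforms of the quadratic bounds and then read off (1) from it.
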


\begin{proposition}\label{effHm}
Let $\delta>0$. Fix $x, p\in \mathbb{R}^d$ and $m \in \mathcal{P}_2$. Let $v^\delta = v^\delta(\cdot,x,p,m)\colon \mathbb{T}^d \to \mathbb{R}$
denote the unique viscosity solution of the discounted cell problem
\begin{equation}\label{discountcellm}
    \delta v^\delta(y) + H\big(y,x, Dv^\delta(y) + p, m\big) = 0,
    \qquad y \in \mathbb{T}^d.
\end{equation}
Then there exists a constant $C>0$, depending only on $p$ and $H$, such that
\[
\delta\| v^\delta\|_{L^\infty(\mathbb{T}^d)} + \|Dv^\delta\|_{L^\infty(\mathbb{T}^d)} \le C.
\]
Moreover, as $\delta \to 0^+$, the family $\delta v^\delta(\cdot,x,p,m)$ converges uniformly on $\mathbb{T}^d$ to a constant $-\overline{H}(x,p,m)$. Finally, allowing $x, p \in \mathbb{R}^d$ and $m\in \mathcal{P}_2$ to vary, the function $\overline{H}\colon \mathbb{R}^d\times\mathbb{R}^d \times \mathcal{P}_2 \to \mathbb{R}$ satisfies the following growth and regularity estimates: there exists a constant $C_{\overline{H}}=C_{\overline{H}}(C_{1,H},C_{2,H}, C_H, K_0) > 0$ such that
\begin{align}
\bar{H}(x,p,m) &\text{ is convex in }p,\\
    C_{1,H} |p|^2 -K_0
    &\le \overline{H}(x,p,m)
    \le C_{2,H} |p|^2 + K_0, \label{eqn:Hbarmgrowth}\\[1mm]
    \big|\overline{H}(x, p,m) - \overline{H}(x', p',m')\big|
    &\le C_{\overline{H}}\big(1 + |p| + |p'|\big)\left(|x-x'|+|p - p'|+{\bf d}_2(m,m')\right),\label{eqn:Hbarmreg}
\end{align}
for all $x,x',p,p' \in \mathbb{R}^d, m, m' \in \mathcal{P}_2$, where $C_{1,H}, C_{2, H}, C_H, K_0>0$ are the constants appearing in
\eqref{quadgrowthHm} and \eqref{ineqHm}.
\end{proposition}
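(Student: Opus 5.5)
The plan is to follow the classical discounted-approximation route of Lions--Papanicolaou--Varadhan, keeping track of how every constant depends on $(x,p,m)$ through the uniform bounds in (A1).

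\textbf{Step 1: the discounted problem and its bounds.} For fixed $(x,p,m)$, the Hamiltonian $(y,q)\mapsto H(y,x,q+p,m)$ on $\T^d$ is continuous and coercive, with quadratic growth by \eqref{quadgrowthHm}. The discounted problem \eqref{discountcellm} therefore has a comparison principle on the compact torus. Constant sub- and supersolutions $\mp\delta^{-1}\max_y|H(y,x,p,m)|$ together with Perron's method give existence and uniqueness, and also $\delta\|v^\delta\|_\infty\le \max_y|H(y,x,p,m)|\le C_{2,H}|p|^2+K_0$. Since $\delta v^\delta\le C$, the equation gives $H(y,x,Dv^\delta+p,m)\le C$ in the viscosity sense. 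The coercivity bound $C_{1,H}|Dv^\delta+p|^2-K_0\le H$ then yields $|Dv^\delta|\le C$ a.e., because viscosity subsolutions of coercive equations are Lipschitz. This proves the first estimate, with $C$ depending only on $|p|$ and the constants in (A1).

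\textbf{Step 2: convergence to $-\overline{H}$.} Set $w^\delta=v^\delta-v^\delta(0)$. These functions are equi-Lipschitz and equibounded on $\T^d$, and $\delta v^\delta(0)$ is bounded. By Arzel\`a--Ascoli there is a subsequence along which $w^\delta\to w$ and $\delta v^\delta(0)\to -c$. Moreover $\delta w^\delta\to0$, so $\delta v^\delta\to -c$ uniformly, and stability of viscosity solutions shows that $w$ solves the cell problem with constant $c$. The constant is unique: if two correctors have constants $c_1<c_2$, add a large constant to one so that it lies strictly above or below the other, and compare via the discounted problem; this gives the contradiction. Hence the whole family converges, and we define $\overline{H}(x,p,m):=c$.

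\textbf{Step 3: properties of $\overline{H}$.} For the growth bounds, evaluate the cell problem at a point where the Lipschitz corrector $w$ is differentiable. At a maximum point of $w$ we have $0\in D^+w$, so the subsolution inequality gives $\overline{H}\ge H(y_0,x,p,m)\ge C_{1,H}|p|^2-K_0$. At a minimum point the supersolution property gives $\overline{H}\le C_{2,H}|p|^2+K_0$.

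For convexity, use the inf--sup formula $\overline{H}(p)=\inf_{\phi\in C^1(\T^d)}\max_y H(y,x,p+D\phi,m)$. Equivalently, since $H$ is convex in $p$, subsolutions can be regularized by convolution to make them approximately $C^1$. A convex combination of correctors $\lambda w_1+(1-\lambda)w_2$ is then a subsolution for the momentum $\lambda p_1+(1-\lambda)p_2$ with constant $\lambda\overline{H}(p_1)+(1-\lambda)\overline{H}(p_2)$, and the subsolution characterization $\overline{H}(p)=\min\{c:\text{a subsolution exists}\}$ gives convexity.

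For the Lipschitz estimate \eqref{eqn:Hbarmreg}, compare $v^\delta$ for $(x,p,m)$ with $v^\delta$ for $(x',p',m')$. By \eqref{ineqHm} and the gradient bound $|Dv^\delta|\le C(1+|p|)$, the function $v^\delta(x,p,m)$ is a subsolution of the primed problem up to an error $C_H(1+|p|+|p'|)(|x-x'|+|p-p'|+{\bf d}_{2-r}(m,m'))$. Comparison bounds $\delta|v^\delta-v'^\delta|$ by the same quantity, and letting $\delta\to0$ gives the estimate, using ${\bf d}_{2-r}\le{\bf d}_2$.

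\textbf{Main obstacle.} The delicate point is the Lipschitz estimate in $p$: the gradient bound of $v^\delta$ grows linearly in $|p|$, and it must enter only as the factor $(1+|p|+|p'|)$ rather than quadratically. I would handle this by carefully inserting the bound $|Dv^\delta+p|\le C(1+|p|)$, which follows from the quadratic coercivity, into \eqref{ineqHm}.
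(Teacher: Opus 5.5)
Your proposal is correct and follows essentially the same route as the paper. The growth bounds are the standard inequality $\min_y H(y,x,p,m)\le\overline{H}(x,p,m)\le\max_y H(y,x,p,m)$, which the paper cites and you prove via the max/min test on the corrector; \eqref{eqn:Hbarmreg} is obtained as in the paper by shifting one discounted solution by $\pm C(1+|p|+|p'|)(|x-x'|+|p-p'|+{\bf d}_2(m,m'))/\delta$ and applying comparison, with the only cosmetic difference being that you derive $|Dv^\delta|\le C(1+|p|)$ for all $\delta$ from $\delta\|v^\delta\|_\infty\le C_{2,H}|p|^2+K_0$ rather than from $-\delta u^\delta\le\overline{H}+1$ for small $\delta$.
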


\begin{proposition}\label{prop:Lbarmprop}
 Let $\bar{L}: \mathbb{R}^d \times \mathbb{R}^d \times \mathcal{P}_2\rightarrow \mathbb{R}$ be the Legendre transform of $\bar{H}$, defined in \eqref{effLagr} by
\[
\bar{L}(x,v,m) = \sup_{p \in \R^d} \big\{ -p \cdot v - \bar{H}(x,p,m) \big\}.
\]
Then, the following statements hold.
\begin{enumerate}
    \item[$\rm{(1)}$] \(\bar{L}\) is finite everywhere on \(\mathbb{R}^d \times\mathbb{R}^d \times \mathcal{P}_2 \).
    \item[$\rm{(2)}$] For any \(x \in \R^d, m \in \mathcal{P}_2\), the map \(v \mapsto \bar{L}(x, v, m)\) is convex.
    \item[$\rm{(3)}$] For any $x, v \in \mathbb{R}^d$, and \(m \in \mathcal{P}_2\),
    \begin{equation}\label{eqn:Lbarmquad}
    \frac{|v|^2}{4C_{2,H}} -K_0 \leq \bar{L}(x, v, m) \leq \frac{|v|^2}{4C_{1,H}} +K_0.       \end{equation}

    \item[$\rm{(4)}$] There exists a constant $C_{\bar{L}}=C_{\bar{L}}(C_{1,H},C_{2,H}, C_H,K_0)>0$ such that for all
    $x, x'\in \R^d$, $v,v'\in \R^d$, and $m,m'\in \mathcal{P}_2$,
    \begin{equation}\label{eqn:LbarmLip}
    \left|\bar{L}(x,v,m)-\bar{L}(x', v',m')\right|\le C_{\bar{L}}(1+|v|+|v'|)(|x-x'|+|v-v'|+{\bf d}_2(m,m')).
    \end{equation}
\end{enumerate}
\end{proposition}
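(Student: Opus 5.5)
The plan is to derive every property of $\bar L$ from the corresponding property of $\bar H$ in Proposition~\ref{effHm}, in the same way Proposition~\ref{prop:Lmprop} is obtained from (A1).

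\textbf{(1) and (3).} These follow from the two-sided quadratic bound \eqref{eqn:Hbarmgrowth} by conjugation.
\begin{itemize}
\item \emph{Upper bound.} Using $\bar H\ge C_{1,H}|p|^2-K_0$,
\[
\bar L(x,v,m)\le \sup_p\{-p\cdot v-C_{1,H}|p|^2\}+K_0=\frac{|v|^2}{4C_{1,H}}+K_0 .
\]
\item \emph{Lower bound.} Using $\bar H\le C_{2,H}|p|^2+K_0$ and the choice $p=-v/(2C_{2,H})$,
\[
\bar L(x,v,m)\ge \frac{|v|^2}{4C_{2,H}}-K_0 .
\]
\end{itemize}
Finiteness of $\bar L$ is then immediate.

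\textbf{(2).} Convexity in $v$ holds because $\bar L(x,\cdot,m)$ is a supremum of affine functions of $v$.

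\textbf{(4), first step: localize the maximizer.} For fixed $(x,v,m)$, the supremum defining $\bar L$ is attained, since $\bar H$ is continuous and superlinear. Let $p^*$ be a maximizer. Comparing with the value at $p=0$ gives
\[
-p^*\cdot v-\bar H(x,p^*,m)\ge -\bar H(x,0,m)\ge -K_0 .
\]
Combined with $\bar H\ge C_{1,H}|p|^2-K_0$, this yields
\[
C_{1,H}|p^*|^2\le |p^*||v|+2K_0,
\]
and hence $|p^*|\le C(1+|v|)$ with $C$ depending only on $C_{1,H},K_0$.

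\textbf{(4), second step: one-sided estimate.} Using $p^*$ as a competitor in the supremum for $\bar L(x',v',m')$,
\[
\bar L(x,v,m)-\bar L(x',v',m')\le -p^*\cdot(v-v')+\bar H(x',p^*,m')-\bar H(x,p^*,m).
\]
By \eqref{eqn:Hbarmreg} with $p=p'=p^*$, the right-hand side is at most
\[
|p^*||v-v'|+C_{\bar H}(1+2|p^*|)\big(|x-x'|+{\bf d}_2(m,m')\big)\le C(1+|v|)\big(|x-x'|+|v-v'|+{\bf d}_2(m,m')\big).
\]
Exchanging the roles of $(x,v,m)$ and $(x',v',m')$ gives the symmetric bound with $1+|v'|$ in place of $1+|v|$. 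Together these give \eqref{eqn:LbarmLip}, with a constant depending only on $C_{1,H},C_{2,H},C_H,K_0$ through $C_{\bar H}$.

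The only point requiring care is the a priori bound on the maximizer. It uses the lower coercivity bound, and it is what makes the Lipschitz factor linear in $|v|$. Everything else is routine convex duality.
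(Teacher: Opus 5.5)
Your proof is correct and follows essentially the route the paper intends: the paper omits this proof as ``similar'' to that of Proposition~\ref{prop:Lmprop}, which conjugates the quadratic bounds, bounds a maximizer via the lower coercivity, uses it as a competitor for the other point, and symmetrizes, with \eqref{eqn:Hbarmgrowth} and \eqref{eqn:Hbarmreg} from Proposition~\ref{effHm} replacing (A1). The only cosmetic differences are that you deduce finiteness from the explicit bounds in (3) rather than by restricting the supremum to a ball, and that you compute the quadratic bounds directly rather than citing them.
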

\vspace{1mm}

\subsubsection{Properties of the cost function $h^\e$}
To study the convergence rates later, we introduce the cost function $h^\e$ associated with the Lagrangian $L$. Under Assumption (A1'), $h^\e$ was given in \eqref{cost} and represents the minimal action required to travel from $x$ to $y$ over the time interval $[t,T]$. We give the following more general definition for the time interval $[t_1,t_2]$.

\begin{definition}\label{def:costf} (1) Assume (A1). For $\ep>0$, define $h^\e:\R^d \times \mathcal{P}_2 \times [0,T] \times [0,T] \times \R^d \times \R^d \to \mathbb{R}$ by
\begin{equation}\label{def:mep}
h^\e(c, \mu; t_1, t_2, x,y)=\inf_{\substack{\gamma\in {\rm AC}\left([t_1,t_2];\mathbb{R}^d\right)\\
\gamma(t_1)=x,\gamma(t_2)=y }}\int^{t_2}_{t_1}L\left(\frac{\gamma(t)}{\varepsilon}, c, \dot{\gamma}(t),\mu\right)dt.
\end{equation}
For $\ep=1$, we write
\begin{equation}\label{def:mep1}
h(c,\mu;t_1, t_2, x,y)=\inf_{\substack{\gamma\in \mathrm{AC}\left([t_1,t_2];\mathbb{R}^d\right)\\
\gamma(t_1)=x,\gamma(t_2)=y}}\int^{t_2}_{t_1}L\left(\gamma(t),c, \dot{\gamma}(t),\mu\right)dt.
\end{equation}

\noindent
(2) Assume (A1'). For $\ep>0$, define $h^\e: [0,T] \times [0,T] \times \R^d \times \R^d \to \mathbb{R}$ by
\begin{equation}\label{def:meps}
h^\e(t_1, t_2, x,y)=\inf_{\substack{\gamma\in {\rm AC}\left([t_1,t_2];\mathbb{R}^d\right)\\
\gamma(t_1)=x,\gamma(t_2)=y }}\int^{t_2}_{t_1}L\left(\frac{\gamma(t)}{\varepsilon}, \dot{\gamma}(t)\right)dt.
\end{equation}
For $\ep=1$, we write
\begin{equation}\label{def:meps1}
h(t_1, t_2, x,y)=\inf_{\substack{\gamma\in \mathrm{AC}\left([t_1,t_2];\mathbb{R}^d\right)\\
\gamma(t_1)=x,\gamma(t_2)=y}}\int^{t_2}_{t_1}L\left(\gamma(t),\dot{\gamma}(t)\right)dt.
\end{equation}

\end{definition}

\nd
In both cases, for $\ep>0$, we denote by $h^\e$ the minimal action
associated with the Lagrangian $L$. When $\ep=1$, the cost function $h$
coincides with the cost function $m$ introduced in \cite{TranYu2021}.

\begin{proposition}\label{prop:mmeas}
(1) Assume (A1) and (A2). Fix $\varepsilon>0$, $t_1<t_2$ with $t_1,t_2\in[0,T]$, and $\mu\in\mathcal{P}_2$.
Then the map $(c,x,y)\mapsto h^\e(c, \mu; t_1, t_2, x,y)$ is continuous on $\mathbb{R}^d\times\mathbb{R}^d\times\mathbb{R}^d$.
In particular, it is Borel measurable.

\noindent
(2) Assume (A1') and (A2). Fix $\varepsilon>0$, $t_1<t_2$ with $t_1,t_2\in[0,T]$. Then the map $(x,y)\mapsto h^\e(t_1,t_2,x,y)$ is continuous on $\mathbb{R}^d\times\mathbb{R}^d$. In particular, it is Borel measurable.
\end{proposition}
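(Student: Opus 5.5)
We prove local Lipschitz continuity of $(c,x,y)\mapsto h^\e(c,\mu;t_1,t_2,x,y)$; part (2) is the special case where $L$ does not depend on $c,\mu$. Set $\tau:=t_2-t_1>0$. The plan is to compare $h^\e$ at two points $(c,x,y)$ and $(c',x',y')$ by perturbing near-optimal curves affinely and controlling the change of the action with \eqref{eqn:LmLip}. The quantitative ingredient is a uniform $L^2$ bound on the velocities of near-optimal curves.

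\textbf{Step 1 (upper and lower bounds, velocity control).} The straight segment $\gamma_0(s)=x+\frac{s-t_1}{\tau}(y-x)$ and \eqref{eqn:Lmquad} give
\[
h^\e(c,\mu;t_1,t_2,x,y)\le \frac{|y-x|^2}{4C_{1,H}\tau}+K_0\tau .
\]
The lower bound in \eqref{eqn:Lmquad} gives $h^\e\ge -K_0\tau$, so $h^\e$ is finite. Now take any $\gamma$ with action at most $h^\e+1$. The lower bound in \eqref{eqn:Lmquad} then yields
\[
\int_{t_1}^{t_2}|\dot\gamma|^2\,ds\le 4C_{2,H}\Big(\frac{|y-x|^2}{4C_{1,H}\tau}+2K_0\tau+1\Big)=:M .
\]
This bound is uniform when $x,y$ range over a bounded set.

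\textbf{Step 2 (perturbation).} Fix $(c,x,y)$ and $(c',x',y')$ in a bounded set $B$, and take $\gamma$ as in Step 1 with action at most $h^\e(c,\mu;t_1,t_2,x,y)+\eta$, where $\eta\in(0,1)$. Define
\[
\tilde\gamma(s)=\gamma(s)+\Big(1-\frac{s-t_1}{\tau}\Big)(x'-x)+\frac{s-t_1}{\tau}(y'-y),
\]
which joins $x'$ to $y'$. Then
\[
|\tilde\gamma(s)-\gamma(s)|\le |x-x'|+|y-y'|,\qquad |\dot{\tilde\gamma}-\dot\gamma|\le \frac{|x-x'|+|y-y'|}{\tau}.
\]
The periodic fast variable moves by at most $(|x-x'|+|y-y'|)/\e$, since the torus distance is bounded by the Euclidean one. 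By \eqref{eqn:LmLip}, with the $m$-difference equal to zero,
\[
\Big|L\big(\tfrac{\tilde\gamma}{\e},c',\dot{\tilde\gamma},\mu\big)-L\big(\tfrac{\gamma}{\e},c,\dot\gamma,\mu\big)\Big|
\le C_L\big(1+2|\dot\gamma|+\Delta\big)\Big(|c-c'|+\big(1+\tfrac1\e+\tfrac1\tau\big)\Delta\Big),
\]
where $\Delta:=|x-x'|+|y-y'|$. Integrate over $[t_1,t_2]$ and use Cauchy--Schwarz, $\int|\dot\gamma|\le\sqrt{\tau M}$. This gives
\[
h^\e(c',\mu;t_1,t_2,x',y')\le h^\e(c,\mu;t_1,t_2,x,y)+\eta+C_{B,\e,\tau}\big(|c-c'|+|x-x'|+|y-y'|\big).
\]
Let $\eta\to0$ and swap the roles of the two points. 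This yields local Lipschitz continuity, hence continuity and Borel measurability.

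\textbf{Main obstacle.} The only real point is Step 1's velocity bound. Without it, the factor $(1+|v|+|v'|)$ in \eqref{eqn:LmLip} cannot be integrated. The quadratic lower bound on $L$ handles this cleanly. The constants may depend on $\e$ and $\tau$, which is harmless since $\e$ and $t_1<t_2$ are fixed in the statement.
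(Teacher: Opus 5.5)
Your proof is correct and follows essentially the paper's argument: a velocity bound for (near-)optimal curves from the quadratic growth \eqref{eqn:Lmquad}, an affine correction of that curve to match the new endpoints, the Lipschitz estimate \eqref{eqn:LmLip} integrated via Cauchy--Schwarz, and a symmetric swap. The paper differs only in using an exact minimizer and confining the correction to the first and last quarters of $[t_1,t_2]$; your use of $\eta$-minimizers has the minor advantage of not needing existence of minimizers. One small slip: since $|\dot{\tilde\gamma}-\dot\gamma|\le\Delta/\tau$, the first factor should be $1+2|\dot\gamma|+\Delta/\tau$ rather than $1+2|\dot\gamma|+\Delta$, which only changes the constant.
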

\begin{proof} We only prove part (1), as the proof of part (2) is analogous.

Let $x, x',y,y',c,c' \in \R^d$. Then there exists a minimizer $\gamma^\ast: [t_1,t_2] \to \mathbb{R}^d$ for $h^\e(c,\mu;t_1, t_2, x,y)$ (see \cite[Lemma D.2]{tran_hamilton-jacobi_2021}, \cite{cannarsa2004semiconcave}), that is,

\[
\gamma^\ast \in \operatorname*{argmin}_{\substack{
\gamma \in \mathrm{AC}\bigl([t_1,t_2];\mathbb{R}^d\bigr) \\
\gamma(t_1)=x,\ \gamma(t_2)=y
}}\int^{t_2}_{t_1}L\left(\frac{\gamma(t)}{\ep}, c, \dot{\gamma}(t), \mu \right)dt.
\]
Consider the path $\eta:[t_1, t_2] \to \mathbb{R}^d$ defined by $\displaystyle \eta(t)=x+\frac{t-t_1}{t_2-t_1}\left(y-x\right)$. By the definition of $h^\e$ and \eqref{eqn:Lmquad},
\begin{equation}\label{eqn:hupbd}
h^\e\left(c, \mu;t_1, t_2, x, y\right) \leq \int_{t_1}^{t_2} L \left(\frac{\eta(t)}{\ep}, c, \dot{\eta}(t), \mu\right) dt\leq \frac{\left|x-y\right|^2}{4C_{1,H}(t_2-t_1)}+K_0\left(t_2-t_1\right).
\end{equation}
On the other hand, again by \eqref{eqn:Lmquad},
\[
h^\e\left(c, \mu;t_1,t_2, x, y\right)=\int^{t_2}_{t_1}L\left(\frac{\gamma^\ast (t)}{\ep}, c, \dot{\gamma}^\ast(t),\mu \right)dt \geq   \frac{1}{4C_{2,H}}\int_{t_1}^{t_2}\left|\dot{\gamma}^\ast(t)\right|^2 \,dt -K_0(t_2-t_1).
\]
Hence,
\begin{equation}\label{eqn:velbdgst}
\int_{t_1}^{t_2}\left|\dot{\gamma}^\ast(t)\right|^2\,dt \leq \frac{C_{2,H}|x-y|^2}{C_{1,H}(t_2-t_1)}+8C_{2,H}K_0(t_2-t_1).
\end{equation}

Let $\delta=\frac{t_2-t_1}{4}$. Define $\tilde{\gamma}:[t_1, t_2]\to \R^d$ by
\[
   \tilde{\gamma}(t) = \left\{\begin{aligned}
   & \gamma^\ast(t)+\frac{\delta+t_1-t}{\delta}\left(x'-x\right), \quad \,\,\,  \text{ if } t\in \left[t_1, t_1+\delta\right], \\
   &\gamma^\ast(t), \qquad \qquad \qquad \qquad \qquad  \quad \text{ if } t \in \left[t_1+\delta,t_2-\delta\right],\\
   &\gamma^\ast(t)+\frac{t-\left(t_2-\delta\right)}{\delta}\left(y'-y\right), \quad  \text{ if } t\in \left[t_2-\delta, t_2\right].
   \end{aligned}
   \right.
\]
By construction, $\tilde{\gamma}$ is admissible for $h^\e (c',\mu;t_1,t_2, x',y')$. Therefore,
\[
\begin{aligned}
&h^\e (c',\mu;t_1,t_2, x',y')-h^\e (c,\mu;t_1,t_2, x,y) \\
&\qquad \qquad\qquad\qquad\le
\int_{t_1}^{t_2} L\left(\frac{\tilde\gamma(t)}{\ep},c',\dot{\tilde\gamma}(t), \mu\right)\,dt
-\int_{t_1}^{t_2} L\left(
\frac{\gamma^\ast(t)}{\ep},c,
\dot{\gamma}^\ast(t),\mu
\right)\,dt \\
&\qquad\qquad\qquad\qquad\le \mathrm{I}+\mathrm{II}+\mathrm{III},
\end{aligned}
\]
where
\[
\mathrm{I}
:=\int_{t_1}^{t_1+\delta}
\Bigg|
L\left(
\frac{
\gamma^\ast(t)
+\frac{\delta+t_1-t}{\delta}(x'-x)
}{\ep},c',\dot{\gamma}^\ast(t)-\frac{x'-x}{\delta},\mu
\right)
- L\left(
\frac{\gamma^\ast(t)}{\ep},c,
\dot{\gamma}^\ast(t),\mu
\right)
\Bigg|\,dt ,
\]

\[
\mathrm{II}
:=\int_{t_2-\delta}^{t_2}
\Bigg|
L\left(
\frac{
\gamma^\ast(t)
+\frac{t-(t_2-\delta)}{\delta}(y'-y)
}{\ep},c',
\dot{\gamma}^\ast(t)+\frac{y'-y}{\delta},\mu
\right)
- L\left(
\frac{\gamma^\ast(t)}{\ep},c,
\dot{\gamma}^\ast(t), \mu
\right)
\Bigg|\,dt,
\]
and
\[
\mathrm{III}:=\int_{t_1+\delta}^{t_2-\delta}
\Bigg|
L\left(
\frac{
\gamma^\ast(t)}{\ep},c',\dot{\gamma}^\ast(t),\mu
\right)- L\left(
\frac{\gamma^\ast(t)}{\ep},c,
\dot{\gamma}^\ast(t),\mu
\right)
\Bigg|\,dt.
\]
By \eqref{eqn:LmLip} and  \eqref{eqn:velbdgst},
\[
\begin{aligned}
\mathrm{I} &\leq C_L \left(\left(\frac{1}{\ep}+\frac{1}{\delta}\right) \left|x'-x\right| +|c'-c|\right) \int_{t_1}^{t_1+\delta} \left(1+\frac{|x'-x|}{\delta}+2\left|\dot{\gamma}^\ast(t)\right|\right)dt\\
&\leq  C_L \left(\left(\frac{1}{\ep}+\frac{1}{\delta}\right) \left|x'-x\right| +|c'-c|\right)\left(\delta +|x'-x|+2\delta^\frac{1}{2} \left(\int_{t_1}^{t_1+\delta}\left|\dot{\gamma}^\ast(t)\right|^2 \,dt\right)^\frac{1}{2} \right)\\
&\leq C\left(\left|x'-x\right|+|c'-c|\right)\left(1+\left|x'-x\right|+|x-y|\right),
\end{aligned}
\]
for some constant $C=C(\ep,C_{1,H}, C_{2, H}, C_H, K_0, T-t_1)>0$.
Similarly,
\[
\mathrm{II}\leq C\left(\left|y'-y\right|+|c'-c|\right)\left(1+\left|y'-y\right|+|x-y|\right).
\]
Again by \eqref{eqn:LmLip} and  \eqref{eqn:velbdgst},
\[
\begin{aligned}
\mathrm{III} &\leq C_L |c'-c| \int_{t_1+\delta}^{t_2-\delta}\left(1+2\left|\dot{\gamma}^\ast(t)\right|\right) \,dt \leq C_L |c'-c| \int_{t_1}^{t_2}\left(1+2\left|\dot{\gamma}^\ast(t)\right|\right) \,dt \\
& \leq C_L |c'-c| \left((t_2-t_1)+2\left(\int_{t_1}^{t_2}\left|\dot{\gamma}^\ast(t)\right|^2\,dt\right)^\frac{1}{2}(t_2-t_1)^\frac{1}{2}\right)\\
& \leq C|c'-c|\left(1+|x-y|\right),
\end{aligned}
\]
for some constant $C=C(\ep,C_{1,H}, C_{2, H}, C_H, K_0, t_2-t_1)>0$.
Combining these estimates, we obtain
\[
\begin{aligned}
&h^\e (c',\mu;t_1,t_2, x',y')-h^\e (c,\mu;t_1,t_2, x,y)\\
&\qquad \qquad\qquad\leq C \left(\left|x'-x\right|+\left|y'-y\right|+|c'-c|\right)\left(1+\left|x'-x\right|+\left|y'-y\right|+|x-y|\right).
\end{aligned}
\]
By symmetry, exchanging $(x, y)$ and $(x', y')$ gives
\[
\begin{aligned}
&h^\e\left(c,\mu;t_1, t_2, x, y\right)-h^\e\left(c',\mu;t_1, t_2, x', y'\right)\\
&\qquad \qquad\qquad\leq C \left(\left|x'-x\right|+\left|y'-y\right|+|c'-c|\right)\left(1+\left|x'-x\right|+\left|y'-y\right|+|x'-y'|\right).
\end{aligned}
\]
Therefore,
\begin{equation}\label{eqn:mlip}
\begin{aligned}
&\left|h^\e(c',\mu;t_1,t_2, x',y')-h^\e(c,\mu;t_1,t_2, x,y)\right|\\
& \qquad \qquad\leq C \left(\left|x'-x\right|+\left|y'-y\right|+|c'-c|\right)\left(1+\left|x'-x\right|+\left|y'-y\right|+|x-y|+|x'-y'|\right),
\end{aligned}
\end{equation}
where $C=C(\ep,C_{1,H},C_{2,H}, C_H,K_0,t_2-t_1)>0$. In particular, the map $(c,x,y)\mapsto h^\e(c,\mu;t_1,t_2, x,y)$ is continuous, and hence measurable.
\end{proof}

For the homogenization problem in $\R^d$ considered in \cite{TranYu2021}, the quantitative convergence argument relies on establishing both the subadditivity and the superadditivity of the cost function $h$. Subadditivity, together with Fekete’s lemma, implies the existence of the averaged metric $\overline{h}$. Combining subadditivity and superadditivity with $\overline{h}$ then yields an $O(\ep)$ rate of convergence. These results are summarized in the following proposition, which we will use to establish the uniform convergence of $U^\ep$ to its limit, as well as the corresponding rate of convergence.

\begin{proposition}[Restatement of \cite{TranYu2021}, Lemma 3.1, 3.2]\label{prop:subsuperm}
Let $x, y \in \R^d$ and $0\leq t_1 < t_2 \leq T$.

\nd
(1) (Subadditivity) For any \(k,l> 0\), there exists a constant \(C=C(d, C_{1, H}, C_{2, H}, K_0)>0\) independent of \(t_1, t_2, x , y\) such that
    \[
    \begin{aligned}
    &h\bigl((k+l)t_1,(k+l)t_2,(k+l)x,(k+l)y\bigr) \\
    & \qquad \qquad \qquad \qquad \le h\left(kt_1,kt_2,kx,ky\right)+ h\left(lt_1, lt_2, lx,ly\right) + C\left(1+\frac{|x-y|^2}{(t_2-t_1)^2}\right).
    \end{aligned}
    \]
    For the case \(k=l=1\), we have
    \[
  h\bigl(2t_1,2t_2, 2x,2y\bigr) \le 2h\bigl(t_1,t_2,x,y\bigr)+ C\left(1+\frac{|x-y|^2}{(t_2-t_1)^2}\right).
    \]

\nd
 (2) (Superadditivity) There exists a constant \(C=C(d, C_{1, H}, C_{2, H}, K_0)>0\) independent of \(t_1,t_2,x,y\) such that
        \[
  2h\bigl(t_1,t_2,x,y\bigr) \leq h\bigl(2t_1,2t_2, 2x,2y\bigr) + C\left(1+\frac{|x-y|^2}{(t_2-t_1)^2}\right).
    \]

\nd
(3) The limit
\[
\bar{h}\left(t_1,t_2,x,y\right):=\lim_{\ep\to 0^+} \ep h\left(\frac{t_1}{\ep},\frac{t_2}{\ep}, \frac{x}{\ep},\frac{y}{\ep}\right) =\lim_{\ep\to 0^+}  h^\e (t_1, t_2, x,y )
\]
exists, and
\[
\left|\bar{h}\left(t_1,t_2,x,y\right)-h^\e (t_1, t_2, x,y )\right|\leq C \left(1+\frac{|x-y|^2}{(t_2-t_1)^2}\right)\ep,
\]
for some constant \(C=C(d, C_{1, H}, C_{2, H}, K_0)>0\) independent of \(t_1,t_2,x , y\).
\end{proposition}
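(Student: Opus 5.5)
Since this is a restatement of \cite[Lemmas 3.1, 3.2]{TranYu2021}, the plan follows their argument, adapted to the normalization of $h$ in Definition~\ref{def:costf}(2).

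\textbf{Preliminary facts.} I would first record three properties of $h$.
\begin{enumerate}[label=(\roman*)]
\item \emph{Scaling and invariance.} The change of variables $\gamma(t)=\e\eta(t/\e)$ gives $h^\e(t_1,t_2,x,y)=\e\,h(t_1/\e,t_2/\e,x/\e,y/\e)$. Since $L$ is autonomous and $\mathbb{Z}^d$-periodic in its first argument, $h(t_1+s,t_2+s,x+z,y+z)=h(t_1,t_2,x,y)$ for all $s\in\R$ and $z\in\mathbb{Z}^d$.
\item \emph{Velocity bound.} A minimizer $\gamma^\ast$ exists (as in the proof of Proposition~\ref{prop:mmeas}). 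Energy conservation along minimizers of the autonomous Lagrangian, combined with the quadratic bounds \eqref{eqn:Lmquad} and \eqref{eqn:hupbd}, should give $\|\dot\gamma^\ast\|_\infty\le C(1+|x-y|/(t_2-t_1))$.
\item \emph{Cost of a bounded correction.} Perturbing a path by a displacement of size at most $\sqrt d$ spread over a unit time interval changes the action by at most $C(1+|x-y|^2/(t_2-t_1)^2)$. This follows from \eqref{eqn:LmLip} and (ii).
\end{enumerate}
When the rescaled time interval is shorter than $1$, I would instead bound everything directly using \eqref{eqn:hupbd} and the lower quadratic bound, since in that regime the errors are of the same order.

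\textbf{Subadditivity.} Take minimizers $\gamma_1$ for $h(kt_1,kt_2,kx,ky)$ and $\gamma_2$ for $h(lt_1,lt_2,lx,ly)$. Build an admissible curve on $[(k+l)t_1,(k+l)t_2]$ from $(k+l)x$ to $(k+l)y$ in two stages:
\begin{enumerate}[label=(\alph*)]
\item first traverse $\gamma_1$, translated by an integer vector $z_1$ with $|z_1-lx|\le\sqrt d$;
\item then traverse $\gamma_2$, translated in time and by an integer vector $z_2$ with $|z_2-ky|\le\sqrt d$.
\end{enumerate}
The endpoint mismatches, each at most $\sqrt d$, are absorbed by linear corrections over unit time windows, which cost the amount in (iii). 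By the invariance in (i), the translated pieces cost exactly $h(kt_1,kt_2,kx,ky)+h(lt_1,lt_2,lx,ly)$. This yields (1).

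\textbf{Superadditivity (main obstacle).} Let $\gamma$ be a minimizer for $h(2t_1,2t_2,2x,2y)$, which spans the time interval $[2t_1,2t_2]$ of length $2(t_2-t_1)$. I would cut $\gamma$ into two pieces of time length $t_2-t_1$, namely on $[2t_1,t_1+t_2]$ and $[t_1+t_2,2t_2]$, and try to turn each piece, after an integer translation, into a competitor for $h(t_1,t_2,x,y)$. The difficulty is that the two pieces have displacements $w$ and $2(y-x)-w$, which need not each equal $y-x$.

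Following Tran--Yu, I would not cut at the midpoint. Instead I would choose cut times $s<s'$ with $s'-s=t_2-t_1$ and $\gamma(s')-\gamma(s)\in (y-x)+[0,1)^d$. Such a pair should exist by an intermediate value argument applied to $s\mapsto\gamma(s+t_2-t_1)-\gamma(s)-(y-x)$, whose values at the two extreme positions $s=2t_1$ and $s=t_1+t_2$ sum to zero. The excised middle piece, translated by an integer vector, is then a competitor up to a $\sqrt d$ endpoint correction. The two remaining end pieces, glued after an integer translation, form the second competitor, again up to a $\sqrt d$ correction. Using (iii) for both corrections gives $2h(t_1,t_2,x,y)\le h(2t_1,2t_2,2x,2y)+C(1+|x-y|^2/(t_2-t_1)^2)$. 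In $d\ge2$ the intermediate value step is the delicate point: one cannot match all coordinates simultaneously with a one-parameter family. This is exactly where the curve-cutting lemma of \cite{TranYu2021} is needed, and I would invoke it rather than reprove it.

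\textbf{Part (3).} Combine (1) and (2) with $k=l=1$ and note that the error term is invariant under the scaling $(t_1,t_2,x,y)\mapsto(\lambda t_1,\lambda t_2,\lambda x,\lambda y)$. Then $a_n:=2^{-n}h(2^nt_1,2^nt_2,2^nx,2^ny)$ satisfies $|a_{n+1}-a_n|\le C(1+|x-y|^2/(t_2-t_1)^2)2^{-n}$. Hence $a_n$ converges, and its limit $\bar h$ is within $C(1+|x-y|^2/(t_2-t_1)^2)2^{-n}$ of each $a_n$. To extend from dyadic to arbitrary $\e$, I would apply the general $(k,l)$ subadditivity together with superadditivity at the intermediate scales. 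Finally, by the scaling in (i), $h^\e=\e h(\cdot/\e)$, so the dyadic estimate transfers to the claimed bound $|\bar h-h^\e|\le C(1+|x-y|^2/(t_2-t_1)^2)\e$.
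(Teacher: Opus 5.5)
Your outline is correct, and for (2) and (3) it agrees with the paper. The paper also simply cites Tran--Yu for superadditivity; your single-cut intermediate-value sketch is the $d=1$ case of the cutting lemma used there, as you note. For (3), the paper iterates the $k=l=1$ inequalities dyadically and gets the full limit from Fekete's lemma. You do not need superadditivity at intermediate scales: once the full limit exists, run your dyadic estimate from the point $(t_1,t_2,x,y)/\varepsilon$ itself and use the homogeneity of $\bar h$.

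The genuine difference is how you absorb the $O(\sqrt d)$ endpoint mismatches in (1). The paper treats $k=l=1$ by reducing to $h(0,t,y,2y)\le h(0,t,0,y)+C(1+|y|^2/t^2)$. It translates a minimizer $\eta$ of $h(0,t,0,y)$ by an integer vector and pays for the two mismatches with straight connectors of duration $1/4$. The half unit of time these connectors need is freed by running $\eta$ at double speed on a unit window $[d,d+1]$, chosen by pigeonhole so that $\int_d^{d+1}|\dot\eta|^2\le C(1+|y|^2/t^2)$. This uses only \eqref{eqn:Lmquad} and no regularity of minimizers.

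You instead keep the time parametrization fixed and ramp the displacement in over a unit window adjacent to each mismatch. Since that window is prescribed rather than chosen, you need the pointwise bound (ii). Under (A1') $L$ need not be $C^1$, so (ii) does not follow from the classical energy identity. It does hold via a reparametrization argument, which you must write out rather than assert: convexity of $\lambda\mapsto\lambda L(x,v/\lambda)$ and optimality of $\lambda\equiv 1$ under $\int\lambda=t_2-t_1$ give a constant lying a.e. between the one-sided $\lambda$-derivatives at $1$. Then \eqref{eqn:Lmquad} and \eqref{eqn:hupbd} convert this into $|\dot\gamma^\ast|\le C(1+|x-y|/(t_2-t_1))$.

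Also, deriving (iii) from \eqref{eqn:LmLip} brings $C_H$ into the constant, which the statement excludes. Bounding the perturbed and unperturbed integrands separately on the window by \eqref{eqn:Lmquad} avoids this. In short, your route keeps corrections local and avoids reparametrizing time, at the price of a Lipschitz estimate for minimizers; the paper's route needs nothing beyond quadratic growth.
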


For the reader's convenience, we provide proofs of $\rm{(1)}$ and $\rm{(3)}$ in Appendix~\ref{proofofprelim}. The proof of $\rm{(2)}$ can be found in \cite{TranYu2021}.

\section{Solutions to 1st order HJ equations in the Wasserstein Space}\label{viscositysolutions}

\nd
In this section, we provide the definition of $L$-viscosity solutions of \eqref{HJBe} and we prove some regularity properties that they possess. As mentioned in the introduction, apart from being the dynamic programming equations to stochastic optimal control problems, Hamilton--Jacobi equations in the Wasserstein space of the form \eqref{HJBe} arise from finite dimensional problems of the form
\be\label{HJB2}\tag{$\mathrm{HJB}_N$}
    \begin{cases}
    -\partial_tV^N_\e+\frac{1}{N}\sum_{i=1}^NH(x^i/\e,x^i,ND_{x^i}V^N_\e,m^N_{\bm x})=0, &(t,\bm x)\in (0,+\infty)\times (\R^d)^N,\\
    V^N_\e(T,\bm x)= G(m_{\bm x}^N), & \bm x\in (\R^d)^N,
\end{cases}
\ee
where $N\in\mathbb{N}$, as $N\to +\infty$. The latter equation  is understood in the viscosity sense (see \cite{crandall1992user}). Given a solution $V^N_{\e}$ of \eqref{HJB2}, we may establish uniform equicontinuity for the sequence $(V^N_\e)_{N\in\mathbb{N}}$.
\vspace{1mm}

\begin{proposition}\label{Nregularity}
Assume that (A1) and (A2) hold. Fix $\e>0$ and consider $V^{N}_\e$ the unique viscosity solution of \eqref{HJB2} with  $H$ substituted by $H^\e(x,y,p,m)=H\left(\frac{x}{\e},y,p,m\right)$. There exists a constant $C_0>0$ independent of $\e$ and a constant $C>0$, which might depend on $\e$,  such that for each $N\in \mathbb{N}$, the unique viscosity solution of \eqref{HJB2} satisfies
\begin{align}
&|V^N_\e(t,\bm x)- V^N_\e(s,\bm y)|\le C_0\left( {\bf d}_{2}(m_{\bm x}^N,m_{\bm y}^N)+|t-s|\right),\;\;\text{and}\label{epsilonunifinorm}\\[1ex]
&|V^N_\e(t,\bm x)- V^N_\e(s,\bm y)|\le C\left( {\bf d}_{2-r}(m_{\bm x}^N,m_{\bm y}^N)+|t-s|\right)\label{notuniform},
\end{align}
for all $t,s\in[0,T]$ and $\bm x,\bm y\in (\R^d)^N$.
\end{proposition}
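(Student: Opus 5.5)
We prove the estimates through the optimal control representation of $V^N_\e$. Because $H$ is convex in $p$ and $L$ is its Legendre transform, the unique viscosity solution of \eqref{HJB2} is the value function of an $N$-particle deterministic control problem. Using the scaling $ND_{x^i}$ and the factor $\frac1N$, we have
\[
V^N_\e(t,\bm x)=\inf_{\bm\alpha}\Big\{\frac1N\sum_{i=1}^N\int_t^T L\Big(\frac{X^i_s}{\e},X^i_s,\alpha^i_s,m^N_{\bm X_s}\Big)ds+G(m^N_{\bm X_T})\Big\},\qquad \dot X^i_s=\alpha^i_s,\ X^i_t=x^i.
\]
This follows from the standard finite-dimensional theory \cite{crandall1992user}, since the Hamiltonian $\frac1N\sum_i H(\cdot,ND_{x^i}V)$ is the Legendre dual of $\frac1N\sum_i L(\cdot,\alpha^i)$.

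\textbf{Step 1: a priori bounds.} Comparing with the control $\alpha\equiv 0$ and using \eqref{eqn:Lmquad} and \eqref{terminalassumption} gives $|V^N_\e|\le K_0T+C_G$. Next, take an $\eta$-optimal control. The lower bound in \eqref{eqn:Lmquad} then yields
\[
\frac1N\sum_i\int_t^T|\alpha^i_s|^2ds\le C_1,
\]
where $C_1$ is independent of $N$ and $\e$.

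\textbf{Step 2: space regularity, the main obstacle.} First, $V^N_\e$ is symmetric in the particles, so we may permute $\bm y$ so that $\frac1N\sum_i|x^i-y^i|^2={\bf d}_2^2(m^N_{\bm x},m^N_{\bm y})$; this is possible because optimal couplings between empirical measures with equal weights are permutations. For estimate \eqref{notuniform}, we take a near-optimal control $\bm\alpha$ from $\bm x$ and reuse it from $\bm y$, translating each particle by $y^i-x^i$. Each trajectory is then shifted by a constant, so the empirical measures stay within ${\bf d}_2$ (hence ${\bf d}_{2-r}$) distance ${\bf d}_2(m^N_{\bm x},m^N_{\bm y})$ at every time. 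By \eqref{eqn:LmLip}, the running-cost difference is bounded by
\[
C_L\Big(\frac{1}{\e}+1\Big)\frac1N\sum_i\int_t^T(1+2|\alpha^i_s|)\big(|x^i-y^i|+{\bf d}_2\big)ds\le C(\e)\,{\bf d}_2(m^N_{\bm x},m^N_{\bm y}),
\]
using Cauchy--Schwarz and Step 1. The terminal term is handled by \eqref{terminalassumption}. This gives a ${\bf d}_2$ bound with an $\e$-dependent constant, and the ${\bf d}_{2-r}$ version \eqref{notuniform} should follow the same way, using that $H$, and hence $L$, is ${\bf d}_{2-r}$-Lipschitz in $m$ by \eqref{ineqHm}.

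The difficulty is \eqref{epsilonunifinorm}, where the constant must not depend on $\e$: the translation argument inherits the $1/\e$ factor from the $y$-Lipschitz constant of $L$. To avoid it, we will not translate the whole path. Instead we use the splicing construction from the proof of Proposition \ref{prop:mmeas}: on a short initial interval $[t,t+\delta]$ we move particle $i$ from $y^i$ onto the optimal trajectory starting at $x^i$ by a straight segment, and afterwards we follow that optimal trajectory. The quadratic bound \eqref{eqn:Lmquad} controls the extra cost on $[t,t+\delta]$ by
\[
C\Big(\delta+\frac{{\bf d}_2^2(m^N_{\bm x},m^N_{\bm y})}{\delta}\Big)+C\,\delta^{1/2}\Big(\int_t^{t+\delta}\frac1N\sum_i|\alpha^i_s|^2ds\Big)^{1/2}.
\]
This is an upper bound on both the spliced and the original cost over $[t,t+\delta]$, so no Lipschitz continuity in the fast variable is needed. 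After time $t+\delta$ the paths coincide and there is no error. Choosing $\delta\sim{\bf d}_2$ makes the first term $O({\bf d}_2)$. The last term requires an $L^\infty$ (or local-in-time) bound on the optimal velocities, which is the delicate point. If this is not available, we will fall back on the semiconcavity/Lipschitz estimates for the $N$-particle value function obtained via the comparison principle. Concretely, we will test whether $\phi^\pm=V^N_\e(\cdot,\bm y)\pm C_0\sqrt{\tfrac1N\sum_i|x^i-y^i|^2}$ gives sub- and supersolutions, using only the coercivity of $H$; coercivity bounds gradients independently of $\e$, exactly as in the finite-dimensional case $|Du^\e|\le C$. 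Integrating this gradient bound over the equal-weight permutation then gives the ${\bf d}_2$ estimate with $C_0$ independent of $\e$.

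\textbf{Step 3: time regularity.} For $s>t$, the dynamic programming principle gives
\[
V^N_\e(t,\bm x)\le\int_t^s\frac1N\sum_iL(\cdot,0,\cdot)\,dr+V^N_\e(s,\bm x)\le K_0(s-t)+V^N_\e(s,\bm x).
\]
For the reverse inequality, the cost accrued by a near-optimal control on $[t,s]$ is at least $-K_0(s-t)$. The displacement is bounded in ${\bf d}_2$ by $\big(\int_t^s\frac1N\sum_i|\alpha^i_r|^2dr\big)^{1/2}(s-t)^{1/2}$, and combining this with the spatial estimate from Step 2 yields the $|t-s|$ term. We will use the PDE itself, with the gradient bound from Step 2 inserted into the quadratic growth of $H$, to get a genuinely Lipschitz-in-time bound.
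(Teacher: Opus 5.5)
There is a genuine gap at the one point that matters, the $\e$-independent estimate \eqref{epsilonunifinorm}. Your splicing argument needs a local-in-time bound $\int_t^{t+\delta}\frac1N\sum_i|\alpha^i_s|^2\,ds\lesssim\delta$ on near-optimal controls, but Step 1 only gives the global bound. Such a local bound is no easier than the gradient bound you are after, since optimal velocities are governed by $ND_{x^i}V^N_\e$.

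Your fallback is circular. The function $V^N_\e(t,\bm y)+C_0\big(\frac1N\sum_i|x^i-y^i|^2\big)^{1/2}$ is a supersolution away from $\bm x=\bm y$ only if $\partial_tV^N_\e(\cdot,\bm y)$ is bounded above independently of $\e$. Yet your Step 3 obtains the time bound from Step 2. Moreover, $\phi^-$ can never be a subsolution: its rescaled gradient satisfies $\frac1N\sum_i|ND_{x^i}\phi^-|^2=C_0^2$, so coercivity makes the Hamiltonian large. By the symmetry $\bm x\leftrightarrow\bm y$ you only need $\phi^+$ anyway. The finite-dimensional fact you invoke, $|Du^\e|\le C$ from coercivity, is itself derived from a previously established $\e$-independent bound on $\partial_tu^\e$. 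That bound is exactly the missing ingredient.

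The paper proceeds in that order, time first.
\begin{itemize}
\item It approximates $G$ by smooth $G_k$ with the same ${\bf d}_2$-Lipschitz constant. Then $\frac1N\sum_i|ND_{x^i}G_k(m^N_{\bm x})|^2=\frac1N\sum_i|D_mG_k(m^N_{\bm x},x^i)|^2$ is bounded by a constant depending only on $C_G$.
\item Hence $G_k(m^N_{\bm x})\pm C(T-t)$ are classical super- and subsolutions with $C$ independent of $\e,N,k$. Comparison gives $|V-G_k(m^N_{\cdot})|\le C(T-t)$, and time-translation invariance of the autonomous equation plus comparison upgrades this to a global Lipschitz bound in time.
\item Since $V$ is Lipschitz, the equation holds a.e. Combining $|\partial_tV|\le C$ with $H\ge C_{1,H}|p|^2-K_0$ gives $N\sum_i|D_{x^i}V|^2\le C_0$ a.e.
\item Integrating along the segment from $\bm x$ to the optimally permuted $\bm y$ yields \eqref{epsilonunifinorm}.
\end{itemize}

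If you prefer to stay with the control formulation, you can repair your splicing instead. Go straight from $\bm y$ to $\bm x$ on $[t,t+\delta]$, then run the near-optimal path from $\bm x$ time-compressed into $[t+\delta,T]$, as in the proof of Proposition \ref{altformulaot}. The extra cost is $O({\bf d}_2^2/\delta+\delta)$. This uses only the Lipschitz dependence of $L$ on $v$ and the energy bound $\int_t^T\frac1N\sum_i|\alpha^i_s|^2ds\le C(T-t)$. The regime ${\bf d}_2\gtrsim T-t$ is covered by $|V^N_\e(t,\bm x)-G(m^N_{\bm x})|\le C(T-t)$, which also follows from the control problem.

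Separately, your claim that \eqref{notuniform} follows ``the same way'' is unjustified. The obstruction is not the measure argument of $L$ but the pairing of $|x^i-y^i|$ with $1+2|\alpha^i_s|$. With only $L^2$ control of the velocities, H\"older's inequality returns ${\bf d}_2$, not the smaller ${\bf d}_{2-r}$. The paper takes this estimate from \cite{gangbo2021finite}.
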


\begin{proof}
    We give the proof in Appendix \ref{tech}.
\end{proof}

\nd
We note that analogous regularity estimates when $\e=1$, that is when $H^\e=H$, can be found in \cite[Theorem 3.3]{gangbo2021finite} or \cite[Proposition 2.5]{daudin2025error}.
\vspace{3mm}

\nd
To find the relation between a solution of \eqref{HJB2} and a solution of \eqref{HJBe}, one exploits the uniform equicontinuity of $(V^{N})_{N\in\mathbb{N}}$ provided by Proposition \ref{Nregularity}, to show that the family of functions $(U^N_\e)_{N\in\mathbb{N}}$ with $U^N_\e\colon [0,T]\times \mathcal{P}_{2-r}\to \R$ and
\be\label{extension}
U^N_\e(t,m):= \inf_{\bm x\in (\R^d)^N}\left\{ V^N_\e(t,\bm x)+C{\bf d}_{2-r}(m^N_{\bm x},m)   \right\},
\ee
where $C$ is the constant in \eqref{notuniform}, is uniformly equicontinuous \footnote{In particular, $U^N_\e$ is ${\bf d}_{2-r}$--Lipschitz with constant $C$.} and equibounded as a sequence in $N$, since $(V_\e^N)_{N\in\mathbb{N}}$ is equibounded. Therefore, we can apply the Arzela--Ascoli theorem on the relatively compact subsets of $\mathcal{P}_{2-r}(\R^d)$:
\be\label{subsets}
M_R^2=\left\{m\in \mathcal{P}_{2-r}(\R^d):\; \int_{\R^d}|x|^2 m(dx)\le R \right\},\; R>0,
\ee
to derive that (up to a subsequence) $U^N_\e$ converges uniformly in $[0,T]\times M_R^2$ to a function $U^\e\colon [0,T]\times \mathcal{P}_2\to \R$, which is also ${\bf d}_{2-r}$-Lipchitz with constant $C$.
For $\e=1$, it was shown
in \cite[Theorem 1.2]{gangbo2021finite} under assumptions (A1), (A2), that for any bounded $B\subset \mathcal{P}_2$ we have
$$\lim_{N\rightarrow +\infty}\sup_{(t,m_{\bm x}^N)\in [0,T]\times B}|U^\e(t,m_{\bm x}^N)-V^N_\e(t,\bm x)|=0,$$
and, furthermore, the limit $U^\e$ satisfies \eqref{HJBe} in the ``lifted'' viscosity sense of P.L Lions ($L$-viscosity solutions).
\vspace{1mm}

\nd
Before introducing the definition of $L$-viscosity solutions, we collect the results of the above discussion together with a uniform in $\e$ ${\bf d}_2$-Lipschitz estimate for $U^\e$. We remark that we do not get a uniform in $\e$ ${\bf d}_{2-r}$-Lipschitz estimate as the constant $C$ in \eqref{notuniform} from Proposition \ref{Nregularity} depends on $\e$.

\begin{corollary}\label{cor:lipschitzestimate}
    Assume that (A1) and (A2) are in place and $\e>0$ is fixed. Let $U^N_\e\colon [0,T]\times\mathcal{P}_{2-r}\to \R$, $N\in \mathbb{N}$, be as in \eqref{extension}. Then, the sequence $(U^N_\e)_{N\in \mathbb{N}}$ converges (up to a subsequence) uniformly in the sets $[0,T]\times M_R^2$, where $M_R^2$ is as in \eqref{subsets}, to a function $U^\e\colon[0,T]\times\mathcal{P}_2\to \R$. The function $U^\e$ is an $L$-viscosity solution of \eqref{HJBe} and, furthermore,
    \be\label{uniforminepsilon}
|U^\e(t,m_1)-U^\e(s,m_2)|\le C_0\left( |t-s|+{\bf d}_2(m_1,m_2) \right),
    \ee
    for all $t,s\in [0,T]$ and $m_1,m_2\in\mathcal{P}_2$, where $C_0$ is the constant from \eqref{epsilonunifinorm}.
\end{corollary}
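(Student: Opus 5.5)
The plan is to derive everything from Proposition \ref{Nregularity} together with the compactness and identification results quoted above from \cite{gangbo2021finite}. Fix $\e>0$; $\e$ only enters through the Hamiltonian $H^\e$.

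\textbf{Step 1 (compactness and convergence).} By \eqref{notuniform}, each $V^N_\e$ depends on $\bm x$ only through $m^N_{\bm x}$. It is also ${\bf d}_{2-r}$-Lipschitz in the measure with constant $C=C(\e)$, independent of $N$. Hence the inf-convolution \eqref{extension} is a genuine extension: $U^N_\e(t,m^N_{\bm x})=V^N_\e(t,\bm x)$, since taking $\bm y=\bm x$ gives one inequality and \eqref{notuniform} gives the other. It follows that $U^N_\e$ is ${\bf d}_{2-r}$-Lipschitz with constant $C$ and $C$-Lipschitz in $t$.

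For equiboundedness I would use the control representation of $V^N_\e$, or compare with the barrier functions $\pm(C_G+K_0(T-t))$ via the finite-dimensional comparison principle, using $H\ge -K_0$ and the upper quadratic bound at $p=0$. This gives $|V^N_\e|\le C_G+K_0T$. The sets $M_R^2$ are relatively compact in $({\mathcal P}_{2-r},{\bf d}_{2-r})$ because second moments are uniformly bounded. Arzelà–Ascoli on $[0,T]\times \overline{M_R^2}$, combined with a diagonal argument over $R\in\mathbb{N}$, then yields a subsequence converging uniformly on each $[0,T]\times M_R^2$ to some $U^\e$ defined on $[0,T]\times\mathcal P_2$.

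\textbf{Step 2 (viscosity property).} Here I would invoke \cite[Theorem 1.2]{gangbo2021finite} with Hamiltonian $H^\e(y,x,p,m)=H(y/\e,x,p,m)$. Under (A1) this still satisfies the convexity and quadratic growth bounds. Its Lipschitz constant in $y$ is $C_H/\e$, which is finite for fixed $\e$. That theorem gives that the limit is an $L$-viscosity solution of \eqref{HJBe}. Uniqueness, from \cite[Theorem 7.4]{gangbo2021finite}, shows that the limit does not depend on the subsequence.

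\textbf{Step 3 (the $\e$-uniform estimate \eqref{uniforminepsilon}).} This is the step needing care, because the ${\bf d}_2$ estimate \eqref{epsilonunifinorm} holds only on empirical measures. First, pass \eqref{epsilonunifinorm} to the limit on empirical measures: for fixed $\bm x,\bm y$, the measures $m^N_{\bm x}$ lie in a fixed $M_R^2$ only if we consider measures of the form $m^{kN}$, obtained by repeating points. So use that $V^{kN}_\e$ evaluated at repeated configurations equals $U^{kN}_\e(t,m^N_{\bm x})$, and let $k\to\infty$ along the subsequence. This gives
\[
|U^\e(t,\mu)-U^\e(s,\nu)|\le C_0({\bf d}_2(\mu,\nu)+|t-s|)
\]
for all empirical measures $\mu,\nu$ with a common number of atoms.

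Second, extend to general $m_1,m_2\in\mathcal P_2$. Choose empirical approximations $\mu_n\to m_1$ and $\nu_n\to m_2$ in ${\bf d}_2$ with a common number of atoms; for instance, take i.i.d. samples of an optimal coupling and use the law of large numbers in $\mathcal P_2$, which gives ${\bf d}_2(\mu_n,\nu_n)\to{\bf d}_2(m_1,m_2)$. Since $U^\e$ is ${\bf d}_{2-r}$-continuous and ${\bf d}_{2-r}\le {\bf d}_2$, we get $U^\e(t,\mu_n)\to U^\e(t,m_1)$. Passing to the limit yields \eqref{uniforminepsilon} with the $\e$-independent constant $C_0$.

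The main obstacle is making sure the limit in Step 3 uses only the $\e$-dependent ${\bf d}_{2-r}$ continuity qualitatively, so that the ${\bf d}_2$ constant $C_0$ survives unchanged.
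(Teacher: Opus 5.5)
Your proposal is correct. Steps 1 and 2 match the discussion the paper relies on; the paper only asserts equiboundedness, so your barrier argument with $\pm(C_G+K_0(T-t))$ is a useful addition. The difference is in Step 3.

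\textbf{The paper's route.} It proves \eqref{uniforminepsilon} in one pass. For fixed $m_1,m_2$ it takes i.i.d.\ empirical measures $m_1^N,m_2^N$ and inserts $U^N_\e(t,m_1^N)$ and $U^N_\e(s,m_2^N)$ via the triangle inequality.
\begin{itemize}
\item The outer terms are handled by pointwise convergence of $U^N_\e$ at the fixed points $m_1,m_2$, together with the $N$-uniform ${\bf d}_{2-r}$-Lipschitz bound of the inf-convolutions $U^N_\e$.
\item The middle term equals $|V^N_\e(t,X_1,\ldots,X_N)-V^N_\e(s,Y_1,\ldots,Y_N)|$, to which \eqref{epsilonunifinorm} applies directly.
\end{itemize}

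\textbf{Your route.} You first pass to the limit at fixed empirical measures by repeating points. You then extend to general measures by density, using only the qualitative ${\bf d}_{2-r}$-continuity of the limit $U^\e$. This cleanly separates the $N\to\infty$ limit from the approximation of $m_1,m_2$.

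\textbf{What your route costs.} Evaluating $V^{kN}_\e$ at repeated configurations needs the indices $kN$ to lie in the convergent subsequence. That is guaranteed only because Step 2 uses uniqueness (the comparison principle) to upgrade subsequential convergence to convergence of the whole sequence. So ``let $k\to\infty$ along the subsequence'' should read ``along the full sequence''. Alternatively, you could run the Arzel\`a--Ascoli extraction along $N=j!$ from the start. The paper's argument works along any convergent subsequence, so its proof of the $\e$-uniform estimate does not depend on comparison.

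\textbf{Minor point.} You do not need samples from an optimal coupling. The convergence ${\bf d}_2(\mu_n,\nu_n)\to{\bf d}_2(m_1,m_2)$ follows from the triangle inequality once $\mu_n\to m_1$ and $\nu_n\to m_2$ in ${\bf d}_2$, which is exactly how the paper uses independent samples.
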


\begin{proof}
    We give the proof in Appendix \ref{tech}.
\end{proof}

\begin{remark}
    The above estimate is the infinite dimensional analogue of the equi-Lipschitz property of the family $(u^\e)_{\e>0}$, where $u^\e$ solves \eqref{fdHJBe}, obtained in \cite{LionsPapaVara1987}. In addition, we note that the results of Proposition \ref{Nregularity} and Corollary \ref{cor:lipschitzestimate} hold even without the convexity assumption on the map $p\mapsto H(y,x,p,m)$.
\end{remark}



\subsection{$L$-viscosity solutions} Let $(\Omega,\mathcal{F},\mathbb{P})$ be a non-atomic probability space. We consider $\mathcal{H}=L^2(\Omega;\R^d)$, which is a separable Hilbert space endowed with the inner product $\langle X,Y \rangle=\mathbb{E}[XY]$, $X,Y\in\mathcal{H}$. Its norm is denoted by $\|X\|_2=\left(\mathbb{E}[X^2]\right)^{1/2}$.

\begin{definition}
    For a given $\Phi:\mathcal{P}_2\rightarrow \R^d$, we denote by $\hat{\Phi}:\mathcal{H}\rightarrow \R$ its ``lift'' defined by
    $$\hat{\Phi}(X)=\Phi(\mathcal{L}(X)).$$
\end{definition}

\begin{remark}
Recall that for a function $\hat{\Phi}:\mathcal{H}\rightarrow \R$ its gradient $D \hat{\Phi}(X)\in \mathcal{H}$ satisfies
$$\hat{\Phi}(X+Y)=\hat{\Phi}(X)+\langle D\hat{\Phi}(X), Y\rangle+o(\|Y\|_2),\;\; X,Y\in\mathcal{H}.$$
If $\Phi$ is smooth enough (see \cite{carmona2018probabilistic}), the gradient $D\hat{\Phi}$ is related to the Wasserstein derivative $D_m\Phi$ in the following way
$$D_m\Phi(\mathcal{L}(X),X)=D\hat{\Phi}(X),\;\; X\in\mathcal{H}.$$
\end{remark}
\vspace{1mm}

\nd
With this in mind, we notice that, at least formally, if $U$ satisfies \eqref{HJBe}, then its ``lift'' $\hat{U}:[0,T]\times\mathcal{H}\rightarrow \R$ satisfies the lifted equation
\be\label{HJB3}\tag{$\mathrm{HJB}_\mathcal{H}$}
\begin{cases}
    -\partial_t \hat{U} +\hat{H}^\e(X,D\hat{U}(t,X))=0, &(t,X)\in [0,T)\times \mathcal{H},\\
    \hat{U}(T,X)=\hat{G}(X), &X\in\mathcal{H},
\end{cases}
\ee
where
$$\hat{H}^\e(X,P)=\mathbb{E}[H(X/\e,X,P,\mathcal{L}(X))],\quad \hat{G}(X)=G(\mathcal{L}(X)),\;\; \text{for }X,P\in \mathcal{H}.$$

\begin{definition}\label{L-visc}
    An upper semicontinuous (resp. lower semicontinuous) function $V:[0,T]\times \mathcal{H}\rightarrow \R$ is a viscosity subsolution (resp. supersolution) of \eqref{HJB3} if
    \begin{itemize}
        \item $V(T,X)\le (\text{resp. }\ge)\; \hat{G}(X)$;
        \item For any $\Phi\in C^{1,2}([0,T]\times\mathcal{H})$ and each $(t_0,X_0)\in [0,T)\times \mathcal{H}$ such that $V(t_0,X_0)-\Phi(t_0,X_0)$ has a local maximum (resp. local minimum), we have
        $$-\partial_t \Phi(t_0,X_0)+\hat{H}^\e(X_0,D\Phi(t_0,X_0))\le (\text{resp. }\ge)\; 0.$$
    \end{itemize}
    $V$ is a viscosity solution of \eqref{HJB3} if it is a viscosity subsolution and a viscosity supersolution.
\end{definition}


\begin{definition}\label{visc.}
    A function $U^\e\colon [0,T]\times\mathcal{P}_2\rightarrow \R$ is a viscosity solution/subsolution/supersolution of \eqref{HJBe} if its lift $\hat{U}^\e$ is a viscosity solution/subsolution/supersolution of \eqref{HJB3}.
\end{definition}
\vspace{1mm}

\nd
From now on, when refering to viscosity solutions of \eqref{HJB} or \eqref{HJBe}, we mean in the sense of Definition \ref{visc.}.
It was proved in \cite[Theorem 7.4]{gangbo2021finite} that there is comparison between bounded viscosity sub and super solutions of \eqref{HJB3} given our assumptions (A1) and (A2). Furthermore, using the dynamic programming principle from \cite{djete2022mckean1}, it can be shown that the value function of the mean field control problem \eqref{eqn:U_value_function} is a viscosity solution of \eqref{HJBe}. We record this in the following Theorem.

\begin{theorem}\label{comprep}
    Assume that $H,G$ satisfy assumptions (A1)-(A2) or (A1')-(A2). Then, \eqref{HJBe} has a unique $L$-viscosity solution $U^\e$ which is given by \eqref{eqn:U_value_function} for $L$ as in \eqref{Lagr}. Furthermore, if $\overline{H}$ is the corresponding effective Hamiltonian constructed in Proposition \ref{effHm}, the equations \eqref{HJB} and \eqref{HJB4} have unique $L$-viscosity solutions.
\end{theorem}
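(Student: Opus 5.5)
The plan is to split Theorem \ref{comprep} into three claims: uniqueness via comparison, existence via the control representation, and the effective equations. For \eqref{HJBe}, uniqueness follows from the comparison principle of \cite[Theorem 7.4]{gangbo2021finite}. That result applies to bounded sub/supersolutions of the lifted equation \eqref{HJB3}, provided the lifted Hamiltonian $\hat H^\e(X,P)=\mathbb{E}[H(X/\e,X,P,\mathcal{L}(X))]$ satisfies the structural hypotheses there. Concretely, I will check three things. First, quadratic coercivity and growth in $P$, which come from \eqref{quadgrowthHm}. Second, local Lipschitz continuity in $(X,P)$ with the $(1+|p|+|p'|)$ weight, which comes from \eqref{ineqHm}. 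Here I use ${\bf d}_{2-r}(\mathcal{L}(X),\mathcal{L}(X'))\le \|X-X'\|_2$ and the fact that $|X/\e-X'/\e|_{\T^d}\le |X-X'|/\e$ for fixed $\e$. Third, bounded ${\bf d}_{2-r}$-Lipschitz terminal data from (A2). Under (A1') the same verification is simpler, since the $x$ and $m$ arguments are absent.

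For existence, I will identify the value function \eqref{eqn:U_value_function} as a viscosity solution. The dynamic programming principle of \cite{djete2022mckean1} gives, for $s\in[t,T]$,
\[
U^\e(t,m)=\inf_{\alpha}\Big\{\mathbb{E}\int_t^s L(X_r/\e,X_r,\alpha_r,\mathcal{L}(X_r))\,dr+U^\e(s,\mathcal{L}(X_s))\Big\}.
\]
I will lift to $\mathcal{H}$ and run the standard test-function argument. For the subsolution inequality, I take a constant control $\alpha\equiv P$ and expand $\Phi(s,X_s)$ to first order. This gives $-\partial_t\Phi-\langle D\Phi,P\rangle-\mathbb{E}[L]\le 0$ for every $P$, and taking the supremum over $P$ recovers $\hat H^\e$ via \eqref{Lagr} and convexity.

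For the supersolution inequality, I take $\delta$-optimal controls and use the uniform $L^2$ velocity bound coming from \eqref{eqn:Lmquad} together with the Lipschitz bound \eqref{uniforminepsilon}. This controls the Taylor remainder, and then the Fenchel inequality $-p\cdot v-H\le L$ gives the reverse bound. The continuity needed for Definition \ref{L-visc} is provided by Corollary \ref{cor:lipschitzestimate}. Alternatively, Corollary \ref{cor:lipschitzestimate} already yields an $L$-viscosity solution as a limit of $U^N_\e$, and uniqueness then forces it to coincide with the value function.

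For \eqref{HJB} and \eqref{HJB4}, the plan is to repeat the argument with $\overline H$ in place of $H$. Proposition \ref{effHm} supplies convexity, the quadratic bounds \eqref{eqn:Hbarmgrowth}, and the local Lipschitz estimate \eqref{eqn:Hbarmreg}, and Proposition \ref{prop:Lbarmprop} supplies the corresponding properties of $\overline L$. The lifted Hamiltonian has no fast variable, so the argument above applies verbatim and gives a unique solution, namely the control problem \eqref{formula}.

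The main obstacle is that \eqref{eqn:Hbarmreg} is stated in ${\bf d}_2$ rather than ${\bf d}_{2-r}$. The comparison theorem of \cite{gangbo2021finite}, under assumptions like (A1)–(A2), appears to need ${\bf d}_{2-r}$ regularity of the Hamiltonian in $m$ to get the compactness used there. I would first try to show that $\overline H$ inherits ${\bf d}_{2-r}$-Lipschitz dependence on $m$ directly from \eqref{ineqHm}. This would go through the discounted cell problem \eqref{discountcellm}, using the stability estimate $\delta|v^\delta_m-v^\delta_{m'}|\le \sup|H(\cdot,m)-H(\cdot,m')|$ on the set $|p+Dv^\delta|\le C$. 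If that works, the hypotheses of \cite[Theorem 7.4]{gangbo2021finite} hold exactly as for $H$.
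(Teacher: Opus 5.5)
Your plan is essentially the paper's own justification. The paper gives no separate proof: it takes uniqueness from the comparison principle \cite[Theorem 7.4]{gangbo2021finite} for the lifted equation \eqref{HJB3}, the viscosity property of \eqref{eqn:U_value_function} from the dynamic programming principle of \cite{djete2022mckean1}, and the effective equations \eqref{HJB}, \eqref{HJB4} from the same results via Proposition \ref{effHm}. Your ${\bf d}_{2-r}$ repair for $\overline{H}$ is sound, since the proof of \eqref{eqn:Hbarmreg} only uses \eqref{ineqHm}, which is already stated in ${\bf d}_{2-r}$.

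One correction: Corollary \ref{cor:lipschitzestimate} concerns the limit of the $U^N_\e$, not the value function. Citing it for the continuity of \eqref{eqn:U_value_function} is therefore circular, and so is your ``alternatively'' route, since uniqueness can only identify that limit with the value function once the latter is known to be a viscosity solution. Instead, get continuity directly from the control problem by applying the same control to an optimal coupling of the two initial laws, which gives an $\e$-dependent ${\bf d}_2$-Lipschitz bound. Note also that the local $L^2$ velocity bound in your supersolution step needs only the boundedness of $U^\e$.
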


\nd
We finish the section with the following remark.

\begin{remark}\label{rem1}
    (i) Assume that $\e=1$. If $U:=U^{\e=1}$ is a classical solution of \eqref{HJBe}, that is the Wasserstein $D_mU$ and time $\partial_tU$ derivatives exist and \eqref{HJBe} holds everywhere, then we can verify that $U(t,m_{\bm x}^N)=:u^N(t,\bm x)$ is a classical solution of \eqref{HJB2}. Assuming uniqueness of solutions of \eqref{HJB2}, it follows that $u^N(t,\bm x)=V^N(t,\bm x)$.
    \\[0.5ex]
    (ii) The PDE \eqref{HJBe} does not always admit a classical solution. For example, if $H(y,x,p,m)=\frac{|p|^2}{2}$ and $G(m)=g(\int x m(dx))$ for some $g\in C^\infty_c(\R^d)$, then the L-viscosity solution of \eqref{HJBe} is $U(t,m)= u(t, \int x m(dx))$, where $u$ is the unique viscosity solution of
    \be\nonumber
    \begin{cases}
        -\partial_t u +\frac{1}{2}|Du|^2=0, &(t,x)\in [0,T)\times \R^d,\\
        u(T,x)=g(x), &x\in \R^d,
    \end{cases}
    \ee
    which does not have classical solutions in general. The proof of this fact is similar to \cite[Proposition 2.10]{daudin2024optimal} which uses the optimal control formulation of the solution. \\[0.5ex]
(iii) By considering periodic data, the construction presented in this section can be transferred to $\T^d$ (instead of $\R^d$).
\end{remark}

\section{Different types of admissible controls}
\label{section4}
\subsection{Alternative control sets}\label{altcontrset}

In this section, we introduce several classes of admissible controls that will be used to establish the rate of convergence.

Recall that $(\Omega, \mathcal{F}, \mathbb{F}=(\mathcal{F}_t)_{t\geq 0},\mathbb{P})$ is a sufficiently rich, atomless filtered probability space, and let $B$ be an $\mathbb{F}$-Brownian motion. Let $\xi:\Omega \to \mathbb{R}^d$ be a Gaussian random variable, independent of $B$, such that $\sigma(\xi) \subset \mathcal{F}_0$. Let $\ep>0$ and $T>0$. For fixed $(t,m) \in [0,T] \times \mathcal{P}_2$, consider the control problem
\begin{equation} \label{eqn:control}
\inf_{\alpha\in \mathcal{A}} \left\{\mathbb{E}\left[\int_t^T L \left(\frac{X_s}{\ep},X_s,\alpha_s, \mathcal{L}(X_s)\right)\,ds\right]+G(\mathcal{L}(X_T))\right\},
\end{equation}
where $\mathcal{A}=\left\{ \alpha : [t,T]\times \Omega \to \mathbb{R}^d :
\alpha \text{ is } \mathbb{F}\text{-progressively measurable and } \mathbb{E}\left[ \int_t^T|\alpha_s|^2 \, ds \right] <\infty\right\}$. For each $\alpha \in \mathcal{A}$ and each $\mathcal{F}_t$-measurable random variable
$X_t : \Omega \to \mathbb{R}^d$ with $\mathcal{L}(X_t)=m$, independent of $B$, consider the controlled process $X$ that satisfies $X_s(\omega)=X_t(\omega)+\int_t^s \alpha(r,\omega)dr$ a.s.. The associated value function is $U^\e(t,m)$.




We now introduce two alternative admissible control sets.

\nd
(1) Define \[
\mathcal{A}_1:=\left\{\alpha:[t,T]\times \Omega\rightarrow \mathbb{R}^d: \alpha \text{ is } \mathcal{B}([t,T])\otimes \mathcal{F} \text{-measurable and } \mathbb{E}\left[ \int_t^T|\alpha_s|^2 \,ds \right] <\infty\right\}.\]
For each $\alpha \in \mathcal{A}_1$ and each $\mathcal{F}_t$-measurable random variable
$X_t : \Omega \to \mathbb{R}^d$ with $\mathcal{L}(X_t)=m$, independent of $B$,
consider the controlled process $X$ as above. The corresponding value function in \eqref{eqn:control} associated with $\mathcal{A}_1$
is denoted by $U_1(t,m)$.

With this choice of controls, for any $\nu\in\mathcal{P}_2(\R^d)$, we can generate any coupling $\pi\in \Pi(m,\nu)$ as $\pi=\mathcal{L}((X_t,X_T))$.

\nd
(2) Define
\[
\mathcal{A}_2
:=
\left\{
\alpha : [t,T]\times \Omega \to \mathbb{R}^d \;\left|\;
\begin{aligned}
&\alpha_s(\omega)
= \tilde{\alpha}\left(s,\xi(\omega)\right),\mathbb{E}\left[\int_t^T \lvert \alpha_s\rvert^2 \,ds \right] < \infty, \\
&\tilde{\alpha} : [t,T]\times \mathbb{R}^d \to \mathbb{R}^d
\text{ Borel measurable}, \\
\end{aligned}
\right.
\right\}.
\]
For each $\alpha \in \mathcal{A}_2$ and each $\mathcal{F}_t$-measurable random variable
$X_t : \Omega \to \mathbb{R}^d$ with $\mathcal{L}(X_t)=m$, independent of $B$,
consider the controlled process $X$ as above. Denote the corresponding value function by $U^\e_2(t,m)$.

In this setting, admissible controls are measurable with respect to the independent Gaussian variable $\xi$. This additional randomness allows the control to split mass. Our aim is to prove that $U^\varepsilon = U_1^\varepsilon$. However, a direct comparison is difficult because progressively measurable controls are technically delicate to handle. To overcome this difficulty, we introduce the intermediate value function $U_2^\varepsilon$, which provides a more tractable formulation and simplifies the proof. This is a randomization technique.

Since $\mathcal{A}_1 \subset \mathcal{A} \subset\mathcal{A}_2$, it follows that
\[
U^\ep_1(t,m) \leq U^\ep(t,m) \leq U^\ep_2(t,m).
\]
We will prove that under assumptions (A1) and (A2), $U_1^\ep=U^\ep=U_2^\ep$. Before proving this, we first provide equivalent formulations of $U_1^\varepsilon$, which will be useful in the subsequent analysis.

\subsection{Equivalent formulations of $U^\ep_1$}\label{formulations}
To derive the equivalent formulations of $U^\e_1$, we first define two admissible sets. Let $(t,m) \in [0,T] \times \mathcal{P}_2$.
\begin{enumerate}
\item For any $\nu\in \mathcal{P}_2$, we define
\[
\mathcal{E}_{m,\nu}
:=
\left\{
(X,Y)\in L^2(\Omega,\mathcal{F};\mathbb{R}^d)
\times
L^2(\Omega,\mathcal{F};\mathbb{R}^d): \mathcal{L}(X)=m,\mathcal{L}(Y)=\nu
\right\}.
\]

\item For $(X, Y) \in \mathcal{E}_{m, \nu}$, we define the set of admissible paths as
\begin{equation}\label{eqn:gammaxy}
\Gamma(X, Y) = \left\{\,\gamma:[t,T]\times \Omega \to \mathbb{R}^d\;\middle|\;
\begin{array}{l}
\gamma \text{ is } \mathcal{B}([t,T]) \otimes \mathcal{F}\text{-measurable},\\[4pt]
\gamma(\cdot,\omega)\in \text{AC}([t,T];\mathbb{R}^d) \,\,\, \forall \, \omega \in \Omega,\\[4pt]
 \gamma(t,\cdot)=X,\quad \gamma(T,\cdot)=Y
\end{array}
\right\}.
\end{equation}
\end{enumerate}
Then, we define a new value function $\hat{U}_1^\ep$.

\begin{definition}
Define $\tilde{U}_1^\varepsilon : [0,T] \times \mathcal{P}_2 \to \mathbb{R}$ by
\begin{equation}\label{eqn:Uep1h}
\tilde{U}_1^\varepsilon(t,m)
:= \inf_{\nu \in \mathcal{P}_2}
\left\{ \inf_{(X, Y) \in \mathcal{E}_{m,\nu}}
\left\{
\inf_{\gamma \in\Gamma(X, Y)}
\mathbb{E}\left[
\int_{t}^T
L\left(
\frac{\gamma_s}{\e},
\gamma_s,
\dot{\gamma}_s,
\mathcal{L}(\gamma_s)
\right)\, ds
\right]
\right\}+G(\nu)
\right\},
\end{equation}
where $\gamma_s:=\gamma(s,\cdot)$.
\end{definition}

\nd
Although the definitions of $U^\varepsilon_1$ and $\tilde{U}^\varepsilon_1$ seem very different, we will prove the equivalence of these two definitions, meaning $U^\e_1=\tilde{U}^\e_1$. The key observation is that the value functions of the optimal control problem in both~\eqref{eqn:control} and~\eqref{eqn:Uep1h} {are functions} of the optimal path. Thus, as long as both formulations allow similar sets of admissible paths, the optimal value is the same.
\vspace{1mm}

\nd
Before proving the equivalence between $U^\e_1$ and $\tilde{U}^\e_1$, we establish a velocity bound for admissible paths $\gamma$. This estimate shows that, for nearly optimal choices of $\nu$ and $(X,Y)$, the expected squared distance between $X$ and $Y$ is controlled by $(T-t)^2$. In particular, if $\gamma^\delta$ is a $\delta$-optimal path with $0<\delta<T-t$, then the expected $L^2$-norm of its velocity is bounded.
\begin{proposition}\label{prop:Uepm2new}
(1) For any $t \in [0, T]$ and $m\in \mathcal{P}_2$,
\begin{equation}\label{eqn:Uep1hnew}
\tilde{U}_1^\e(t,m)
= \inf_{\nu \in \mathcal{P}_2} \left\{ \inf_{(X, Y) \in \mathcal{E}^\ast_{m,\nu}} \left\{ \inf_{\gamma \in \Gamma(X, Y)} \mathbb{E}\left[\int_{t}^T L\left( \frac{\gamma_s}{\e}, \gamma_s, \dot{\gamma}_s, \mathcal{L}(\gamma_s) \right)\, ds \right] \right\}+G(\nu) \right\},
\end{equation}
where
\begin{equation}\label{eqn:estar}
\mathcal{E}^\ast_{m,\nu}=\left\{(X, Y) \in \mathcal{E}_{m,\nu}: \mathbb{E}\left|X-Y\right|^2 \leq M_0 (T-t)^2\right\},
\end{equation}
for some constant $M_0=M_0(C_{2,H}, C_G, K_0)>0$ that is independent of $m, \nu, X,Y, t$, and $T$.

\noindent
(2) Let $t \in [0, T)$ and $\delta \in \left(0, T-t\right)$. Suppose $(\nu_\delta,X^\delta, Y^\delta,\gamma^\delta)$ is a $\delta$-minimizer for $\tilde{U}^\e_1(t,m)$, that is
\begin{equation}\label{eqn:U1opdel}
\mathbb{E}\left[
\int_{t}^T
L\left(
\frac{\gamma^\delta_s}{\e},
\gamma^\delta_s,
\dot{\gamma}^\delta_s,
\mathcal{L}(\gamma^\delta_s)
\right)\, ds
\right] +G(\nu_\delta) \leq \tilde{U}_1^\ep(t, m)+\delta.
\end{equation}
Then there exists a constant $M_1=M_1(C_{2, H}, C_G, K_0)>0$ such that
\[
\mathbb{E}\left[\int_{t}^T|\dot{\gamma}^\delta_s|^2 \,ds\right] \leq M_1(T-t).
\]
\end{proposition}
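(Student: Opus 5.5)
The argument rests on two one-sided bounds. The first is an upper bound on $\tilde U^\e_1(t,m)$, obtained from the trivial choice $\nu=m$, $Y=X$, $\gamma_s\equiv X$. The second is a lower bound on the cost of any admissible quadruple in terms of $\mathbb{E}|X-Y|^2$. Comparing them shows that any quadruple with large $\mathbb{E}|X-Y|^2$ cannot be competitive.

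\emph{Upper bound.} Take $\nu=m$, $X=Y$ with $\mathcal{L}(X)=m$, and the constant path $\gamma_s=X$; this path is clearly in $\Gamma(X,X)$. Since $\dot\gamma=0$, \eqref{eqn:Lmquad} gives $L\le K_0$, so
\[
\tilde U_1^\e(t,m)\le K_0(T-t)+G(m)\le K_0(T-t)+C_G .
\]

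\emph{Lower bound.} Let $(\nu,X,Y,\gamma)$ be any admissible quadruple. By \eqref{eqn:Lmquad}, then Jensen in time pathwise (using $\gamma(T)-\gamma(t)=Y-X$), and $G\ge -C_G$,
\[
J:=\mathbb{E}\int_t^T L\,ds+G(\nu)\ \ge\ \frac{1}{4C_{2,H}}\mathbb{E}\int_t^T|\dot\gamma_s|^2ds-K_0(T-t)-C_G\ \ge\ \frac{\mathbb{E}|X-Y|^2}{4C_{2,H}(T-t)}-K_0(T-t)-C_G .
\]

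\emph{Part (1).} The inequality ``$\le$'' in \eqref{eqn:Uep1hnew} is immediate, because the infimum on the right is over the smaller set $\mathcal{E}^\ast_{m,\nu}\subset\mathcal{E}_{m,\nu}$. For ``$\ge$'', take a $\delta$-minimizer of $\tilde U^\e_1(t,m)$ with $\delta$ small. Its cost satisfies $J\le 2K_0(T-t)+2C_G+\delta$, by the upper bound and the first inequality of the lower bound together with $G\ge-C_G$. Feeding this into the lower bound gives
\[
\mathbb{E}|X-Y|^2\le 4C_{2,H}(T-t)\bigl(2K_0(T-t)+2C_G+\delta\bigr).
\]
This is of size $(T-t)^2$ only when $T-t$ is bounded away from $0$; as it stands it is $O(T-t)$. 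So the constant $C_G$ coming from the terminal cost is the obstacle to getting a pure $M_0(T-t)^2$ bound.

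I would first try to use the Lipschitz property in (A2) instead of boundedness. Compare the $\delta$-minimizer with the competitor $\nu'=m$, $Y'=X$, constant path. This gives
\[
J-(K_0(T-t)+G(m))\ \ge\ \frac{\mathbb{E}|X-Y|^2}{4C_{2,H}(T-t)}-2K_0(T-t)-C_G\,\mathbf d_{2-r}(m,\nu).
\]
Since $\mathbf d_{2-r}(m,\nu)\le (\mathbb{E}|X-Y|^2)^{1/2}=:D$ and the left side is at most $\delta$, we get
\[
\frac{D^2}{4C_{2,H}(T-t)}\le 2K_0(T-t)+C_G D+\delta .
\]
Solving this quadratic inequality in $D$ yields $D^2\le M_0(T-t)^2$, provided we take $\delta\le (T-t)^2$. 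This is harmless, because in the infimum we may restrict to near-minimizers with arbitrarily small $\delta$.

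Concretely, the $\ge$ direction goes as follows. Fix $\eta>0$ and pick a $\delta$-minimizer with $\delta\le\min(\eta,(T-t)^2)$. It lies in $\mathcal{E}^\ast_{m,\nu_\delta}$ with $M_0=M_0(C_{2,H},C_G,K_0)$, for instance $M_0=32C_{2,H}^2C_G^2+32C_{2,H}K_0+16C_{2,H}$. Hence the restricted infimum is at most $\tilde U^\e_1+\eta$, and letting $\eta\to0$ gives equality. The case $t=T$ is trivial, since then $X=Y$ is forced.

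\emph{Part (2).} Repeat the comparison but keep the velocity term instead of applying Jensen:
\[
\frac{1}{4C_{2,H}}\mathbb{E}\int_t^T|\dot\gamma^\delta_s|^2ds\le 2K_0(T-t)+C_G D+\delta .
\]
By the bound just proved, $D\le \sqrt{M_0}(T-t)$, and by hypothesis $\delta<T-t$. So the right-hand side is bounded by a constant times $(T-t)$, which gives
\[
\mathbb{E}\int_t^T|\dot\gamma^\delta_s|^2ds\le M_1(T-t)\quad\text{with } M_1=4C_{2,H}\bigl(2K_0+C_G\sqrt{M_0}+1\bigr).
\]
There is one subtlety: the bound on $D$ in part (1) needed $\delta\le (T-t)^2$, whereas here only $\delta<T-t$ is assumed. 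I would redo the quadratic estimate with $\delta<T-t$, which gives $D^2\lesssim (T-t)^2+(T-t)$. I would then absorb this using the velocity inequality directly, namely $D^2\le (T-t)\,\mathbb{E}\int|\dot\gamma|^2$, so the velocity term $V:=\mathbb{E}\int_t^T|\dot\gamma^\delta_s|^2ds$ satisfies
\[
\frac{V}{4C_{2,H}}\le 2K_0(T-t)+C_G\sqrt{(T-t)V}+(T-t).
\]
By Young's inequality this yields $V\le M_1(T-t)$ with $M_1$ depending only on $C_{2,H},C_G,K_0$. This closes the argument uniformly and is the step I would check most carefully.
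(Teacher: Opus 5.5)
Your argument is the paper's proof. It uses the constant-path upper bound $K_0(T-t)+G(m)$, the quadratic lower bound on $L$, the estimate $G(m)-G(\nu)\le C_G D$ with $D=(\mathbb{E}|X-Y|^2)^{1/2}$, and the pathwise bound $D^2\le (T-t)\,\mathbb{E}\int_t^T|\dot\gamma_s|^2ds$, and then solves the resulting quadratic inequality; your Young's-inequality step in part (2) is exactly this computation.

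One slip: when you clear the denominator, $\delta$ appears as $4C_{2,H}(T-t)\delta$, so the right condition is $\delta\le T-t$, not $\delta\le (T-t)^2$. The latter would leave a $(T-t)^3$ term, so your explicit $M_0$ would depend on $T$ for long horizons. For the same reason, the subtlety you flag in part (2) does not exist: the hypothesis $\delta<T-t$ already gives $D^2\le M_0(T-t)^2$ directly, which is how the paper proceeds.
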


\begin{remark}
In the second part of Proposition~\ref{prop:Uepm2new}, $\delta<T-t$ is a technical assumption to ensure $M_1$ is a constant independent of $\delta$. This condition can always be satisfied when we consider a path that is nearly optimal.
\end{remark}

\begin{proof}
If $t=T$, then by the definition of $\Gamma(X,  Y)$ in \eqref{eqn:gammaxy}, we have $\gamma(t,\omega)=\gamma(T,\omega)$ for all $\omega\in \Omega$. Consequently, the admissible pair $(X, Y)$ must satisfy $X=Y$, and the only admissible measure $\nu \in \mathcal{P}_2$ is $\nu=m$. Hence, $\tilde{U}_1^\ep\left(t, m\right)=G(m)$. The same conclusion holds for the value function defined on the right-hand side of \eqref{eqn:Uep1hnew}. Hence, \eqref{eqn:Uep1hnew} is valid when $t=T$.

Suppose $t<T$. By \eqref{eqn:Uep1h}, choosing $\nu=m, (X, X) \in \mathcal{E}_{m, m}$, $ \gamma (s,\cdot) =X$ for all $s \in [t,T]
$, we obtain
\begin{equation}\label{eqn:U1upperbd}
   \tilde{U}_1^\ep\left(t, m\right) \leq \int_{\R^d}\int^T_{t}L\left(\frac{x}{\varepsilon},x, 0,m\right) \, ds\, m(dx)+G(m)\leq K_0(T-t)+ G(m),
\end{equation}
where the second inequality follows from \eqref{eqn:Lmquad}.
Let $\delta \in \left(0, T- t\right)$. Then there exist $\nu_\delta \in \mathcal{P}_2, (X^\delta,Y^\delta) \in \mathcal{E}_{m,\nu_\delta}, \gamma^\delta\in \Gamma(X^\delta,Y^\delta)$ such that $\gamma^\delta \left(t, \cdot\right)=X^\delta$, $\gamma^\delta \left(T, \cdot\right)=Y^\delta$, and
\eqref{eqn:U1opdel} holds. By \eqref{eqn:Lmquad},
\begin{equation} \label{eqn:U1oplowerbddel}
\mathbb{E}\left[
\int_{t}^T
L\left(
\frac{\gamma^\delta_s}{\e},
\gamma^\delta_s,
\dot{\gamma}^\delta_s,
\mathcal{L}(\gamma^\delta_s)
\right)\, ds
\right]  \geq \frac{1}{4C_{2, H}}\mathbb{E}\left[\int_{t}^T|\dot{\gamma}^\delta_s|^2 \,ds\right]- K_0(T-t).
\end{equation}
Combining \eqref{eqn:U1upperbd}, \eqref{eqn:U1opdel}, \eqref{eqn:U1oplowerbddel}, we obtain
\[
\frac{1}{4C_{2, H}}\mathbb{E}\left[\int_{t}^T|\dot{\gamma}^\delta_s|^2 \,ds\right] \leq (2K_0+1)(T-t)+ G(m)-G(\nu_\delta).
\]
Using \eqref{terminalassumption}, this implies,
\begin{equation}\label{eqn:velbdimp}
\begin{aligned}
\mathbb{E}\left[\int_{t}^T|\dot{\gamma}^\delta_s|^2 \,ds\right]
& \leq 4C_{2, H}(2K_0+1)(T-t)+ 4C_{2, H}C_G{\bf d}_2(m,\nu_{\delta})\\
& \leq 4C_{2, H}(2K_0+1)(T-t)+ 4C_{2, H}C_G \left(\mathbb{E}\left|X^\delta-Y^\delta\right|^2\right)^\frac{1}{2}.
\end{aligned}
\end{equation}
By H{\"o}lder’s inequality, for each $\omega \in \Omega$,

\[
    \left|X^\delta(\omega)-Y^\delta(\omega)\right|\leq \int_{t}^T \left|\dot{\gamma}^\delta(t,\omega)\right|ds \leq \left(\int_{t}^T\left|\dot{\gamma}^\delta (t,\omega)\right|^2
ds\right)^\frac{1}{2}(T-t)^\frac{1}{2}.
\]
Combining this estimate with \eqref{eqn:velbdimp} yields
\[
    \frac{\mathbb{E}\left|X^\delta-Y^\delta\right|^2} {(T-t)}\leq 4C_{2, H}(2K_0+1)(T-t)+ 4C_{2, H}C_G \left(\mathbb{E}\left|X^\delta-Y^\delta \right|^2\right)^\frac{1}{2},
\]
which implies
\begin{equation}\label{eqn:M_0def}
    \mathbb{E}\left|X^\delta-Y^\delta \right|^2 \leq M_0 (T-t)^2,
\end{equation}
for some constant $M_0=M_0(C_{2,H}, C_G, K_0)>0$. Since $\delta>0$ is arbitrary, part (1) follows.

Finally, \eqref{eqn:velbdimp} and \eqref{eqn:M_0def} imply that
\[
\mathbb{E}\left[\int_{t}^T|\dot{\gamma}^\delta_s|^2 \,ds\right] \leq M_1 (T-t),   \]
    for some constant $M_1=M_1(C_{2, H}, C_G, K_0, M_0)=M_1(C_{2, H}, C_G, K_0)>0$, which completes the proof of part (2).
\end{proof}
We now prove the equivalence of $U_1^\ep$ and $\tilde{U}_1^\ep$.

\begin{proposition}\label{prop:UeqU2}
     Assume (A1) and (A2), and let $\e>0$. Then $U_1^\ep=\tilde{U}_1^\ep$.
\end{proposition}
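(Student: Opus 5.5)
The plan is to prove the two inequalities $\tilde U_1^\e\le U_1^\e$ and $U_1^\e\le \tilde U_1^\e$ by showing that both problems optimize the same functional over essentially the same family of random absolutely continuous paths. Throughout, $t<T$; the case $t=T$ is immediate, since both values equal $G(m)$, exactly as in the proof of Proposition \ref{prop:Uepm2new}.

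\emph{Step 1: $\tilde U_1^\e\le U_1^\e$.} Fix $\alpha\in\mathcal{A}_1$ and an $\mathcal{F}_t$-measurable $X_t$ with $\mathcal{L}(X_t)=m$, independent of $B$. Define $\gamma(s,\omega):=X_t(\omega)+\int_t^s\alpha(r,\omega)\,dr$. The following properties need to be checked.
\begin{enumerate}
\item Since $\mathbb{E}\int_t^T|\alpha_s|^2ds<\infty$, the set $N:=\{\omega:\int_t^T|\alpha(r,\omega)|^2dr=\infty\}$ is null. Redefining $\alpha(\cdot,\omega)=0$ on $N$ changes neither the cost nor the laws. After this modification, $\gamma(\cdot,\omega)$ is absolutely continuous for every $\omega$.
\item Joint $\mathcal{B}([t,T])\otimes\mathcal{F}$-measurability of $\gamma$ follows from Fubini, because the integral is continuous in $s$ and measurable in $\omega$.
\item $\dot\gamma_s=\alpha_s$ for a.e. $s$.
\end{enumerate}
Setting $X=X_t$, $Y=\gamma_T$ and $\nu=\mathcal{L}(Y)$, we have $Y\in L^2$ because $\mathbb{E}|Y-X|^2\le (T-t)\mathbb{E}\int|\alpha|^2$. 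Hence $(X,Y)\in\mathcal{E}_{m,\nu}$, $\gamma\in\Gamma(X,Y)$, and the costs coincide, with $G(\mathcal{L}(X_T))=G(\nu)$. Taking the infimum over $\alpha$ and $X_t$ gives the inequality.

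\emph{Step 2: $U_1^\e\le\tilde U_1^\e$.} Take $\nu$, $(X,Y)\in\mathcal{E}_{m,\nu}$ and $\gamma\in\Gamma(X,Y)$ with finite cost. By \eqref{eqn:Lmquad}, a finite cost forces $\mathbb{E}\int_t^T|\dot\gamma_s|^2ds<\infty$. The issue is that $X$ need not be $\mathcal{F}_t$-measurable or independent of $B$, whereas $U_1^\e$ requires this. Only the joint law of the path matters, so I will transfer $\gamma$ to the right initial condition.
\begin{enumerate}
\item Regard $\gamma$ as a random variable with values in the Polish space $C([t,T];\R^d)$; measurability follows from joint measurability. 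Its law $P$ lives on $AC$ paths with $\int|\dot\gamma|^2<\infty$, which is a Borel subset, and its time-$t$ marginal is $m$.
\item Since $\mathcal{F}_t$ contains a uniform random variable independent of $X_t$ and $B$ (the space is rich and atomless; alternatively use $\mathcal{F}_0$, which carries $\xi$), disintegrate $P$ with respect to the initial point. By the standard transfer/randomization lemma, there is a measurable map $\Phi$ such that $\tilde\gamma:=\Phi(X_t,U)$ has law $P$ and $\tilde\gamma(t)=X_t$ a.s.
\item Define $\alpha_s:=\dot{\tilde\gamma}_s$. A measurable version is $\limsup_{h\to0}(\tilde\gamma_{s+h}-\tilde\gamma_s)/h$, set to $0$ where this is not finite. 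Then $\alpha$ is $\mathcal{B}\otimes\mathcal{F}$-measurable, so $\alpha\in\mathcal{A}_1$, and the controlled state is $X_s=\tilde\gamma_s$.
\item Because the functional $\eta\mapsto\int_t^T L(\eta_s/\e,\eta_s,\dot\eta_s,\mu_s)\,ds$ is Borel on path space, with the flow $\mu_s=\mathcal{L}(\gamma_s)=\mathcal{L}(\tilde\gamma_s)$ fixed, and terminal laws agree, the two costs are equal.
\end{enumerate}

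The main obstacle I expect is Step 2: making the transfer of the path law to an $\mathcal{F}_t$-measurable initial condition rigorous, together with the measurability of the cost functional on path space. This uses Borel measurability of $(y,x,v)\mapsto L$ (continuity from Proposition \ref{prop:Lmprop}) and of the derivative map. The uniform bounds from Proposition \ref{prop:Uepm2new} are not needed for the equality itself, only finiteness.
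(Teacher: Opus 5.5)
Your proof is correct and has the same skeleton as the paper's. Step 1 is identical: integrate the control to get a path with $\dot\gamma=\alpha$, then take $X=X_t$, $Y=X_T$ and $\nu=\mathcal{L}(X_T)$. Step 2 also takes the control to be a measurable version of $\dot\gamma$.

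The real difference is how Step 2 obtains an admissible initial condition. The paper takes a $\delta$-optimal $(\nu,X,Y,\gamma)$ with $(X,Y)\in\mathcal{E}^\ast_{m,\nu}$ and sets $\alpha=\dot\gamma$, which is measurable by the cited result of Moshe. It then uses $X$ itself as the starting point, invoking the velocity bound of Proposition \ref{prop:Uepm2new}(2) to get $\alpha\in\mathcal{A}_1$. It never checks that $X$, which is only an element of $L^2(\Omega,\mathcal{F};\R^d)$, is $\mathcal{F}_t$-measurable and independent of $B$, as the definition of $U_1^\e$ requires.

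Your transfer argument fills exactly this point: you push the law of $\gamma$ to $C([t,T];\R^d)$, disintegrate in the initial point, and set $\tilde\gamma=\Phi(X_t,U)$. The cost is one measure-theoretic step, which is routine here because the cost functional is Borel on path space. You also observe that a finite cost together with the lower bound in \eqref{eqn:Lmquad} already gives $\mathbb{E}\int_t^T|\dot\gamma_s|^2ds<\infty$. So Proposition \ref{prop:Uepm2new} is not needed for this equality; the paper uses it because the restriction to $\mathcal{E}^\ast_{m,\nu}$ and the constant $M_1$ are reused later. Your null-set modification in Step 1 likewise makes precise the paper's unchecked claim that $\gamma(\cdot,\omega)$ is absolutely continuous for every $\omega$.

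One small correction. The uniform $U$ only needs to be independent of $X_t$, not of $B$. It also need not be $\mathcal{F}_t$-measurable, since controls in $\mathcal{A}_1$ are only required to be $\mathcal{B}([t,T])\otimes\mathcal{F}$-measurable. You can take $U$ from an increment of $B$, which is independent of $X_t$, or choose both $X_t$ and $U$ as functions of $\xi$.
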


\begin{proof}
Step 1: We show $\tilde{U}_1^\ep\leq U_1^\ep$.

Let $\delta>0$. Then there exist $\alpha \in \mathcal{A}_1$ and an $\mathcal{F}_t$-measurable random variable $X_t$ independent of $B$, with $\mathcal{L}(X_t)=m$, such that for the process $X_s$ that satisfies $X_s=X_t+\int_t^s\alpha_r \,dr$ a.s.,
\[
\mathbb{E}\left[\int^T_{t}L\left(\frac{X_s}{\varepsilon},X_s,\alpha_s, \mathcal{L}(X_s)\right)\, ds \right]+G(\mathcal{L}(X_T)) \leq  U_1^\e(t, m)+\delta.
\]
Define $\gamma:[t, T] \times \Omega \to \R^d$ by $\gamma(s,\omega) =X_t(\omega)+\int_t^s \alpha(r,\omega)\,ds$, which is $\mathcal{B}([t,T]) \otimes\mathcal{F}$-measurable. For each $\omega\in \Omega$, $\gamma(\cdot, \omega) \in \mathrm{AC}([t,T];\R^d)$ and $\dot{\gamma}(s, \omega)=\alpha(s,\omega)$ for $s\in[t,T]$. Set $X:=X_{t}$, $Y:=X_T$, and $\nu: = \mathcal{L}(X_T)$. Note that $X, Y \in L^2(\Omega,\mathcal{F};\mathbb{R}^d)$ since $m\in \mathcal{P}_2$ and $\alpha\in \mathcal{A}_1$.Therefore, $(\nu, X,Y, \gamma)$ belong to the admissible sets of $\tilde{U}_1^\ep\left(t, m\right)$ in \eqref{eqn:Uep1h}, and
\[
\begin{aligned}
\tilde{U}_1^\ep(t,m)\leq &\; \mathbb{E}\left[
\int_{t}^T
L\left(
\frac{\gamma_s}{\e},
\gamma_s,
\dot{\gamma}_s,
\mathcal{L}(\gamma_s)
\right)\, ds
\right] +G(\nu) \\
= &\; \mathbb{E}\left[\int^T_{t}L\left(\frac{X_s}{\varepsilon},X_s,\alpha_s, \mathcal{L}(X_s)\right)\, ds \right]+G(\mathcal{L}(X_T)) \leq   U_1^\ep(t, m)+\delta.
\end{aligned}
\]
Letting $\delta \to 0$ yields $\tilde{U}_1^\ep\leq U_1^\ep$.

\noindent
Step 2: We show $U^\ep_1\leq \tilde{U}_1^\ep$.

If $t=T$, then $U^\ep_1(t,m)=G(m)=\tilde{U}_1^\ep\left(t, m\right)$.

Suppose $t<T$. Let $\delta \in \left(0, T- t\right)$. Then there exist $\nu \in \mathcal{P}_2, (X, Y)\in \mathcal{E}^\ast_{m,\nu}, \gamma\in \Gamma(X, Y)$ such that $\gamma \left(t, \cdot\right)=X$, $\gamma \left(T, \cdot\right)=Y$, and
\[
\mathbb{E}\left[
\int_{t}^T
L\left(
\frac{\gamma_s}{\e},
\gamma_s,
\dot{\gamma}_s,
\mathcal{L}(\gamma_s)
\right)\, ds
\right]  +G(\nu) \leq \tilde{U}_1^\ep(t, m)+\delta.
\]
For each $\omega \in \Omega$, since $\gamma(\cdot,\omega)\in \text{AC}([t,T];\mathbb{R}^d)$, $\dot{\gamma}(s,\omega)$ exists for almost every $s \in [t, T]$. Define $\alpha:[t, T] \times \Omega \to \R^d$ by
\[
\alpha(s,\omega)=\left\{
\begin{aligned}
&\dot{\gamma}(s,\omega),\qquad \text{if the derivative exists},\\
&0,\qquad \qquad \quad \, \, \text{otherwise}.
\end{aligned}
\right.
\]
Then $\alpha$ is $\mathcal{B}([t, T]) \otimes \mathcal{F}$-measurable (see, for example, \cite[Theorem 1]{Moshe_1977}). By Proposition~\ref{prop:Uepm2new},

\[
\mathbb{E}\left[\int_{t}^T\left|\alpha_s\right|^2 \,ds\right]=\mathbb{E}\left[\int_{t}^T\left|\dot{\gamma}_s\right|^2\,
ds \right]\leq C,
\]
for some constant $C=C(C_{2,H}, C_G, K_0, T-t)>0$. Hence, $\alpha \in \mathcal{A}_1$. Now define $X_s(\omega):=\gamma (s, \omega)$ for $s \in [t,T]$. Then $X_s$ satisfies $X_s(\omega)=X(\omega) + \int_t^s\alpha(s,\omega)\,ds$ and $\mathcal{L}(X_T)= \mathcal{L}(\gamma_T)=\mathcal{L}(Y)=\nu$. Therefore,
\[
\begin{aligned}
U^\e_1(t, m) &\leq  \mathbb{E}\left[\int^T_{t}L\left(\frac{X_s}{\varepsilon},X_s,\alpha_s, \mathcal{L}(X_s)\right)\, ds \right]+G(\mathcal{L}(X_T))\\
&=\mathbb{E}\left[
\int_{t}^T
L\left(
\frac{\gamma_s}{\e},
\gamma_s,
\dot{\gamma}_s,
\mathcal{L}(\gamma_s)
\right)\, ds
\right] +G(\nu) \leq \tilde{U}_1^\ep(t, m)+\delta.
\end{aligned}
\]
Letting $\delta \to 0$ yields $U_1^\ep\leq \tilde{U}_1^\ep$.
\end{proof}

\nd
In the definition of $\tilde{U}^\e_1$, the infimum over $\gamma$ can be rewritten as the cost function $h$. This opens the door for applying the techniques developed in finite dimensional case~\cite{TranYu2021}. Specifically, with assumptions (A1'), (A2) instead of (A1), (A2), we further reformulate $U_1^\e$ in terms of the cost function $h^\ep$ in the following lemma.

\begin{lemma}\label{lem:Uep1costh}
     Assume (A1') and (A2), and let $\e>0$. Then
\begin{equation}\label{eqn:Uep1costh}
U_1^\ep(t,m)= \inf_{\nu \in \mathcal{P}_2} \left\{
\inf_{(X, Y) \in \mathcal{E}^\ast_{m,\nu}}
\mathbb{E}[h^\e(t,T,X,Y)]+ G(\nu)\right\},
\end{equation}
where $\mathcal{E}_{m,\nu}^\ast$ is defined in \eqref{eqn:estar}, and $h^\varepsilon$ is given in Definition~\ref{def:costf}.
\end{lemma}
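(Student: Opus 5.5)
By Proposition~\ref{prop:UeqU2} we have $U_1^\e=\tilde U_1^\e$, and by Proposition~\ref{prop:Uepm2new}(1) we may restrict to pairs $(X,Y)\in\mathcal{E}^\ast_{m,\nu}$. Under (A1') the Lagrangian no longer depends on $\gamma_s$ or on $\mathcal{L}(\gamma_s)$, so the inner quantity in \eqref{eqn:Uep1hnew} is
\[
\inf_{\gamma\in\Gamma(X,Y)}\mathbb{E}\Big[\int_t^T L\big(\tfrac{\gamma_s}{\e},\dot\gamma_s\big)ds\Big].
\]
It therefore suffices to show that, for each fixed $(X,Y)\in\mathcal{E}^\ast_{m,\nu}$, this infimum equals $\mathbb{E}[h^\e(t,T,X,Y)]$. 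We prove the two inequalities separately, and we treat the case $t=T$ trivially as in the previous proofs.

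\emph{Lower bound.} Fix $\gamma\in\Gamma(X,Y)$ and $\omega\in\Omega$. The path $\gamma(\cdot,\omega)$ is absolutely continuous and joins $X(\omega)$ to $Y(\omega)$. Hence
\[
\int_t^T L\big(\gamma(s,\omega)/\e,\dot\gamma(s,\omega)\big)\,ds\ \ge\ h^\e(t,T,X(\omega),Y(\omega)).
\]
The right-hand side is measurable in $\omega$ because of the continuity in Proposition~\ref{prop:mmeas}(2). The left-hand side is bounded below by $-K_0(T-t)$, so its expectation makes sense. Taking expectations and then the infimum over $\gamma$ gives the inequality ``$\ge$''.

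\emph{Upper bound.} This direction is the main obstacle: we must select near-minimizing paths $\omega\mapsto\gamma^\omega$ in a $\mathcal{B}([t,T])\otimes\mathcal{F}$-measurable way. We plan to avoid measurable-selection theorems by constructing the selection on a grid.
\begin{itemize}
\item Fix $\eta>0$ and a countable partition of $\R^d\times\R^d$ into Borel cubes $Q_k$ of side $\rho$, with centers $(x_k,y_k)$.
\item For each $k$, take a minimizer $\gamma_k^\ast$ for $h^\e(t,T,x_k,y_k)$. It exists as noted in the proof of Proposition~\ref{prop:mmeas}.
\item For $(X(\omega),Y(\omega))\in Q_k$, define $\gamma(\cdot,\omega)$ exactly like $\tilde\gamma$ in the proof of Proposition~\ref{prop:mmeas}. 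That is, perturb $\gamma_k^\ast$ linearly on $[t,t+\delta]$ and on $[T-\delta,T]$, with $\delta=(T-t)/4$, so that the path starts at $X(\omega)$ and ends at $Y(\omega)$.
\end{itemize}
This $\gamma$ is jointly measurable: on each event $\{(X,Y)\in Q_k\}$ it is an affine function of $(X(\omega),Y(\omega))$ with coefficients that are continuous in $s$. Hence $\gamma\in\Gamma(X,Y)$.

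The cost estimate in that proof, combined with \eqref{eqn:mlip}, shows that on $Q_k$ the cost of $\gamma(\cdot,\omega)$ exceeds $h^\e(t,T,X(\omega),Y(\omega))$ by at most
\[
C_\e\,\rho\,\big(1+\rho+|X(\omega)-Y(\omega)|\big),
\]
where $C_\e$ depends on $\e$ and $T-t$. Since $\mathbb{E}|X-Y|^2<\infty$, taking expectations gives
\[
\mathbb{E}\Big[\int_t^T L\big(\tfrac{\gamma_s}{\e},\dot\gamma_s\big)ds\Big]\ \le\ \mathbb{E}[h^\e(t,T,X,Y)]+C_\e\,\rho\,\big(1+\|X-Y\|_2\big).
\]
Letting $\rho\to0$ yields ``$\le$''. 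The dependence of $C_\e$ on $\e$ is harmless, because $\e$ is fixed throughout this lemma.

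Finally, we substitute the resulting identity into \eqref{eqn:Uep1hnew} and take the infimum over $(X,Y)\in\mathcal{E}^\ast_{m,\nu}$ and over $\nu$. This gives \eqref{eqn:Uep1costh}.
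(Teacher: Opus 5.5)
Your proof is correct and follows the paper's argument. The lower bound is the same pathwise comparison. For the upper bound, you discretize the endpoints on a grid, take exact minimizers at grid points, and patch them to the true endpoints, so measurability comes for free; this is the same idea as the paper's Lemma~\ref{lem:measapp}. The only difference is the patching step. The paper reconnects via a time-reparametrization trick: it speeds up on a short low-energy subinterval to free time for linear connectors, which gives an error of order $n^{-1/2}(1+|y-z|^2)$. You instead reuse the additive linear endpoint perturbation from the proof of Proposition~\ref{prop:mmeas}, which is simpler and gives an error of order $\rho\,(1+|X-Y|)$; the $\varepsilon$-dependence of the constant is harmless in both versions since $\varepsilon$ is fixed.
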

Note that the right-hand side of \eqref{eqn:Uep1costh} is well defined since $h^\e(t,T,\cdot,\cdot)$ is Borel measurable by Proposition~\ref{prop:mmeas}.
\begin{proof}
Denote the right-hand side of \eqref{eqn:Uep1costh} by
\[
V^\ep(t,m):=\inf_{\nu \in \mathcal{P}_2} \left\{
\inf_{(X, Y) \in \mathcal{E}^\ast_{m,\nu}}
\mathbb{E}[h^\e(t,T,X,Y)]+ G(\nu)\right\}.
\]
By Proposition \ref{prop:UeqU2}, with assumptions (A1') and (A2),
\begin{equation}\label{eqn:Uep1sim}
U_1^\e(t,m)
= \inf_{\nu \in \mathcal{P}_2} \left\{ \inf_{(X, Y) \in \mathcal{E}^\ast_{m,\nu}} \left\{ \inf_{\gamma \in \Gamma(X, Y)} \mathbb{E}\left[\int_{t}^T L\left( \frac{\gamma_s}{\e}, \dot{\gamma}_s \right)\, ds \right] \right\}+G(\nu) \right\}.
\end{equation}
By the definition of $\Gamma (X,Y)$ and $h^\ep(t,T,X,Y)$, for any $\gamma \in \Gamma(X,Y)$ and each fixed $\omega \in \Omega$,
\[
\int_{t}^T L\left( \frac{\gamma(s,\omega)}{\e}, \dot{\gamma}_s(s,\omega) \right)\, ds  \geq h^\e(t,T,X(\omega),Y(\omega)).
\]
Hence, $U^\e_1(t,m)\geq V^\e(t,m)$. It remains to prove the reverse inequality.

If $t=T$, then any admissible pair ${(X, Y) \in \mathcal{E}^\ast_{m,\nu}}$ satisfies $X=Y$, which implies the only admissible measure $\nu\in \mathcal{P}_2$ is $\nu=m$. Therefore, $U^\e_1(T,m)=G(m)=V^\e(T,m)$.

Suppose $t<T$. Let $\delta \in \left(0, T- t\right)$. Then there exist $\nu \in \mathcal{P}_2, (X,Y)\in \mathcal{E}^\ast_{m,\nu}$ such that
\[
\mathbb{E}\left[h^\e\left(t, T, X, Y\right)\right]+G(\nu) \leq V^\e(t, m)+\frac{\delta}{2}.
\]
Since $(X,Y)\in \mathcal{E}^\ast_{m, \nu}$, $\mathbb{E}\left|X-Y\right|^2 \leq M_0 (T-t)^2$ for some constant $M_0=M_0(C_{2,H}, C_G, K_0)>0$. Lemma \ref{lem:measapp} implies that there exists a Borel measurable path $\gamma:[t,T]\times\mathbb{R}^d\times\mathbb{R}^d\to\mathbb{R}^d$ such that
\[
 \gamma(\cdot, x,y) \in \mathrm{AC}([t, T];\R^d), \quad  \gamma(t,x,y)=x, \quad \gamma(T,x,y)=y, \qquad \text{for all }x, y \in \R^d,
\]
and
\[
\mathbb{E}\left[\int_{t}^{T}
L \left(\frac{\gamma(s,X,Y)}{\varepsilon}, \dot{\gamma}(s,X,Y)\right)\,ds\right] \leq
\mathbb{E} \left[h^\e\left(t, T, X, Y\right)\right]+ \frac{\delta}{2}.
\]
Moreover, $(t,\omega) \mapsto \gamma(t,X(\omega),Y(\omega))$ is $[t,T]\otimes \mathcal{F}$-measurable and admissible for $U^\e_1(t, m)$. Consequently,
\[
\begin{aligned}
U^\e_1(t,m) &\leq \mathbb{E}\left[\int_{t}^{T}
L \left(\frac{\gamma(s,X,Y)}{\varepsilon}, \dot{\gamma}(s,X,Y)\right)\,ds\right]+G\left(\nu\right)\\
& \leq \mathbb{E} \left[h^\e\left(t, T, X, Y\right)\right] + G\left(\nu\right) +\frac{\delta}{2} \leq V^\e (t, m)+\delta.
\end{aligned}
\]
Since $\delta>0$ is arbitrary, the conclusion follows.
\end{proof}

\subsection{Equivalence of $U^\e, U^\e_1$, and $U^\e_2$}\label{equivalence}
We begin with a localization estimate that will be used repeatedly in the proof of $U^\e=U^\e_1=U^\e_2$. The main point is that, along any admissible curve $\gamma$ whose velocity is bounded in $L^2$ in expectation, one may freeze the slow spatial variable and the measure argument on each subinterval of a uniform time partition at a total cost of order $O(N^{-1})$. More precisely, if $[t,T]$ is divided into $N$ subintervals and, on each $[t_k,t_{k+1}]$, the variables $x$ and $m$ in the Lagrangian are replaced by $\gamma_{t_k}$ and $\mathcal L(\gamma_{t_k})$, respectively, then the total freezing error is bounded by $C(T-t)^2/N$.

The second part of the lemma identifies the infimum of the corresponding frozen cost. Since the frozen problem decouples across the subintervals and across sample points $\omega$, its value is given by the sum of the costs $h^\e$ between the endpoints $\gamma_{t_k}$ and $\gamma_{t_{k+1}}$. Moreover, one can choose measurable nearly minimizing curves on each subinterval such that the resulting concatenated admissible path still has bounded $L^2$ velocity in expectation. This allows the freezing estimate to be applied not only to the original curve $\gamma$, but also to these piecewise nearly optimal paths.

\begin{lemma} \label{lem:fixest}
    Let $\e >0, t \in[0, T) ,m, \nu \in \mathcal{P}_2, (X,Y) \in \mathcal{E}^\ast_{m,\nu}$, and $\gamma \in \Gamma(X,Y)$ with
    \[
    \mathbb{E}\left[\int_{t}^T|\dot{\gamma}_s|^2 \,ds\right] \leq M(T-t),
    \]
    for some constant $M>0$. Let $N \in \mathbb{N}$, $\Delta t := (T-t)/N$, and $t_k := t + k\Delta t$ for $k=0,\dots,N$. Then the following hold.

    \nd
    (1) There exists a constant $C=C(C_{1,H}, C_H, K_0)>0$ such that
    \[
\sum_{k=0}^{N-1}\mathbb{E}\left[\int_{t_k}^{t_{k+1}} \left|L\left( \frac{\gamma_s}{\e}, \gamma_s, \dot{\gamma}_s, \mathcal{L}(\gamma_s) \right)-
L\left(\frac{\gamma_s}{\ep}, \gamma_{t_k}, \dot{\gamma}_s, \mathcal{L}(\gamma_{t_k})\right)\right|\,ds\right] \leq \frac{C(1+M)(T-t)^2}{N}.
    \]

\nd
(2) Define $\Gamma(X,Y;\gamma)
:=
\{\eta\in \Gamma(X,Y) : \eta_{t_k}=\gamma_{t_k}\,\, \text{a.s. for all } k=0,\dots,N\}$.  For $\eta \in \Gamma(X,Y;\gamma)$, define
\[
J_\e(\eta)
:=
\sum_{k=0}^{N-1}
\mathbb E\left[
\int_{t_k}^{t_{k+1}}
L\left(\frac{\eta_s}{\varepsilon},\gamma_{t_k},\dot{\eta}_s,\mathcal{L}(\gamma_{t_k})\right)\,ds\right].
\]
Then
\[
\inf_{\eta\in\Gamma(X,Y;\gamma)} J_\varepsilon(\eta)
=
\sum_{k=0}^{N-1}
\mathbb E \left[
h^\e\left(\gamma_{t_k},\mathcal{L}(\gamma_{t_k});t_k,t_{k+1},
\gamma_{t_k},\gamma_{t_{k+1}}\right)\right].
\]
Moreover, for every $\delta\in(0,T-t)$, there exists a $\delta$-minimizer
$\eta^\delta\in\Gamma(X,Y;\gamma)$ of $J_\varepsilon$ such that
\[
\eta^\delta(s, \omega)
=
\eta^\delta_k\left(s,\gamma_{t_k}(\omega),\gamma_{t_{k+1}}(\omega)\right),
\qquad s\in[t_k,t_{k+1}],
\]
where each $\eta^\delta_k:[t_k,t_{k+1}]\times\mathbb R^d\times\mathbb R^d\to\mathbb R^d$ is Borel measurable and satisfies
\[
\eta^\delta_k(\cdot,x,y)\in \mathrm{AC}([t_k,t_{k+1}];\mathbb R^d),
\quad
\eta^\delta_k(t_k,x,y)=x,\quad
\eta^\delta_k(t_{k+1},x,y)=y,\quad \text{ for all }x, y\in \mathbb{R}^d.
\]
Furthermore, ther exists a constant $\tilde{M}=\tilde{M}(C_{1,H},C_{2,H},K_0,M)>0$ such that
\[
\mathbb E\left[
\int_t^T
\left|\dot{\eta}^\delta_s\right|^2 \,ds\right]
\le \tilde{M} (T-t),
\]
and
\begin{equation}\label{eq:freeze-est-eta}
\sum_{k=0}^{N-1}\mathbb{E}\left[\int_{t_k}^{t_{k+1}} \left|L\left( \frac{\eta^\delta_s}{\e}, \eta^\delta_s, \dot{\eta}^\delta_s, \mathcal{L}(\eta^\delta_s) \right)-
L\left(\frac{\eta^\delta_s}{\ep}, \gamma_{t_k}, \dot{\eta}^\delta_s, \mathcal{L}(\gamma_{t_k})\right)\right|\,ds\right] \leq \frac{C(1+\tilde{M})(T-t)^2}{N}.
\end{equation}
\end{lemma}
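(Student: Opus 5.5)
For part (1), I would use the Lipschitz estimate \eqref{eqn:LmLip} of Proposition~\ref{prop:Lmprop} on each subinterval. For $s\in[t_k,t_{k+1}]$ the fast variable and the velocity agree in the two Lagrangians, so
\[
\Bigl|L\Bigl(\tfrac{\gamma_s}{\e},\gamma_s,\dot\gamma_s,\mathcal L(\gamma_s)\Bigr)-L\Bigl(\tfrac{\gamma_s}{\e},\gamma_{t_k},\dot\gamma_s,\mathcal L(\gamma_{t_k})\Bigr)\Bigr|\le C_L(1+2|\dot\gamma_s|)\bigl(|\gamma_s-\gamma_{t_k}|+{\bf d}_2(\mathcal L(\gamma_s),\mathcal L(\gamma_{t_k}))\bigr).
\]
Using the coupling $(\gamma_s,\gamma_{t_k})$ gives ${\bf d}_2(\mathcal L(\gamma_s),\mathcal L(\gamma_{t_k}))\le \|\gamma_s-\gamma_{t_k}\|_2$. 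Cauchy--Schwarz in time gives $|\gamma_s-\gamma_{t_k}|\le (\Delta t)^{1/2}\bigl(\int_{t_k}^{t_{k+1}}|\dot\gamma_r|^2dr\bigr)^{1/2}$, and therefore also $\|\gamma_s-\gamma_{t_k}\|_2^2\le \Delta t\,\mathbb E\int_{t_k}^{t_{k+1}}|\dot\gamma_r|^2dr=:\Delta t\,a_k$. I then take expectation, integrate over $[t_k,t_{k+1}]$, and apply Cauchy--Schwarz in $(s,\omega)$. This bounds the $k$-th term by $C\,(\Delta t+ (\Delta t)^{1/2}a_k^{1/2})\cdot(\Delta t\, a_k)^{1/2}$, up to constants, which is at most $C(\Delta t)^{3/2}a_k^{1/2}+C\Delta t\, a_k$. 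Summing over $k$ and using $\sum_k a_k\le M(T-t)$ together with $\sum_k a_k^{1/2}\le \sqrt N\,(M(T-t))^{1/2}$, the total is at most $C(\Delta t)^{3/2}\sqrt{N}\sqrt{M(T-t)}+C\Delta t\, M(T-t)\le C(1+M)(T-t)^2/N$.

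For part (2), the lower bound is pathwise. For any $\eta\in\Gamma(X,Y;\gamma)$ and each $\omega$, the restriction $\eta(\cdot,\omega)|_{[t_k,t_{k+1}]}$ is admissible for $h^\e(\gamma_{t_k}(\omega),\mathcal L(\gamma_{t_k});t_k,t_{k+1},\gamma_{t_k}(\omega),\gamma_{t_{k+1}}(\omega))$, with $c=\gamma_{t_k}(\omega)$ and $\mu=\mathcal L(\gamma_{t_k})$ frozen. This gives $J_\e(\eta)\ge\sum_k\mathbb E[h^\e(\cdots)]$, where measurability follows from Proposition~\ref{prop:mmeas}(1).

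For the upper bound, I would invoke a measurable selection analogous to Lemma~\ref{lem:measapp}, applied to each subinterval with parameters $(c,x,y)=(x,x,y)$ and fixed $\mu=\mathcal L(\gamma_{t_k})$. This yields Borel maps $\eta^\delta_k(s,x,y)$ that are $\delta/N$-optimal in expectation with respect to the law of $(\gamma_{t_k},\gamma_{t_{k+1}})$. Concatenating them gives $\eta^\delta$, which is jointly measurable, absolutely continuous in $s$ (the endpoints match), and lies in $\Gamma(X,Y;\gamma)$. It is a $\delta$-minimizer, so the infimum identity follows.

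The velocity bound is where I expect the main work. I would first aim to make each selected curve satisfy, in addition, the pointwise action bound
\[
\int_{t_k}^{t_{k+1}}L\bigl(\cdot,\dot\eta^\delta_k\bigr)\le \frac{|x-y|^2}{4C_{1,H}\Delta t}+K_0\Delta t+\frac{\delta}{N},
\]
for instance by selecting near-minimizers only among curves whose action is below the straight-line bound \eqref{eqn:hupbd}. Then the lower bound in \eqref{eqn:Lmquad} gives
\[
\int_{t_k}^{t_{k+1}}|\dot\eta^\delta|^2\le 4C_{2,H}\Bigl(\frac{|\gamma_{t_{k+1}}-\gamma_{t_k}|^2}{4C_{1,H}\Delta t}+2K_0\Delta t+\frac{\delta}{N}\Bigr).
\]
Since $|\gamma_{t_{k+1}}-\gamma_{t_k}|^2\le\Delta t\int_{t_k}^{t_{k+1}}|\dot\gamma|^2$, summing over $k$ and taking expectation yields $\mathbb E\int_t^T|\dot\eta^\delta|^2\le C(M+K_0+1)(T-t)$, using $\delta<T-t$. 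The obstacle is ensuring that the measurable selection respects the pointwise cap; if Lemma~\ref{lem:measapp} only gives an average bound, I would truncate. On the set where the selected curve is too costly, I would replace it by the straight segment, which is Borel in $(x,y)$ and only lowers the cost. Finally, \eqref{eq:freeze-est-eta} follows by repeating the part (1) argument with $\eta^\delta$ in place of $\gamma$ and $\tilde M$ in place of $M$. This is valid because $\eta^\delta_{t_k}=\gamma_{t_k}$, so the frozen arguments coincide.
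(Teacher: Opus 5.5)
Your proposal is correct and follows the paper's proof in all essential steps: \eqref{eqn:LmLip} plus Cauchy--Schwarz for part (1), the pathwise lower bound, Lemma~\ref{lem:measapp} on each subinterval with $(w,y,z)=(x,x,y)$ followed by concatenation for the upper bound, and rerunning part (1) on $\eta^\delta$ for \eqref{eq:freeze-est-eta}. The only divergence is the velocity bound for $\eta^\delta$, which is easier than you expect.

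The paper needs no pointwise cap. Since $\gamma$ itself belongs to $\Gamma(X,Y;\gamma)$, the upper bound in \eqref{eqn:Lmquad} and $\delta<T-t$ give $J_\e(\eta^\delta)\le J_\e(\gamma)+\delta\le\bigl(\tfrac{M}{4C_{1,H}}+K_0+1\bigr)(T-t)$. The lower bound in \eqref{eqn:Lmquad} gives $J_\e(\eta^\delta)\ge\tfrac{1}{4C_{2,H}}\mathbb{E}\int_t^T|\dot\eta^\delta_s|^2ds-K_0(T-t)$. Hence $\tilde M=4C_{2,H}\bigl(\tfrac{M}{4C_{1,H}}+2K_0+1\bigr)$ follows from near-optimality in expectation alone.

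Your straight-line cap also works, and truncation is not even needed. The error in Lemma~\ref{lem:measapp} is already pointwise, of the form $\tfrac{C}{\sqrt n}(1+|x-y|^2)$, so together with \eqref{eqn:hupbd} it yields the cap up to a constant factor once $n$ is large. Your route buys the stronger pathwise estimate $\int_{t_k}^{t_{k+1}}|\dot\eta^\delta_s|^2ds\le C\bigl(|\gamma_{t_{k+1}}-\gamma_{t_k}|^2/\Delta t+\Delta t\bigr)$, but this is never used later. If you do truncate, you must also check that the action of the selected curve is Borel in $(x,y)$.
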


\begin{proof}
We first prove (1). For each $k \in \left\{0, 1, \cdots, N-1\right\}$, denote
\[
E_k:=\mathbb{E}\left[\int_{t_k}^{t_{k+1}} \left|L\left( \frac{\gamma_s}{\e}, \gamma_s, \dot{\gamma}_s, \mathcal{L}(\gamma_s) \right)-
L\left(\frac{\gamma_s}{\ep}, \gamma_{t_k}, \dot{\gamma}_s, \mathcal{L}(\gamma_{t_k})\right)\right|\,ds\right].
\]
For $s\in[t_k, t_{k+1}]$, by H{\"o}lder's inequality,
\[
 {\bf d}_2\left(\mathcal{L}(\gamma_s), \mathcal{L}(\gamma_
    {t_k})\right) \leq \left(\mathbb{E}|\gamma_s-\gamma_{t_k}|^2 \right)^\frac{1}{2}\leq (\Delta t)^\frac{1}{2} A_k^\frac{1}{2},
\]
where $A_k := \mathbb{E}\left[\int_{t_k}^{t_{k+1}} |\dot{\gamma}_s|^2\,ds\right]$.
Since
\[
\mathbb{E}\left[\int_{t_k}^{t_{k+1}} |\dot{\gamma}_s|\,ds\right]
\le (\Delta t)^\frac{1}{2} A_k^\frac{1}{2},
\qquad
\mathbb{E}\left(\int_{t_k}^{t_{k+1}}|\dot\gamma_s|\,ds\right)^2
\le (\Delta t) A_k,
\]
it follows from \eqref{eqn:LmLip} that
\[
\begin{aligned}
E_k
&\le C_L \mathbb{E}\left[\int_{t_k}^{t_{k+1}}
\left(1+2|\dot\gamma_s|\right)
\left(
|\gamma_s-\gamma_{t_k}|
+ \mathbf d_2\bigl(\mathcal{L}(\gamma_s),\mathcal{L}(\gamma_{t_k})\bigr)
\right)\,ds \right]\\
&\le C_L \mathbb{E}\left[
\left(
t_{k+1}-t_k + 2\int_{t_k}^{t_{k+1}} |\dot\gamma_s|\,ds
\right)
\left(
\int_{t_k}^{t_{k+1}} |\dot\gamma_s|\,ds
+ (\Delta t)^{\frac{1}{2}} A_k^{\frac{1}{2}}
\right)
\right] \\
& \leq C_L (\Delta t )\left(\mathbb{E}\left[\int_{t_k}^{t_{k+1}} |\dot\gamma_s|\,ds\right]\right)
+2C_L \mathbb{E} \left[\int_{t_k}^{t_{k+1}} |\dot\gamma_s|\,ds\right]^2 \\
&\qquad \qquad \qquad \qquad \qquad \qquad + C_L (\Delta t)^{\frac{3}{2}} A_k^{\frac{1}{2}}
+2C_L (\Delta t)^{\frac{1}{2}}
\left(\mathbb{E}\left[\int_{t_k}^{t_{k+1}} |\dot\gamma_s|\,ds\right]\right) A_k^\frac{1}{2}\\
&\leq 4C_L (\Delta t) A_k
+ 2C_L (\Delta t)^{\frac{3}{2}} A_k^\frac{1}{2}.
\end{aligned}
\]
Using \eqref{eqn:gamdelvelbd} and the Cauchy–Schwarz inequality,
\[
\sum_{k=0}^{N-1}A_k\leq M(T-t) ,\qquad\sum_{k=0}^{N-1}A_k^\frac{1}{2} \leq \sqrt{N} \left(\sum_{k=0}^{N-1}A_k\right)^\frac{1}{2}\leq \sqrt{N} \sqrt{M(T-t)}.
\]
Hence,
\[
\sum_{k=0}^{N-1}E_k \leq  \frac{4C_LM(T-t)^2}{N} +   \frac{2C_L \sqrt{M}(T-t)^2}{N}.
\]

We now prove {\rm (2)}. By the definition of $h^\e$, for any $\eta\in \Gamma(X,Y;\gamma)$,
\[
J_\e(\eta)
\ge
\sum_{k=0}^{N-1}
 \mathbb{E}\left[
h^\e\left(\gamma_{t_k}, \mathcal{L}(\gamma_{t_k});t_k,t_{k+1},
\gamma_{t_k},\gamma_{t_{k+1}}\right)\right].
\]
Therefore,
\[
\inf_{\eta\in\Gamma(X,Y;\gamma)}J_\varepsilon(\eta)
\ge
\sum_{k=0}^{N-1}
\mathbb E\left[
h^\varepsilon\left(
\gamma_{t_k},
\mathcal{L}(\gamma_{t_k});
t_k,t_{k+1},
\gamma_{t_k},\gamma_{t_{k+1}}
\right)
\right].
\]

To obtain the reverse inequality, let $\delta>0$.
By Lemma \ref{lem:measapp}, for each $k$ and large $n$, there exists a Borel measurable path $\gamma_n^k:[t_k,t_{k+1}] \times \mathbb{R}^d \times \mathbb{R}^d \to \mathbb{R}^d$ such that
\[
\quad \gamma_n^k(\cdot, x,y) \in \mathrm{AC}([t_k, t_{k+1};\R^d]), \quad \gamma_n^k(t_k,x,y)=x, \quad \gamma_n^k(t_{k+1},x,y)=y,\quad \text{ for all }x, y\in \mathbb{R}^d,
\]
and
\[
\int_{t_k}^{t_{k+1}}
L \left(\frac{\gamma_n^k(s,x,y)}{\e}, x, \dot{\gamma}_n^k(s,x,y),\mathcal{L}(\gamma_{t_k})\right)\,ds
<
h^\e\left(x,\mathcal{L}(\gamma_{t_k});t_k, t_{k+1}, x, y\right)+\frac{C_N}{\sqrt{n}}\left(1+|x-y|^2\right),
\]
for some constant $C_N>0$. Since $ \sum_{k=0}^{N-1}\mathbb{E}\left|\gamma_{t_k}-\gamma_{t_{k_1}}\right|^2 \leq \Delta t \mathbb{E} \left[\int_t^T\left|\dot{\gamma}_s\right|^2 \, ds\right]\leq \frac{M(T-t)^2}{N}$,
\begin{equation}\label{eqn:gamm}
\begin{aligned}
&\sum_{k=0}^{N-1}\mathbb{E}\left[\int_{t_k}^{t_{k+1}}
L \left(\frac{\gamma_n^k(s,\gamma_{t_k},\gamma_{t_{k+1}})}{\varepsilon}, \gamma_{t_k}, \dot{\gamma}_n^k(s,\gamma_{t_k},\gamma_{t_{k+1}}),\mathcal{L}(\gamma_{t_k})\right)\,ds\right]\\
& \qquad \qquad \qquad\qquad\qquad\qquad\leq
\sum_{k=0}^{N-1}\mathbb{E} \left[h^\varepsilon\left(\gamma_{t_k},\mathcal{L}(\gamma_{t_k});t_k, t_{k+1}, \gamma_{t_k}, \gamma_{t_{k+1}}\right)\right]+\frac{C_N}{\sqrt{n}},
\end{aligned}
\end{equation}
for some constant $C_N>0$. Choose $n$ sufficiently large so that $\frac{C_N}{ \sqrt{n}} <\delta$ and define
\[
\eta^\delta(s, \omega):=
\gamma^k_n\left(s,\gamma_{t_k}(\omega),\gamma_{t_{k+1}}(\omega)\right),
\qquad s\in[t_k,t_{k+1}], \,\, k=0,1, \cdots, N-1.
\]
Then $\eta^\delta\in\Gamma(X,Y;\gamma)$ and
\[
\inf_{\eta\in\Gamma(X,Y;\gamma)}
J_\e(\eta) \leq J_\ep(\eta^\delta) \leq \sum_{k=0}^{N-1}\mathbb{E} \left[h^\varepsilon\left(\gamma_{t_k},\mathcal{L}(\gamma_{t_k});t_k, t_{k+1}, \gamma_{t_k}, \gamma_{t_{k+1}}\right)\right]+\delta.
\]
This proves the reverse inequality and also gives the measurable representation of a $\delta$-minimizer.

Since $\gamma \in \Gamma(X,Y; \gamma)$ is an admissible path,
\begin{equation}\label{eqn:gamplugin}
\begin{aligned}
    J_\ep(\eta^\delta) &\leq \inf_{\eta\in \Gamma(X,Y,\gamma)}
J_\ep(\eta)+\delta \leq \sum_{k=0}^{N-1}
\mathbb E \left[
\int_{t_k}^{t_{k+1}} L \left(
\frac{\gamma_s}{\varepsilon},\gamma_{t_k},
\dot{\gamma}_s,
\mathcal{L}(\gamma_{t_k})
\right)\,ds\right] +(T-t)\\
& \leq \frac{1}{4C_{1,H}} \mathbb{E}
\left[\int_{t}^{T} \left|\dot{\gamma}_s\right|^2\,ds\right]+(K_0+1)(T-t) \leq \left(\frac{M}{4C_{1,H}}+K_0+1\right)(T-t),
\end{aligned}
\end{equation}
where the third inequality follows from \eqref{eqn:Lmquad}. On the other hand, again by \eqref{eqn:Lmquad},
\[
J_\ep (\eta^\delta) \geq \sum_{k=0}^{N-1} \left(\frac{1}{4C_{2,H}}
\mathbb{E} \left[
\int_{t_k}^{t_{k+1}}  \left|\dot{\eta}^\delta_s \right|^2 \,ds \right] - K_0  \Delta t\right) \geq \frac{1}{4C_{2,H}} \mathbb{E}
\left[\int_t^{T}  \left|\dot{\eta}^\delta_s\right|^2 \,ds\right]-(T-t)K_0.
\]
Combing with \eqref{eqn:gamplugin}, we have
\[
\mathbb{E}
\left[\int_{t}^{T} \left|\dot{\eta}^\delta_s\right|^2 \,ds \right] \leq \tilde{M}(T-t),
\]
where $\tilde{M}=4C_{2,H}
\left(
\frac{M}{4C_{1,H}}+2K_0+1
\right)$.

Finally, applying the freezing estimate from part {\rm (1)} to the path $\eta^\delta$, with $M$ replaced by $\tilde{M}$, gives \eqref{eq:freeze-est-eta}. The proof is complete.
\end{proof}

We are now ready to prove that $U^\e=U^\e_1$. Although it is not clear whether, for an arbitrary $\nu\in\mathcal P_2$, every coupling $\pi\in\Pi(m,\nu)$ can be realized in the form $\mathcal L((X_t,X_T))$, the admissible controls in $\mathcal A$ generate ``enough'' couplings for the two value functions to coincide in the same sense that the Benamou--Brenier formulation of optimal transport covers all the ``interesting'' couplings.

\begin{proposition}\label{prop:U1eqUeqU2}
Assume (A1) and (A2). Then $U^\e=U^\e_1=U^\e_2$.
\end{proposition}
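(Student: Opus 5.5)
The plan is to close the chain of inequalities between the three value functions. Every $\alpha\in\mathcal{A}_2$ has the form $\alpha_s=\tilde\alpha(s,\xi)$ with $\sigma(\xi)\subset\mathcal{F}_0$, so it is $\mathbb{F}$-progressively measurable; every progressively measurable control is $\mathcal{B}([t,T])\otimes\mathcal{F}$-measurable. Hence $\mathcal{A}_2\subset\mathcal{A}\subset\mathcal{A}_1$, and therefore $U^\e_1\le U^\e\le U^\e_2$. It remains to show $U^\e_2\le U^\e_1$. By Proposition~\ref{prop:UeqU2} we have $U^\e_1=\tilde U^\e_1$, so it suffices to prove $U^\e_2(t,m)\le \tilde U^\e_1(t,m)+\delta+C/N$ for arbitrary $\delta>0$ and $N\in\mathbb{N}$. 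The case $t=T$ is trivial, so assume $t<T$.

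Fix $\delta\in(0,T-t)$. Take a $\delta$-minimizer $(\nu,X,Y,\gamma)$ for $\tilde U^\e_1(t,m)$ with $(X,Y)\in\mathcal{E}^\ast_{m,\nu}$. By Proposition~\ref{prop:Uepm2new}(2), $\mathbb{E}\int_t^T|\dot\gamma_s|^2ds\le M_1(T-t)$. Fix $N$ and the uniform partition $t_k$. Apply Lemma~\ref{lem:fixest} and obtain a path $\eta^\delta$ with the following properties:
\begin{itemize}
\item on $[t_k,t_{k+1}]$ it equals $\eta^\delta_k(s,\gamma_{t_k},\gamma_{t_{k+1}})$ for Borel maps $\eta^\delta_k$;
\item $J_\e(\eta^\delta)\le J_\e(\gamma)+\delta$, using that $\gamma$ itself is admissible in $\Gamma(X,Y;\gamma)$;
\item its velocity satisfies $\mathbb{E}\int_t^T|\dot\eta^\delta_s|^2ds\le\tilde M(T-t)$.
\end{itemize}
Combining part (1) of Lemma~\ref{lem:fixest} for $\gamma$ with \eqref{eq:freeze-est-eta} for $\eta^\delta$ gives
\[
\mathbb{E}\int_t^T L\Big(\tfrac{\eta^\delta_s}{\e},\eta^\delta_s,\dot\eta^\delta_s,\mathcal{L}(\eta^\delta_s)\Big)ds+G(\nu)\le \tilde U^\e_1(t,m)+2\delta+\frac{C(1+M_1+\tilde M)(T-t)^2}{N}.
\]
The key structural point is that $\eta^\delta$ is a fixed Borel functional of the finite-dimensional vector $\Theta:=(\gamma_{t_0},\dots,\gamma_{t_N})$. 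Consequently, the entire cost above, including the law terms $\mathcal{L}(\eta^\delta_s)$ and $\mathcal{L}(\gamma_{t_k})$ and the terminal term $G(\nu)=G(\mathcal{L}(\gamma_{t_N}))$, depends only on the law $\mathcal{L}(\Theta)\in\mathcal{P}_2((\R^d)^{N+1})$.

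Next I realize this law inside $\mathcal{A}_2$. Since $\xi$ is Gaussian, $\mathcal{L}(\xi)$ is atomless on a standard Borel space, and $(\R^d)^{N+1}$ is standard Borel. Therefore there is a Borel map $\Phi:\R^d\to(\R^d)^{N+1}$ with $\mathcal{L}(\Phi(\xi))=\mathcal{L}(\Theta)$. Write $\Phi=(\Phi_0,\dots,\Phi_N)$ and make the following choices:
\begin{itemize}
\item initial condition $X_t:=\Phi_0(\xi)$, which is $\mathcal{F}_0\subset\mathcal{F}_t$-measurable, independent of $B$, and has law $m$;
\item path $\tilde\eta(s):=\eta^\delta_k(s,\Phi_k(\xi),\Phi_{k+1}(\xi))$ for $s\in[t_k,t_{k+1}]$;
\item control $\tilde\alpha(s,z):=\partial_s\eta^\delta_k(s,\Phi_k(z),\Phi_{k+1}(z))$ where this derivative exists, and $0$ otherwise. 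This is Borel in $(s,z)$ by the same measurable-derivative result (\cite{Moshe_1977}) used in Proposition~\ref{prop:UeqU2}.
\end{itemize}
Then $\alpha_s=\tilde\alpha(s,\xi)\in\mathcal{A}_2$, with the $L^2$ bound inherited from the bound on $\eta^\delta$. The state process is $X_s=\tilde\eta(s)$, because the pieces glue continuously at the nodes $t_k$ and start at $X_t$. Since $(\xi\mapsto\tilde\eta)$ has the same joint law as $(\omega\mapsto\eta^\delta)$, the cost of this control equals the left-hand side of the last display. Hence $U^\e_2(t,m)\le\tilde U^\e_1(t,m)+2\delta+C/N$. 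Letting $N\to\infty$ and then $\delta\to0$ yields $U^\e_2\le U^\e_1$.

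The step I expect to be most delicate is the transfer of laws. Controls in $\mathcal{A}_2$ may depend on $\xi$ but not directly on $\omega$ through $X_t$. The argument therefore relies on the infimum defining $U^\e_2$ allowing us to choose $X_t$ as a function of $\xi$, so that $X_t$ and the whole skeleton $\Theta$ are coupled through the single variable $\xi$. One must also check that the mean-field arguments $\mathcal{L}(X_s)$ coincide with $\mathcal{L}(\eta^\delta_s)$. This holds because both are pushforwards of the same law of $\Theta$ under the same Borel maps.
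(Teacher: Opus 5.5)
Your proof is correct and follows essentially the same route as the paper. You take a $\delta$-minimizer of the path formulation, apply Lemma~\ref{lem:fixest} on a uniform partition to get a piecewise near-optimal $\eta^\delta$ that is a Borel functional of the skeleton $(\gamma_{t_0},\dots,\gamma_{t_N})$, and then realize the skeleton's law as a Borel function of $\xi$ to obtain a control in $\mathcal{A}_2$. Comparing directly via $J_\e(\eta^\delta)\le J_\e(\gamma)+\delta$ instead of through the sum of $h^\e$ is a cosmetic difference, and your explicit choice $X_t=\Phi_0(\xi)$ spells out a coupling the paper leaves implicit.
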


\begin{proof}
    By Propositions~\ref{prop:Uepm2new} and \ref{prop:UeqU2},
    \[
    U_1^\e(t,m)
= \inf_{\nu \in \mathcal{P}_2} \left\{ \inf_{(X, Y) \in \mathcal{E}^\ast_{m,\nu}} \left\{ \inf_{\gamma \in \Gamma(X, Y)} \mathbb{E}\left[\int_{t}^T L\left( \frac{\gamma_s}{\e}, \gamma_s, \dot{\gamma}_s, \mathcal{L}(\gamma_s) \right)\, ds \right] \right\}+G(\nu) \right\}.
    \]
Let $\delta >0$, and let $(\nu,X,Y,\gamma)$ be $\delta$-optimal. Then by Proposition~\ref{prop:Uepm2new},
\begin{equation}\label{eqn:gamdelvelbd}
\mathbb{E}\left[\int_{t}^T|\dot{\gamma}_s|^2 \,ds\right] \leq M_1(T-t)
\end{equation}
for some constant $M_1=M_1(C_{2,H}, C_G, K_0)>0$. Let $N\in\mathbb{N}$, $\Delta t=(T-t)/N$, $t_k=t+k\Delta t$. By Lemma~\ref{lem:fixest},
\begin{equation}\label{eqn:Uep1mul}
\begin{aligned}
U^\varepsilon_1(t,m) +\delta & \geq \mathbb{E}\left[\int_{t}^T L\left( \frac{\gamma_s}{\e}, \gamma_s, \dot{\gamma}_s, \mathcal{L}(\gamma_s) \right)\, ds \right]
+G(\nu)\\
& \geq \sum_{k=0}^{N-1} \mathbb{E} \left[ \int_{t_k}^{t_{k+1}}
L\left(\frac{\gamma_s}{\ep}, \gamma_{t_k}, \dot{\gamma}_s, \mathcal{L}(\gamma_{t_k})\right)\,ds\right]-\frac{C}{N}  +G(\nu)\\
& \geq \sum_{k=0}^{N-1} \mathbb{E} \left[h^\e\left(\gamma_{t_k}, \mathcal{L}(\gamma_{t_k});t_k,t_{k+1},\gamma_{t_k},\gamma_{t_{k+1}}\right)\right]-\frac{C}{N} +G(\nu),
\end{aligned}
\end{equation}
for some constant $C>0$ independent of $N$.
By Lemma~\ref{lem:fixest} again, for any $\delta>0$, there exists
$\eta^{\delta}\in\Gamma(X,Y;\gamma)$ such that
\begin{equation}\label{eqn:eta-piecewise-near-opt}
\sum_{k=0}^{N-1} \mathbb E\left[ \int_{t_k}^{t_{k+1}} L\left( \frac{\eta^\delta_s}{\varepsilon}, \gamma_{t_k}, \dot{\eta}^\delta_s, \mathcal{L}(\gamma_{t_k})\right)\,ds \right] \le \sum_{k=0}^{N-1} \mathbb E\left[ h^\varepsilon\left( \gamma_{t_k}, \mathcal{L}(\gamma_{t_k}); t_k,t_{k+1}, \gamma_{t_k}, \gamma_{t_{k+1}} \right) \right] + \delta,
\end{equation}
and
\begin{equation}\label{eqn:etavelbd}
\mathbb E\left[\int_t^T|\dot\eta_s^\delta|^2\,ds\right]\le \tilde{M}(T-t),
\end{equation}
where $\tilde M= \tilde{M}(C_{1,H},C_{2,H}, K_0, C_G)>0$ is a constant. Moreover,
\[
\eta^\delta(s,\omega)
=
\eta^\delta_k\left(
s,\gamma_{t_k}(\omega),\gamma_{t_{k+1}}(\omega)
\right),
\qquad s\in[t_k,t_{k+1}],
\]
for some Borel measurable maps $\eta^\delta_k:[t_k,t_{k+1}]\times\mathbb R^d\times\mathbb R^d\to\mathbb R^d$ with $\eta^\delta_k(\cdot,x,y)\in \mathrm{AC}([t_k,t_{k+1}];\mathbb R^d)$ and $\eta^\delta_k(t_k,x,y)=x,\; \eta^\delta_k(t_{k+1},x,y)=y$.


Since $\xi$ is an independent Gaussian random variable, there exist Borel measurable functions $f_k:\mathbb{R}^d \to \mathbb{R}^d$ for $k=0,1,\cdots, N$ such that
\begin{equation}\label{eqn:coupling_gamma}
\left(\gamma_t, \gamma_{t_1}, \ldots, \gamma_{T}\right)\stackrel{d}{=}\left(f_0(\xi), f_1(\xi), \ldots, f_N(\xi)\right).
\end{equation}
 Define $\tilde{\alpha}^\delta:[t,T] \times \mathbb{R}^d \to \mathbb{R}^d$ by
\[
\tilde{\alpha}^\delta(s,z):=\dot{\eta}^\delta_k(s,f_k(z),f_{k+1}(z)), \quad \text{ for } s \in [t_k, t_{k+1}],\quad k=0,1, \cdots, N-1,
\]
and set $\alpha^\delta(s,\omega):=\tilde{\alpha}^\delta(s,\xi(\omega))$. Then \eqref{eqn:etavelbd} implies $\alpha^\delta\in\mathcal{A}_2$. The corresponding controlled process satisfies
\[
\begin{aligned}
X_s^\delta\stackrel{d}{=}\eta^\delta(s,\cdot), \qquad \text{ for } s \in[t, T].
\end{aligned}
\]
Hence, using \eqref{eqn:Uep1mul}–\eqref{eqn:etavelbd} and Lemma~\ref{lem:fixest}, we obtain
\[
\begin{aligned}
U^\e_2(t,m) &\leq \mathbb{E}\left[\int_t^TL\left(\frac{X_s^\delta}{\ep}, X_s^\delta,\alpha_s^\delta,\mathcal{L}(X_s^\delta)\right)\, ds\right]+G(\nu)\\
&=\mathbb E\left[ \int_t^T L\left( \frac{\eta^\delta_s}{\varepsilon}, \eta^\delta_s, \dot\eta^\delta_s, \mathcal L(\eta^\delta_s) \right)\,ds \right]+G(\nu)
\\&\le \sum_{k=0}^{N-1} \mathbb E\left[ \int_{t_k}^{t_{k+1}} L\left(\frac{\eta^\delta_s}{\varepsilon}, \gamma_{t_k}, \dot\eta^\delta_s, \mathcal{L}(\gamma_{t_k}) \right)\,ds \right] +\frac{C}{N}+G(\nu)
\\
&\leq \sum_{k=0}^{N-1}\mathbb{E} \left[h^\e \left(\gamma_{t_k},\mathcal{L}(\gamma_{t_k});t_k, t_{k+1}, \gamma_{t_k}, \gamma_{t_{k+1}}\right)\right]+\frac{C}{N}+\delta+G(\nu)\\
&\leq U_1^\ep(t,m)+2\delta+\frac{C}{N}.\\
\end{aligned}\]
Letting $\delta\to 0$, then $N\to\infty$, we conclude $U_2^\ep(t,m) \leq U_1^\ep(t,m)$.
\end{proof}

\begin{remark} We note that, the use of Lemma~\ref{lem:fixest} in the above proof is to overcome the technical difficulty that appears only in the multiscale case. Specifically, without cutting the intervals, the coupling constructed in~\eqref{eqn:coupling_gamma} needs to satisfy an infinite number of constaints, i.e. $\gamma_t=f_t(\xi)$ for all $t$. The existence of such coupling is not clear to the best of our knowledge. When $H$ does not have $x,m$ dependence, such constraint no longer appears and the proof can be much simplified. In this simplified case, we do not need to freeze the slow variable and separate the time interval into $N$ subintervals. The same proof as above can be directly conducted in $[t,T]$ to prove $U^\e=U^\e_1=U^\e_2$.
\end{remark}

\section{Uniform convergence of $U^\ep$ and rate of convergence under (A1') and (A2)} \label{sec:rocresult}

\nd
By Corollary \ref{cor:lipschitzestimate}, we know that $U^\e$ satisfies
$$|U^\e(t,m_1)-U^\e(s,m_2)|\leq C_0 \left(|t-s|+{\bf d}_2(m_1,m_2)\right),\;\;\text{for all }t,s\in[0,T],\; m_1,m_2\in\mathcal{P}_2,$$
where the constant $C_0$ is independent of $\e$. Thus, the family $(U^\e)_{\e>0}$ is uniformly equicontinuous and equibounded, hence by Arzela--Ascoli it converges as $\e\to 0$ (up to subsequences) uniformly in the compact subsets of $[0,T]\times \mathcal{P}_2$. Moreover, since $[0,T]\times \mathcal{P}_2$ is separable, we may obtain that $U^\e$ converges pointwise to a function $U\colon[0,T]\times\mathcal{P}_2\to \R$ satisfying the same Lipschitz estimate as $U^\e$.
\vspace{1mm}

\nd
Indeed, let $\mathcal{D}$ be a countable dense subset of $[0,T]\times\mathcal{P}_2$. Then, a diagonal argument yields the pointwise convergence of $U^\e$ to a function $U\colon\mathcal{D}\to \R$, which is also Lipschitz with constant $C_0$ over $\mathcal{D}$. Since $\mathcal{D}$ is dense, we may extend $U$ to a $C_0$-Lipschitz function (still denoted by $U$) $U\colon [0,T]\times\mathcal{P}_2\to \R$. We argue that $U^\e(t,m)\xrightarrow{\e\to 0}U(t,m)$, for any $(t,m)\in[0,T]\times\mathcal{P}_2$. For any $\eta>0$, we may choose $(t_n,m_n)\in\mathcal{D}$ such that $|t-t_n|+{\bf d}_2(m,m_n)\le \eta/2C_0$. Then we write
\begin{align*}
    \left|U^\e(t,m)-U(t,m)\right|&\le |U^\e(t,m)-U^\e(t_n,m_n)|+|U^\e(t_n,m_n)-U(t_n,m_n)|+|U(t_n,m_n)-U(t,m)|\\
    &\le 2C_0(|t-t_n|+{\bf d}_2(m,m_n)) +|U^\e(t_n,m_n)-U(t_n,m_n)|\\
    &\le \eta + |U^\e(t_n,m_n)-U(t_n,m_n)|.
\end{align*}
Sending $\e\to 0$, this implies $\limsup_{\e\to 0}\left|U^\e(t,m)-U(t,m)\right|\le \eta$, and our claim follows by the arbitrariness of $\eta>0$.

\vspace{1mm}

 \nd
In this section we strengthen the above convergence. Under assumptions (A1') and (A2), we prove that $U^\varepsilon$ converges uniformly to a limit $\bar U\colon [0,T]\times \mathcal{P}_2\to \R$ and we establish the optimal convergence rate $O(\varepsilon)$. The argument relies on the optimal control representation of $U^\varepsilon$. As shown in the previous section, $U^\varepsilon$ admits an equivalent formulation in terms of the cost function $h^\ep$. The subadditivity and superadditivity properties of $h$, summarized in Proposition~\ref{prop:subsuperm}, then yield the optimal convergence rate.

\subsection{Uniform convergence and rate of convergence}

Using the averaged metric $\bar{h}$, we define a new value function $\bar{U}$, which will be the uniform limit of $U^\ep$.

\begin{definition} Define $\bar{U} :[0, T] \times \mathcal{P}_2 \to \R $ by
\[
\bar{U}(t,m):=\inf_{\nu\in\mathcal{P}_2}\left\{\inf_{(X, Y) \in \mathcal{E}^\ast_{m,\nu}} \mathbb{E} \left[\bar{h}\left(t, T, X, Y\right)\right]+G(\nu)\right\},
\]
where $\mathcal{E}_{m,\nu}^\ast$ is defined in \eqref{eqn:estar}, and $h^\varepsilon$ is given in Proposition~\ref{prop:subsuperm}.
\end{definition}

We now present the uniform convergence of $U^\varepsilon$ to $\bar U$, together with the rate of convergence, in the following theorem. Moreover, the rate $O(\varepsilon)$ is optimal; see Proposition~\ref{prop:opex}.

\begin{theorem}\label{thm: unif conv}
    Assume {\rm (A1')} and {\rm (A2)}. $U^\ep$ converges to $\bar{U}$ uniformly on $[0, T] \times \mathcal{P}_2$ as $\ep \to 0$ , and there exists a constant $C=C\left(d, C_{1, H}, C_{2,H},K_0, C_G\right)>0$ such that for any $(t, m)\in [0,T] \times \mathcal{P}_2$,
    \[
    \left|U^\ep\left(t,m\right)-\bar{U}\left(t, m\right)\right|\leq C \ep.
    \]
\end{theorem}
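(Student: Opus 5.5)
The plan is to compare the two variational formulas directly. By Proposition~\ref{prop:U1eqUeqU2} we have $U^\e=U^\e_1$, and by Lemma~\ref{lem:Uep1costh},
\[
U^\e(t,m)=\inf_{\nu\in\mathcal{P}_2}\Big\{\inf_{(X,Y)\in\mathcal{E}^\ast_{m,\nu}}\mathbb{E}[h^\e(t,T,X,Y)]+G(\nu)\Big\},
\]
while $\bar U$ is the same expression with $\bar h$ in place of $h^\e$. Both infima run over the same admissible set $\mathcal{E}^\ast_{m,\nu}$. It therefore suffices to bound $\mathbb{E}|h^\e(t,T,X,Y)-\bar h(t,T,X,Y)|$ uniformly over $(X,Y)\in\mathcal{E}^\ast_{m,\nu}$ and over $\nu$. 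The estimate then passes through both infima, because $|\inf_a f(a)-\inf_a g(a)|\le\sup_a|f(a)-g(a)|$.

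The case $t=T$ is trivial: both functions equal $G(m)$, since $X=Y$ is forced. For $t<T$, Proposition~\ref{prop:subsuperm}(3) gives, pointwise in $\omega$,
\[
|h^\e(t,T,X,Y)-\bar h(t,T,X,Y)|\le C\Big(1+\frac{|X-Y|^2}{(T-t)^2}\Big)\e .
\]
Taking expectations and using the defining constraint $\mathbb{E}|X-Y|^2\le M_0(T-t)^2$ of $\mathcal{E}^\ast_{m,\nu}$ yields a bound $C(1+M_0)\e$. Here $M_0=M_0(C_{2,H},C_G,K_0)$ does not depend on $t,m,\nu$. This gives $|U^\e-\bar U|\le C\e$ with $C=C(d,C_{1,H},C_{2,H},K_0,C_G)$, uniformly on $[0,T]\times\mathcal{P}_2$, which in turn gives uniform convergence.

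Two technical points remain to check.

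First, $\mathbb{E}[\bar h(t,T,X,Y)]$ must be well defined and finite. Measurability of $\bar h$ follows because it is a pointwise limit of the continuous functions $h^\e$ from Proposition~\ref{prop:mmeas}. Finiteness follows from the same estimate, combined with the quadratic bounds $|h^\e(t,T,x,y)|\le C\big(|x-y|^2/(T-t)+(T-t)\big)$ from \eqref{eqn:hupbd} and \eqref{eqn:Lmquad}.

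Second, and this is the main point to watch, the restriction to $\mathcal{E}^\ast_{m,\nu}$ is essential. The error term in Proposition~\ref{prop:subsuperm}(3) blows up with $|X-Y|^2/(T-t)^2$, so the comparison is only uniform because both value functions are defined over this restricted set. For $U^\e$ this restriction is harmless by Proposition~\ref{prop:Uepm2new}. For $\bar U$ it holds by definition, so no separate velocity bound for $\bar h$-minimizers is needed. If one preferred to define $\bar U$ over all of $\mathcal{E}_{m,\nu}$, one would instead repeat the argument of Proposition~\ref{prop:Uepm2new}, using the growth bounds \eqref{eqn:Lbarmquad} for $\bar L$ and the fact that $\bar h$ is the action of $\bar L$.
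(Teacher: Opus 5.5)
Your proposal is correct and follows the paper's proof essentially verbatim. You use the representation of $U^\e=U^\e_1$ from Lemma~\ref{lem:Uep1costh}, apply Proposition~\ref{prop:subsuperm}(3) pointwise, and use the constraint $\mathbb{E}|X-Y|^2\le M_0(T-t)^2$ built into $\mathcal{E}^\ast_{m,\nu}$ to obtain the bound $C(1+M_0)\e$ in both directions. Your extra remarks on the measurability and integrability of $\bar h(t,T,X,Y)$ supply a detail that the paper leaves implicit.
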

\begin{proof}
Let $\ep>0$, and $(t, m)\in [0,T] \times \mathcal{P}_2$. If $t=T$, by the definition of $m$ in \eqref{def:meps1},
\[
   h^\e \left(t,t,x,y\right)=h^\e\left(0,0,x,y\right)=\left\{\begin{aligned}
    &0, \qquad \quad \text{ if } x=y,\\
    &+\infty, \quad \, \text{ if } x \neq y.\\
    \end{aligned}\right.
\]
Hence,
\[
    \bar{h}\left(t,t,x,y\right)=\bar{h}\left(0,0,x,y\right)=\left\{\begin{aligned}
    &0, \qquad \quad \text{ if } x=y,\\
    &+\infty, \quad \, \text{ if } x \neq y,\\
    \end{aligned}\right.
\]
and
$\bar{U}(t, m)=G(m)=U^\ep(t, m)$.

If $t<T$, for any $\nu \in \mathcal{P}_2$ and $(X,Y) \in \mathcal{E}^\ast_{m, \nu}$, by Proposition \ref{prop:subsuperm},
    \[
\mathbb{E} \left[\left|\bar{h}\left(t, T,X,Y\right)- h^\e \left(t,T,X,Y\right)\right|\right] \leq C \left(1+ \frac{\mathbb{E}|X-Y|^2}{(T-t)^2}\right)\e \leq C\left(1+M_0\right)\ep,
    \]
for some constant \(C=C(d, C_{1, H}, C_{2, H}, K_0)>0\). Since $M_0=M_0\left(C_{2,H},C_G, K_0\right)>0$,
\[
\begin{aligned}
U^\ep(t, m)&=\inf_{\nu \in \mathcal{P}_2} \left\{
\inf_{(X, Y) \in \mathcal{E}^\ast_{m,\nu}}
\mathbb{E}[h^\e(t,T,X,Y)]+ G(\nu)\right\}\\
&\leq \inf_{\nu\in\mathcal{P}_2}\left\{\inf_{(X, Y) \in \mathcal{E}^\ast_{m,\nu}}
\mathbb{E}[\bar{h}(t,T,X,Y)]+G(\nu)\right\} +C(1+M_0)\ep\\
&=\bar{U}(t,m)+\tilde{C}\ep,
\end{aligned}
\]
where $\tilde{C}=\tilde{C}(d,C_{1, H}, C_{2,H}, K_0, M_0)=\tilde{C}(d,C_{1, H}, C_{2,H}, K_0, C_G)>0$.
By a similar argument, we have
\[
\bar{U}\left(t, m\right)\leq U^\ep\left(t, m\right)+\tilde{C}\ep.
\]
\end{proof}

\begin{remark}
    We do not identify $\bar{h}$ in terms of $\bar{L}$ at this stage; this will be addressed later (see Proposition~\ref{prop:rocmu}). Instead, in the next subsection, we employ the doubling-variable method to show that $\bar{U}$ indeed solves the limiting equation. The reason for this approach is that identifying $\bar{h}$ in terms of $\bar{L}$ is highly nontrivial (see, for instance, \cite{LionsPapaVara1987,Weinan1991}), and in more complicated settings such an identification may fail. By using the doubling-variable method, we make the structure of the argument transparent, allowing it to be extended to more general situations in future work.
\end{remark}

\subsection{The limit $\overline{U}$ and the limiting equation}\label{charoflimit}

To characterize the limit $\overline{U}$, we prove that it is the unique $L$-viscosity solution of \eqref{HJB}. For the proof, we will need the following proposition.

\begin{proposition}\label{supersolution}
Assume that $H$ satisfies Assumption (A1'). Suppose that $\delta>0$ are fixed and for $p\in \R^d$ let $v^\delta(\cdot):=v^{\delta}(\cdot, p)$ be the unique viscosity solution of \eqref{discountcellm}.
For a fixed $P\in\mathcal{H}$, consider the function $V^\delta\colon \mathcal{H}\to \R$ with $V^\delta(X)=\mathbb{E}\left[ v^\delta(X,P)\right]$ and assume that for some fixed $\phi\in C^1(\mathcal{H})$, the function $V^\delta -\phi$ admits a minimum at some $X_0\in \mathcal{H}$. Then,
\be \label{strict supersol}
\delta V^\delta(X_0)+\mathbb{E}\left[ H( X_0,P+D\phi(X_0))   \right]\ge 0
\ee
\end{proposition}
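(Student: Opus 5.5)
The plan is to reduce the Hilbert-space minimum condition to a family of pointwise (in $\omega$) finite-dimensional touching conditions. Then I would apply the supersolution property of the corrector $v^\delta(\cdot,p)$ from Proposition~\ref{effHm} for each fixed $\omega$, and integrate. I read $v^\delta(X,P)$ as $\omega\mapsto v^\delta(X(\omega);P(\omega))$, and $H(X_0,\cdot)$ as $H$ evaluated at the fast variable $X_0(\omega)$ modulo $\mathbb{Z}^d$.

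\textbf{Preliminaries.} First I record that $(y,p)\mapsto v^\delta(y;p)$ is jointly Borel measurable. For fixed $p$, $v^\delta$ is Lipschitz in $y$ with constant $C(p)$ by Proposition~\ref{effHm}. Continuity in $p$ follows from the stability of viscosity solutions of \eqref{discountcellm} together with uniqueness; alternatively, the comparison principle and \eqref{ineq} give $\delta\|v^\delta(\cdot;p)-v^\delta(\cdot;p')\|_\infty\le C(1+|p|+|p'|)|p-p'|$. Hence $V^\delta$ is well defined on $\mathcal{H}$, at least when $P$ is bounded; the general case is handled by truncating $P$ on sets of small measure. Write $q:=D\phi(X_0)\in\mathcal{H}$. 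Since $X_0$ is a minimum point of $V^\delta-\phi$, for every $Y\in\mathcal{H}$ we have
\[
\mathbb{E}\big[v^\delta(X_0+Y;P)-v^\delta(X_0;P)\big]\ \ge\ \phi(X_0+Y)-\phi(X_0).
\]

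\textbf{Localization.} This is the key step. Assume first that $D\phi$ is Lipschitz near $X_0$, as is the case for the $C^{1,2}$ test functions in Definition~\ref{L-visc}. Then $\phi(X_0+Y)-\phi(X_0)\ge \langle q,Y\rangle - C\|Y\|_2^2$ for $\|Y\|_2$ small, and so
\[
\mathbb{E}\Big[v^\delta(X_0+Y;P)-v^\delta(X_0;P)-q\cdot Y+C|Y|^2\Big]\ \ge\ 0 .
\]
The crucial point is that the quadratic error $\|Y\|_2^2=\mathbb{E}|Y|^2$ is itself an integral of a pointwise quadratic, so the inequality decouples in $\omega$. I would then argue by contradiction. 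Suppose the set of $\omega$ for which some $x$ with $|x-X_0(\omega)|\le 1$ satisfies
\[
v^\delta(x;P(\omega))-q(\omega)\cdot(x-X_0(\omega))+C|x-X_0(\omega)|^2<v^\delta(X_0(\omega);P(\omega))
\]
has positive probability. By a measurable selection argument, using continuity in $x$ and a countable dense set of candidate points, we can pick such an $x(\omega)$ measurably on a set $A$ with $\mathbb{P}(A)>0$. Taking $Y=(x(\omega)-X_0(\omega))\mathbf 1_{A'}$, with $A'\subset A$ of small measure so that $\|Y\|_2$ is small, gives a strictly negative expectation, which is a contradiction.

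\textbf{Conclusion and main obstacle.} Consequently, for a.e. $\omega$, the function $x\mapsto v^\delta(x;P(\omega))-\psi_\omega(x)$ has a local minimum at $X_0(\omega)$, where $\psi_\omega(x)=q(\omega)\cdot x - C|x-X_0(\omega)|^2$ and $D\psi_\omega(X_0(\omega))=q(\omega)$. The supersolution property of the corrector then gives, for a.e. $\omega$,
\[
\delta v^\delta(X_0;P)+H\big(X_0,P+q\big)\ge 0 .
\]
Integrating over $\Omega$, using the quadratic growth \eqref{quadgrowth} for integrability, yields \eqref{strict supersol}. I expect the main obstacle to be the case where $\phi$ is merely $C^1$: the remainder $o(\|Y\|_2)$ does not decouple in $\omega$. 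There I would first try to replace $\phi$ by a $C^{1,1}$ test function touching $V^\delta$ from below at $X_0$ with the same gradient, or perturb $X_0$ via a quadratic penalization $\phi-\eta\|X-X_0\|_2^2$. Alternatively, I would use the fact that the supersolution test for $v^\delta$ only involves the subdifferential, and pass to the limit in the pointwise inequalities using continuity of $H$.
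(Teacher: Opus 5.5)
\textbf{Gap: the case of a general $C^1$ test function.} Your localization argument is correct, but only for test functions with a quadratic lower expansion, $\phi(X_0+Y)\ge\phi(X_0)+\langle q,Y\rangle-C\|Y\|_2^2$. The statement allows any $\phi\in C^1(\mathcal{H})$, and that case is not proved.

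With only $\phi(X_0+Y)-\phi(X_0)\ge\langle q,Y\rangle-o(\|Y\|_2)$, your perturbation $Y=(x(\omega)-X_0(\omega))\mathbf 1_{A'}$ gains $c\,\mathbb{P}(A')$ against an error of size $o(\sqrt{\mathbb{P}(A')})$, so no contradiction follows. Rescaling $Y$ does not help either, because the scales at which the pointwise inequality fails may vary with $\omega$. In short, a Fr\'echet subgradient of $V^\delta$ in $\mathcal{H}$ does not decouple into pointwise subgradients, so your last suggestion does not work as stated.

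Your first suggested fix is also unavailable in general. A $C^{1,1}$ function touching $V^\delta$ from below at $X_0$ with gradient $q$ exists essentially only when $q$ is a proximal subgradient, which is strictly stronger than being a Fr\'echet subgradient (think of $-|x|^{3/2}$ at $0$).

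\textbf{How to close the gap.} You need a reduction to quadratic test functions via a variational principle, the same Ekeland--Lebourg/Stegall device the paper uses in Section~\ref{charoflimit}.
\begin{enumerate}
\item Given $\theta>0$, pick $r\in(0,\theta]$ with $\|D\phi-q\|_2\le\theta$ on $B_r(X_0)$. Then $V^\delta(X)\ge V^\delta(X_0)+\langle q,X-X_0\rangle-\theta\|X-X_0\|_2$ on that ball.
\item Set $g(X)=V^\delta(X)-\langle q,X-X_0\rangle+\frac{2\theta}{r}\|X-X_0\|_2^2$. It satisfies $g\ge V^\delta(X_0)+\theta r$ on $\partial B_r(X_0)$ and $g(X_0)=V^\delta(X_0)$.
\item Choose $Z$ with $\|Z\|_2<\theta$ such that $g-\langle Z,\cdot\rangle$ attains its minimum on $\overline{B}_r(X_0)$. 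By the previous step, the minimizer $X_\theta$ lies in the open ball.
\item At $X_\theta$, $V^\delta$ is touched from below by a quadratic function whose gradient is within $5\theta$ of $q$. Your $C^{1,1}$ argument applies there.
\item Let $\theta\to0$. This uses that $V^\delta$ is Lipschitz on $\mathcal{H}$ (constant of order $1+\|P\|_2$, from $\|Dv^\delta(\cdot;p)\|_\infty\le C(1+|p|)$), and that $(X,Q)\mapsto\mathbb{E}[H(X,P+Q)]$ is locally Lipschitz on $\mathcal{H}\times\mathcal{H}$ by \eqref{ineq}.
\end{enumerate}
Separately, no truncation of $P$ is needed: comparison with constants gives $|\delta v^\delta(y;p)|\le C_{2,H}|p|^2+K_0$.

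\textbf{Comparison with the paper's route.} The paper proceeds in three steps:
\begin{itemize}
\item it identifies $V^\delta$ with the value function of a discounted control problem on $\mathcal{H}$, using approximation by simple random variables;
\item it proves a dynamic programming principle for that problem;
\item it runs the classical supersolution argument along controlled trajectories, then uses $\mathbb{E}[\sup_a\cdot]\ge\sup_a\mathbb{E}[\cdot]$ and Legendre duality.
\end{itemize}
This needs convexity of $H$ and a DPP. In exchange, it handles any $C^1$ test function directly, since only the first-order expansion of $\phi$ along trajectories enters.

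Your route instead exploits that $V^\delta$ is an integral functional on a non-atomic space. A quadratic one-sided bound decouples in $\omega$, and the finite-dimensional supersolution property of the corrector is then applied pathwise. It is shorter and more elementary, and it needs neither convexity nor a control representation. Its cost is the restriction to test functions with quadratic lower expansion. That restriction is harmless for the paper's purposes: the only application, \eqref{strictsuper}, uses a test function that is quadratic in $Y$.
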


\begin{proof} The proof is divided in two parts.
\vspace{1mm}

\nd
\textit{Step 1.} (Alternative control formulation for $V^\delta$).\\
    It is known that, for any $y \in \T^d$, $v^\delta$ satisfies
    $$v^\delta(y,p)=\inf_{\alpha\in \mathcal{A}_{det}}\left\{ \int_0^{+\infty} e^{-\delta t}\left(L(X_t,\alpha_t )+\alpha_t\cdot p\right)dt\right\}\text{, where }\;\;X_t=y+\int_0^t\alpha_sds$$
    and $\mathcal{A}_{det}=\{\alpha:[0,+\infty)\to \R^d: \alpha\text{ is }\mathcal{B}([0,+\infty))\text{-measurable and }\int_0^{+\infty}e^{-\delta t}|\alpha_t|^2dt<\infty\}.$

    \nd
    In addition, $v^\delta(y,p)$ is continuous and satisfies $|v^\delta(y,p)|\le C(1+|p|^2)$, for some constant $C>0$ depending only on $\delta$ and $H$. We let
    $$V_2^\delta(X):=\inf_{\alpha\in \mathcal{A}}\left\{ \mathbb{E}\left[\int_0^{+\infty}e^{-\delta t}\left( L(X_t,\alpha_t) +\alpha_t\cdot P\right)dt\right] \right\},$$
    where $X_t= X+\int_0^t\alpha_s ds$ and $\mathcal{A}:=\mathcal{A}(X,P)$ is the set  of controls
    \begin{align*}
        \mathcal{A}(X,P)=\bigg\{ \alpha\colon [0,+\infty)\times \Omega\to \R^d:\; \alpha\text{ is }\mathcal{B}([0,+\infty))\otimes &\;\sigma(X,P)\text{-measurable}\\
        &\text{ and }\int_0^{+\infty}e^{-\delta t}\mathbb{E}[|\alpha_t|^2]dt<\infty\bigg\}.
    \end{align*}
    We claim that for any $X\in\mathcal{H}$
    \be\label{claim}\nonumber
    \mathbb{E}\left[ v^\delta(X,P) \right]=V_2^\delta(X).
    \ee
    It is clear that $\mathbb{E}[v^\delta(X,P)]\le V_2^\delta(X)$, since $\alpha(\cdot,\omega)\in \mathcal{A}_{det}$ for almost every $\omega\in \Omega$, for any $\alpha\in \mathcal{A}(X,P)$. To prove the reverse inequality, we let $\e>0$ and for $n\in \mathbb{N}$ we construct simple random variables $X^n,P^n$ such that
    \begin{align*}
        &X^n(\omega)=\sum_{k=1}^{N_n}x_k{\bf 1}_{A_k}(\omega),\;\;\text{with }\;\|X^n-X\|_2\le \frac{1}{n},\text{ and }\\
        & P^n(\omega)=\sum_{k=1}^{N_n}p_k{\bf 1}_{A_k}(\omega),\;\;\text{with }\;\|P^n-P\|_2\le \frac{1}{n},
    \end{align*}
    for some partition $(A_k)_{k=1}^{N_n}$ of the probability space $\Omega$. For any $k\in \{1,\ldots,N_n\}$, there exists an $\e$-optimal control $\alpha_k$ for $v^\delta(x_k,p_k)$. We now consider the control
    $$\alpha_s^n(\omega)=\sum_{k=1}^{N_n}(\alpha_k)_s{\bf 1}_{A_k}(\omega).$$
    and the processes $(X_t^n)_{t\ge 0}$, $(X_t)_{t\ge 0}$ with
    $$X_t^n=X^n+\int_0^t\alpha^n_sds\quad\text{and}\quad X_t=X+\int_0^t\alpha^n_s ds.$$
    By the $\e$ optimality and the quadratic growth of $L$ we have for some constant $c,C$
    \begin{align*}
        \mathbb{E}\left[v^\delta(X^n,P^n) \right]&\ge \mathbb{E}\left[\int_0^{+\infty}e^{-\delta t}\left(L(X_t^n,\alpha_t^n)+\alpha_t^n\cdot P^n   \right)dt\right]-\e\\
        &\ge c\int_0^{+\infty}e^{-\delta t}\|\alpha_t^n\|_2^2dt-C(1+\|P^n\|_2^2).
    \end{align*}
    Thus, by the boundedness of $\delta v^\delta$ and the fact that $\|P^n\|_2\xrightarrow{n\to \infty}\|P\|_2$, we get the bound
    \be\label{eq: bound}
        \int_0^{+\infty}e^{-\delta t}\|\alpha_t^n\|_2^2dt\le C_{\delta, P}+\e,
    \ee
    for some constant $C_{\delta,P}$ depending only on $\delta, P$.
    \vspace{1mm}

    \nd
    By the $\e$-optimality, once again, we have by the regularity properties of $L$
    \begin{align*}
        \mathbb{E}\left[v^\delta(X^n,P^n) \right]&\ge \mathbb{E}\left[\int_0^{+\infty}e^{-\delta t}\left(L(X_t^n,\alpha_t^n)+\alpha_t^n\cdot P^n   \right)dt\right]-\e\\
        &= \mathbb{E}\left[\int_0^{+\infty}e^{-\delta t}\left(L(X_t,\alpha_t^n)+\alpha_t^n\cdot P \right)dt\right]-\e\\
        &\quad+\mathbb{E}\left[\int_0^{+\infty}e^{-\delta t}\left(L(X_t^n,\alpha_t^n)-L(X_t,\alpha^n_t) \right)dt\right]+\mathbb{E}\left[\int_0^{+\infty}e^{-\delta t}\alpha_t^n\cdot\left( P^n-P \right)dt\right]\\
        &\ge V_2^\delta(X)-\e-\mathbb{E}\left[\int_0^{+\infty}e^{-\delta t}(1+|\alpha_t^n|)|X-X^n|dt \right]-\|P^n-P\|_2\int_0^{+\infty}e^{-\delta t}\|\alpha_t^n\|_2dt\\
        &\ge V_2^\delta(X)-\e -(\|X-X^n\|_2+\|P-P^n\|_2)\int_0^{+\infty}e^{-\delta t}(1+\|\alpha_t^n\|_2)dt.
    \end{align*}
    Since $v^\delta$ is continuous and  and satisfies $|v^\delta(y,p)|\le C(1+|p|^2)$, we have by the dominted convergence theorem $\mathbb{E}[v^\delta(X^n,P^n)]\xrightarrow{n\to\infty}\mathbb{E}[v^\delta(X,P)]$, therefore, by choosing $n$ large enough,
    $$\mathbb{E}[v^{\delta}(X,P)]+\frac{\e}{2}\ge V_2^\delta(X)-\e-\frac{2}{n}\int_0^{+\infty}e^{-\delta t}(1+\|\alpha_t^n\|_2)dt.$$
    Combining this with \eqref{eq: bound} and setting $n$ to be large enough, we get $\mathbb{E}[v^\delta(X,P)]\ge V_2^\delta(X)-2\e$. The inequality $\mathbb{E}[v^\delta(X,P)]\ge V_2^\delta(X)$ follows.
    \vspace{2mm}

    \nd
    \textit{Step 2.} (Inequality \eqref{strict supersol} via dynamic programming)\\
    We rely on the dynamic programming equality satisfied by $V_2^\delta$:
    \be\label{DPP}
V_2^\delta(X)=\inf_{\alpha\in \mathcal{A}}\mathbb{E}\left[ \int_0^he^{-\delta t}(L(X_t,\alpha_t)+\alpha_t\cdot P)dt+e^{-\delta h}V_2^\delta(X_h)\right], \text{ for all }h\ge 0\text{ and }X\in\mathcal{H},
    \ee
    where $X_h=X+\int_0^h\alpha_sds$. Having \eqref{DPP} established, by classical arguments (for example see \cite[Section 2.5]{fabbri2017stochastic}), we get the viscosity supersolution inequality
    $$\delta V^\delta_2(X_0)+\sup_{a}\mathbb{E}[-L(X_0,a)-a\cdot(D\phi(X_0)+P))]\ge 0,$$
    where the supremum is taken over all square integrable and $\sigma(X_0,P)$-measurable random variables $a\colon \Omega\to\R^d$. Inequality \eqref{strict supersol} then follows from the above because
    $$\mathbb{E}[\sup_{a\in \R^d}\{-L(X_0,a)-a\cdot (D\phi(X_0)+P)\}]\ge \sup_{a}\mathbb{E}[-L(X_0,a)-a\cdot(D\phi(X_0)+P))].$$
    \vspace{0.5mm}

    \nd
    We conclude the proof by showing \eqref{DPP}. By the definition of $V_2^\delta$ at the beginning of the proof, \eqref{DPP} hold as a $\ge$ inequality. It suffices to prove the reverse. Let $\alpha^1\in\mathcal{A}=\mathcal{A}(X,P)$ and $X_t=X+\int_0^t\alpha_s^1ds$. For $\e>0$, we pick $\alpha^2\in\mathcal{A}(X_h,P)$ such that
    \begin{align}
        V_2^\delta(X_h)+\e&\ge \mathbb{E}\left[\int_0^{+\infty}e^{-\delta t}(L(\widetilde{X}_t,\alpha_t^2)+\alpha^2_t\cdot P)dt     \right]\nonumber\\
        &=\mathbb{E}\left[\int_h^{+\infty}e^{-\delta (t-h)}(L(\widetilde{X}_{t-h},\alpha_{t-h}^2)+\alpha^2_{t-h}\cdot P)dt \right],\label{inequalityDPP}
    \end{align}
    where $\widetilde{X}_t=X_h+\int_0^t\alpha^2_sds,\;\;t\ge 0$. Consider the control $\hat{\alpha}_t=\begin{cases}
        \alpha^1_t,&t\in [0,h],\\
        \alpha^2_{t-h},& t\in (h,+\infty).
    \end{cases}$ We consider the process $(Y_t)_{t\ge 0}$ with $dY_t=\hat{\alpha}_tdt,\;t\ge 0$ and $Y_0=X$. We have $Y_t=X_t,\;t\in [0,h]$ and $Y_t=\widetilde{X}_{t-h},\; t\ge h$. Since $\sigma(X_h,P)\subseteq \sigma(X,P)$, it is clear that $\hat{\alpha}\in \mathcal{A}(X,P)$, therefore we have
    \begin{align*}
        V_2^\delta(X)&\le \mathbb{E}\left[\int_0^{h}e^{-\delta t}(L(X_t,\alpha_t^1)+\alpha^1_t\cdot P)dt\right]+e^{-\delta h}\mathbb{E}\left[\int_h^{+\infty}e^{-\delta (t-h)}(L(\widetilde{X}_{t-h},\alpha_{t-h}^2)+\alpha^2_{t-h}\cdot P)dt \right]\\
        &\le \mathbb{E}\left[\int_0^{h}e^{-\delta t}(L(X_t,\alpha_t^1)+\alpha^1_t\cdot P)dt\right]+e^{-\delta h}\left( V_2^\delta(X_h)+\e\right),
    \end{align*}
    where in the last line we used \eqref{inequalityDPP}. The result follows by sending $\e\to 0$ and by minimizing over $\alpha^1\in\mathcal{A}(X,P)$.
\end{proof}
\vspace{3mm}

\begin{proof}[Proof of the viscosity property of the limit]
    By Theorem \ref{thm: unif conv}, we know that $U^\e\colon[0,T]\times \mathcal{P}_2\to \R$ converges to a function $U\colon [0,T]\times \mathcal{P}_2\to \R$ uniformly. We will show that $U$ satisfies \eqref{HJB} in the viscosity sense. Then the result will follow by uniqueness of viscosity solutions provided by Theorem \ref{comprep}.
    \vspace{2mm}

    \nd
     We will only show the subsolution property, since the supersolution will be similar. We note that $U(T,m)=G(m)$, therefore we just need to verify the second property of Definition \ref{L-visc}. Let $\Phi\in C^{1,1}([0,T]\times\mathcal{H})$ be a test function and assume that $\hat{U}(t,X)-\Phi(t,X)$ admits a strict global maximum with value zero at $(t_0,X_0)\in [0,T)\times\mathcal{H}$. We assume that $t_0>0$, so there exists $r>0$ such that $(t_0-r,t_0+r)\subset [0,T)$. The case $t_0=0$ can be treated analogously by considering $r$ such that $[0,r)\subset [0,T)$. Seeking a contradiction, we suppose that
     \be\label{"wrong" ineq}
-\partial_t\Phi(t_0,X_0)+\mathbb{E}\left[\overline{H}(D\Phi(t_0,X_0))\right]=\theta>0.
    \ee
    Our proof utilizes the idea of the perturbed test function method. We let $\delta>0$ and, for any $p\in \R^d$, we consider $v^\delta(\cdot):= v^\delta(\cdot, p)$ to be the solution of the discounted cell problem
    \be\label{discountedcell}
\delta v^\delta+ H(y, p+D_y v^\delta)=0,\;\; y\in\T^d.
    \ee
    We may consider the functions $V^\delta\colon\mathcal{H}\to \R$ with $V^\delta(X)=\mathbb{E}\left[ v^\delta(X,D\Phi(t_0,X_0)   \right]$ and $\Psi^{\e,\delta,\eta}\colon [0,T]\times \mathcal{H}\times\mathcal{H}\to \R$ with
    $$\Psi^{\e,\delta,\eta}(t,X,Y)=\Phi(t,X)+\e V^\delta(Y)+\frac{\|Y-\frac{X}{\e}\|_2^2}{\eta}+L\left(\|X-X_0\|_2^2+|t-t_0|^2  \right),$$
    Consider $r>0$ small enough (to be chosen later) and the set
    $$A_r=(t_0-r,t_0+r)\times \left\{ X\in\mathcal{H}:\; \|X-X_0\|_2< r  \right\}.$$
    By the perturbed optimization result of Ekeland-Lebourg \cite[Theorem 3.25]{fabbri2017stochastic} for every $\mathbb{N}\ni n\ge 1$, there exist $a_n\in \R$ and $X_n,Y_n\in \mathcal{H}$ such that $|a_n|+\|X_n\|_2+\|Y_n\|_2\le \frac{1}{n}$ and the function
    $$[0,T]\times\mathcal{H}\times\mathcal{H}\ni(t,X,Y)\mapsto\hat{U}^\e(t,X)-\Psi^{\e,\delta,\eta}(t,X,Y)-a_nt-\mathbb{E}[X_n\cdot X]-\mathbb{E}[Y_n\cdot Y]-\e^3 \|Y\|_2^2$$
    attains a maximum at some point $(\overline{t},\overline{X},\overline{Y})$ such that $(\overline{t},\overline{X})\in \overline{A}_r$, where $\overline{A}_r$ is the closure of the set $A_r$ defined above, and $\|\overline{Y}\|_2<\e^{-3/2}$.


    By comparing the values of the above function at $(\overline{t},\overline{X},\overline{Y})$ and at $(\overline{t},\overline{X},\overline{X}/\e)$, we obtain
    \begin{align}
        -\e V^\delta(\overline{X}/\e)-\mathbb{E}\left[ \frac{Y_n\cdot \overline{X}}{\e}   \right] -\e^3\left\|\frac{\overline{X}}{\e}\right\|_2^2\le -\e V^\delta(\overline{Y})-\frac{\|\overline{Y}-\overline{X}/\e\|_2^2}{\eta}-\mathbb{E}[Y_n\cdot \overline{Y}]-\e^3\|\overline{Y}\|_2^2.\label{ineq100}
    \end{align}
    Consequently, for $n$ large enough such that $\frac{1}{n}\le \e^5$, we discover due to the uniform Lipschitz bound for $v^\delta$
    \begin{align*}
        \e^3\|\overline{Y}\|_2^2+\frac{\|\overline{Y}-\overline{X}/\e\|_2^2}{\eta}&\le \e\left(V^\delta(\overline{X}/\e)- V^\delta(\overline{Y})\right)+\e\|\overline{X}\|_2^2-\mathbb{E}[Y_n\cdot \overline{Y}]+\frac{1}{\e}\mathbb{E}[Y_n\cdot \overline{X}]\\
        &\le \e\|Dv^{\delta}\|_{\infty}\left\|\frac{\overline{X}}{\e}-\overline{Y}  \right\|_2 +2\e(r^2+\|X_0\|_2^2)+\e^5\|\overline{Y}\|_2+\e^4\|\overline{X}\|_2\\
        &\le C\e\left\|\frac{\overline{X}}{\e}-\overline{Y}  \right\|_2+ \e^5\|\overline{Y}\|_2+4\e(r^2+\|X_0\|_2^2)\\
        &\le C\e(r^2+1) +\frac{\|\overline{X}/\e-\overline{Y}\|_2^2}{2\eta}+\frac{C^2\e^2}{2}+\frac{\e^3\|\overline{Y}\|_2^2}{2}+\frac{\e^7}{2},
    \end{align*}
    which gives
    \begin{align}
        \|\overline{Y}-\overline{X}/\e\|_2^2\le \eta\e C(r^2+1) \quad\text{and}\quad \e^2\|\overline{Y}\|_2^2\le C(r^2+1),\label{prox1}
    \end{align}

    \nd
    for some constant $C>0$ depending only on the data and $X_0$. Going back to \eqref{ineq100}, we get
    \begin{align*}
        \frac{\|\overline{Y}-\overline{X}/\e\|_2^2}{\eta}&\le \e\left(V^\delta(\overline{X}/\e)- V^\delta(\overline{Y})\right)-\mathbb{E}[Y_n(\cdot \overline{Y}-\overline{X}/\e)]+\e^3\left(\|\overline{X}/\e\|_2^2-\|\overline{Y}\|_2^2\right)\\
        &\le C\e\|\overline{X}/\e-\overline{Y}\|_2+\frac{1}{n}\|\overline{X}/\e-\overline{Y}\|_2+\|\overline{X}/\e-\overline{Y}\|_2\left(\e^2\|\overline{X}\|_2+\e^3\|\overline{Y}\|_2\right),
    \end{align*}
    which, due to \eqref{prox1}, yields
    \be\label{prox2}
\|\overline{X}/\e-\overline{Y}\|_2\le C\e\eta,
    \ee
    for some constant $C>0$ depending only on the data and $X_0$.
    \vspace{1mm}

    \nd
    We consider 2 cases depending on whether $(\overline{t},\overline{X})$ is on the boundary or in the interior of $\overline{A}_r$.
    \vspace{3mm}

    \nd
    \textit{Case 1.} We start with the case where $(\overline{t},\overline{X})$ is in the interior. Now, by the definition of viscosity solutions for $\hat{U}^\e$ we have
    \be\label{viscprop}
    \begin{split}
    -\partial_t\Phi(\overline{t},\overline{X})-a_n-2L(t&-t_0)\\
    &+\mathbb{E}\left[ H\left( \frac{\overline{X}}{\e}, \frac{2 (\overline{X}/\e-\overline{Y})}{\e\eta}+2L(\overline{X}-X_0)+X_n+D\Phi(\overline{t},\overline{X})    \right)\right]\le 0.
    \end{split}
    \ee
    In addition, since the function
    $$\mathcal{H}\ni Y\mapsto \e V^\delta(Y)+\frac{\|\overline{X}/\e-Y\|_2^2}{\eta}+\mathbb{E}[Y_n\cdot Y]+\e^3\|Y\|_2^2$$
    admits a minimum at $\overline{Y}$, Proposition \ref{supersolution} yields
    \be\label{strictsuper}
    \delta V^\delta(\overline{Y})+\mathbb{E}\left[ H\left(\overline{Y},D\Phi(t_0,X_0)+\frac{2(\overline{X}/\e-\overline{Y})}{\e\eta} -2\overline{Y}\e^2 -\frac{Y_n}{\e} \right)   \right]\ge 0.
    \ee
    Due to the regularity of $\Phi$, we may choose $r$ small enough such that
    \be\label{prox3}
    \left|\partial_t\Phi(\overline{t},\overline{X})-\partial_t\Phi(t_0,X_0)\right|+\left\|D\Phi(\overline{t},\overline{X})-D\Phi(t_0,X_0)\right\|_2\le \delta^2/3.
    \ee
    We add \eqref{viscprop} and \eqref{strictsuper}, and we get by the properties of $H$
    \begin{align}
    0&\ge -\partial_t\Phi(\overline{t},\overline{X})-\delta V^\delta(\overline{Y})-a_n-2L(t-t_0)\nonumber\\
    &\hspace{2cm}+\mathbb{E}\left[ H\left( \frac{\overline{X}}{\e}, \frac{2 (\overline{X}/\e-\overline{Y})}{\e\eta}+2L(\overline{X}-X_0)+X_n+D\Phi(\overline{t},\overline{X})    \right)\right]\nonumber\\
    &\hspace{2cm}\qquad -\mathbb{E}\left[ H\left(\overline{Y},D\Phi(t_0,X_0)+\frac{2(\overline{X}/\e-\overline{Y})}{\e\eta} -2\overline{Y}\e^2 -\frac{Y_n}{\e} \right)   \right]\nonumber\\
    &\ge -\partial_t\Phi(\overline{t},\overline{X})-\delta V^\delta(\overline{Y})-a_n-2L(t-t_0)\nonumber\\
    &\; -C_H\mathbb{E}\bigg[\left(1+2Lr+\frac{1}{n}+2\e^2\left|\overline{Y}\right|+\left|\frac{Y_n}{\e}\right|+|D\Phi(t_0,X_0)|+|D\Phi(\overline{t},\overline{X})|+\left|\frac{4(\overline{X}/\e-\overline{Y})}{\e\eta}\right|\right)\nonumber\\
    &\quad\hspace{0.5cm} \cdot \left( \left|\frac{\overline{X}}{\e}-\overline{Y}\right|+|2L(\overline{X}-X_0)|+|X_n|+|D\Phi(\overline{t},\overline{X})-D\Phi(t_0,X_{0})|+2\e^2|\overline{Y}|+\left|\frac{Y_n}{\e}\right|\right)\bigg]\nonumber
    \end{align}

    \nd
    We now use Cauchy-Schwarz, \eqref{prox3} and the fact that $\frac{1}{n}\le \e^5$ to obtain
    \begin{align}
    &0\ge -\partial_t\Phi(t_0,X_0)-\frac{\delta^2}{3}-\delta V^\delta(\overline{Y})-\frac{1}{n}-2Lr\nonumber\\
    &\;\; -C_H\mathbb{E}\bigg[ \left(1+2Lr+\e^5 + 2\e^2|\overline{Y}| +\left|\frac{Y_n}{\e}\right|+|D\Phi(t_0,X_0)|+|D\Phi(\overline{t},\overline{X})|+\left|\frac{2(\overline{X}/\e-\overline{Y}}{\e\eta}   \right|\right)^2\bigg]^{1/2}\nonumber  \\
    & \hspace{1cm} \cdot\mathbb{E}\bigg[\left( \left|\frac{\overline{X}}{\e}-\overline{Y}\right|+ 2Lr+|X_n|+|D\Phi(t_0,X_0)-D\Phi(\overline{t},\overline{X})|+2\e^2|\overline{Y}|+\left|\frac{Y_n}{\e}\right|\right)^2\bigg]^{1/2}.\label{ineq3}
    \end{align}
    However, we have
    \begin{align*}
        \mathbb{E}\bigg[ &\left(1+2Lr+\e^5 + 2\e^2|\overline{Y}| +\left|\frac{Y_n}{\e}\right|+|D\Phi(t_0,X_0)|+|D\Phi(\overline{t},\overline{X})|+\left|\frac{2(\overline{X}/\e-\overline{Y}}{\e\eta}   \right|\right)^2\bigg]^{1/2}\\
        &\hspace{0.5cm}\le  1+2Lr+\e^{5}+2\e^2\|\overline{Y}\|_2+\frac{1}{n\e} +\|D\Phi(t_0,X_0)\|_2 +\|D\Phi(\overline{t},\overline{X})\|_2 +\frac{2}{\e\eta}\|\overline{X}/\e-\overline{Y}\|_2   \\
        &\hspace{0.5cm}\le  1+(2Lr)^2+\e^{5}+ \e C\sqrt{r^2+1}+\e^4+\frac{\delta^2}{3}+2 \|D\Phi(t_0,X_0)\|_2+2C =:M,
    \end{align*}
    where we used \eqref{prox3}, \eqref{prox2} and \eqref{prox1}, and $M$ is bounded when $\e,r,\delta$ are small. In addition,
    \begin{align*}
    \mathbb{E}\bigg[&\left(\left|\frac{\overline{X}}{\e}-\overline{Y}\right|+ 2Lr+|X_n|+|D\Phi(t_0,X_0)-D\Phi(\overline{t},\overline{X})|+2\e^2|\overline{Y}|+\left|\frac{Y_n}{\e}\right|\right)^2\bigg]^{1/2}\\
    &\hspace{1cm}\le \left\|\frac{\overline{X}}{\e}-\overline{Y}\right\|_2+ 2Lr+\|X_n\|_2+\|D\Phi(t_0,X_0)-D\Phi(\overline{t},\overline{X})\|_2+\e^2\|\overline{Y}\|_2+\frac{1}{\e}\|Y_n\|_2\\
    &\hspace{1cm}\le C\eta\e+ 2Lr+\frac{1}{n}+\frac{\delta^2}{3}+\e C+\frac{1}{n\e}\le C\e+2Lr+\frac{\delta^2}{3}.
    \end{align*}
    Using the last two inequalities in \eqref{ineq3} when $Lr, \e,\eta$ and $\delta$ are chosen to be small enough we get
    $$\frac{\theta}{2}\ge -\partial_t\Phi(t_0,X_0)-\delta V^\delta(\overline{Y}).$$
    However, by Proposition~\ref{effHm}, for any $p\in \R^d$, $-\delta v^\delta(\cdot,p)$ converges uniformly to $\overline{H}(p)$ as $\delta\to 0$, hence $-\delta V^\delta(\overline{Y})$ converges to $\mathbb{E}[\overline{H}(D\Phi(t_0,X_0))]$ as $\delta\to0$.
    This contradicts \eqref{"wrong" ineq} if $\delta$ is small enough, so we may proceed with case 2.
    \vspace{2mm}

    \nd
    \textit{Case 2.} We now assume that $(\overline{t},\overline{X})$ is on the boundary of $\overline{A}_r$. Note, also, that it follows from \eqref{prox1} that $\|\overline{Y}\|_2<\e^{-3/2}$. We will get a contradiction by proving that
    \be\label{ts}
    \begin{split}
\hat{U}^\e(t,X)-\Phi(t,X)-\e V^\delta(Y)&-\frac{\|X/\e-Y\|_2^2}{\eta}-L\left(\|X-X_0\|_2^2+|t-t_0|^2\right)\\
&\quad-\e^3\|Y\|_2^2-\mathbb{E}[X\cdot X_n]-\mathbb{E}[Y\cdot Y_n]+a_nt<-\frac{Lr^2}{2},
    \end{split}
    \ee
    for any $(t,X)$ on the boundary of $\overline{A}_r$, any $Y$ such that $\|Y\|_2<\e^{-3/2}$, any $\e$ small enough and any $n\in\mathbb{N}$ with $\frac{1}{n}<\e^5$. Indeed, if \eqref{ts} is true, then since $(\overline{t},\overline{X},\overline{Y})$ is a position of maximum, we also have
    $$\hat{U}^\e(t_0,X_0)-\Phi(t_0,X_0)-\e V^\delta\left(\frac{X_0}{\e}\right)-\e\|X_0\|_2^2-\mathbb{E}\left[X_0\cdot X_n+\frac{X_0}{\e}\cdot Y_n\right] +a_nt<-\frac{Lr^2}{2}.$$
    By sending $n\to +\infty$ and then $\e\to 0$, we get $0=\hat{U}(t_0,X_0)-\Phi(t_0,X_0)<-\frac{Lr^2}{2}$, which is a contradiction.
    \vspace{1mm}

    \nd
    To show \eqref{ts}, we again argue by contradiction. Suppose that for any $\e>0$, there exist $(t^\e,X^\e)$ on the boundary of $\overline{A}_r$ and $Y^\e\in\mathcal{H}$ with $\|Y^\e\|_2<\e^{-3/2}$ such that
    \be\nonumber
    \begin{split}
\hat{U}^\e(t^\e,X^\e)-\Phi(t^\e,X^\e)-\e V^\delta(Y^\e)&-\frac{\|X^\e/\e-Y^\e\|_2^2}{\eta}-L\left(\|X^\e-X_0\|_2^2+|t^\e-t_0|^2\right)\\
&\quad-\e^3\|Y^\e\|_2^2-\mathbb{E}[X^\e\cdot X_n]-\mathbb{E}[Y^\e\cdot Y_n]+a_nt^\e\ge -\frac{Lr^2}{2},
    \end{split}
    \ee
    for some $n\in \mathbb{N}$ such that $\frac{1}{n}<\e^5$. Since $(t^\e,X^\e)$ is on the boundary of $\overline{A}_r$, we have the inequality $L\left(\|X^\e-X_0\|_2^2+|t^\e-t_0|^2\right)\ge Lr^2$. In addition, $\hat{U}(t^\e,X^\e)\le \Phi(t^\e,X^\e)$, so we get from the above
    $$\hat{U}^\e(t^\e,X^\e)-\hat{U}(t^\e,X^\e)-\e V^\delta(Y^\e)-\mathbb{E}[X^\e\cdot X_n]-\mathbb{E}[Y^\e\cdot Y_n]+a_n t^\e\ge \frac{Lr^2}{2},$$
    or
    $$\sup_{(t,X)\in \overline{A}_r}|U^\e(t,\mathcal{L}(X))-U(t,\mathcal{L}(X))|-\e V^\delta(Y^\e)+\|X^\e\|_2\|X_n\|_2+\|Y^\e\|_2 \|Y_n\|_2+|a_n||t^\e|\ge \frac{Lr^2}{2},$$
    or
    \be\label{ineq101}\nonumber
    \sup_{(t,X)\in \overline{A}_r}|U^\e(t,\mathcal{L}(X))-U(t,\mathcal{L}(X))|-\e V^\delta(Y^\e)+\|X^\e\|_2\frac{1}{n}+\|Y^\e\|_2 \frac{1}{n}+|t^\e|\frac{1}{n}\ge \frac{Lr^2}{2}.
    \ee
    We now send $\e\to 0$ and $n\to \infty$ to get a contradiction due to the uniform convergence of $U^\e$ to $U$, the boundedness of $V^\delta$ and the uniform boundedness of $(\|X^\e\|_2)_{\e>0}$ and $(t^\e)_{\e >0}$.
\end{proof}

\subsection{Proof of Theorem \ref{thm:main1} and optimality of the rate $O(\e)$.}\label{proofmain1}

\nd
We collect the results of the previous subsections to prove Theorem \ref{thm:main1}.

\begin{proof}[Proof of Theorem \ref{thm:main1}]
    Combining the result of Theorem \ref{thm: unif conv} and the proof presented in subsection \ref{charoflimit}, the family $(U^\e)_{\e>0}$ converges uniformly and with rate $O(\e)$ (equation \eqref{rate1}) to an $L$-viscosity solution $U$ of \eqref{HJB}. By Theorem \ref{comprep}, \eqref{HJB} has a unique $L$-viscosity solution. This fully characterizes the limit $U$. The optimality of the rate $O(\e)$ follows from Proposition \ref{prop:opex} below.
\end{proof}

\nd
We finally present an example in $\R^d$ adapted from \cite[Proposition 4.3]{MitakeTranYu2019}, which shows the optimality of $O(\e)$ rate in Theorem \ref{thm:main1}.
\begin{proposition}\label{prop:opex}
Let $H(x,p)=\frac{|p|^2}{2}+V(x)$, where $V\in C(\mathbb{T}^d)$ satisfies $\max_{x\in \mathbb{T}^d} V(x)=0$ and $V(x)\leq -1$ for all $x\in B\left(0,\frac{1}{3}\right)$. Suppose $G(m)\equiv 0$ for all $m\in \mathcal{P}_2$. For $\varepsilon>0$, let $U^\varepsilon$ be the solution to \eqref{HJBe} associated with this Hamiltonian $H$, and let $U$ be the solution to \eqref{HJB} with the effective Hamiltonian $\bar{H}:\mathbb{R}^d\to\mathbb{R}$ corresponding to $H$.

Then $U^\varepsilon$ converges uniformly to $U\equiv 0$ on $[0,T]\times \mathcal{P}_2$ as $\varepsilon\to 0$, and
    \[
    U^\ep(T-1,\delta_0) \geq \frac{\e}{6}.
    \]
\end{proposition}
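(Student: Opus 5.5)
Since $G\equiv0$, the data are of the form treated by Corollary \ref{aeequality}(2) with $g\equiv0$, so $U^\e(t,\delta_x)=u^\e(t,x)$. We will not need this, however: a direct argument through the control representation of Proposition \ref{prop:U1eqUeqU2} and Lemma \ref{lem:Uep1costh} suffices.

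\textbf{Step 1: the limit is $U\equiv0$.} The Lagrangian is $L(y,v)=\frac{|v|^2}{2}-V(y)\ge 0$, since $V\le0$. We show that $\overline H(0)=0$. First, $H(y,p)\ge V(y)$, and the cell problem at $p=0$ forces $\overline H(0)\ge\max V=0$: at a maximum point of a corrector we get a test inequality. Conversely, the constant corrector is a subsolution of $H(y,0)=V(y)\le0$, which gives $\overline H(0)\le 0$. Convexity gives $\overline H(p)\ge\overline H(0)=0$, and hence $\overline L(0)=-\min_p\overline H=0$. With $G\equiv0$, the zero control in \eqref{formula} gives $U\le0$, and $\overline L\ge -\overline H(0)=0$ gives $U\ge0$. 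So $U\equiv0$, and uniform convergence follows from Theorem \ref{thm:main1}.

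\textbf{Step 2: the lower bound.} By Lemma \ref{lem:Uep1costh} (or directly, since $G\equiv0$ and $L\ge0$), we have $U^\e(T-1,\delta_0)=\inf_{\nu}\inf_Y\mathbb E[h^\e(T-1,T,0,Y)]$. Because $X\equiv0$ is deterministic, it suffices to prove the pathwise bound $h^\e(0,1,0,y)\ge\e/6$ for every $y\in\R^d$.

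Fix an absolutely continuous path $\gamma$ with $\gamma(0)=0$, and split into two cases.

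If $\gamma$ stays in $B(0,\e/3)$ for all $s\in[0,1]$, then $\gamma(s)/\e\in B(0,1/3)$ on the torus. There $-V\ge1$, so the cost is at least $1\ge\e/6$ when $\e\le 6$.

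Otherwise, let $\tau$ be the first exit time from $B(0,\e/3)$. Using $-V\ge1$ and $\frac{|v|^2}{2}+1\ge\sqrt2|v|$, the cost is at least
\[
\int_0^\tau\Bigl(\tfrac{|\dot\gamma|^2}{2}+1\Bigr)ds\ge\sqrt2\int_0^\tau|\dot\gamma|\,ds\ge\sqrt2\,\e/3\ge\e/6.
\]

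\textbf{Main obstacle.} The key step is correctly identifying $\overline H(0)=0$ to pin down $U\equiv0$. The remaining reduction of $U^\e$ to $h^\e$ is supplied by the earlier lemmas.
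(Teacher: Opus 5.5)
Your Step~2 is correct, but Step~1, the part you yourself flag as the key step, contains a wrong argument. Testing the cell problem at a maximum point $y_0$ of a corrector $v$, where a constant touches $v$ from above, yields only $V(y_0)\le\overline H(0)$, and $y_0$ is wherever $v$ happens to peak. Testing at extrema of $v$ gives $\min V\le\overline H(0)\le\max V$ and nothing sharper. For instance, in $d=1$ take $V(y)=-2(1+\cos 2\pi y)$. Then $v(y)=\min\{\int_{-1/2}^{y}\sqrt{-2V(s)}\,ds,\int_{y}^{1/2}\sqrt{-2V(s)}\,ds\}$ on $[-\tfrac12,\tfrac12]$ is a corrector at $p=0$, and it peaks at $y=0$, where $V=-4$.

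A correct argument uses a.e.\ differentiability. The function $v^\delta$ is Lipschitz (Proposition~\ref{effHm}), so $-\delta v^\delta=\tfrac12|Dv^\delta|^2+V\ge V$ a.e., hence everywhere by continuity. Evaluating at a maximum point of $V$ and letting $\delta\to0$ gives $\overline H(0)\ge\max V=0$.

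Similarly, ``convexity gives $\overline H(p)\ge\overline H(0)$'' does not follow from convexity alone; you would also need evenness of $\overline H$. The same a.e.\ argument with $p+Dv^\delta(\cdot,p)$ gives $\overline H(p)\ge\max V=0$ for all $p$, which is what $\overline L(0)=0$ requires. Alternatively, follow the paper. It takes $\overline H(0)=0$ from \cite[Lemma 4.21]{tran_hamilton-jacobi_2021}, notes that $\hat U\equiv0$ is then a classical solution of the lifted equation, and concludes by uniqueness (Theorem~\ref{comprep}), with no need for $\overline H(p)$, $p\neq0$. Your bound $\overline H(0)\le0$ is fine: $0$ is a subsolution of the discounted problem since $V\le0$, so $v^\delta\ge0$ by comparison.

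Step~2 is essentially the paper's argument: a pathwise bound for each trajectory issued from $0$, then an expectation. The detour through Lemma~\ref{lem:Uep1costh} is unnecessary. The two case splits differ:
\begin{itemize}
\item The paper splits on whether the path stays in $B(0,\e/3)$ during $[0,\e/3]$. In the exit case it uses only kinetic energy, $\int_0^{t_0}\tfrac12|\alpha_s|^2ds\ge|X_{t_0}|^2/(2t_0)\ge\e/6$ with $t_0\le\e/3$.
\item You split over all of $[0,1]$ and combine kinetic and potential cost via $\tfrac12|v|^2+1\ge\sqrt2|v|$. This even gives $\sqrt2\,\e/3$ in the exit case.
\end{itemize}
Your restriction $\e\le6$ (the paper tacitly needs $\e\le3$) is genuinely necessary. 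The zero control gives $U^\e(T-1,\delta_0)\le-V(0)$, so the bound $\e/6$ fails for large $\e$.
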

\begin{proof}
Since $\bar H(0)=0$ (see \cite[Lemma 4.21]{tran_hamilton-jacobi_2021}), the function $U\equiv 0$ solves \eqref{HJB}. Indeed, the lifted function $\hat{U} : [0,T] \times \mathcal{H} \to \mathbb{R}$ defined by $\hat{U}(t,X) \equiv 0$ satisfies
\[ \begin{cases}
-\partial_t \hat{U}(t,X) + \hat{\bar{H}}(D\hat{U}(t,X)) = 0, & (t,X) \in [0,T) \times \mathcal{H}, \\ \hat{U}(T,X) = 0, & X \in \mathcal{H},
\end{cases}
\]
where $\hat{\bar{H}}(P)=\mathbb{E}[\bar{H}(P)]\;\; \text{for }P\in \mathcal{H}$. By the value function representation of $U^\e$,
\[
U^\e(T-1,\delta_0)
=\inf_{\alpha\in\mathcal A} \mathbb{E}\left[\int_0^1 \left(\frac{|\alpha_s|^2}{2}-V \left(\frac{X_s}{\e}\right)\right)\,ds\right],
\qquad dX_s=\alpha_s\,ds,\ \ X_0=0.
\]
Fix $\alpha\in\mathcal A$ and let $X$ be the associated controlled process. Define
\[
S=\left\{\omega\in\Omega: X_s(\omega) \in B\left(0, \frac{\ep}{3}\right) \text{ for all } s \in \left[0, \frac{\e}{3}\right]\right\}.
\]
If $\omega\in S$, then $V(X_s/\e)\le -1$ on $[0,\e/3]$, hence
\[
\int_0^1 \frac{\left|\alpha_s\right|^2}{2}-V\left(\frac{X_s}{\e}\right)\,ds \geq  \int_0^\frac{\ep}{3}-V\left(\frac{X_s}{\e}\right)\,ds \geq \frac{\e}{3}.
\]
If $\omega\in S^c$, let $t_0(\omega)\le \e/3$ be the first exit time from $B(0,\e/3)$. Then $|X_{t_0}|=\e/3$ and
\[
\int_0^1 \frac{\left|\alpha_s\right|^2}{2}-V\left(\frac{X_s}{\e}\right)\,ds \geq  \int_0^{t_0}\frac{\left|\alpha_s\right|^2}{2}\,ds \geq \frac{1}{2t_0} \left(\int_0^{t_0} \alpha_s\,ds\right)^2 =\frac{\left|X_{t_0}\right|^2}{2t_0} \geq \frac{\e}{6}.
\]
Therefore,
\[
\mathbb{E}\left[\int_0^1 \frac{\left|\alpha_s\right|^2}{2}-V\left(\frac{X_s}{\e}\right)\,ds \right] \geq \frac{\e}{3}\mathbb{P}\left(S\right)+\frac{\e}{6}\mathbb{P}\left(S^c\right) \geq \frac{\e}{6}.
\]
Taking the infimum over $\alpha$ completes the proof.
\end{proof}

\section{Extension to Macroscopic and Measure-Dependent Lagrangians}\label{sec:generall}

\nd
In this section, we assume {\rm (A1)} and {\rm (A2)}. The Hamiltonian is now allowed to depend on the slow variable $x \in \mathbb{R}^d$ and probability measure $m \in \mathcal{P}_2$, and the constants appearing in (A1') are uniform with respect to $x\in \mathbb{R}^d, m \in \mathcal{P}_2$. We extend the result from the previous case, obtained under assumptions (A1') and (A2), to this more general setting. The rate of convergence in this case is $O(\sqrt{\e})$.


\subsection{Equivalent formulations of $U$}
As in the case of $U^\varepsilon$, by the dynamic programming principle \cite{djete2022mckean1}, the solution $U$ to the limiting equation \eqref{HJB4} admits a variational representation. In particular, $U$ satisfies \eqref{formula}. We also provide an alternative formulation of $U$ and a velocity bound for nearly optimal paths. The proof is omitted as it follows the same argument as in Propositions~\ref{prop:Uepm2new} and \ref{prop:UeqU2}.

\begin{proposition}\label{prop:Uformula}
(1)  For any $t \in [0, T]$ and $m \in \mathcal{P}_2$,
\[
\begin{aligned}
U(t, m)=&\inf_{\alpha\in\mathcal{A}}\left\{\mathbb{E}\left[\int^T_t \bar{L}\left(X_s,\alpha_s, \mathcal{L}(X_s) \right)\, ds \right]+G(\mathcal{L}(X_T))\right\},\\
=&\inf_{\nu\in \mathcal{P}_2}\left\{\inf_{(X,Y)\in\mathcal{E}^\ast_{m,\nu}}\left\{\inf_{\gamma\in\Gamma(X,Y)}\mathbb{E}\left[\int^T_t\bar{L}\left(\gamma_s, \dot{\gamma}_s,\mathcal{L}(\gamma_s)\right) \, ds\right]\right\}+G(\nu)\right\}.
\end{aligned}
\]
Here, $\mathcal{A}$ is the control set defined in~\eqref{eqn:cs}, and $X_s$ solves $dX_s=\alpha_s\,ds$. $\mathcal{E}_{m,\nu}^\ast$ is defined in \eqref{eqn:estar}, and $\Gamma(X,Y)$ is defined in \eqref{eqn:gammaxy}.

\nd
(2) Let $t \in [0, T)$ and $\delta \in \left(0, T-t\right)$. Suppose $(\nu_\delta,X^\delta, Y^\delta,\gamma^\delta)$ is a $\delta$-minimizer for $U(t,m)$.
Then there exists a constant $M_1=M_1(C_{2, H}, C_G, K_0)>0$ such that
\[
    \mathbb{E} \left[\int_{t}^T\left|\dot{\gamma}^\delta_s\right|^2\,ds\right] \leq M_1(T-t).
\]
\end{proposition}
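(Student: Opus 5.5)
The plan is to mirror, step by step, the arguments already given for $U^\e$, with $L$ replaced by $\bar L$. The ingredients are Proposition~\ref{prop:Lbarmprop}, Theorem~\ref{comprep} and the dynamic programming principle of \cite{djete2022mckean1}.

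\emph{First equality.} By Proposition~\ref{effHm}, $\bar H$ is convex in $p$, has quadratic growth \eqref{eqn:Hbarmgrowth}, and is locally Lipschitz \eqref{eqn:Hbarmreg}. Consequently $\bar L$ enjoys the bounds \eqref{eqn:Lbarmquad} and \eqref{eqn:LbarmLip}. This is the same structure that $L$ has under (A1), except that there is no fast variable. Hence the value function on the right-hand side of the first line is bounded and Lipschitz, and it satisfies the dynamic programming principle \cite{djete2022mckean1}. The standard verification then shows that it is an $L$-viscosity solution of \eqref{HJB4}. Uniqueness from Theorem~\ref{comprep} identifies it with $U$.

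\emph{Second equality.} Define $\bar U_1$ as the analogous value over the larger control set $\mathcal A_1$, and $\tilde{\bar U}_1$ as the path formulation over $\Gamma(X,Y)$. The proof of Proposition~\ref{prop:UeqU2} applies verbatim and gives $\bar U_1=\tilde{\bar U}_1$: in one direction a control produces the path $\gamma_s=X_t+\int_t^s\alpha$, and in the other direction a measurable derivative of $\gamma$ is a control. Next, the argument of Proposition~\ref{prop:Uepm2new}(1) uses only the growth bounds \eqref{eqn:Lbarmquad}, $|G|\le C_G$, the ${\bf d}_2$-Lipschitz bound on $G$, and ${\bf d}_2(m,\nu)\le \|X-Y\|_2$. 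It therefore shows that the infimum may be restricted to $\mathcal E^\ast_{m,\nu}$, with the same constant $M_0$.

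Finally, $\bar U=\bar U_1$ (the value over $\mathcal A$ equals the value over $\mathcal A_1$) follows the proof of Proposition~\ref{prop:U1eqUeqU2}. Starting from a $\delta$-optimal path, freeze $(x,m)$ on a partition of $N$ subintervals using Lemma~\ref{lem:fixest}. That lemma only uses the Lipschitz bound \eqref{eqn:LbarmLip} and the growth bounds, so its proof carries over with $\bar L$ in place of $L$ and with no $\gamma/\e$ argument. Then choose measurable near-minimizers on each subinterval via Lemma~\ref{lem:measapp}, and realize the finite-dimensional law of $(\gamma_{t_0},\dots,\gamma_{t_N})$ as functions of the independent Gaussian $\xi$. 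This produces a control in $\mathcal A_2$, and the error is $O(1/N)+2\delta$. Together with the inclusions of control sets this gives $\bar U_1=\bar U=\bar U_2$.

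\emph{Part (2).} This is identical to Proposition~\ref{prop:Uepm2new}(2). Compare the cost with the candidate $\nu=m$, $\gamma\equiv X$, which yields $U(t,m)\le K_0(T-t)+G(m)$. Use the lower bound $\bar L\ge |v|^2/(4C_{2,H})-K_0$ and the estimate ${\bf d}_2(m,\nu_\delta)\le (\mathbb E|X^\delta-Y^\delta|^2)^{1/2}\le \sqrt{M_0}(T-t)$. This gives $\mathbb E\int_t^T|\dot\gamma^\delta|^2\,ds\le M_1(T-t)$, with $M_1$ depending only on $C_{2,H}$, $C_G$ and $K_0$.

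\emph{Main obstacle.} I expect the delicate step to be the first equality: checking that the dynamic programming principle of \cite{djete2022mckean1} applies to the Lagrangian $\bar L$ under our assumptions, and that the resulting value function is a bounded $L$-viscosity solution in the sense of Definition~\ref{visc.}, so that uniqueness in Theorem~\ref{comprep} can be invoked. The remaining steps are direct transcriptions of the earlier proofs, since $\bar L$ satisfies the same structural bounds as $L$.
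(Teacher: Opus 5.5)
Your proposal is correct and follows the paper's (omitted) proof: the first equality comes from the dynamic programming principle of \cite{djete2022mckean1} combined with uniqueness in Theorem~\ref{comprep}, and the path representation and the velocity bound come from transcribing Propositions~\ref{prop:Uepm2new} and \ref{prop:UeqU2} with $\bar L$ in place of $L$. You are also right that the analogue of Proposition~\ref{prop:U1eqUeqU2} is needed to pass from $\mathcal{A}_1$ back to $\mathcal{A}$; there, Lemma~\ref{lem:measapp} is unnecessary, because the frozen problem for $\bar L$ is solved exactly by the piecewise linear interpolation (Lemma~\ref{lem:lbarpart}).
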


\subsection{Convergence of the cost function $h^\e$}
The rate of convergence of $h^\e$ is summarized below as a direct consequence of Proposition~\ref{prop:subsuperm}. In particular, the constant $C$ in the $O(\varepsilon)$ convergence rate is independent of $c, \mu$. Moreover, the limit can be characterized in terms of the effective Lagrangian $\bar{L}$, as established in the existing literature (see, for example, \cite{LionsPapaVara1987,Weinan1991}).

\begin{proposition}\label{prop:rocmu}
Let $\ep>0,\ 0\leq t_1 < t_2\leq T, x,y, c\in\mathbb{R}^d, \mu \in \mathcal{P}_2$. Then the limit
\[
\bar{h} \left(c,\mu;t_1,  t_2,x,y\right):=\lim_{\ep\to 0^+} \ep h\left(c,\mu; \frac{t_1}{\ep}, \frac{t_2}{\ep},\frac{x}{\ep},\frac{y}{\ep}\right)= \lim_{\ep\to 0^+}  h^\e (c,\mu;t_1, t_2, x,y )
\]
exists. Moreover, there exists a constant $C=C(d, C_{1,H},C_{2,H},K_0)>0$ independent of $c, \mu, t_1, t_2, x, y$ such that
\[
\begin{aligned}
&\left|\bar{h}\left(c,\mu; t_1,t_2,x,y\right)- h^\e\left(c,\mu;t_1, t_2, x, y\right)\right|\leq C\left(1+\frac{|x-y|^2}{(t_2-t_1)^2}\right) \ep.
\end{aligned}
\]
Finally, the limit admits the representation
\[
\begin{aligned}
\bar{h}(c,\mu;t_1,t_2,x,y)
&= (t_2 - t_1)\,
\bar{L}\!\left(c,\frac{y-x}{t_2 - t_1}, \mu\right)\\
&=\inf\left\{\int_{t_1}^{t_2} \bar{L}(c,\dot{\gamma}(s), \mu)\,ds:\gamma \in \mathrm{AC}\left([t_1, t_2];\R^d\right), \gamma(t_1)=x, \gamma(t_2)=y\right\}.
\end{aligned}
\]
\end{proposition}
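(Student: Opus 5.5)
The plan is to freeze the parameters $(c,\mu)$ and reduce everything to the (A1') setting, where Proposition~\ref{prop:subsuperm} is already available. For fixed $c\in\R^d$ and $\mu\in\mathcal{P}_2$, set $H_{c,\mu}(y,p):=H(y,c,p,\mu)$. By (A1), $H_{c,\mu}$ is convex in $p$, satisfies \eqref{quadgrowth} with the \emph{same} constants $C_{1,H},C_{2,H},K_0$, and satisfies \eqref{ineq} with constant $C_H$. Its Legendre transform is $L_{c,\mu}(y,v)=L(y,c,v,\mu)$. The function $h(c,\mu;\cdot)$ of \eqref{def:mep1} is then exactly the cost \eqref{def:meps1} built from $L_{c,\mu}$, and $h^\e(c,\mu;\cdot)$ is its rescaling \eqref{def:meps}. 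Applying Proposition~\ref{prop:subsuperm}(3) to $H_{c,\mu}$ gives existence of the limit and the estimate $C(1+|x-y|^2/(t_2-t_1)^2)\e$. The constant there depends only on $(d,C_{1,H},C_{2,H},K_0)$, so it is uniform in $(c,\mu)$. Here I will check that the subadditivity/superadditivity proofs use only the quadratic bounds \eqref{eqn:Lmquad} and the $\mathbb{Z}^d$-periodicity in $y$, both of which hold uniformly.

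Next comes the identification of $\bar h$ with $\bar L$. The effective Hamiltonian of $H_{c,\mu}$ is $\overline{H}(c,\cdot,\mu)$ by Proposition~\ref{effHm}, so its conjugate is $\bar L(c,\cdot,\mu)$. The classical finite-dimensional result of \cite{LionsPapaVara1987,Weinan1991} (see also \cite{tran_hamilton-jacobi_2021}) states that the homogenized metric equals $(t_2-t_1)\bar L(c,(y-x)/(t_2-t_1),\mu)$. To spell out the argument I would proceed as follows. First, $\bar h$ is translation invariant in $(t_1,t_2)$ and, by the scaling in the definition, positively $1$-homogeneous in $(t_2-t_1,y-x)$. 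So $\bar h=(t_2-t_1)\,\ell((y-x)/(t_2-t_1))$ for some function $\ell$, and subadditivity passes to the limit to make $\ell$ convex. Second, to show $\ell=\bar L(c,\cdot,\mu)$, I would use the finite-dimensional homogenization of \eqref{fdHJBe} with linear terminal data $g(x)=p\cdot x$. On one side, $u^\e(t,x)=\inf_y\{h^\e(t,T,x,y)+p\cdot y\}\to \inf_y\{\bar h(t,T,x,y)+p\cdot y\}$. On the other side, by \cite{LionsPapaVara1987} the limit is $p\cdot x-(T-t)\overline{H}(p)$. Comparing gives $\overline{H}=\ell^*$ up to the sign convention in \eqref{effLagr}, and Legendre duality together with convexity and continuity of $\ell$ then yields $\ell=\bar L(c,\cdot,\mu)$.

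The last equality, the infimum over paths of $\int\bar L(c,\dot\gamma,\mu)\,ds$, follows from Jensen's inequality, since $\bar L(c,\cdot,\mu)$ is convex (Proposition~\ref{prop:Lbarmprop}) and the straight line attains the bound. I expect the main obstacle to be the identification step. The passage from the pointwise limits of $h^\e$ to the limit of the infimum defining $u^\e$ requires the uniform estimate together with the coercivity $h^\e\ge |x-y|^2/(4C_{2,H}(T-t))-K_0(T-t)$, so that the minimizing $y$ stay bounded. Apart from this, the argument is a direct citation of the classical theory.
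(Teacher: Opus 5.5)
Your proposal follows the paper's argument: freeze $(c,\mu)$, apply Proposition~\ref{prop:subsuperm} with constants uniform in $(c,\mu)$, and identify $\bar h$ with $(t_2-t_1)\bar L$ using the classical theory. The paper only cites \cite{LionsPapaVara1987,Weinan1991} for the identification; your sketch of it via linear terminal data, the coercivity bound on near-minimizers, and Legendre duality is a correct way to fill in that citation.
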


\subsection{Rate of convergence of $U^\e$ to $U$}

Before proving the convergence rate, we establish the following lemma, which is analogous to Lemma \ref{lem:fixest} and shows that the same decomposition property holds for $\bar{L}$: the minimal action over a fixed time partition can be written as a sum of local minimal costs on each subinterval.

\begin{lemma}\label{lem:lbarpart}
Let $\e>0, t\in[0,T), \nu\in\mathcal P_2, (X,Y)\in\mathcal{E}_{\mu_0,\mu}^\ast$, and $\gamma\in\Gamma(X,Y) $ satisfy
\[\mathbb{E}\left[\int_t^T\left|\dot{\gamma}_s\right|^2\,
ds\right] \leq M_1(T-t),
\]
with $M_1=M_1(C_{2, H}, C_G, K_0)>0$ from Proposition~\ref{prop:Uepm2new}. Let $N \in \mathbb{N}$, $\Delta t := (T-t)/N$, and $t_k := t + k\Delta t$ for $k=0,\dots,N$. Define $\Gamma(X,Y,\gamma)$ as in Lemma \ref{lem:fixest}, that is, $\Gamma(X,Y;\gamma)
:=
\{\eta\in \Gamma(X,Y) : \eta_{t_k}=\gamma_{t_k}\,\, \text{a.s. for all } k=0,\dots,N\}$. For $\eta \in \Gamma(X,Y; \gamma)$, define
\[
\bar{J}(\eta)
:=
\sum_{k=0}^{N-1}
\mathbb E \left[
\int_{t_k}^{t_{k+1}}
\bar L \left(\gamma_{t_k},\dot{\eta}_s,\mathcal{L}\left(\gamma_{t_k}\right)\right)\,dt\right]
\]
Then
\[
\inf_{\eta \in\Gamma(X,Y;\gamma)} \bar J(\eta)
=
\sum_{k=0}^{N-1}
\mathbb{E} \left[
\bar{h} \left(\gamma_{t_k},\mathcal{L}\left(\gamma_{t_k}\right);t_k,t_{k+1},
\gamma_{t_k},\gamma_{t_{k+1}}\right)\right].
\]
The infimum is attained by the piecewise linear interpolation
\begin{equation}\label{eqn:gambar}
\bar{\gamma}(s,\omega):=\gamma(t_k,\omega)+\frac{s-t_k}{t_{k+1}-t_k}\left(\gamma(t_{k+1}, \omega)-\gamma(t_k,\omega)\right), \quad \text{ for } \omega \in \Omega, \,\,s \in \left[t_k, t_{k+1}\right], \, \,k=0,\dots,N-1,
\end{equation}
and $\displaystyle \mathbb{E}\left[\int_t^T \left|\dot{\bar{\gamma}}_s\right|^2 \,ds\right] \leq M_1(T-t)$.
\end{lemma}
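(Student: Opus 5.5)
The plan is to mirror part (2) of Lemma~\ref{lem:fixest}, using the explicit representation of $\bar h$ from Proposition~\ref{prop:rocmu} and Jensen's inequality in place of the measurable-selection step. For the lower bound I fix $\eta\in\Gamma(X,Y;\gamma)$, $k$, and $\omega$. On $[t_k,t_{k+1}]$ the frozen parameters $c=\gamma_{t_k}(\omega)$ and $\mu=\mathcal L(\gamma_{t_k})$ do not depend on $s$. The path $\eta(\cdot,\omega)$ is absolutely continuous and joins $\gamma_{t_k}(\omega)$ to $\gamma_{t_{k+1}}(\omega)$. By the variational formula in Proposition~\ref{prop:rocmu},
\[
\int_{t_k}^{t_{k+1}}\bar L\bigl(\gamma_{t_k}(\omega),\dot\eta_s(\omega),\mathcal L(\gamma_{t_k})\bigr)\,ds\ \ge\ \bar h\bigl(\gamma_{t_k}(\omega),\mathcal L(\gamma_{t_k});t_k,t_{k+1},\gamma_{t_k}(\omega),\gamma_{t_{k+1}}(\omega)\bigr).
\]
This inequality can also be checked directly by Jensen's inequality, using the convexity of $v\mapsto\bar L(c,v,\mu)$ from Proposition~\ref{prop:Lbarmprop}(2). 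Taking expectations and summing over $k$ then gives $\bar J(\eta)\ge\sum_k\mathbb E[\bar h(\cdots)]$.

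For the reverse inequality I use the piecewise linear interpolation $\bar\gamma$ from \eqref{eqn:gambar}. It is $\mathcal B([t,T])\otimes\mathcal F$-measurable, since it is an affine combination of the random variables $\gamma_{t_k}$ with continuous coefficients in $s$. Each path $\bar\gamma(\cdot,\omega)$ is absolutely continuous, with $\bar\gamma_t=X$, $\bar\gamma_T=Y$ and $\bar\gamma_{t_k}=\gamma_{t_k}$, so $\bar\gamma\in\Gamma(X,Y;\gamma)$. On $(t_k,t_{k+1})$ its velocity is constant, equal to $(\gamma_{t_{k+1}}-\gamma_{t_k})/\Delta t$. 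Hence
\[
\int_{t_k}^{t_{k+1}}\bar L(\gamma_{t_k},\dot{\bar\gamma}_s,\mathcal L(\gamma_{t_k}))\,ds=\Delta t\,\bar L\Bigl(\gamma_{t_k},\tfrac{\gamma_{t_{k+1}}-\gamma_{t_k}}{\Delta t},\mathcal L(\gamma_{t_k})\Bigr),
\]
and by the first equality in Proposition~\ref{prop:rocmu} this is exactly $\bar h(\gamma_{t_k},\mathcal L(\gamma_{t_k});t_k,t_{k+1},\gamma_{t_k},\gamma_{t_{k+1}})$. So $\bar J(\bar\gamma)$ equals the sum, and the infimum is attained at $\bar\gamma$.

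Before taking expectations I need the quantities involved to be measurable and integrable. Measurability of $\omega\mapsto\bar h(\gamma_{t_k}(\omega),\dots)$ follows from the continuity of $\bar L$ in $(c,v)$ (Proposition~\ref{prop:Lbarmprop}(4)). Integrability follows from the quadratic bounds \eqref{eqn:Lbarmquad} together with $\gamma_{t_k}\in L^2$.

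For the velocity bound I apply Cauchy--Schwarz on each subinterval, pathwise:
\[
\int_{t_k}^{t_{k+1}}|\dot{\bar\gamma}_s|^2ds=\frac{|\gamma_{t_{k+1}}-\gamma_{t_k}|^2}{\Delta t}\le\int_{t_k}^{t_{k+1}}|\dot\gamma_s|^2ds.
\]
Summing over $k$ and taking expectations gives $\mathbb E\int_t^T|\dot{\bar\gamma}_s|^2ds\le M_1(T-t)$.

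The step I expect to need the most care is applying the representation of $\bar h$ from Proposition~\ref{prop:rocmu}. That representation is stated for deterministic $(c,\mu,x,y)$, while here it is evaluated at the random argument $c=\gamma_{t_k}(\omega)$. Since it holds for every fixed $c$, it can be applied $\omega$ by $\omega$ with $\mu=\mathcal L(\gamma_{t_k})$ held fixed. The remaining argument is elementary.
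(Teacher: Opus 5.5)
Your proposal is correct and follows the paper's proof essentially step for step. The lower bound comes from the variational representation of $\bar h$ in Proposition~\ref{prop:rocmu}, applied $\omega$ by $\omega$ (fine, since $\eta_{t_k}=\gamma_{t_k}$ holds a.s.). Attainment comes from evaluating $\bar J$ at the piecewise linear interpolation $\bar\gamma$ via $\bar h=\Delta t\,\bar L\bigl(c,(y-x)/\Delta t,\mu\bigr)$, and the velocity bound is Cauchy--Schwarz on each subinterval; your measurability, integrability and Jensen remarks are sound details the paper leaves implicit.
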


\begin{proof}
By definition,
\[
\inf_{\eta \in \Gamma(X,Y;\gamma)}
\bar{J}(\eta)
\geq
\sum_{k=0}^{N-1}
\mathbb{E} \left[
\bar{h}
\left(\gamma_{t_k},\mathcal{L}(\gamma_{t_k});t_k,t_{k+1},
\gamma_{t_k},\gamma_{t_{k+1}}\right)\right].
\]
By Proposition \ref{prop:rocmu},
\[
\bar{h}
\bigl(\gamma_{t_k},\mathcal{L}(\gamma_{t_k});t_k,t_{k+1},
\gamma_{t_k},\gamma_{t_{k+1}}\bigr)=\left(t_{k+1}-t_k\right) \bar{L}\left(\gamma_{t_k}, \frac{\gamma_{t_{k+1}}-\gamma_{t_k}}{t_{k+1}-t_{k}}, \mathcal{L}(\gamma_{t_k})\right).
\]
Consider the path $\bar{\gamma}:[t,T] \times \Omega \to \R^d$ defined in \eqref{eqn:gambar}. Then $\bar{\gamma}\in \Gamma(X,Y;\gamma)$, and
\[
\inf_{\eta \in \Gamma(X,Y,\gamma)}
J(\eta) \leq \bar{J}(\bar{\gamma})=\sum_{k=0}^{N-1}
\mathbb{E} \left[\bar{h}\left(\gamma_{t_k},\mathcal{L}(\gamma_{t_k}); t_k, t_{k+1}, \gamma_{t_k}, \gamma_{t_{k+1}}\right) \right].
\]
Hence, $\bar{\gamma}$ attains the infimum.
We compute
\[
\mathbb{E} \left[\int_t^{T} \left|\dot{\bar{\gamma}}_s\right|^2\,ds\right] = \frac{1}{\Delta t}\sum_{k=0}^{N-1}\mathbb{E}\left|\gamma_{t_{k+1}}-\gamma_{t_k}\right|^2 \leq \sum_{k=0}^{N-1}\mathbb{E} \left[\int_{t_k}^{t_{k+1}} \left|\dot{\gamma}_s\right|^2\,ds\right] \leq M_1(T-t).
\]
\end{proof}

We are now ready to prove the convergence rate in the general setting under assumptions (A1) and (A2).

\begin{proof}[Proof of Theorem \ref{thm:multiscale}]
If $t_0=T$, by Propositions~\ref{prop:Uepm2new}, \ref{prop:UeqU2}, \ref{prop:U1eqUeqU2} and \ref{prop:Uformula}, the admissible pair $(X, Y)$ must satisfy $X=Y$, and the only admissible measure $\nu \in \mathcal{P}_2$ is $\nu=m$. It follows that $U^\ep(T, m)=G(m)=U(T,m)$.

Assume from now on that $t<T$ and let $\delta \in \left(0, T- t\right)$.

\nd
\textbf{Step 1: Lower bound for $U^\ep-U$.} By Propositions~\ref{prop:Uepm2new}, \ref{prop:UeqU2}, and \ref{prop:U1eqUeqU2}, let $(\nu,X,Y,\gamma)$ be $\delta$-optimal for $U^\e(t,m)$. Then
\begin{equation}\label{eqn:nopuep}
 \mathbb{E}\left[\int_{t}^T L\left( \frac{\gamma_s}{\e}, \gamma_s, \dot{\gamma}_s, \mathcal{L}(\gamma_s) \right)\, ds \right] +G(\nu) \leq U^\e(t, m)+\delta, \qquad
\mathbb{E}\left[\int_{t}^T|\dot{\gamma}_s|^2 \,ds\right] \leq M_1(T-t),
\end{equation}
    for some constant $M_1=M_1(C_{2, H}, C_G, K_0)>0$.

Consider a time partition $t_k := t + \frac{k}{N}(T-t)$ for $ k=0,\dots,N$, where $N \in \mathbb{N}$ will be chosen later. With $J_\ep$ as in Lemma~\ref{lem:fixest}, we may write
\begin{equation}\label{eqn:opfixmu}
\mathbb{E}\left[\int_{t}^T L\left( \frac{\gamma_s}{\e}, \gamma_s, \dot{\gamma}_s, \mathcal{L}(\gamma_s) \right)\, ds \right]\geq J_\e (\gamma) - \sum_{k=0}^{N-1} E_k,
\end{equation}
where
\[
E_k:=\mathbb{E}\left[\int_{t_k}^{t_{k+1}} \left|L\left( \frac{\gamma_s}{\e}, \gamma_s, \dot{\gamma}_s, \mathcal{L}(\gamma_s) \right)-
L\left(\frac{\gamma_s}{\ep}, \gamma_{t_k}, \dot{\gamma}_s, \mathcal{L}(\gamma_{t_k})\right)\right|\,ds\right]. \]
By Lemma \ref{lem:fixest},
\begin{equation}\label{eqn:tpartest}
\begin{aligned}
J_\e(\gamma) \geq \inf_{\eta\in\Gamma(X,Y;\gamma)}
J_\e (\eta) =\sum_{k=0}^{N-1}
\mathbb E\left[h^\e\left(\gamma_{t_k}, \mathcal{L}(\gamma_{t_k}); t_k,t_{k+1},
\gamma_{t_k},\gamma_{t_{k+1}}\right)\right].
\end{aligned}
\end{equation}

\nd
Next, Proposition~\ref{prop:rocmu} implies that
\begin{equation}\label{eqn:memukmbar}
\begin{split}
\mathbb{E}&\left[h^\e\left(\gamma_{t_k}, \mathcal{L}(\gamma_{t_k}); t_k,t_{k+1},
\gamma_{t_k},\gamma_{t_{k+1}}\right)\right]\\
&\qquad\geq \mathbb{E}\left[\bar{h}\left(\gamma_{t_k}, \mathcal{L}(\gamma_{t_k}); t_k,t_{k+1},
\gamma_{t_k},\gamma_{t_{k+1}}\right)\right] -C\left(1+\frac{ \mathbb{E}\left|\gamma_{k+1}-\gamma_k\right|^2}{|t_{k+1}-t_k|^2}\right) \ep,
\end{split}
\end{equation}
for some constant $C=C(d, C_{1,H},C_{2,H},K_0)>0$.
By H\"older's inequality,
\[
\sum_{k=0}^{N-1}\frac{ \mathbb{E}\left|\gamma_{k+1}-\gamma_k\right|^2}{|t_{k+1}-t_k|^2}\leq \frac{N}{T-t} \mathbb{E} \left[\int_{t}^T\left|\dot{\gamma}_s\right|^2\right]\,
ds \leq M_1 N,
\]
where we use the fact that $t_{k+1}-t_k=\frac{T-t_0}{N}$ and \eqref{eqn:nopuep}. Combining this with \eqref{eqn:memukmbar} yields
\begin{equation}\label{eqn:rocsumerr}
\sum_{k=0}^{N-1}
\mathbb E\left[h^\e\left(\gamma_{t_k}, \mathcal{L}(\gamma_{t_k}); t_k,t_{k+1},
\gamma_{t_k},\gamma_{t_{k+1}}\right)\right]\geq \sum_{k=0}^{N-1}\mathbb{E}\left[\bar{h}\left(\gamma_{t_k}, \mathcal{L}(\gamma_{t_k}); t_k,t_{k+1},
\gamma_{t_k},\gamma_{t_{k+1}}\right)\right] -C N\ep,
\end{equation}
for some constant $C=C(d, C_{1,H},C_{2,H},C_G, K_0)>0$.

Let $\bar\gamma:[t_0,T] \times \R^d \to \R^d$ be the piecewise linear interpolation of $\gamma$ on the partition $\{t_k\}_{k=0}^N$, namely
\[
\bar{\gamma}(t,\omega):=\gamma(t_k,\omega)+\frac{s-t_k}{t_{k+1}-t_k}\left(\gamma(t_{k+1}, \omega)-\gamma(t_k,\omega)\right), \qquad \text{ for } t \in \left[t_k, t_{k+1}\right].
\]
By Lemma \ref{lem:lbarpart},
\begin{equation}\label{eqn:admgabar}
\begin{aligned}
&\sum_{k=0}^{N-1}
\mathbb{E}\left[\bar{h}\left(\gamma_{t_k}, \mathcal{L}(\gamma_{t_k}); t_k,t_{k+1},
\gamma_{t_k},\gamma_{t_{k+1}}\right)\right]=\inf_{\eta\in \Gamma(X,Y;\gamma)} \bar J(\eta)=\bar{J}(\bar{\gamma})\\
\geq & \sum_{k=0}^{N-1}
\mathbb{E} \left[\int_{t_k}^{t_{k+1}}
\bar{L}\left(\bar{\gamma}_s,
\dot{\bar{\gamma}}_s,
\mathcal{L}\left(\bar{\gamma}_s\right)
\right)\,ds\right]-\sum_{k=0}^{N-1} \bar{E}_k,
\end{aligned}
\end{equation}
where
\[
\bar{E}_k:= \mathbb{E} \left[\int_{t_k}^{t_{k+1}}\left|
\bar{L}\left(\gamma_{t_k},
\dot{\bar{\gamma}}_s,
 \mathcal{L}(\gamma_{t_k})
\right)-
\bar{L}\left(\bar{\gamma}_s,
\dot{\bar{\gamma}}_s,
\mathcal{L}\left(\bar{\gamma}_s\right)
\right)\right|\,ds\right].
\]
Combining \eqref{eqn:opfixmu}, \eqref{eqn:tpartest}, \eqref{eqn:rocsumerr}, and \eqref{eqn:admgabar}, we obtain
\begin{equation}\label{eqn:intstepuep}
    \mathbb{E}\left[\int_{t}^T L\left( \frac{\gamma_s}{\e}, \gamma_s, \dot{\gamma}_s, \mathcal{L}(\gamma_s) \right)\, ds \right] \geq \mathbb{E} \left[\int_t^T
\bar{L}\left(\bar{\gamma}_s,
\dot{\bar{\gamma}}_s,
\mathcal{L}\left(\bar{\gamma}_s\right)
\right)\,ds\right]-\sum_{k=0}^{N-1} E_k -\sum_{k=0}^{N-1} \bar{E}_k -CN \ep,
\end{equation}
for some constant $C=C(d, C_{1,H},C_{2,H},C_G, K_0)>0$.
By Lemmas \ref{lem:fixest}, \ref{lem:lbarpart}, and \eqref{eqn:nopuep},
\begin{equation}\label{eqn:eksum}
\sum_{k=0}^{N-1}E_k \leq  \frac{C(1+M_1)(T-t)^2}{N}, \qquad
\sum_{k=0}^{N-1}\bar{E}_k\leq \frac{C(1+M_1)(T-t)^2}{N}.
\end{equation}Combining \eqref{eqn:nopuep}, \eqref{eqn:intstepuep}, and \eqref{eqn:eksum}, we deduce that
\[
\begin{aligned}
U^\ep(t,m)+\delta & \geq \mathbb{E} \left[\int_t^T
\bar{L}\left(\bar{\gamma}_s,
\dot{\bar{\gamma}}_s,
\mathcal{L}\left(\bar{\gamma}_s\right)
\right)\,ds\right] + G(\nu) -\frac{C(T-t)^2}{N}-CN\ep\\
& \geq U(t,m ) -\frac{C(T-t)^2}{N}-CN\ep.
\end{aligned}
\]
for some constant $C=C(d,C_{1,H},C_{2,H},C_G, K_0)>0$.
Letting $\delta \to 0$, we obtain
\[
U^\ep(t,m) \geq U(t,m) -\frac{C(T-t)^2}{N}-CN\ep.
\]
If $T-t \leq \sqrt{\ep}$, choose $N=1$. Then
\[
U^\ep(t,m) \geq U(t,m) -C(T-t)^2-C\ep \geq U(t,m) -C\ep.
\]
If $T-t > \sqrt{\ep}$, choose $N=\left\lfloor\frac{T-t}{\sqrt{\ep}}\right\rfloor+1$. Then
\[
\frac{(T-t)^2}{N} \leq (T-t)\sqrt{\ep}, \qquad \qquad N \ep \leq 2(T-t) \sqrt{\ep},
\]
and hence
\[
U^\ep(t,m) \geq U(t,m) -C(T-t) \sqrt{\ep}.
\]
Therefore,
\[
U^\ep(t,m)-U(t,m) \ge
\begin{cases}
-C(T-t)\sqrt{\ep}, & T-t \ge \sqrt{\varepsilon},\\[4pt]
-C\ep, & 0\leq T-t<\sqrt{\ep}.
\end{cases}
\]

\nd
\textbf{Step 2: Upper bound for $U^\ep-U$.} By Lemma \ref{prop:Uformula}, there exist $\nu \in \mathcal{P}_2, (X,Y)\in \mathcal{E}^\ast_{\mu_0,\mu}$, and $\gamma\in \Gamma(X,Y)$ satisfying
\begin{equation}\label{eqn:upbdvelbd}
   \mathbb{E}\left[\int^T_{t} \bar{L}\left(\gamma_s, \dot{\gamma}_s, \mathcal{L}(\gamma_s) \right) \, ds\right] +G(\nu) \leq U(t, m)+\delta, \qquad \mathbb{E}\left[ \int_{t}^T\left|\dot{\gamma}_s\right|^2\,
ds\right] \leq M_1(T-t),
\end{equation}
for some constant $M_1=M_1(C_{2, H}, C_G, K_0)>0$.

Consider the same time partition $t_k := t + \frac{k}{N}(T-t)$ for $ k=0,\dots,N$ with $N\in \mathbb{N}$ to be determined. Then
\begin{equation}\label{eqn:upbdLbar}
\mathbb{E}\left[\int^T_{t} \bar{L}\left(\gamma_s, \dot{\gamma}_s, \mathcal{L}(\gamma_s) \right) \, ds\right]\geq \bar{J}(\gamma)-\sum_{k=0}^{N-1} \bar{E}_k,
\end{equation}
where
\[
\bar{E}_k:= \mathbb{E}\left[\int_{t_k}^{t_{k+1}}\left|\bar{L}\left(\gamma_s,
\dot{\gamma}_s,
\mathcal{L}\left(\gamma_t\right)
\right)-\bar{L}\left(\gamma_{t_k},
\dot{\gamma}_s,
 \mathcal{L}(\gamma_{t_k})
\right) \right|\,ds\right].
\]
By Lemma \ref{lem:lbarpart},
\begin{equation}\label{eqn:upbdJbar}
\bar{J}(\gamma) \geq \inf_{\eta\in \Gamma(X,Y;\gamma)} \bar J(\eta) =\sum_{k=0}^{N-1}
\mathbb{E} \left[
\bar{h} \left(\gamma_{t_k}, \mathcal{L}(\gamma_{t_k});t_k,t_{k+1},
\gamma_{t_k},\gamma_{t_{k+1}}\right)\right].
\end{equation}
Using Proposition~\ref{prop:rocmu}, we obtain
\be\nonumber
\begin{split}
\mathbb{E}& \left[
\bar{h} \left(\gamma_{t_k}, \mathcal{L}(\gamma_{t_k});t_k,t_{k+1},
\gamma_{t_k},\gamma_{t_{k+1}}\right)\right]\\
 &\qquad\geq \mathbb{E} \left[h^\e \left(\gamma_{t_k}, \mathcal{L}(\gamma_{t_k});t_k,t_{k+1},
\gamma_{t_k},\gamma_{t_{k+1}}\right)\right]-C\left(1+\frac{ \mathbb{E}\left|\gamma_{k+1}-\gamma_k\right|^2}{|t_{k+1}-t_k|^2}\right) \ep,
\end{split}
\ee
for some constant $C=C(d, C_{1,H},C_{2,H},K_0)>0$. Using the velocity bound \eqref{eqn:upbdvelbd} exactly as in Step 1 yields
\begin{equation}\label{eqn:upbdsumroc}
\sum_{k=0}^{N-1}
\mathbb E\left[\bar{h}\left(\gamma_{t_k}, \mathcal{L}(\gamma_{t_k}); t_k,t_{k+1},
\gamma_{t_k},\gamma_{t_{k+1}}\right)\right]\geq \sum_{k=0}^{N-1}\mathbb{E}\left[h^\e \left(\gamma_{t_k}, \mathcal{L}(\gamma_{t_k}); t_k,t_{k+1},
\gamma_{t_k},\gamma_{t_{k+1}}\right)\right] -C N\ep,
\end{equation}
for some constant $C=C(d, C_{1,H},C_{2,H},C_G, K_0)>0$.

By Lemma \ref{lem:fixest}, there exists $\tilde{\gamma} \in \Gamma(X,Y; \gamma)$ such that
\begin{equation}\label{eqn:upbdgtvb}
\mathbb{E} \left[ \int_{t}^T \left|\dot{\tilde{\gamma}}_s\right|^2 dt \right]
\le \tilde{M} (T-t),
\end{equation}
and
\begin{equation}\label{eqn:upbdme}
\begin{aligned}
&\sum_{k=0}^{N-1}\mathbb{E}\left[h^\e \left(\gamma_{t_k}, \mathcal{L}(\gamma_{t_k}); t_k,t_{k+1},
\gamma_{t_k},\gamma_{t_{k+1}}\right)\right]+\delta \geq J_\ep (\tilde{\gamma})\\
& \qquad \qquad\geq \sum_{k=0}^{N-1}
\mathbb{E} \left[\int_{t_k}^{t_{k+1}}
L\left(\frac{\tilde{\gamma}_s}{\e},\tilde{\gamma}_s,
\dot{\tilde{\gamma}}_s,
\mathcal{L}\left(\tilde{\gamma}_s\right)
\right)\,ds\right]-\sum_{k=0}^{N-1} E_k,
\end{aligned}
\end{equation}
where
\[
    E_k:= \mathbb{E} \left[ \int_{t_k}^{t_{k+1}}\left| L\left(\frac{\tilde{\gamma}_s}{\e},\gamma_{t_k},\dot{\tilde{\gamma}}_s, \mathcal{L}(\gamma_{t_k}) \right)-L\left(\frac{\tilde{\gamma}_s}{\e},\tilde{\gamma}_s,\dot{\tilde{\gamma}}_s,\mathcal{L}\left(\tilde{\gamma}_s\right)\right)\right|\,ds \right],
\]
and $\tilde{M}=\tilde{M}(C_{1,H}, C_{2,H}, K_0, M_1)=\tilde{M}(C_{1,H}, C_{2,H}, K_0, C_G)>0$ is some constant.

Combining \eqref{eqn:upbdLbar}, \eqref{eqn:upbdJbar}, \eqref{eqn:upbdsumroc}, and \eqref{eqn:upbdme}, we obtain
\[
\mathbb{E}\left[\int^T_{t} \bar{L}\left(\gamma_s, \dot{\gamma}_s, \mathcal{L}(\gamma_s) \right) \, ds\right] \geq \mathbb{E} \left[\int_t^T
L\left(\frac{\tilde{\gamma}_s}{\e},\tilde{\gamma}_s,
\dot{\tilde{\gamma}}_s,
\mathcal{L}\left(\tilde{\gamma}_s\right)
\right)\,ds\right]-\sum_{k=0}^{N-1} E_k -\sum_{k=0}^{N-1} \bar{E}_k -CN \ep -\delta.
\]
Using the velocity bounds \eqref{eqn:upbdvelbd} and \eqref{eqn:upbdgtvb} as in Step 1,
\[
\sum_{k=0}^{N-1}E_k+\sum_{k=0}^{N-1}\bar E_k
\le \frac{C(T-t)^2}{N},
\]
and hence
\[
U^\ep(t,m) - U(t,m)\leq  \frac{C(T-t)^2}{N}+CN\e
\]
for some constant $C=C(d,C_{1,H},C_{2,H},C_G, K_0)>0$.

Choosing $N$ exactly as in Step 1, we conclude that
\[
U^\ep(t,m) - U(t,m)\leq \begin{cases}
C(T-t)\sqrt{\ep}, & T-t \ge \sqrt{\varepsilon},\\[4pt]
C\ep, & 0\leq T-t<\sqrt{\ep}.
\end{cases}
\]
Consequently,
\[
    \left|U^\ep(t,m)-U(t,m)\right| \le
\begin{cases}
C(T-t)\sqrt{\ep}, & T-t \ge \sqrt{\ep},\\[4pt]
C\ep, & 0\leq T-t<\sqrt{\ep}.
\end{cases}
\]

Finally, by Corollary \ref{cor:lipschitzestimate}, we have
\[
\left|U^\ep(t,m)-G(m)\right|=\left|U^\ep(t,m)-U^\ep(T,m)\right| \leq C_0(T-t),
\]
and similarly
\[
\left|U(t,m)-G(m)\right|=\left|U(t,m)-U(T,m)\right| \leq C_0(T-t),
\]
for some constant $C_0>0$. Therefore,
\[
\left|U^\ep(t,m)-U(t,m)\right|  \leq 2 C_0(T-t).
\]
In particular, if $0\leq T-t<\sqrt{\ep}$, we obtain
\[
\left|U^\ep(t,m)-U(t,m)\right| \leq C \min\{T-t, \ep\}
\]
for some constant $C=C(d,C_{1,H},C_{2,H},C_G, C_H,K_0)>0$, and
\[
\left|U^\ep(t,m)-U(t,m)\right|  \le
\begin{cases}
C(T-t)\sqrt{\varepsilon}, & T-t \ge \sqrt{\varepsilon},\\[4pt]
C\min\{T-t,\varepsilon\}, & 0\leq T-t<\sqrt{\varepsilon}.
\end{cases}
\]
\end{proof}




\section{Dynamic Optimal Transport Problem}\label{DOT}

\nd
In this section, we prove our results for the dynamic optimal transport problem $U^\e_{ot}$. Recall that for $\mu,\nu\in\mathcal{P}_2$ and $t\in [0,T)$, $U^\e(t,\mu,\nu)$ is the value function

\begin{equation}\nonumber
U_{ot}^\varepsilon(t,\mu,\nu)
\;:=\;
\inf_{\alpha\in\mathcal{A}_{t,\mu,\nu}}\;
\mathbb E\Bigg[\int_t^T L\Big(\frac{X_s}{\varepsilon},X_s,\alpha_s,\mathcal{L}(X_s)\Big)\,ds\Bigg],
\end{equation}
where $(X_s)_{s\in [t,T]}$ satisfies $dX_s=\alpha_s ds$, $\mathcal{L}(X_t)=\mu$ and $\mathcal{L}(X_T)=\nu$ and $\mathcal{A}_{t,\mu,\nu}$ was defined in \eqref{controlsetot}. As in the previous sections, we consider the control sets
$$\mathcal{A}_{t,\mu,\nu}^1:=\left\{ \alpha\in\mathcal{A}_1:\; \exists X_t\text{ such that }\mathcal{L}(X_t)=\mu\text{ and }\mathcal{L}(X_t+\int_t^T\alpha_sds)=\nu \right\}$$
and
$$\mathcal{A}_{t,\mu,\nu}^2:=\left\{ \alpha\in\mathcal{A}_2:\; \exists X_t\text{ such that }\mathcal{L}(X_t)=\mu\text{ and }\mathcal{L}(X_t+\int_t^T\alpha_sds)=\nu \right\}$$
where the sets $\mathcal{A}_1,\mathcal{A}_2$ were introduced in Section \ref{altcontrset}. The corresponding value functions are denoted by $U_{1,ot}^{\e}(t,\mu,\nu)$ and $U_{2,ot}^\e(t,\mu,\nu)$, respectively. Since $\mathcal{A}_2\subseteq \mathcal{A}\subseteq \mathcal{A}_1$, it is clear that
\be\label{eq1001}
U_{1,ot}^{\e}(t,\mu,\nu)\le U_{ot}^{\e}(t,\mu,\nu)\le U_{2,ot}^{\e}(t,\mu,\nu),\;\;\text{for any }(t,\mu,\nu)\in [0,T)\times \mathcal{P}_2\times\mathcal{P}_2.
\ee
We observe that $\mathcal{A}_{t,\mu,\nu}^1\supseteq\mathcal{A}_{t,\mu,\nu}^2\neq\emptyset$, since the $\sigma$-algebra $\mathcal{F}_0$ is assumed to be rich enough. Indeed, we may pick $\mathcal{F}_0$-measurable $X,Y\in L^2$ with $\mathcal{L}(X)=\mu$ and $\mathcal{L}(Y)=\nu$ and $X_s= \frac{s-t}{T-t}Y+\frac{T-s}{T-t}X$ is an admissible state process.\vspace{2mm}

\nd
The following lemma is a consequence of the quadratic growth of $L$ (equation \eqref{eqn:Lmquad}).

\begin{lemma}\label{naive estimate}
Fix $t\in [0,T)$ and $\mu,\nu\in \mathcal{P}_2$. Then,
$$\frac{1}{2C_{2,H}(T-t)}{\bf d}_2^2(\mu,\nu)-K_0(T-t)\le U_{1,ot}^\e(t,\mu,\nu)\le U_{2,ot}^\e(t,\mu,\nu) \le \frac{1}{2C_{1,H}(T-t)}{\bf d}_2^2(\mu,\nu)+K_0(T-t).$$
\end{lemma}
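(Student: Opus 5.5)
The plan is to sandwich all the value functions between two explicit quantities, both built from the quadratic two-sided bound \eqref{eqn:Lmquad} on $L$. The middle inequality $U_{1,ot}^\e\le U_{2,ot}^\e$ is just \eqref{eq1001}, because $\mathcal{A}^2_{t,\mu,\nu}\subseteq\mathcal{A}^1_{t,\mu,\nu}$. That leaves two tasks: a lower bound for $U^\e_{1,ot}$, valid for every admissible control, and an upper bound for $U^\e_{2,ot}$, obtained from one explicit admissible control.

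\textbf{Upper bound.} I use the control described just before the lemma.
\begin{enumerate}
\item Since $\mathcal{F}_0$ is rich, choose a ${\bf d}_2$-optimal coupling $(X,Y)$ of $(\mu,\nu)$ that is $\mathcal{F}_0$-measurable. I realize it as $(f(\xi),g(\xi))$ with $f,g$ Borel, where $\xi$ is the Gaussian variable from Section~\ref{altcontrset}.
\item Take the constant-in-time control $\alpha_s=(Y-X)/(T-t)$ and the straight-line path $X_s=\frac{T-s}{T-t}X+\frac{s-t}{T-t}Y$. This control lies in $\mathcal{A}^2_{t,\mu,\nu}$, because $\alpha_s=\tilde\alpha(s,\xi)$ with $\tilde\alpha$ Borel and $\mathbb{E}\int_t^T|\alpha_s|^2\,ds<\infty$.
\item By the upper half of \eqref{eqn:Lmquad}, the cost is at most
\[
\frac{\mathbb{E}|Y-X|^2}{4C_{1,H}(T-t)}+K_0(T-t)=\frac{{\bf d}_2^2(\mu,\nu)}{4C_{1,H}(T-t)}+K_0(T-t).
\]
\item This is at most the stated right-hand side, since $\frac{1}{4C_{1,H}}\le\frac{1}{2C_{1,H}}$.
\end{enumerate}

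\textbf{Lower bound.} Fix any $\alpha\in\mathcal{A}^1_{t,\mu,\nu}$ with its state process $X_s$.
\begin{enumerate}
\item The lower half of \eqref{eqn:Lmquad} bounds the cost from below by $\frac{1}{4C_{2,H}}\mathbb{E}\int_t^T|\alpha_s|^2\,ds-K_0(T-t)$.
\item Cauchy--Schwarz in time, applied for each $\omega$, gives $|X_T-X_t|^2\le (T-t)\int_t^T|\alpha_s|^2\,ds$.
\item The pair $(X_t,X_T)$ is a coupling of $(\mu,\nu)$, so $\mathbb{E}|X_T-X_t|^2\ge{\bf d}_2^2(\mu,\nu)$.
\item Combining these and taking the infimum over $\alpha$ yields ${\bf d}_2^2(\mu,\nu)/(c\,C_{2,H}(T-t))-K_0(T-t)$, where $c$ is the numerical factor coming from \eqref{eqn:Lmquad}.
\end{enumerate}

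\textbf{Main obstacle: the constant in the lower bound.} This is the only delicate step. The Legendre bound $L\ge |v|^2/(4C_{2,H})-K_0$ is sharp under $H\le C_{2,H}|p|^2+K_0$. With that normalization, the chain above gives exactly $c=4$, not the stated $c=2$.

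To reach the stated constant $\frac{1}{2C_{2,H}}$, I would first check whether the lower estimate can be sharpened. My first attempt would be to see whether the Hamiltonian normalization in force here, for example $H$ written with the factor $\tfrac12|p|^2$ as in Proposition~\ref{prop:opex}, upgrades \eqref{eqn:Lmquad} to $|v|^2/(2C_{2,H})$. The rest of the argument is unchanged.

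I flag this carefully for the following reason. For $H=C_{2,H}|p|^2$, the computation above is an equality, so the value is ${\bf d}_2^2/(4C_{2,H}(T-t))$, which is smaller than the stated lower bound when ${\bf d}_2$ is large. The stated constant is therefore tied to that normalization. In any case, what the later sections use is only that both sides are of the form $c_1{\bf d}_2^2/(T-t)\pm K_0(T-t)$ with structural constants $c_1$, and this the argument delivers.
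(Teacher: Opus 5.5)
Your argument is the paper's own. The paper justifies the lemma only by remarking that it follows from the quadratic bounds \eqref{eqn:Lmquad}: the straight-line control on an optimal coupling gives the upper bound, and Cauchy--Schwarz together with $\mathbb{E}|X_T-X_t|^2\ge {\bf d}_2^2(\mu,\nu)$ gives the lower bound.

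Your diagnosis of the constant is also correct, and you should state it firmly rather than hedge.
\begin{itemize}
\item Under (A1)/(A1'), the lower bound in \eqref{eqn:Lmquad} is sharp, and no renormalization of $H$ improves it. So drop the speculation about a factor $\tfrac12|p|^2$.
\item Your example $H=C_{2,H}|p|^2$ is admissible with $C_{1,H}=C_{2,H}$. For it, $U^\e_{1,ot}=U^\e_{2,ot}={\bf d}_2^2(\mu,\nu)/(4C_{2,H}(T-t))$ exactly.
\item Consequently the stated lower bound with $\frac{1}{2C_{2,H}}$ fails whenever ${\bf d}_2^2(\mu,\nu)>4C_{2,H}K_0(T-t)^2$. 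The factor is a typo for $\frac{1}{4C_{2,H}}$, and your argument proves the lemma with that constant. The upper bound even holds with $\frac{1}{4C_{1,H}}$.
\end{itemize}

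The later uses of the lemma are unaffected by the correction. They need only lower boundedness (Proposition~\ref{altformulaot} and Step 1 of the proof of Theorem~\ref{main3}) and the upper bound (\eqref{L2bound} and \eqref{gammaineq}).
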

\vspace{3mm}

\nd
In order to prove Theorem \ref{main3}, our first step will be to show that $U^\e_{1,ot}=U^\e_{ot}$. Then we proceed with a similar proof strategy as the one we used for Theorems \ref{thm:main1} and \ref{thm:multiscale} (see Section \ref{sec:generall} for the proof of the latter). Recalling the equality $U^\e(t,\mu)=\inf_{\nu\in \mathcal{P}_2}\{ U^\e_{ot}(t,\mu,\nu)+G(\nu)\}$, the proof of the first step relies on the following lemma.

\begin{lemma}\label{infeq}
    Let $f_1,f_2\colon\mathcal{P}_2(\R^d)\to \R$ be two lower bounded functions such that
    \be\label{infcondition}
\inf_{m\in\mathcal{P}_2(\R^d)}\left\{ f_1(m)+G(m)\right\}=\inf_{m\in\mathcal{P}_2(\R^d)}\left\{ f_2(m)+G(m)\right\},
    \ee
    for any bounded and ${\bf d}_2$-Lipschitz function $G\colon \mathcal{P}_2(\R^d)\to \R$. If $f_1$ is assumed to be lower semi-continuous, then $f_1(m)\le f_2(m)$, for all $m\in \mathcal{P}_2(\R^d)$. In particular, if both $f_1,f_2$ are lower semi-continuous, then $f_1(m)=f_2(m)$, for all $m\in \mathcal{P}_2(\R^d)$.
\end{lemma}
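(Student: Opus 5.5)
We argue by contradiction, testing \eqref{infcondition} against a penalization $G$ that concentrates near the point of interest. Fix $m_0\in\mathcal{P}_2(\R^d)$. We want $f_1(m_0)\le f_2(m_0)$, and we may assume $f_2(m_0)<\infty$. Since $f_1,f_2$ are lower bounded, there is a constant $c_0$ with $f_i\ge -c_0$.

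For $k\in\mathbb{N}$ and a large constant $A>0$, define
\[
G_k(m):=k\min\{{\bf d}_2(m,m_0),A\}.
\]
This function is bounded by $kA$ and ${\bf d}_2$-Lipschitz with constant $k$, so it is admissible in \eqref{infcondition}. Evaluating the right-hand side at $m=m_0$ gives
\[
\inf_m\{f_2(m)+G_k(m)\}\le f_2(m_0).
\]
By \eqref{infcondition} the same bound holds for $f_1$. Hence for each $k$ there exists $m_k$ with
\[
f_1(m_k)+G_k(m_k)\le f_2(m_0)+\tfrac1k .
\]

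The key step is to show $m_k\to m_0$ in ${\bf d}_2$. From the inequality above and $f_1\ge -c_0$ we get
\[
k\min\{{\bf d}_2(m_k,m_0),A\}\le f_2(m_0)+c_0+1.
\]
For $k$ large the right-hand side is less than $kA$, so the minimum is attained by the distance. Therefore ${\bf d}_2(m_k,m_0)\le (f_2(m_0)+c_0+1)/k\to0$.

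Since $G_k\ge0$, we also have $f_1(m_k)\le f_2(m_0)+1/k$. Lower semicontinuity of $f_1$ then gives
\[
f_1(m_0)\le\liminf_k f_1(m_k)\le f_2(m_0).
\]
The case where both functions are lower semicontinuous follows by exchanging the roles of $f_1$ and $f_2$.

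The argument has no real obstacle. The only point needing care is that each $G_k$ must be bounded, which is why the distance is truncated at $A$. The lower bound on $f_1$ is what forces the near-minimizers $m_k$ to stay in the region where the truncation is inactive.
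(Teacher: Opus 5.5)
Your argument is correct and is essentially the paper's. Both penalize with a truncated multiple of $\mathbf{d}_2(\cdot,m_0)$, use the lower bound on $f_1$ to force near-minimizers of $f_1+G_k$ to converge to $m_0$, and conclude by lower semicontinuity. The differences are cosmetic: the paper fixes a tolerance $\delta<f_1(m_0)-f_2(m_0)$, truncates at level $1$ and argues by contradiction, whereas you let the tolerance $1/k$ vanish and argue directly, so your opening phrase ``we argue by contradiction'' is unnecessary.
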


\begin{proof}
    We argue by contradiction. Assume that $f_1(m_0)>f_2(m_0)$, for some $m_0\in \mathcal{P}_2$. For $N\in \mathbb{N}$, we let $G_N(m)=N\min\{1,{\bf d}_2(m,m_0)\}$, which is bounded and ${\bf d}_2$-Lipschitz with $G_N(m_0)=0$ and $G_{N} \ge 0$. Then, \eqref{infcondition} implies
    $$\inf_{m\in \mathcal{P}_2}\{f_1(m)+G_N(m)\}\le f_2(m_0)+G_N(m_0)=f_2(m_0).$$
    Let $\delta>0$ such that $\delta<f_1(m_0)-f_2(m_0)$. Then, there exists $m^{\delta,N}$ such that
    \be\label{ineq10001}
    f_1(m^{\delta,N})+G_N(m^{\delta,N})-\delta\le f_2(m_0).
    \ee
    Since $f_1$ is lower bounded, this inequality gives a uniform in $N$ upper bound for $G_N(m^{\delta,N})$, which in turn implies $m^{\delta,N}\xrightarrow{N\to \infty}m_0$ in $\textbf{d}_2$ metric. Sending $N\to \infty$ in \eqref{ineq10001} we get, by the semi-continuity of $f_1$ and the non-negativity of $G_N$,
    $$f_1(m_0)-\delta\le \liminf_{N\to \infty} f_1(m^{\delta,N})-\delta\le \liminf_{N\to \infty} (f_1(m^{\delta,N})+G_N(m^{\delta,N}))-\delta \le f_2(m_0).$$
    This contradicts the choice of $\delta$.
    \vspace{2mm}

    \nd
    If $f_2$ is also lower semi-continuous, then $f_2\le f_1$. Therefore, combining both inequalities, $f_1=f_2$.
\end{proof}

\subsection{Continuity properties of the value function}\label{DOTcont}
We prove that $U^\e_{1,ot},U^\e_{2,ot}\colon [0,T)\times \mathcal{P}_2\times\mathcal{P}_2\to \R$ are continuous functions. To lighten the notation in the proofs, for this subsection only, we assume that $H$ satisfies Assumption (A1'). The case where $H$ satisfies (A1) can be treated by adapting the arguments that follow.
\vspace{2mm}

\nd
For $i=1,2$, we consider the sets
$$\mathcal{A}^{i,\delta}_{t,\mu,\nu}=\left\{ \alpha\in\mathcal{A}_{t,\mu,\nu}^i:\; \alpha_s=0,\; s\in [t,t+\delta]\right\},\;\; \text{for }\delta\in (0,T-t).$$
We first show that the value function of the optimization problems $U^\e_{i,ot},\;i=1,2$ remains the same if we restrict to controls in $\mathcal{A}_{t,\mu,\nu}^{i,\delta},\;i=1,2$, for $\delta>0$. This property will later allow us to prove the continuity with respect to the measure arguments.

\begin{proposition}\label{altformulaot}
    Fix $\mu,\nu\in \mathcal{P}_2$, $t<T$ and $i\in \{1,2\}$. Then, in the above setup,
    \be\label{approxot}
    U_{i,ot}^\e(t,\mu,\nu)=\inf_{\delta>0,\;\alpha\in \mathcal{A}^{i,\delta}_{t,\mu,\nu}}\left\{ \mathbb{E}\left[ \int_t^TL\left( \frac{X_s}{\e},\alpha_s\right)ds\right]\right\}.
    \ee
    Furthermore, for any $R>0$ such that $R>\int|x|^2\mu(dx)$, there exists a constant $C:=C_{t,\nu,R}$ depending on the quantity $\sup_{m\in \mathcal{P}_2:\; {\bf d}_2^2(\delta_0,m)\le R}U_{i,ot}^\e(t,m,\nu)$ and $T-t$ such that for any $\eta>0$ with $C\eta<T-t$, there exists an $\eta$-optimal control $\alpha^\eta\in \mathcal{A}^{i,C\eta}_{t,\mu,\nu}$.
\end{proposition}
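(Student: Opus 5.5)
The inequality ``$\le$'' in \eqref{approxot} is immediate, since $\mathcal{A}^{i,\delta}_{t,\mu,\nu}\subseteq\mathcal{A}^{i}_{t,\mu,\nu}$ for every $\delta\in(0,T-t)$. For the reverse inequality and the quantitative statement, the plan is to take an $\eta$-optimal control $\alpha\in\mathcal{A}^i_{t,\mu,\nu}$ with state $X$. We then build from it a control that is idle on $[t,t+\delta]$ and reaches the same terminal law. The natural construction is a time reparametrization. Set
\[
\theta(s)=t+\frac{T-t}{T-t-\delta}(s-t-\delta),\qquad s\in[t+\delta,T],
\]
and define $\alpha^\delta_s=0$ on $[t,t+\delta]$ and $\alpha^\delta_s=\frac{T-t}{T-t-\delta}\,\alpha_{\theta(s)}$ on $[t+\delta,T]$. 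Then $X^\delta_s=X_t$ on $[t,t+\delta]$ and $X^\delta_s=X_{\theta(s)}$ afterwards. Hence $\mathcal{L}(X^\delta_t)=\mu$ and $\mathcal{L}(X^\delta_T)=\nu$.

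For $i=2$, $\alpha^\delta$ is still a Borel function of $(s,\xi)$, and for $i=1$ it is still $\mathcal{B}([t,T])\otimes\mathcal{F}$-measurable. So $\alpha^\delta\in\mathcal{A}^{i,\delta}_{t,\mu,\nu}$.

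The next step is to estimate the cost of $\alpha^\delta$. Write $\lambda=\frac{T-t}{T-t-\delta}$ and change variables $r=\theta(s)$. This gives
\[
\mathbb{E}\int_t^T L\Big(\tfrac{X^\delta_s}{\e},\alpha^\delta_s\Big)ds=\delta\,\mathbb{E}\,L\Big(\tfrac{X_t}{\e},0\Big)+\lambda^{-1}\mathbb{E}\int_t^T L\Big(\tfrac{X_r}{\e},\lambda\alpha_r\Big)dr .
\]
The first term is at most $K_0\delta$ by \eqref{eqn:Lmquad}. For the second, we use the Lipschitz bound \eqref{eqn:LmLip} in the velocity variable, or alternatively convexity together with quadratic growth. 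This yields
\[
|L(y,\lambda v)-L(y,v)|\le C_L(1+2\lambda|v|)(\lambda-1)|v|,
\]
where $\lambda-1=\frac{\delta}{T-t-\delta}$. The resulting error is controlled by $C\,\frac{\delta}{T-t}\big(1+\mathbb{E}\int_t^T|\alpha_r|^2dr\big)$.

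The key input, and the main obstacle, is a uniform bound on $\mathbb{E}\int_t^T|\alpha_r|^2dr$ for nearly optimal controls. We get it from the quadratic lower bound in \eqref{eqn:Lmquad}:
\[
\tfrac{1}{4C_{2,H}}\mathbb{E}\int_t^T|\alpha|^2\le U^\e_{i,ot}(t,\mu,\nu)+\eta+K_0(T-t).
\]
By Lemma \ref{naive estimate}, the right-hand side is bounded in terms of ${\bf d}_2^2(\mu,\nu)/(T-t)$. In turn it is bounded by the supremum $\sup_{{\bf d}_2^2(\delta_0,m)\le R}U^\e_{i,ot}(t,m,\nu)$ and $T-t$, which explains the form of the constant $C_{t,\nu,R}$.

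Putting the estimates together, the cost of $\alpha^\delta$ is at most $U^\e_{i,ot}(t,\mu,\nu)+\eta+C'\delta$, where $C'$ depends only on the above quantities. This gives \eqref{approxot} by letting $\eta,\delta\to0$. For the second claim we choose $\delta=C\eta$ with $C$ proportional to $C'$: starting from an $\eta/2$-optimal control, the modified control is $\eta$-optimal and lies in $\mathcal{A}^{i,C\eta}_{t,\mu,\nu}$, provided that $C\eta<T-t$. The only care needed is to keep $\delta\le (T-t)/2$ so that $\lambda\le2$ and the constants stay uniform; this is ensured by adjusting $C$.
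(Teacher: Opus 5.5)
Your construction and estimates are the paper's: the reparametrization $\psi(s)=t+\frac{T-t}{T-t-\delta}(s-t-\delta)$ with zero control on $[t,t+\delta]$, the velocity-Lipschitz bound \eqref{eqn:LmLip} for $L(y,\lambda v)-L(y,v)$, the $L^2$ bound on nearly optimal controls from \eqref{eqn:Lmquad}, and Lemma \ref{naive estimate} to control the constants. This fully proves \eqref{approxot}. In the last step, however, your constant is inverted. Starting from an $\eta/2$-optimal control you need $C'\delta\le\eta/2$, so $\delta=C\eta$ with $C=(2C')^{-1}$. Thus $C$ is inversely proportional to $C'$, matching the paper's choice $C=C_0^{-1}$.

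There is a more substantive problem. The restriction $\delta\le (T-t)/2$ cannot be ``ensured by adjusting $C$'': the hypothesis $C\eta<T-t$ only gives $\delta=C\eta<T-t$. Near that endpoint the second assertion is in fact false when $\mu\neq\nu$. Take any $\alpha\in\mathcal{A}^{i,\delta}_{t,\mu,\nu}$. Cauchy--Schwarz gives ${\bf d}_2^2(\mu,\nu)\le\mathbb{E}|X_T-X_t|^2\le (T-t-\delta)\,\mathbb{E}\int_{t+\delta}^T|\alpha_s|^2\,ds$. So by \eqref{eqn:Lmquad} the cost of $\alpha$ is at least $\frac{{\bf d}_2^2(\mu,\nu)}{4C_{2,H}(T-t-\delta)}-K_0(T-t)$, which tends to $+\infty$ as $\delta\uparrow T-t$. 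Meanwhile $U^\e_{i,ot}(t,\mu,\nu)+\eta<U^\e_{i,ot}(t,\mu,\nu)+(T-t)/C$ stays bounded. What your argument actually proves, with uniform constants since then $\lambda\le 2$, is the claim under the restriction $C\eta\le (T-t)/2$. You should state it that way. The paper's proof has the same defect: it absorbs factors like $\frac{T-t}{T-t-\delta}$ into a constant declared independent of $\delta\in(0,T-t)$. The restricted version should be enough for the subsequent continuity estimates in Theorem \ref{thm:lipschitz-time} and Proposition \ref{measure lip}, where small $\delta$ suffices.
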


\begin{proof}
    The fact that $U^\e_{i,ot}(t,\mu,\nu)$ is less or equal that the right hand side of \eqref{approxot} is obvious, so it suffices to prove the reverse inequality. Let $\mathbb{N}\ni n\ge 1$, $\delta\in (0,T-t)$ and $(\alpha_s^n)_{s\in [t,T]}$ be a $\frac{1}{n}$-optimal control for $U^\e_{i,ot}$ and $(X^n_s)_{s\in [t,T]}$ the associated state process. We denote $\psi(s)=\frac{T-t}{T-t-\delta}(s-t-\delta)+t$ and we define the control
    $$\alpha_s=\begin{cases}
        \frac{T-t}{T-t-\delta}\alpha_{\psi(s)}^n, &\text{ if }s\in [t+\delta, T],\\
        0, &\text{ if }s\in [t,t+\delta],
    \end{cases}$$
    and the process
    $$X_s=\begin{cases}
        X_{\psi(s)}^n,&\text{ if }s\in [t+\delta,T],\\
        X_t^n,&\text{ if }s\in [t,t+\delta].
    \end{cases}$$
    It straightforward to see that $dX_s=\alpha_sds$ with $X_t=X_t^n\sim \mu$ and $X_T=X^n_{\psi(T)}=X^n_T\sim \nu$, therefore $(\alpha_s)_{s\in [t,T]}\in \mathcal{A}^{i,\delta}_{t,\mu,\nu}$. In the following inequalities the constant $C$ might become larger from line to line and remains independent of $n,\delta$. We have
    \begin{align*}
        \mathbb{E}\bigg[ \int_t^TL\big( \frac{X_s}{\e},&\;\alpha_s\big)ds \bigg]=\mathbb{E}\left[ \int_t^{t+\delta}L\left( \frac{X_s}{\e},0\right)ds + \int_{t+\delta}^TL\left( \frac{X_s}{\e},\alpha_s\right)ds \right]\\
        &=\delta\mathbb{E}[L(X_t/\e,0)]+\frac{T-t-\delta}{T-t}\mathbb{E}\left[ \int_t^TL\left( \frac{X_s^n}{\e},\alpha_s^n\frac{T-t}{T-t-\delta}\right)ds \right]\\
        &\le \delta K_0+\frac{T-t-\delta}{T-t}\mathbb{E}\left[ \int_t^TL\left( \frac{X_s^n}{\e},\alpha_s^n\right)ds \right]\\
        &\qquad+\frac{T-t-\delta}{T-t}\mathbb{E}\left[ \int_t^T\left(L\left( \frac{X_s^n}{\e},\alpha_s^n\frac{T-t}{T-t-\delta}\right)  - L\left( \frac{X_s^n}{\e},\alpha_s^n\right)\right)ds  \right]\\
        &\le \delta K_0+\frac{T-t-\delta}{T-t}(U_{i,ot}^\e(t,\mu,\nu)+1/n)\\
        &\quad +\frac{T-t-\delta}{T-t}\cdot \frac{\delta C_0}{T-t-\delta}\mathbb{E}\left[ \int_t^T\left(1+\frac{\delta}{T-t-\delta}|\alpha_s^n|\right)|\alpha_s^n|ds\right]\\
        &\le \delta \left(K_0-\frac{U_{i,ot}^\e(t,\mu,\nu)}{T-t}\right)+\frac{1}{n}+U_{ot}^\e(t,\mu,\nu) +\frac{\delta C}{T-t}\mathbb{E}\left[ \int_t^T(|\alpha_s^n|^2+1)ds\right]\\
        &\le\delta \left(K_0-\frac{U_{i,ot}^\e(t,\mu,\nu)}{T-t}\right)+\frac{1}{n}+U_{i,ot}^\e(t,\mu,\nu) +\frac{C\delta}{T-t}\left(T-t+\mathbb{E}\left[\int_t^T|\alpha_s^n|^2ds\right]\right)\\
        &\le U_{i,ot}^\e(t,\mu,\nu)+\delta \left(K_0-\frac{U_{i,ot}^\e(t,\mu,\nu)}{T-t}\right)+\frac{1}{n}\\
        &\qquad+\frac{C\delta}{T-t}\left( \mathbb{E}\left[\int_t^TL\left( \frac{X_s^n}{\e},\alpha_s^n\right)ds\right]+K_0\right)\\
        &\le U_{i,ot}^\e(t,\mu,\nu)+\frac{1}{n}+\delta C\left(1+\frac{1}{n(T-t)}+\frac{U_{i,ot}^\e(t,\mu,\nu)}{T-t}\right)
    \end{align*}
    where in the last inequality we used Lemma \ref{naive estimate}. For $n$ large and $\delta$ small enough, we see that the last two terms on the right hand side become very small. This implies that for $\e_0>0$, we can choose $n,\delta$ such that
    $$\mathbb{E}\bigg[ \int_t^TL\big( \frac{X_s}{\e},\;\alpha_s\big)ds \bigg]\le U^\e_{i,ot}(t,\mu,\nu)+\e_0.$$
    The reverse inequality follows.
    \vspace{2mm}

    \nd
    To prove the second statement, we observe that the above chain of inequalities implies
    $$\mathbb{E}\bigg[ \int_t^TL\big( \frac{X_s}{\e},\;\alpha_s\big)ds \bigg]\le U_{i,ot}^\e(t,\mu,\nu)+\frac{1}{n}+\delta C\left(1+\frac{1}{n(T-t)}+\frac{U_{i,ot}^\e(t,\mu,\nu)}{T-t}\right).$$
    Choosing $n$ to be large enough so that $\frac{1}{n}<\delta$ and since $\frac{1}{n}\le 1$ we derive
    $$\mathbb{E}\bigg[ \int_t^TL\big( \frac{X_s}{\e},\;\alpha_s\big)ds \bigg]\le U_{i,ot}^\e(t,\mu,\nu)+\delta+\delta C\left(1+\frac{1}{(T-t)}+\frac{|U_{i,ot}^\e(t,\mu,\nu)|}{T-t}\right)\le U_{i,ot}^\e(t,\mu,\nu)+\delta C_0,$$
    where $C_0:=1+C\left(1+\frac{1}{(T-t)}+\frac{|\sup_{m\in\mathcal{P}_2:{\bf d}_2^2(\delta_0,m)\le R}U_{i,ot}^\e(t,m,\nu)|}{T-t}\right)$. Here, we used the fact that $\mu$ satisfies ${\bf d}_2^2(\delta_0,\mu)\le \int|x|^2\mu(dx)\le R$.
    \vspace{1mm}

    \nd
    We let $\eta>0$ so that $\eta C_0^{-1}\le T-t$. Choosing $\delta$ such that $\delta C_0=\eta$, the above inequality gives
    $$\mathbb{E}\bigg[ \int_t^TL\big( \frac{X_s}{\e},\;\alpha_s\big)ds \bigg]\le U_{i,ot}^\e(t,\mu,\nu)+\eta.$$
    This means that $(\alpha_s)_{s\in [t,T]}$ is an $\eta$-optimal control for $U^\e_{i,ot}(t,\mu,\nu)$ and, since it is also in $\mathcal{A}_{t,\mu,\nu}^{i,\delta}$, the equality $\delta=C_0^{-1}\eta$ implies $(\alpha_s)_{s\in [t,T]}\in \mathcal{A}_{t,\mu,\nu}^{i, C_0^{-1}\eta}$. So the constant $C=C_{t,\nu,R}$ in the second statement is $C_0^{-1}$ from above.
\end{proof}


\nd
We now study the dependence of $U_{ot}^\varepsilon$ with respect to the initial time variable $t$.

\begin{theorem}\label{thm:lipschitz-time}
Fix $\mu,\nu\in\mathcal P_2(\mathbb R^d)$, $\delta\in (0,T)$ and $i\in \{1,2\}$.
Then there exists a constant $C=C(\mu,\nu,\delta)>0$ such that
\[
|U_{i,ot}^\varepsilon(t,\mu,\nu)-U_{i,ot}^\varepsilon(t',\mu,\nu)|
\;\le\;
C\,|t-t'|
\]
for all $t,t'\in[0,T-\delta]$.
\end{theorem}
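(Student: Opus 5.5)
The plan is a time-reparametrization argument of the same type as in the proof of Proposition \ref{altformulaot}. Throughout we work under (A1'), as in this subsection. Take $t<t'$ in $[0,T-\delta]$; the two inequalities are proved separately. The key a priori input is Lemma \ref{naive estimate}: for $s\in[0,T-\delta]$, $|U^\e_{i,ot}(s,\mu,\nu)|\le \frac{{\bf d}_2^2(\mu,\nu)}{2C_{1,H}\delta}+K_0T=:K$. This bound is uniform in $s$ and depends only on $(\mu,\nu,\delta)$.

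\emph{Step 1 (energy bound).} Let $\alpha$ be a $\kappa$-optimal control for $U^\e_{i,ot}(s,\mu,\nu)$ with $\kappa\le 1$. The lower bound in \eqref{eqn:Lmquad} gives
\[
\mathbb{E}\int_s^T|\alpha_r|^2dr\le 4C_{2,H}\bigl(K+1+K_0T\bigr)=:M ,
\]
and $M$ depends only on $(\mu,\nu,\delta)$.

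\emph{Step 2 (rescaling between horizons).} Set $\lambda=\frac{T-s}{T-s'}$, where $s,s'\in\{t,t'\}$. Define $\psi(r)=T-\lambda(T-r)$, which maps $[s',T]$ onto $[s,T]$. Put $\tilde X_r=X_{\psi(r)}$ and $\tilde\alpha_r=\lambda\alpha_{\psi(r)}$. Then $\tilde X_{s'}=X_s\sim\mu$ and $\tilde X_T=X_T\sim\nu$. Measurability is preserved: for $i=1$ this is clear, and for $i=2$ the map $\tilde\alpha$ is still a Borel function of $(r,\xi)$. Hence $\tilde\alpha\in\mathcal{A}^i_{s',\mu,\nu}$. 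Changing variables,
\[
\mathbb{E}\int_{s'}^T L\Bigl(\tfrac{\tilde X_r}{\e},\tilde\alpha_r\Bigr)dr=\frac1\lambda\,\mathbb{E}\int_s^T L\Bigl(\tfrac{X_u}{\e},\lambda\alpha_u\Bigr)du .
\]
Next use \eqref{eqn:LmLip} in the $v$-variable:
\[
|L(y,\lambda v)-L(y,v)|\le C_L|\lambda-1|\,|v|\,(1+(1+\lambda)|v|).
\]
Combined with Step 1, the quantity above is at most
\[
\frac1\lambda\Bigl(U^\e_{i,ot}(s,\mu,\nu)+\kappa\Bigr)+C|\lambda-1|(T+M).
\]
Since $|\lambda-1|\le |t-t'|/\delta$ and $|\lambda|\le T/\delta$, we get $\bigl|\tfrac1\lambda-1\bigr|\,|U|\le K|t-t'|/\delta$. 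Letting $\kappa\to0$ gives
\[
U^\e_{i,ot}(s',\mu,\nu)\le U^\e_{i,ot}(s,\mu,\nu)+C|t-t'| .
\]
Applying this with $(s,s')=(t,t')$ and with $(s,s')=(t',t)$ yields both inequalities.

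\emph{Main obstacle.} The delicate point is that the time shift must not change the initial law or create admissibility issues. Speeding up ($\lambda>1$) would otherwise be a problem, because $L$ has quadratic growth in $v$. This is handled by the restriction $t,t'\le T-\delta$, which keeps $\lambda\le T/\delta$ bounded, and by the $L^2$ energy bound from Step 1. For $i=2$ one must also check that the shifted process still starts from an $\mathcal{F}_{s'}$-measurable variable independent of $B$. This holds because $X_s$ is $\sigma(X_{s},\xi)$-measurable with $\xi\in\mathcal{F}_0$, so the initial datum can be taken the same. The resulting constant depends only on $\mu,\nu,\delta$ (and on the structural constants), as claimed.
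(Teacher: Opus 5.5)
Your proof is correct, but it is organized differently from the paper's.

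The paper argues asymmetrically:
\begin{itemize}
\item For $U^\varepsilon_{i,ot}(t',\mu,\nu)\le U^\varepsilon_{i,ot}(t,\mu,\nu)+(K_0+1/C)(t'-t)$, it uses Proposition~\ref{altformulaot} to obtain a $\frac{t'-t}{C}$-optimal control for the problem at time $t$ that vanishes on $[t,t']$. It then simply restricts that control to $[t',T]$.
\item For the reverse inequality, it extends an admissible control for the problem at $t'$ by zero on $[t,t')$. This gives $U^\varepsilon_{i,ot}(t,\mu,\nu)\le U^\varepsilon_{i,ot}(t',\mu,\nu)+K_0(t'-t)$ without any energy estimate.
\end{itemize}

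You instead use a single affine reparametrization $r\mapsto T-\lambda(T-r)$ in both directions, together with the $L^2$ energy bound and \eqref{eqn:LmLip} in the $v$-variable. This essentially inlines the compression argument that the paper places inside the proof of Proposition~\ref{altformulaot}.

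What your route gains is a self-contained, symmetric argument whose constant is visibly uniform in $t,t'\in[0,T-\delta]$, through $K$, $M$ and $\lambda\in[\delta/T,T/\delta]$. In the paper this uniformity is only implicit: the constant $C_{t,\nu,R}$ of Proposition~\ref{altformulaot} nominally depends on $t$, and one has to check via Lemma~\ref{naive estimate} that it stays bounded for $t\le T-\delta$.

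What the paper's route gains is a trivial lower bound with the universal constant $K_0$. It also reuses Proposition~\ref{altformulaot}, which the paper needs anyway for Proposition~\ref{measure lip}.

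One minor point concerns your stretching direction ($s'=t<s=t'$). There the relevant issue is not independence from $B$. It is that $X_{t'}$ is only $\mathcal{F}_{t'}$-measurable but is used as the initial datum at time $t$. Your remark that $X_s$ is $\sigma(X_s,\xi)$-measurable does not address this. The paper's extension-by-zero step has exactly the same technicality and treats it equally informally. It is harmless given the assumed richness of $\mathcal{F}_0$, since the cost depends only on the joint law of the initial datum and the control.
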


\begin{proof}
We split the proof into two steps. Assume $t<t'$

\medskip
\noindent\emph{Step 1: Upper bound.} Since $\mu,\nu$ are fixed, we let $C=C_{t,\nu,R}$ (for $R>\int |x|^2\mu(dx)$) be the constant given by Proposition \ref{altformulaot}.
We also consider $\alpha\in \mathcal{A}_{t,\mu,\nu}^{i,t'-t}$ a $\frac{t'-t}{C}$-optimal control and $(X_s)_{s\in [t,T]}$ the associated state process. By definition the definition of $\mathcal{A}_{t,\mu,\nu}^{i,t'-t}$, it is true that $\alpha|_{s\in [t',T]}\in \mathcal{A}_{t',\mu,\nu}^i$, therefore we compute
\begin{align*}
    U_{i,ot}^\e(t',\mu,\nu)&\le\mathbb{E}\left[\int_{t'}^TL\left(\frac{X_s}{\e},\alpha_s\right) ds \right]=\mathbb{E}\left[\int_{t}^TL\left(\frac{X_s}{\e},\alpha_s\right) ds \right]-\mathbb{E}\left[\int_{t}^{t'}L\left(\frac{X_s}{\e},0\right) ds \right]\\
    &\le U_{i,ot}^\e(t,\mu,\nu)+\frac{t'-t}{C}+K_0(t'-t)=U_{i,ot}^\e(t,\mu,\nu)+ (t'-t)(K_0+1/C).
\end{align*}

\medskip
\noindent\emph{Step 2: Lower bound.}
Let $\alpha'$ be an admissible control for $U_{i,ot}^\varepsilon(t',\mu,\nu)$.
We extend it to $[t,T]$ by setting $\alpha'_s:=0$ for $s\in[t,t')$ and
$X_s:=X_{t'}$ on $[t,t']$.
Then $X_t\sim\mu$ and $X_T\sim\nu$.
\vspace{1mm}

\nd
Using \eqref{eqn:Lmquad}, we obtain
\begin{align*}
U_{i,ot}^\varepsilon(t,\mu,\nu)\le
\mathbb E\int_t^{t'} L\Big(\frac{X_s}{\varepsilon},0\Big)\,ds
+
\mathbb E\int_{t'}^T L\Big(\frac{X_s}{\varepsilon},\alpha'_s\Big)\,ds \le
K_0(t'-t)+U_{i,ot}^\varepsilon(t',\mu,\nu).
\end{align*}

\medskip

\nd
Combining the two estimates yields the desired Lipschitz continuity.
\end{proof}

\begin{proposition}\label{measure lip}
Fix $t<T$, $\nu\in\mathcal P_2(\mathbb R^d)$ and $i\in \{1,2\}$. Then the map $\mu\mapsto U_{i,ot}^\varepsilon(t,\mu,\nu)$ is continuous on $\mathcal P_2(\mathbb R^d)$ with respect to the ${\bf d}_2$-distance.
Moreover, for every $R>0$ there exists a constant $\overline{C}=\overline{C}(R,T-t,C_{t,\nu,R})>0,$ where $C_{t,\nu,R}$ is the constant constructed in Proposition \ref{altformulaot}, such that
\[
|U_{i,ot}^\varepsilon(t,\mu_1,\nu)-U_{i,ot}^\varepsilon(t,\mu_2,\nu)|
\le \overline{C}\,{\bf d}_2(\mu_1,\mu_2),
\]
for all $\mu_1,\mu_2\in\mathcal P_2(\mathbb R^d)$
with $\int|x|^2\,d\mu_i\le R,\;\; i=1,2$.
\end{proposition}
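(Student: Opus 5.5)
Given $\mu_1,\mu_2$ with second moments at most $R$, I will take a nearly optimal control for $(t,\mu_1,\nu)$ that vanishes on an initial interval, as provided by Proposition \ref{altformulaot}. On that idle interval I will move the mass of $\mu_2$ onto $\mu_1$ along an optimal coupling, and afterwards I will reuse the original control. This yields $U^\e_{i,ot}(t,\mu_2,\nu)\le U^\e_{i,ot}(t,\mu_1,\nu)+\overline{C}\,{\bf d}_2(\mu_1,\mu_2)$, and exchanging the roles of $\mu_1,\mu_2$ gives the Lipschitz bound. Continuity on all of $\mathcal P_2$ then follows, because any ${\bf d}_2$-convergent sequence has uniformly bounded second moments.

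\textbf{The near-optimal control.} Set $\rho={\bf d}_2(\mu_1,\mu_2)$ and $C=C_{t,\nu,R'}$ for some fixed $R'>R$. I will first treat the case $C\rho<T-t$, i.e. $\rho$ small. Proposition \ref{altformulaot} with $\eta=\rho$ gives a $\rho$-optimal control $\alpha\in\mathcal A^{i,C\rho}_{t,\mu_1,\nu}$, with state $X_s=X_t$ on $[t,t+C\rho]$ and $X_t\sim\mu_1$.

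\textbf{Realizing the transport on the idle interval.} I need $Z\sim\mu_2$ with $\mathbb E|Z-X_t|^2=\rho^2$. For $i=1$ there is no filtration constraint, so I can build $Z$ on the rich probability space with $(Z,X_t)$ an optimal coupling; if necessary I will re-realize the whole pair $(X_t,\alpha)$ jointly with $Z$, since $\mathcal B([t,T])\otimes\mathcal F$-measurable controls allow this. For $i=2$ the control is a function of $(s,\xi)$, so I will instead represent $Z=g(\xi)$ through the independent Gaussian $\xi$, as in \eqref{eqn:coupling_gamma}. This measurability bookkeeping, especially in the $\mathcal A_2$ case where $X_t$ is fixed $\mathcal F_t$-measurable and the joint law of $(X_t,\xi)$ must be matched, is the step I expect to be the main technical obstacle.

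\textbf{The new control and its cost.} Define $\tilde\alpha_s=(X_t-Z)/(C\rho)$ on $[t,t+C\rho]$ and $\tilde\alpha_s=\alpha_s$ afterwards, with $\tilde X_t=Z$. Then $\tilde X_{t+C\rho}=X_t$, the trajectory coincides with $X$ from that time on, and $\tilde X_T\sim\nu$. By \eqref{eqn:Lmquad} the extra cost on the initial interval is at most
\[
\frac{\mathbb E|X_t-Z|^2}{4C_{1,H}C\rho}+K_0C\rho=\frac{\rho}{4C_{1,H}C}+K_0C\rho .
\]
The original cost on that interval was at least $-K_0C\rho$, so it can be discarded at a price of $K_0C\rho$. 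Altogether
\[
U^\e_{i,ot}(t,\mu_2,\nu)\le U^\e_{i,ot}(t,\mu_1,\nu)+\rho+\Big(\frac{1}{4C_{1,H}C}+2K_0C\Big)\rho .
\]

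\textbf{Large $\rho$ and symmetry.} When $C\rho\ge T-t$, both values lie within the bounds of Lemma \ref{naive estimate}, hence are bounded in terms of $R$ and $T-t$. Their difference is therefore at most a constant multiple of $\rho$, because $\rho\ge (T-t)/C$ in this case. Swapping $\mu_1$ and $\mu_2$ completes the proof.
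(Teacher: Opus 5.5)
Your proposal is correct and follows the same skeleton as the paper. Both take a near-optimal control for $(t,\mu_1,\nu)$ that is idle on an initial interval of length proportional to ${\bf d}_2(\mu_1,\mu_2)$ (Proposition~\ref{altformulaot}). Both move $\mu_2$ onto $\mu_1$ during that interval, at cost of order ${\bf d}_2^2(\mu_1,\mu_2)/\delta+\delta$, and then reuse the old control.

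The difference is in how the coupling is realized.
\begin{itemize}
\item The paper smooths $X^\delta_{t+\delta}$ by an independent Gaussian $rZ$ and starts from $T(X^\delta_{t+\delta}+rZ)$, where $T$ is the Brenier map from the absolutely continuous law $\mu_1^r$ to $\mu_2$. The new initial point is thus a function of the existing state and one extra Gaussian. The price is that the offset $rZ$ must be absorbed linearly on $[t+\delta,T]$. This costs $O(r(1+1/\varepsilon))$ through \eqref{eqn:LmLip} and is removed by letting $r\to 0$.
\item You instead glue an optimal Kantorovich coupling to the law of $(X_t,\alpha)$ and re-realize the result. This is legitimate because the running cost and the terminal law depend only on the joint law of $(X_t,\alpha)$.
\end{itemize}
Realizing the glued law as a Borel function of $\xi$ also resolves the $\mathcal{A}_2$ bookkeeping you flag. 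The paper's construction needs the same re-representation when $i=2$, since its new control depends on $X^\delta_t$ and $Z$.

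In return, your trajectory after the idle interval coincides exactly with the old one. So you need no spatial Lipschitz estimate of $L$ and no limit in $r$. The transport step also carries over unchanged to (A1), where $L$ depends on $X_s$ and $\mathcal{L}(X_s)$ as well.

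Your separate treatment of the regime $C\rho\ge T-t$ via Lemma~\ref{naive estimate} is a useful explicit addition; the paper handles that regime only implicitly through its choice of $\delta$. One correction: there the bound involves ${\bf d}_2(\mu_i,\nu)$, and hence $\nu$, not only $R$ and $T-t$. This is harmless, because $U^\varepsilon_{i,ot}(t,\mu_i,\nu)$ is bounded above by $\sup_{m}U^\varepsilon_{i,ot}(t,m,\nu)$, which is exactly the quantity encoded in $C_{t,\nu,R}$. It is bounded below by $-K_0(T-t)$.
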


\begin{proof}
    We fix $R>0$ and $\mu_1,\mu_2\in \mathcal{P}_2$ such that $\int_{\R^d}|x|^2\;\mu_i(dx)\le R,\;i=1,2$.
    \vspace{1mm}

    \nd
    Let $\delta\in (0,T-t)$ and $\alpha^\delta\in \mathcal{A}_{t,\mu_1,\nu}^{i,\delta}$ a $C^{-1}\delta$-optimal control for $U^\e_{i,ot}(t,\mu_1,\nu)$ provided by Proposition  \ref{altformulaot}, where the constant $C=C_{t,\nu,R}$ was also constructed in Proposition \ref{altformulaot}. We denote by $(X_s^\delta)_{s\in [t,T]}$ the associated state process. By the $\delta$-optimality and the quadratic growth of $L$, we have
    \be\label{L2bound}
\frac{1}{4C_{2,H}}\mathbb{E}\left[\int_{t+\delta}^T|\alpha_s^\delta|^2ds\right]-K_0(T-t)\le U^\e_{i,ot}(t,\mu_1,\nu)+C^{-1}\delta\le \frac{1}{2C_{1,H}(T-t)}{\bf d}_2^2(\mu_1,\nu)+K_0(T-t)+C^{-1}\delta.
    \ee
    We consider $Z\sim N(0,I_d)$ to be a standard normal $d$-dimensional random variable which is independent of $X_{t+\delta}^\delta$. Then, for $r>0$, $X_{t+\delta}^\delta+rZ$ has absolutely continuous distribution with respect to the Lebesgue measure; we denote $\mathcal{L}(X_{t+\delta}^\delta+rZ)=\mu_1^r$. Since $\mu_1^r$ is absolutely continuous, we can also pick $(\alpha_s^{ot})_{s\in [t,t+\delta]}$ an optimal control for the optimal transport problem from $\mu_2$ to $\mu_1^r$ with quadratic cost $L(x,v)=|v|^2$ such that the associated state process $(X_s^{ot})_{s\in [t,t+\delta]}$ satisfies $X_{t+\delta}^{ot}=X_{t+\delta}^\delta+rZ\sim \mu_1^r$. In particular, we may take
    $$X_s^{ot}=\left(1-\frac{s-t}{\delta}\right)T(X_{t+\delta}^\delta+rZ)+\frac{s-t}{\delta}(X_{t+\delta}^\delta+rZ),$$
    where $T:\R^d\to \R^d$ is the Brenier map from $\mu_1^r$ to $\mu_2$. We also consider the control $\alpha$ and the associated state process as follows
    $$\alpha_s=\begin{cases}
        \alpha_s^\delta-\frac{rZ}{T-t-\delta},&\text{if }s\in [t+\delta,T],\\
        \alpha_s^{ot},&\text{if }s\in [t,t+\delta)
    \end{cases},\quad\text{and}\quad X_s=X_t^{ot}+\int_t^s\alpha_\tau d\tau,\;\;s\in[t,T].$$
    It is  clear that $\alpha \in\mathcal{A}_{t,\mu_2,\nu}^i$, because $X_t=X_t^{ot}=T(X_{t+\delta}^\delta+rZ)\sim \mu_2$ and $X_T=X_T^\delta\sim\nu$. We have
    \begin{align*}
        U_{i,ot}^\e(t,&\mu_2,\nu)\le \mathbb{E}\left[ \int_t^T L\left(\frac{X_s}{\e},\alpha_s\right)ds    \right]=\mathbb{E}\left[ \int_t^{t+\delta} L\left(\frac{X_s^{ot}}{\e},\alpha_s^{ot}\right)ds +\int_{t+\delta}^TL\left(\frac{X_s}{\e},\alpha_s\right)ds   \right]\\
        &\le \delta K_0 +\frac{{\bf d}_2^2(\mu_1^r,\mu_2)}{2 C_{1,H} \delta}+ \mathbb{E}\left[ \int_{t+\delta}^T L\left(\frac{X_s^\delta}{\e},\alpha_s^\delta\right)ds \right] +\mathbb{E}\left[\int_{t+\delta}^T\left|L\left(\frac{X_s}{\e},\alpha_s\right)-L\left(\frac{X_s^\delta}{\e},\alpha_s^\delta\right)\right|  ds\right]\\
        &\le \delta K_0 +\frac{{\bf d}_2^2(\mu_1^r,\mu_2)}{2 C_{1,H} \delta}+\mathbb{E}\left[ \int_{t}^T L\left(\frac{X_s^\delta}{\e},\alpha_s^\delta\right)ds \right] -\mathbb{E}\left[ \int^{t+\delta}_t L\left(\frac{X_s^\delta}{\e},\alpha_s^\delta\right)ds \right]\\
        &\hspace{2cm} +C_L\mathbb{E}\left[\int_{t+\delta}^T(1+|\alpha_s|+|\alpha_s^\delta|)\left( \left|\frac{X_s-X_s^\delta}{\e}\right|+|\alpha_s-\alpha_s^\delta|   \right)ds\right]\\
        &\le U_{i,ot}^\e(t,\mu_1,\nu)+C^{-1}\delta +\frac{{\bf d}_2^2(\mu_1^r,\mu_2)}{2 C_{1,H} \delta}+2\delta K_0\\
         &\hspace{2cm}+C_L\mathbb{E}\left[\int_{t+\delta}^T(1+|\alpha_s|+|\alpha_s^\delta|)\left( \left|\frac{X_s-X_s^\delta}{\e}\right|+|\alpha_s-\alpha_s^\delta|   \right)ds\right],
        \end{align*}
        where we repeatedly used the regularity and growth properties of $L.$ However, for $s\in [t+\delta,T]$,
        $$X_s-X_s^\delta=rZ-rZ\frac{s-t-\delta}{T-t-\delta}\quad\text{and}\quad\alpha_s-\alpha_s^\delta=\frac{rZ}{T-t-\delta},$$
        hence the last term in the above inequality can be bounded by
        \begin{align*}
            C_L\mathbb{E}\left[\int_{t+\delta}^T\left(1+|\alpha_s^\delta|+\frac{r|Z|}{T-t-\delta}\right)\left(\frac{r|Z|}{\e}\frac{T-s}{T-t-\delta}+\frac{r|Z|}{T-t-\delta}  \right)ds\right]\le rC'\left(\frac{1}{\e}+1\right),
        \end{align*}
        for some constant $C'$ depending only on $R,\nu$ and $T-t$, where in the last step we also used \eqref{L2bound} and the Cauchy-Schwarz inequality. Plugging this back in the above yields
        $$U^\e_{i,ot}(t,\mu_2,\nu)\le U_{i,ot}^\e(t,\mu_1,\nu)+C^{-1}\delta +\frac{{\bf d}_2^2(\mu_1^{r},\mu_2)}{2 C_{1,H} \delta}+2\delta K_0+rC'\left(\frac{1}{\e}+1\right).$$
        For $r\to 0^+$, this implies
        $$U^\e_{i,ot}(t,\mu_2,\nu)\le U_{i,ot}^\e(t,\mu_1,\nu)+C^{-1}\delta +\frac{{\bf d}_2^2(\mu_1,\mu_2)}{2 C_{1,H} \delta}+2\delta K_0.$$
     Now we choose $\delta=\frac{T-t}{4R^2}{\bf d}_2(\mu_1,\mu_2)$. We observe $\delta< T-t$, because ${\bf d}_2(\mu_1,\mu_2)<4R^2$, and that this choice in the above inequality gives
    $$U_{i,ot}^\e(t,\mu_2,\nu)\le U_{i,ot}^\e(t,\mu_1,\nu)+\overline{C}{\bf d}_2(\mu_1,\mu_2),$$
    for some constant $\overline{C}>0$ depending only on $R,\; C\text{ and }T-t$. We may similarly prove the inequality $U_{i,ot}^\e(t,\mu_1,\nu)\le U_{i,ot}^\e(t,\mu_2,\nu)+\overline{C}{\bf d}_2(\mu_1,\mu_2),$ and this finishes the proof.
\end{proof}

\begin{proposition}\label{measure lip2}
    Fix $t<T$, $\mu\in\mathcal P_2(\mathbb R^d)$ and $i\in \{1,2\}$.
    Then the map $\mathcal{P}_2(\R^d)\ni\nu\mapsto U_{i,ot}^\varepsilon(t,\mu,\nu)$ is continuous on
    $\mathcal P_2(\mathbb R^d)$ with respect to the ${\bf d}_2$-distance. Moreover, for every $R>0$ there exists $C=C(R,T-t)>0$ such that
    \[
    |U_{i,ot}^\varepsilon(t,\mu,\nu_1)-U_{i,ot}^\varepsilon(t,\mu,\nu_2)|
    \le C\,{\bf d}_2(\nu_1,\nu_2),
    \]
    for all $\nu_1,\nu_2\in\mathcal P_2(\mathbb R^d)$
    with $\int|x|^2\,d\nu_i\le R,\; i=1,2$.
\end{proposition}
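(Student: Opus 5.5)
Fix $R>0$ and $\nu_1,\nu_2$ with second moments at most $R$. I will mirror the proof of Proposition~\ref{measure lip}, with the surgery now placed at the terminal end of the time interval. By the time-reversed version of Proposition~\ref{altformulaot}, obtained by setting $\alpha_s=0$ on $[T-\delta,T]$ and reparametrizing the remaining time, I may pick a $\delta$-optimal control (up to a constant factor) $\alpha^\delta\in\mathcal{A}^i_{t,\mu,\nu_1}$ that vanishes on $[T-\delta,T]$. I call its state process $X^\delta$, so that $X^\delta_{T-\delta}=X^\delta_T\sim\nu_1$. Exactly as in \eqref{L2bound}, the quadratic growth \eqref{eqn:Lmquad} together with Lemma~\ref{naive estimate} gives
\[
\mathbb{E}\int_t^T|\alpha^\delta_s|^2\,ds\le C(R,T-t).
\]
For $i=2$ I must make sure the modified control stays $\xi$-measurable. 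This works because the reparametrization is deterministic in time.

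Next I modify the control on the final interval $[T-\delta,T]$. I want to transport $\nu_1$ to $\nu_2$ using the process $X^\delta_{T-\delta}$ itself, so that $\sigma$-measurability is preserved. To do this, I take an optimal coupling between $\nu_1$ and $\nu_2$ realized as $(X^\delta_T, Y)$, and set
\[
X_s=X^\delta_{T-\delta}+\frac{s-T+\delta}{\delta}\,(Y-X^\delta_{T-\delta}),\qquad s\in[T-\delta,T].
\]
For $i=1$, $Y$ only needs to be $\mathcal{F}$-measurable, and such a $Y$ exists because the space is non-atomic and rich: one can use independent randomness from $B$ to produce a gluing or disintegration. For $i=2$, I construct $Y=g(\xi)$ by realizing the triple consisting of the path marginals $(X^\delta_{t_k})$ and $Y$ jointly as a function of $\xi$, as in \eqref{eqn:coupling_gamma}. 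Here a simpler trick suffices, since under (A1') only the endpoints of the path matter: the path is determined by $\xi$, so I pick $g$ with $(f(\xi),g(\xi))$ distributed as the glued coupling. The idea is to use a Gaussian split of $\xi$ into two independent components, encoding the original path with one and the conditional law of $Y$ with the other.

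For the cost, on $[t,T-\delta]$ the running cost is unchanged. On the last piece, \eqref{eqn:Lmquad} gives the bound
\[
K_0\delta+\frac{\mathbf d_2^2(\nu_1,\nu_2)}{4C_{1,H}\delta},
\]
while the original cost on that piece was at least $-K_0\delta$. Hence
\[
U^\e_{i,ot}(t,\mu,\nu_2)\le U^\e_{i,ot}(t,\mu,\nu_1)+C\delta+\frac{\mathbf d_2^2(\nu_1,\nu_2)}{4C_{1,H}\delta}.
\]
Choosing $\delta$ proportional to $\mathbf d_2(\nu_1,\nu_2)$ (capped below $T-t$; when $\mathbf d_2(\nu_1,\nu_2)$ is large, the bound follows trivially from Lemma~\ref{naive estimate}, since the moments are bounded by $R$) gives a constant multiple of $\mathbf d_2(\nu_1,\nu_2)$. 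The symmetric argument finishes the proof, and continuity follows. I would check that the near-optimality constant of Proposition~\ref{altformulaot} depends only on $R$ and $T-t$, via Lemma~\ref{naive estimate}, so that $C=C(R,T-t)$.

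I expect the main obstacle to be the measurability and admissibility of the glued coupling in the $\mathcal{A}_2$ case. This is also why no Gaussian smoothing, like that used in Proposition~\ref{measure lip}, is needed here: the terminal-end modification avoids needing a Brenier map, but it requires a measurable gluing of the coupling.
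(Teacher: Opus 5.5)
Your argument is correct. It has the same skeleton as the paper's proof, which just runs the proof of Proposition~\ref{measure lip} backwards in time:
\begin{itemize}
\item take a near-optimal control for $\nu_1$ that vanishes on $[T-\delta,T]$ (the time-reversed Proposition~\ref{altformulaot});
\item append a straight-line transport on that interval, whose cost is at most $K_0\delta+\mathbf{d}_2^2(\nu_1,\nu_2)/(4C_{1,H}\delta)$ by \eqref{eqn:Lmquad};
\item take $\delta\sim\mathbf{d}_2(\nu_1,\nu_2)$.
\end{itemize}

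You differ from the paper in how the optimal coupling is produced. The reversed paper argument adds $rZ$ to the terminal point so that its law is absolutely continuous, and uses the Brenier map onto $\nu_2$. It then compensates with the drift $rZ/(T-t-\delta)$ on $[t,T-\delta]$, pays an error of order $r(1/\varepsilon+1)$ (this is where the velocity bound \eqref{L2bound} is needed), and lets $r\to 0$. You instead glue an optimal coupling of $(\nu_1,\nu_2)$ onto the path law by disintegrating at $X^\delta_{T-\delta}$. This leaves the cost on $[t,T-\delta]$ untouched and removes the limit $r\to0$. It also makes the $L^2$ bound you quote unnecessary. The price is a measurable kernel representation instead of a deterministic map.

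Both routes need auxiliary randomness independent of $X^\delta_{T-\delta}$. Your phrase ``use independent randomness from $B$'' is not quite right for $i=1$. A control in $\mathcal{A}_1$ may depend on all of $B$, so $X^\delta_T$ need not be independent of $B$; the paper's reversed argument has the same issue with $Z$.

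The clean fix is law invariance. The cost and the constraints depend only on the joint law of $(X_t,\alpha)$. So realize the glued law of $(X_t,\alpha^\delta,Y)$ afresh as a Borel function of $(X_t,U)$, with $U$ uniform and built from $B$.

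For $i=2$, first take $X_t$ to be a Borel function of $\xi$. This is admissible, since $\xi$ is $\mathcal{F}_0$-measurable and independent of $B$. Then split $\xi$ into two independent uniforms by a Borel isomorphism; this is what your ``Gaussian split'' must mean, because for $d=1$ no coordinate split exists.

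Finally, through Lemma~\ref{naive estimate} the Lipschitz constant also depends on the second moment of the fixed $\mu$. This is harmless since $\mu$ is fixed, but it is not literally $C(R,T-t)$ alone.
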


\begin{proof}
    The proof is identical to Proposition \ref{measure lip} after reversing the time.
\end{proof}

\subsection{Convergence of $U_{ot}^\e$}

\begin{proof}[Proof of Theorem \ref{main3}]
    \textit{Step 1.} ($U^\e_{ot}=U^\e_{1,ot}$ and $U_{ot}=U_{1,ot}$)\\[0.5ex]
    We first notice that Proposition \ref{prop:U1eqUeqU2} implies
    $$U_1^\e(t,\mu)=\inf_{\nu\in \mathcal{P}_2}\left\{ U^\e_{1,ot}(t,\mu,\nu)+F(\nu)\right\}=\inf_{\nu\in \mathcal{P}_2}\left\{ U^\e_{2,ot}(t,\mu,\nu)+F(\nu)\right\}=U_2^\e(t,\mu),$$
    for any $F\colon\mathcal{P}_2\to \R$ bounded and Lipschitz and any $(t,\mu)\in [0,T)\times \mathcal{P}_2$.
    Since $U^\e_{1,ot},U^\e_{2,ot}$ are lower bounded (Lemma \ref{naive estimate}) and the maps $\nu\mapsto U^\e_{1,ot}(t,\mu,\nu),\; \nu\mapsto U^\e_{2,ot}(t,\mu,\nu)$ are continuous (Proposition \ref{measure lip2}), it follows from Lemma \ref{infeq} that $U^\e_{1,ot}=U^\e_{2,ot}$. This equality yields $U^\e_{1,ot}=U^\e_{ot}=U^\e_{2,ot}$, because of \eqref{eq1001}.
    \vspace{1mm}

    \nd
    By the regularity and growth properties of the effective cost $\overline{L}$ in Proposition \ref{prop:Lmprop}, we may argue similarly for the value function $U_{ot}$ from \eqref{DOTlimit} to derive
    $$U_{ot}(t,\mu,\nu)=U_{1,ot}(t,\mu,\nu):=\inf_{\alpha\in \mathcal{A}^1_{t,\mu,\nu}}\left\{\mathbb{E}\left[\int_t^T\overline{L}(X_s,\alpha_s,\mathcal{L}(X_s))ds   \right]\right\},$$
    for any $(t,\mu,\nu)\in [0,T)\times\mathcal{P}_2\times\mathcal{P}_2$.
    \vspace{2.5mm}

    \nd
    \textit{Step 2.} (Convergence and rate of convergence under (A1))\\[0.3ex]
    Fix $(t,\mu,\nu)\in [0,T)\times\mathcal{P}_2\times\mathcal{P}_2$ and set $M_0:={\bf d}_2^2(\mu,\nu)$. Using the notation introduced in subsection \ref{formulations}, we write
    $$U^\e_{1,ot}(t,\mu,\nu)=\inf_{(X, Y) \in \mathcal{E}_{\mu,\nu}} \left\{ \inf_{\gamma \in \Gamma(X, Y)} \mathbb{E}\left[\int_{t}^T L\left( \frac{\gamma_s}{\e}, \gamma_s, \dot{\gamma}_s, \mathcal{L}(\gamma_s) \right)\, ds \right] \right\}.$$
    We argue as in the proof of Theorem \ref{thm:multiscale}. Let $(X,Y,\gamma)$ be $\delta$-optimal for $U^\e_{1,ot}(t,\mu,\nu)$. Then
\begin{equation}\label{eqn:nopuepot}
 \mathbb{E}\left[\int_{t}^T L\left( \frac{\gamma_s}{\e}, \gamma_s, \dot{\gamma}_s, \mathcal{L}(\gamma_s) \right)\, ds \right] \leq U^\e_{1,ot}(t, \mu,\nu)+\delta.
\end{equation}
By Lemma \ref{naive estimate} and the growth properties of $L$, we discover that there exists a constant $C=C(d, C_{1,H},C_{2,H},C_G, K_0)>0$ such that
\be\label{gammaineq}
\frac{1}{T-t}\mathbb{E}\left[\int_t^T|\dot{\gamma}_s|^2ds   \right]\le C\left(1+\frac{M_0}{(T-t)^2}\right).
\ee
Consider a time partition $t_k := t + \frac{k}{N}(T-t)$ for $ k=0,\dots,N$, where $N \in \mathbb{N}$ will be chosen later. With $J_\e$ as in Lemma~\ref{lem:fixest}, we may write
\begin{equation}\label{eqn:opfixmuot}
\mathbb{E}\left[\int_{t}^T L\left( \frac{\gamma_s}{\e}, \gamma_s, \dot{\gamma}_s, \mathcal{L}(\gamma_s) \right)\, ds \right]\geq J_\e (\gamma) - \sum_{k=0}^{N-1} E_k,
\end{equation}
where
\[
E_k:=\mathbb{E}\left[\int_{t_k}^{t_{k+1}} \left|L\left( \frac{\gamma_s}{\e}, \gamma_s, \dot{\gamma}_s, \mathcal{L}(\gamma_s) \right)-
L\left(\frac{\gamma_s}{\e}, \gamma_{t_k}, \dot{\gamma}_s, \mathcal{L}(\gamma_{t_k})\right)\right|\,ds\right]. \]
By Lemma \ref{lem:fixest},
\begin{equation}\label{eqn:tpartestot}
\begin{aligned}
J_\e(\gamma) \geq \inf_{\eta\in\Gamma(X,Y;\gamma)}
J_\e (\eta) =\sum_{k=0}^{N-1}
\mathbb E\left[h^\e\left(\gamma_{t_k}, \mathcal{L}(\gamma_{t_k}); t_k,t_{k+1},
\gamma_{t_k},\gamma_{t_{k+1}}\right)\right].
\end{aligned}
\end{equation}

\nd
Next, Proposition~\ref{prop:rocmu} implies that
\begin{equation}\label{eqn:memukmbarot}
\begin{split}
\mathbb{E}&\left[h^\e\left(\gamma_{t_k}, \mathcal{L}(\gamma_{t_k}); t_k,t_{k+1},
\gamma_{t_k},\gamma_{t_{k+1}}\right)\right]\\
&\hspace{2cm}\geq \mathbb{E}\left[\bar{h}\left(\gamma_{t_k}, \mathcal{L}(\gamma_{t_k}); t_k,t_{k+1},
\gamma_{t_k},\gamma_{t_{k+1}}\right)\right] -C\left(1+\frac{ \mathbb{E}\left|\gamma_{k+1}-\gamma_k\right|^2}{|t_{k+1}-t_k|^2}\right) \e,
\end{split}
\end{equation}
for some constant $C=C(d, C_{1,H},C_{2,H},K_0)>0$. By H\"older's inequality,
\[
\sum_{k=0}^{N-1}\frac{ \mathbb{E}\left|\gamma_{k+1}-\gamma_k\right|^2}{|t_{k+1}-t_k|^2}\leq \frac{N}{T-t} \mathbb{E} \left[\int_{t}^T\left|\dot{\gamma}_s\right|^2\right]\,
ds \leq NC\left(1+\frac{M_0}{(T-t)^2}\right),
\]
where we use the fact that $t_{k+1}-t_k=\frac{T-t}{N}$ and \eqref{gammaineq}. Combining this with \eqref{eqn:memukmbarot} yields
\begin{equation}\label{eqn:rocsumerrot}
\begin{split}
\sum_{k=0}^{N-1}
\mathbb E&\left[h^\e\left(\gamma_{t_k}, \mathcal{L}(\gamma_{t_k}); t_k,t_{k+1},
\gamma_{t_k},\gamma_{t_{k+1}}\right)\right]\\
&\hspace{1cm}\ge\sum_{k=0}^{N-1}\mathbb{E}\left[\bar{h}\left(\gamma_{t_k}, \mathcal{L}(\gamma_{t_k}); t_k,t_{k+1},
\gamma_{t_k},\gamma_{t_{k+1}}\right)\right] -C N\e\left(1+\frac{M_0}{(T-t)^2}\right).
\end{split}
\end{equation}
Let $\bar\gamma:[t_0,T] \times \R^d \to \R^d$ be the piecewise linear interpolation of $\gamma$ on the partition $\{t_k\}_{k=0}^N$, namely
\[
\bar{\gamma}(t,\omega):=\gamma(t_k,\omega)+\frac{s-t_k}{t_{k+1}-t_k}\left(\gamma(t_{k+1}, \omega)-\gamma(t_k,\omega)\right), \qquad \text{ for } t \in \left[t_k, t_{k+1}\right].
\]
By Lemma \ref{lem:lbarpart} (for $\left(1+\frac{M_0}{(T-t)^2}\right)$ in place of $M_1$),
\begin{equation}\label{eqn:admgabarot}
\begin{aligned}
&\sum_{k=0}^{N-1}
\mathbb{E}\left[\bar{h}\left(\gamma_{t_k}, \mathcal{L}(\gamma_{t_k}); t_k,t_{k+1},
\gamma_{t_k},\gamma_{t_{k+1}}\right)\right]=\inf_{\eta\in \Gamma(X,Y;\gamma)} \bar J(\eta)=\bar{J}(\bar{\gamma})\\
 & \hspace{2cm}\geq\sum_{k=0}^{N-1}
\mathbb{E} \left[\int_{t_k}^{t_{k+1}}
\bar{L}\left(\bar{\gamma}_s,
\dot{\bar{\gamma}}_s,
\mathcal{L}\left(\bar{\gamma}_s\right)
\right)\,ds\right]-\sum_{k=0}^{N-1} \bar{E}_k,
\end{aligned}
\end{equation}
where
\[
\bar{E}_k:= \mathbb{E} \left[\int_{t_k}^{t_{k+1}}\left|
\bar{L}\left(\gamma_{t_k},
\dot{\bar{\gamma}}_s,
 \mathcal{L}(\gamma_{t_k})
\right)-
\bar{L}\left(\bar{\gamma}_s,
\dot{\bar{\gamma}}_s,
\mathcal{L}\left(\bar{\gamma}_s\right)
\right)\right|\,ds\right].
\]
Combining \eqref{eqn:opfixmuot}, \eqref{eqn:tpartestot}, \eqref{eqn:rocsumerrot}, and \eqref{eqn:admgabarot}, we obtain
\begin{equation}\label{eqn:intstepuepot}
    \mathbb{E}\left[\int_{t}^T L\left( \frac{\gamma_s}{\e}, \gamma_s, \dot{\gamma}_s, \mathcal{L}(\gamma_s) \right)\, ds \right] \geq \mathbb{E} \left[\int_t^T
\bar{L}\left(\bar{\gamma}_s,
\dot{\bar{\gamma}}_s,
\mathcal{L}\left(\bar{\gamma}_s\right)
\right)\,ds\right]-\sum_{k=0}^{N-1} E_k -\sum_{k=0}^{N-1} \bar{E}_k -C_0N \e,
\end{equation}
where $C_0:=C\left(1+\frac{M_0}{(T-t)^2}\right)$. By Lemmas \ref{lem:fixest}, \ref{lem:lbarpart} (for $M=M_1=1+\frac{M_0}{(T-t)^2}=C_0/C$), and \eqref{gammaineq},
\begin{equation}\label{eqn:eksumot}
\sum_{k=0}^{N-1}E_k \leq  \frac{C_2(1+C_0/C)(T-t)^2}{N}, \qquad
\sum_{k=0}^{N-1}\bar{E}_k\leq \frac{C_2(1+C_0/C)(T-t)^2}{N},
\end{equation}
for some constant $C_2=C(d,C_{1,H},C_{2,H},C_G, K_0)>0$. Combining \eqref{eqn:nopuepot}, \eqref{eqn:intstepuepot}, and \eqref{eqn:eksumot}, we deduce that
\[
\begin{aligned}
U^\e_{1,ot}(t,\mu,\nu))+\delta & \geq \mathbb{E} \left[\int_t^T
\bar{L}\left(\bar{\gamma}_s,
\dot{\bar{\gamma}}_s,
\mathcal{L}\left(\bar{\gamma}_s\right)
\right)\,ds\right] -\frac{2C_2(1+C_0/C)(T-t)^2}{N}-C_0N\e\\
& \geq U_{1,ot}(t,\mu,\nu) -\frac{2C_2(1+C_0/C)(T-t)^2}{N}-C_0N\e.
\end{aligned}
\]
where $C_2,C_0,C$ are the constants obtained above. Letting $\delta\to 0$ and optimizing with respect to $N$ we obtain
$$U^\e_{1,ot}(t,\mu,\nu)-U_{1,ot}(t,\mu,\nu)\ge -C_1
\left(2(T-t)+\frac{M_0}{T-t}  \right)\sqrt{\e},$$
for some constant $C_1>0$ depending only on $C_2,C$ considered above and hence on $d,C_{1,H},C_{2,H},C_G$ and $K_0$. With a similar argument, as in the proof of Theorem \ref{thm:multiscale}, we establish
$$U^\e_{1,ot}(t,\mu,\nu)-U_{1,ot}(t,\mu,\nu)\le C_1'
\left(2(T-t)+\frac{M_0}{T-t}  \right)\sqrt{\e},$$
for some constant $C_1'>0$ depending only on $d,C_{1,H},C_{2,H},C_G$ and $K_0$. We omit the details of the last calculation. The first result follows.
\vspace{2.5mm}

\nd
    \textit{Step 3.} (Convergence and rate of convergence under (A1'))\\
    Fix $(t,\mu,\nu)\in [0,T)\times\mathcal{P}_2\times\mathcal{P}_2$. Combining the equality $U^\e_1(t,\mu)=\inf_{\nu\in \mathcal{P}_2}\left\{ U^\e_{1,ot}(t,\mu,\nu)+F(\nu) \right\}$ with the equality of Lemma \ref{lem:Uep1costh}, we obtain
    $$\inf_{\nu\in \mathcal{P}_2}\left\{ U^\e_{1,ot}(t,\mu,\nu)+F(\nu) \right\}=\inf_{\nu\in \mathcal{P}_2}\left\{ \inf_{(X, Y) \in \mathcal{E}_{\mu,\nu}}
\mathbb{E}[h^\e(t,T,X,Y)]+F(\nu) \right\},$$
    for any bounded and ${\bf d}_2$-Lipschitz terminal cost $F:\mathcal{P}_2\to \R$. By the continuity and the quadratic growth estimates proved for $h^\e$ in Proposition \ref{prop:mmeas}, we derive from \cite[Theorem 5.20]{villani2009optimal} that the optimal transport value function $\nu\mapsto \inf_{(X, Y) \in \mathcal{E}_{\mu,\nu}}
\mathbb{E}[h^\e(t,T,X,Y)]$ is continuous. Thus, as in Step 1 by Lemma \ref{infeq}, we have
$$U^\e_{ot}(t,\mu,\nu)=U^\e_{1,ot}(t,\mu,\nu)=\inf_{(X, Y) \in \mathcal{E}_{\mu,\nu}}\mathbb{E}[h^\e(t,T,X,Y)].$$
We can similarly show that
$U_{ot}(t,\mu,\nu)=U_{1,ot}(t,\mu,\nu)=\inf_{(X, Y) \in \mathcal{E}_{\mu,\nu}}\mathbb{E}[\overline{h}(t,T,X,Y)],$
where $\overline{h}$ is from Proposition \ref{prop:subsuperm}. We finish the proof by applying the estimate for the difference of $\overline{h},h^\e$ from Proposition \ref{prop:subsuperm}(3):
\begin{align*}
    \inf_{(X, Y) \in \mathcal{E}_{\mu,\nu}}\mathbb{E}[h^\e(t,T,X,Y)]&\ge \inf_{(X, Y) \in \mathcal{E}_{\mu,\nu}}\left\{\mathbb{E}[\overline{h}(t,T,X,Y)]-\e C\left(1+\frac{\mathbb{E}[|X-Y|^2]}{(T-t)^2}\right)\right\}\\
    &\ge  \inf_{(X, Y) \in \mathcal{E}_{\mu,\nu}}\left\{\mathbb{E}[\overline{h}(t,T,X,Y)]-\e C\left(1+\frac{{\bf d}_2^2(\mu,\nu)}{(T-t)^2}\right)\right\},
\end{align*}
for some constant \(C=C(d, C_{1, H}, C_{2, H}, K_0)>0\). Thus, $U^\e_{1,ot}(t,\mu,\nu)\ge U_{1,ot}(t,\mu,\nu)-\e C\left(1+\frac{{\bf d}_2^2(\mu,\nu)}{(T-t)^2}\right)$. We show $U_{1,ot}(t,\mu,\nu)\ge U^\e_{1,ot}(t,\mu,\nu)-\e C\left(1+\frac{{\bf d}_2^2(\mu,\nu)}{(T-t)^2}\right)$ similarly.
\end{proof}

\appendix

\section{Proofs of preliminary results}\label{proofofprelim}
\begin{proof}[Proof of Proposition \ref{prop:Lmprop}]
We only prove (1) and (4). The proof of the remaining statements can be found in \cite{tran_hamilton-jacobi_2021}.

\noindent
(1) Fix \(y \in \mathbb{T}^d, x\in \R^d\), \( v \in \mathbb{R}^d\), and \(m\in \mathcal{P}_2\). By \eqref{quadgrowthHm},
        \[
        0 \cdot v-H(y, x, 0, m)  \geq -K_0.
        \]
        Moreover, again by \eqref{quadgrowthHm}, for any $p \in \mathbb{R}^d$ with $\displaystyle|p|\geq \frac{|v|+2K_0}{C_{1,H}}+1$,
\[
-p \cdot v-H(y,x, p,m)
\leq  |p|\,|v|-C_{1,H}|p|^2+K_0\leq |p|\left(|v|-C_{1,H}|p|\right)+K_0 \leq -K_0.
\]
Therefore,
\[
L(y,x,v,m)=\sup_{p\in \mathbb{R}^d} \big( -p \cdot v-H(y,x,p,m)\big) = \max_{|p|\leq \frac{|v|+2K_0}{C_{1,H}}+1} \big(-p \cdot v-H(y,x, p, m)\big) <+\infty.
\]

\noindent
(4) Let $y,y' \in \mathbb{T}^d$, $x,x', v, v' \in \mathbb{R}^d$. From part (1), there exists \(p_{x,y,m} \in \R^d\) with \(|p_{x,y,m}| \leq \frac{|v|+2K_0}{C_{1,H}}+1 \) such that
\[
L(y,x,v,m) = -p_{x,y,m} \cdot v - H(y,x, p_{x,y,m},m).
\]
We estimate:
\[
\begin{aligned}
&L(y,x,v,m)-L(y',x',v',m')\\
&\leq -p_{x,y,m} \cdot v - H(y,x,p_{x,y,m},m) - \big(-p_{x,y,m} \cdot v' - H(y',x',p_{x,y,m},m')\big) \\
&= H(y',x', p_{x,y,m}, m') - H(y,x,p_{x,y,m},m) + p_{x,y,m} \cdot (v'-v) \\
&\leq H(y,x, p_{x,y,m},m') - H(y,x,p_{x,y,m},m) + \left(\frac{|v|+2K_0}{C_{1,H}}+1\right) |v-v'|.\\
& \leq C_H\left(1 + 2\left(\frac{|v|+2K_0}{C_{1,H}}+1\right) \right)\left(|x-x'|+|y-y'|+{\bf d}_2(m,m')\right)+ \left(\frac{|v|+2K_0}{C_{1,H}}+1\right) |v-v'|\\
& \leq C\left(1+|v|\right)\left(|x-x'|+|y-y'|+|v-v'|+{\bf d}_2(m,m')\right),
\end{aligned}
\]
where the second-to-last inequality follows from \eqref{ineqHm} and $C=C\left(C_{1, H}, C_H, K_0\right)>0$.

By symmetry, exchanging \((x,v)\) and \((x',v')\) gives
\[
L(y',x',v',m')-L(y,x,v,m) \leq C\left(1+|v'|\right)\left(|x-x'|+|y-y'|+|v-v'|+{\bf d}_2(m,m')\right).
\]
Hence,
\[
\left|L(y,x,v,m)-L(y',x',v',m')\right| \leq C_L\left(1+|v|+|v'|\right)\left(|x-x'|+|v-v'|\right),
\]
for some constant $C_L=C_L\left(C_{1, H}, C_H, K_0\right)>0$.
\end{proof}

\begin{proof}[Proof of Proposition \ref{effHm}]
    We only prove \eqref{eqn:Hbarmgrowth} and \eqref{eqn:Hbarmreg}; the remaining assertions follow from the proof of \cite[Theorems 4.3, 4.14]{tran_hamilton-jacobi_2021}.

    From \cite[(4.17)]{tran_hamilton-jacobi_2021}, we know that
    \[
    \min_{y \in \T^d} H(y,x, p,m) \leq \bar{H}(x,p,m) \leq \max_{y\in \T^d} H(y,x, p, m), \qquad \text{for all }x, p \in \R^d, m \in \mathcal{P}_2.
    \]
Together with \eqref{quadgrowthHm}, this yields
\begin{equation}\label{eqn:Hbarquad}
    C_{1,H}|p|^2-K_0 \leq \bar{H}(x,p,m) \leq C_{2,H}|p|^2+K_0.
\end{equation}
    Let $x', p'\in \R^d$, $m' \in \mathcal{P}_2$, and $u^\delta: \T^d \to \R$ be the unique solution of
    \[
    \delta u^\delta(y) + H \left(y, x',Du^\delta(y)+p', m'\right)=0, \qquad y \in \T^d.
    \]
    As $-\delta u^\delta(\cdot) \to \bar{H}(x', p', m')$ uniformly on $\T^d$, for $\delta>0$ small enough, by \eqref{eqn:Hbarquad} and \eqref{quadgrowthHm}, we have
    \[
    C_{1, H}\left|Du^\delta+p'\right|^2-K_0\leq H(y,x', Du^\delta+p',m')=-\delta u^\delta (y) \leq \bar{H}(p',m')+1 \leq C_{2,H}|p|^2+K_0+1,
    \]
    which implies
    \[
    \left|Du^\delta+p'\right|^2\leq \frac{C_{2,H}}{C_{1,H}} |p'|^2+\frac{2K_0+1}{C_{1,H}}.
    \]
    Hence,
    \begin{equation}\label{eqn:udlipm}
        \left|Du^\delta\right| \leq C\left(1+\left|p'\right|\right)
    \end{equation}
    for some constant $C=C\left(C_{1,H}, C_{2,H}, K_0\right)>0$.
    We estimate
    \begin{equation}\label{eqn:Hudppprime}
    \begin{aligned}
    &\left|H\left(y,x', Du^\delta(y) +p',m'\right)-H\left(y,x,Du^\delta(y) +p,m\right)\right| \\\leq & C_H\left(1+2\left|Du^\delta\right|+|p|+|p'|\right)\left(\left|x-x'\right|+\left|p-p'\right|+{\bf d}_2(m,m')\right)\\
    \leq & C_{\bar{H}} \left(1+|p|+|p'|\right)\left(\left|x-x'\right|+\left|p-p'\right|+{\bf d}_2(m,m')\right),
    \end{aligned}
    \end{equation}
where $C_{\bar{H}}=C_{\bar{H}}\left(C_{1,H}, C_{2,H}, C_H, K_0\right)>0$ is a constant, and the second inequality follows from \eqref{eqn:udlipm}. Consider $\displaystyle u^\delta + \frac{C_{\bar{H}}\left(1+|p|+|p'|\right)\left(\left|x-x'\right|+\left|p-p'\right|+{\bf d}_2(m,m')\right)}{\delta}$. This is a supersolution to \eqref{discountcellm}, since by \eqref{eqn:Hudppprime},
\[
\begin{aligned}
&\delta u^\delta+ C_{\bar{H}}\left(1+|p|+|p'|\right)\left(\left|x-x'\right|+\left|p-p'\right|+{\bf d}_2(m,m')\right)+ H\left(y,x, Du^\delta(y) +p,m\right)\\
\geq & \delta u^\delta+H\left(y,x', Du^\delta(y) +p',m'\right)=0.
\end{aligned}
\]
Similarly, $\displaystyle u^\delta -\frac{C_{\bar{H}}\left(1+|p|+|p'|\right)\left(\left|x-x'\right|+\left|p-p'\right|+{\bf d}_2(m,m')\right)}{\delta}$ is a subsolution to \eqref{discountcellm}. By the comparison principle for \eqref{discountcellm}, we obtain
\[
\begin{aligned}
    &u^\delta -\frac{C_{\bar{H}}\left(1+|p|+|p'|\right)\left(\left|x-x'\right|+\left|p-p'\right|+{\bf d}_2(m,m')\right)}{\delta}  \\
    &\qquad \qquad\qquad\qquad\qquad\qquad \leq v^\delta  \leq u^\delta +\frac{C_{\bar{H}}\left(1+|p|+|p'|\right)\left(\left|x-x'\right|+\left|p-p'\right|+{\bf d}_2(m,m')\right)}{\delta}.
\end{aligned}
\]
Multiplying by $\delta$ and sending $\delta \to 0 $, we deduce
\[
 \left|\bar{H}(x,p,m) - \bar{H}(x', p',m')\right| \leq C_{\bar{H}}\left(1+|p|+|p'|\right)\left(\left|x-x'\right|+\left|p-p'\right|+{\bf d}_2(m,m')\right).
\]
\end{proof}

\begin{proof}[Proof of Proposition \ref{prop:subsuperm}, parts (1) and (3)]
We begin by proving part (1). Without loss of generality, we assume
$t_1=0$, $t_2=t$ for some $t\in(0,T]$, and $x=0$. The general case
follows by the same argument.

Since, by definition, $h(0,2t, 0, 2y) \leq h(0,t,0,y)+h(0,t,y,2y)$, it suffices to show
\begin{equation}\label{eqn:m2ygoal}
h(0,t,y,2y) \leq h(0,t,0,y)+C\left(1+\frac{|y|^2}{t^2}\right)
\end{equation}
for some constant $C=C(d, C_{1, H}, C_{2, H}, K_0)>0$.

Consider the straight line $\gamma:[0,t]\to \R^d$ connecting $y$ and $2y$, defined by $\gamma(s)=\frac{s}{t}y+y$. By the definition of $h(0,t,y,2y)$,
\begin{equation}\label{eqn:m2yupbd}
    h(0,t,y,2y) \leq \int_0^t L\left(\frac{s}{t}y+y, \frac{y}{t}\right) \, ds \leq \left(\frac{1}{4C_{1,H}} \cdot \frac{|y|^2}{t^2}+K_0\right)t,
\end{equation}
where the second inequality follows from \eqref{eqn:Lmquad}.

Let $\eta:[0,t] \to \R^d$ be a minimizing curve of $h(0,t,0, y)$, that is, $\eta(0)=0, \eta(t)=y$, and
\begin{equation}\label{eqn:mylobd}
h(0,t,0,y)=\int_0^t L \left(\eta(s), \dot{\eta}(s)\right)ds \geq -K_0t,
\end{equation}
where the second inequality follows from \eqref{eqn:Lmquad}. Consider the straight line $\tilde{\gamma}:[0, t] \to \R^d$ connecting $0$ and $y$, defined by $\tilde{\gamma}(s)=\frac{s}{t}y$. Then
\begin{equation}\label{eqn:myupbd}
h(0,t,0,y)=\int_0^t L \left(\eta(s), \dot{\eta}(s)\right)ds \leq \int_0^t L\left(\frac{s}{t}y, \frac{y}{t}\right) \, ds \leq \left(\frac{1}{4C_{1,H}} \cdot \frac{|y|^2}{t^2}+K_0\right)t,
\end{equation}
where the last inequality follows from \eqref{eqn:Lmquad}.

If $t \leq 4$, then by \eqref{eqn:m2yupbd} and \eqref{eqn:mylobd} we have
\[
h(0,t,y,2y) \leq \frac{1}{C_{1,H} }\cdot \frac{|y|^2}{t^2}+4K_0, \qquad h(0,t,0,y) \geq -4K_0.
\]
Consequently,
\[
h(0,t,y,2y) \leq h(0,t, 0,y) +\frac{1}{C_{1,H} }\cdot \frac{|y|^2}{t^2}+8K_0,
\]
and therefore \eqref{eqn:m2ygoal} holds.

Suppose $t>4$. We claim that there exists $d \in \left\{0, 1, \cdots, \lfloor t \rfloor - 1\right\}$ such that
\begin{equation}\label{eqn:xivelupbd}
\frac{1}{4C_{2,H}}\int_d^{d+1} \left|\dot{\eta}(s)\right|^2 \,ds \leq \frac{1}{C_{1,H} }\cdot \frac{|y|^2}{t^2} + 4K_0.
\end{equation}
Indeed, if no such $d$ existed, then
\[
\frac{1}{4C_{2,H}}\int_0^{\lfloor t \rfloor} \left|\dot{\eta}(s)\right|^2 \,ds \geq \lfloor t \rfloor\left(\frac{1}{C_{1,H} }\cdot \frac{|y|^2}{t^2} + 4K_0\right).
\]
Combining this with \eqref{eqn:myupbd}, we obtain
\begin{equation}\label{eqn:tfloorine}
\begin{aligned}
\left(\frac{1}{4C_{1,H}} \cdot \frac{|y|^2}{t^2}+K_0\right)t &\geq \int_0^{\lfloor t\rfloor} L \left(\eta(s), \dot{\eta}(s)\right)ds+\int_{\lfloor t\rfloor}^t L \left(\eta(s), \dot{\eta}(s)\right)ds\\
& \geq \frac{1}{4C_{2,H}}\int_0^{\lfloor t \rfloor} \left|\dot{\eta}(s)\right|^2 \,ds -K_0 t\\
& \geq \lfloor t \rfloor\left(\frac{1}{C_{1,H} }\cdot \frac{|y|^2}{t^2} + 4K_0\right)-K_0 t,
\end{aligned}
\end{equation}
where the second inequality follows from \eqref{eqn:Lmquad}. Since $t>4$ and $\lfloor t \rfloor  - t > -1 $, we have $4 \lfloor t \rfloor -2t \geq 2 \lfloor t \rfloor -2 >0$, and hence
\[
\begin{aligned}
\lfloor t \rfloor\left(\frac{1}{C_{1,H} }\cdot \frac{|y|^2}{t^2} + 4K_0\right)-K_0 t & =\lfloor t \rfloor\left(\frac{1}{C_{1,H} }\cdot \frac{|y|^2}{t^2} \right) + (4 \lfloor t \rfloor -t )K_0 \\
&> \frac{t}{4}\left(\frac{1}{C_{1,H} }\cdot \frac{|y|^2}{t^2} \right) +tK_0,
\end{aligned}
\]
which contradicts \eqref{eqn:tfloorine}. Hence, \eqref{eqn:xivelupbd} holds.

Let $w \in \mathbb{Z}^d$ such that $y-w \in [0,1]^d$. Define a path $\tilde{\eta}:[0, t] \to \R^d$ by
\[
    \tilde{\eta}(s):=\left
    \{\begin{aligned}
    &w+ 4\left(\frac{1}{4}-s\right)(y-w), \qquad \qquad  \quad \,\,\, \text{if} \,\, \, 0 \leq s \leq \frac{1}{4},\\
    & \eta \left(s -\frac{1}{4}\right) +w, \qquad \qquad  \qquad \qquad \quad \text{if} \,\, \,\frac{1}{4} \leq s \leq d+\frac{1}{4},\\
    & \eta\left(2\left(s-d-\frac{1}{4}\right)+d\right)+w, \qquad  \quad \,\, \text{if} \,\,\, d+\frac{1}{4} \leq s \leq d +\frac{3}{4}\\
    & \eta \left(s+\frac{1}{4}\right)+w, \qquad \qquad\qquad \qquad \quad \text{if} \,\,\, d+\frac{3}{4} \leq s \leq t-\frac{1}{4},\\
    & y+w +4\left(s-t+\frac{1}{4}\right)\left(y-w\right), \qquad \text{if} \,\,\, t-\frac{1}{4} \leq s \leq t.\\
    \end{aligned}
    \right.
\]
Since $\tilde{\eta}(0)=y$ and $\tilde{\eta}(t)=2y$, $\tilde{\eta}$ is an admissible path for $h(0,t, y, 2y)$. Thus,
\begin{equation}\label{eqn:mltilxi}
h(0,t,y,2y) \leq \int_0^tL \left(\tilde{\eta}(s), \dot{\tilde{\eta}} (s)\right) \, ds.
\end{equation}
We decompose the integral as
\begin{equation}\label{eqn:decomptilxi}
\int_0^tL \left(\tilde{\eta}(s), \dot{\tilde{\eta}} (s)\right) \, ds= \mathrm{I}+\mathrm{II}+\mathrm{III},
\end{equation}
where
\[
\mathrm{I}:=\int_0^\frac{1}{4}L \left(\tilde{\eta}(s), \dot{\tilde{\eta}} (s)\right) \, ds+\int_{t-\frac{1}{4}}^{t}L \left(\tilde{\eta}(s), \dot{\tilde{\eta}} (s)\right) \, ds,
\]

\[
\mathrm{II}:= \int_\frac{1}{4}^{d+\frac{1}{4}} L\left(\tilde{\eta}(s), \dot{\tilde{\eta}} (s)\right) \, ds+\int_{d+\frac{3}{4}}^{t-\frac{1}{4}}L\left(\tilde{\eta}(s), \dot{\tilde{\eta}} (s)\right) \, ds, \quad \text{and} \quad \mathrm{III}:=\int_{d+\frac{1}{4}}^{d+\frac{3}{4}}L\left(\tilde{\eta}(s), \dot{\tilde{\eta}} (s)\right) \, ds.
\]
We first estimate $\mathrm{I}$. By the definition of $\tilde{\eta}$,
\begin{equation}\label{eqn:tilxiIest}
\begin{aligned}
\mathrm{I} = &\int_0^\frac{1}{4} L\left(w+ 4\left(\frac{1}{4}-s\right)(y-w), -4(y-w)\right)\, ds\\
&\qquad \qquad\qquad\qquad\qquad +\int_{t-\frac{1}{4}}^t L\left(y+w +4\left(s-t+\frac{1}{4}\right)\left(y-w\right), 4(y-w)\right)\, ds\\
\leq &\frac{1}{4}  \left(\frac{1}{4C_{1,H}}\left(4 \sqrt{d}\right)^2+K_0\right)\cdot 2,
\end{aligned}
\end{equation}
where the inequality follows from \eqref{eqn:Lmquad} and the fact that $|y-w| \leq \sqrt{d}$ since $y-w \in [0, 1]^d$.
Next, we estimate $\mathrm{II}$:
\begin{equation}\label{eqn:tilxiIIest}
\begin{aligned}
\mathrm{II} =&\int_\frac{1}{4}^{d+\frac{1}{4}} L\left(\eta \left(s -\frac{1}{4}\right) +w, \dot{\eta} \left(s -\frac{1}{4}\right)\right)\, ds +\int_{d+\frac{3}{4}}^{t-\frac{1}{4}} L\left(\eta \left(s+\frac{1}{4}\right)+w, \dot{\eta} \left(s +\frac{1}{4}\right)\right)\, ds\\
= & \int_0^d L \left(\eta(s), \dot{\eta}(s)\right) \, ds+ \int_{d+1}^t L\left(\eta \left(s\right), \dot{\eta} \left(s \right)\right)\, ds\\
=& \int_0^t  L \left(\eta(s), \dot{\eta}(s)\right) \, ds - \int_d^{d+1}L\left(\eta \left(s\right), \dot{\eta} \left(s \right)\right)\, ds\\
\leq & h(0,t,0,y) +K_0,
\end{aligned}
\end{equation}
where the second line follows from the fact that $ x \mapsto L(x , v)$ is periodic for every $v \in \R^d$, and the last inequality follows from \eqref{eqn:Lmquad}. Finally, we estimate $\mathrm{III}$:
\begin{equation}\label{eqn:tilxiIIIest}
\begin{aligned}
\mathrm{III} = &\int_{d+\frac{1}{4}}^{d+\frac{3}{4}}L\left(\eta\left(2\left(s-d-\frac{1}{4}\right)+d\right)+w, 2\dot{\eta}\left(2\left(s-d-\frac{1}{4}\right)+d\right)\right) \, ds\\
= & \frac{1}{2}\int_d^{d+1} L\left(\eta\left(s\right), 2\dot{\eta}\left(s\right)\right) \, ds \leq  \frac{1}{2C_{1,H}} \int_d^{d+1} \left|\dot{\eta}(s)\right|^2 \,ds+\frac{K_0}{2}\\
\leq & \frac{2C_{2, H}}{\left(C_{1, H}\right)^2} \cdot \frac{|y|^2}{t^2}+ \left(\frac{8C_{2,H}}{C_{1,H}}+\frac{1}{2}\right) K_0,
\end{aligned}
\end{equation}
where the last inequality follows from \eqref{eqn:xivelupbd}. Now combining \eqref{eqn:mltilxi}, \eqref{eqn:decomptilxi}, \eqref{eqn:tilxiIest}, \eqref{eqn:tilxiIIest}, and \eqref{eqn:tilxiIIIest}, we have
\[
h(0,t,y,2y) \leq h(0,t,0,y)+ \frac{2C_{2,H}}{(C_{1,H})^2}\cdot \frac{|y|^2}{t^2}+\left(\frac{8C_{2,H}}{C_{1,H}}+2\right) K_0 + \frac{2d}{C_{1, H}}.
\]
This completes the proof of part (1).

Now we prove part (3). By part (1) and Fekete's lemma, $\lim_{\ep \to 0^+} \ep m\left(\frac{t_1}{\ep}, \frac{t_2}{\ep}, \frac{x}{\ep},\frac{y}{\ep}\right)$ exists. For simplicity, set $C':=C\left(1+
\frac{|y-x|^2}{(t_2-t_1)^2}\right)$. By subadditivity, we have
\[h(2t_1,2t_2, 2x, 2y)+C' \leq 2 (h(t_1, t_2, x, y)+C').\]
Iterating this inequality $k$ times yields
\[h \left(2^k t_1,2^k t_2, 2^k x, 2^k y\right) +C' \leq 2^k \left(h(t_1,t_2, x, y)+C'\right).\]
Dividing both sides by $2^k$ and letting $k \to \infty$, we obtain
\[\bar{h}(t_1,t_2, x, y) \leq h(t_1,t_2, x, y)+C'.\]
Replacing $(t_1,t_2, x, y)$ by $\left(\frac{t_1}{\e}, \frac{t_2}{\e},\frac{x}{\e}, \frac{y}{\e}\right)$ and multiplying by $\ep$ gives
\[\e \bar{h} \left(\frac{t_1}{\e},\frac{t_2}{\e}, \frac{x}{\e}, \frac{y}{\e}\right) \leq \e h \left(\frac{t_1}{\e},\frac{t_2}{\e}, \frac{x}{\e}, \frac{y}{\e}\right)+\e C'.\]
Note that the left-hand side equals $\bar{h}(t_1,t_2, x, y)$ and hence,
\[\bar{h}(t_1,t_2, x, y)-\e h\left(\frac{t_1}{\e},\frac{t_2}{\e}, \frac{x}{\e}, \frac{y}{\e}\right)\leq C\left(1+
\frac{|y-x|^2}{(t_2-t_1)^2}\right)\ep.\]
The lower bound
\[\bar{h}(t_1,t_2, x, y)-\e h\left(\frac{t_1}{\e},\frac{t_2}{\e}, \frac{x}{\e}, \frac{y}{\e}\right)\geq -C\left(1+
\frac{|y-x|^2}{(t_2-t_1)^2}\right)\ep\]
follows analogously from superadditivity.
\end{proof}

\section{Proof of Corollary \ref{aeequality}}\label{Corapp}

\begin{proof}[Proof of Corollary \ref{aeequality}]
   We first prove (1). We observe that for any $\alpha\in \widetilde{\mathcal{A}}$ and any $s\in [t,T]$, $X_s=X_t+\int_t^s \alpha_r dr$ is $\sigma(X_t)$-measurable, hence $\mathcal{L}(X_T)$ is some pushforward if $m=\mathcal{L}(X_t)$. The techniques of Proposition \ref{prop:UeqU2} yield
    \[
\widetilde{U}^\e(t,m)=\inf_{\nu\in \mathcal{P}_2}\left\{ \inf_{f: f_\#m=\nu}\left\{\int h^\e(t,T,x,f(x))m(dx) \right\}  +G(\nu)  \right\},
\]
which is associated with the Monge formulation of the optimal transport problem with cost function $h^\e(t,T,\cdot,\cdot)\colon \R^d\times\R^d\to \R$. Recall $U_1^\e$ introduced at the beginning of section \ref{altcontrset}. By Lemma \ref{lem:Uep1costh}, we have
\begin{align}
U_1^\ep(t,m)
&\ge \inf_{\nu\in \mathcal{P}_2}\left\{ \inf_{(X, Y) \in \mathcal{E}_{m,\nu}}\mathbb{E}[h^\e(t,T,X,Y)]+ G(\nu) \right\}\nonumber\\
&= \inf_{\nu\in \mathcal{P}_2}\left\{ \inf_{\pi\in \Pi(m,\nu)}\iint h^\e(t,T,x,y)\pi(dx,dy)+ G(\nu) \right\},\nonumber
\end{align}
which is associated with the Kantorovich formulation of the optimal transport problem. Since, by Proposition \ref{prop:U1eqUeqU2}, $U^\ep_1 =U^\e \leq \widetilde{U}^\ep$, we derive
\be\label{ineqs2}
\begin{split}
&\inf_{\nu\in \mathcal{P}_2}\left\{ \inf_{\pi\in \Pi(m,\nu)}\iint h^\e(t,T,x,y)\pi(dx,dy)+ G(\nu) \right\}\le U^\e(t,m)\\
&\hspace{5cm}\le \inf_{\nu\in \mathcal{P}_2}\left\{ \inf_{f: f_\#m=\nu}\left\{\int h^\e(t,T,x,f(x))m(dx) \right\}  +G(\nu)  \right\}.
\end{split}
\ee
Since $h^\e(t,T,x,y)$ is continuous, lower bounded and has quadratic growth with respect to $x,y$ (see Propositions \ref{prop:mmeas} and \ref{prop:subsuperm}), we can apply the result of Lacker-Beiglbock \cite[Proposition 2.11]{beiglbock2018denseness} or Prateli \cite[Theorem B]{pratelli2007equality} to derive that the Monge-Kantorovich optimal transport problems are equal and hence \eqref{ineqs2} holds as chains of equalities if $m$ is non-atomic.

Now we first prove (2) using proof by contradiction. Assume $U^\varepsilon(t,\delta_{x_0})<\widetilde{U}^\varepsilon(t,\delta_{x_0})$ for some $x_0\in \R^d$. Recall the definiton of $U^\varepsilon_2$ in section \ref{altcontrset}. By Proposition \ref{prop:U1eqUeqU2}, we have $U^\varepsilon_2(t,\delta_{x_0})=U^\varepsilon(t,\delta_{x_0})<\widetilde{U}^\varepsilon(t,\delta_{x_0})$. According to the definition of $\mathcal{A}_2$, there exists $\alpha\in \mathcal{A}_2$ such that
\begin{equation}\label{eqn:E_small}
\mathbb{E}\left[ \int_t^T L\left(\frac{X_s}{\varepsilon}, \alpha\left(s,\xi(\omega)\right)\right) ds + \int_{\mathbb{R}^d}g(x)\mathrm{d}m_T(\mathrm{d}x)\right] < \widetilde{U}^\varepsilon(t,\delta_{x_0}),
\end{equation}
We define $X_s(\omega)$ as the solution to the ODE \[dX_s(\omega)=\alpha\left(s,\xi(\omega)\right)ds,\quad s\in [t,T],\quad X_t(\omega)=x_0.\]
\eqref{eqn:E_small} implies that
\[
\mathbb{E}_{\omega}\left[ \int_t^T L\left(\frac{X_s(\omega)}{\varepsilon}, \alpha\left(s,\xi(\omega)\right)\right) ds + g(X_T(\omega))\right] < \widetilde{U}^\varepsilon(t,\delta_{x_0})
\]
Thus, there exists $\omega_0\in \Omega$ such that
\[X_t(\omega_0)=x_0,\quad \int_t^T L\left(\frac{X_s(\omega_0)}{\varepsilon}, \alpha\left(s,\xi(\omega_0)\right)\right) ds + g(X_T(\omega_0)) < \widetilde{U}^\varepsilon(t,\delta_{x_0}).\]

Now, we define $\tilde{\alpha}(s):=\alpha\left(s,\xi(\omega_0)\right)$ for all $s\in [t,T]$. We observe that $\tilde{\alpha}\in \widetilde{\mathcal{A}}$. Let $\tilde{X}_s$ be the solution to the ODE with control $\tilde{\alpha}$, that is, $d\tilde{X}_s=\tilde{\alpha}(s)ds$ for $s\in [t,T]$ and $\tilde{X}_t=x_0$. Then,
\[
\int_t^T L\left(\frac{X_s(\omega_0)}{\varepsilon}, \alpha\left(s,\xi(\omega_0)\right)\right) ds + g(X_T(\omega_0)) = \int_t^T L\left(\frac{\tilde X_s}{\varepsilon}, \tilde{\alpha}(s)\right) ds + g(X_T) < \widetilde{U}^\varepsilon(t,\delta_{x_0})
\]
This is a contradiction to the definition of $\widetilde{U}^\varepsilon(t,\delta_{x_0})$. Hence, $U^\varepsilon(t,\delta_{x_0})=\widetilde{U}^\varepsilon(t,\delta_{x_0})$. We conclude the proof.
\end{proof}

\nd
The equality $U^\e(t,m)=\widetilde{U}^\e(t,m)$ does not necessarily hold if $m$ has atoms (see \cite[Corollary 6.10]{CecchinDaudinJacksonMartini2025}). This hints at the fact that $\widetilde{U}^\e(t,\cdot)$ might be discontinuous at some non-atomic measures. We now present such an example.
    \vspace{2mm}

    \nd\textbf{Example of discontinuous $\widetilde{U}^\e$.} Consider $d=1$, $t=0,\; T=1$, $G(\nu)={\bf d}_1(\nu,\text{Leb}_{|[0,1]})$, $L(x,v)=|v|^2/2$ and $m=\delta_0$.
    \vspace{1mm}

    \nd
    Since the dynamics from $\widetilde{\mathcal{A}}$ are deterministic given $X_0$, we have that $(X_s)_{s\in [0,1]}$ is a deterministic curve in $\R$ starting from $0$ and we compute
    $\widetilde{U}^\e(0,\delta_0)=\inf_{y\in \R}\left\{\frac{|y|^2}{2} +\int_0^1|y-x|dx  \right\}=\frac{1}{3}.$
    We consider the sequence of measures
    $$m_n=\frac{1}{n}\sum_{k=1}^n\delta_{k/n^2}\xrightarrow{{\bf d_2}}\delta_0$$
    and the control $\alpha^n\in\widetilde{\mathcal{A}}$ with $\alpha^n(s,\frac{k}{n^2})=\frac{k}{n}-\frac{k}{n^2}$. We observe that $X_1=\frac{k}{n}$ with probability $\frac{1}{n}$, so $\mathcal{L}(X_1^{\alpha^n})=\frac{1}{n}\sum_{k=1}^n\delta_{k/n}$. We have
    \begin{align*}
    \widetilde{U}^\e(0,m_n)\le \mathbb{E}\left[ \int_0^1\frac{|\alpha^n_s|^2}{2}ds +G(\mathcal{L}(X_1^{\alpha^n}))\right]=\frac{1}{2n}\sum_{k=1}^n\left(\frac{k}{n}-\frac{k}{n^2}\right)^2+{\bf d}_1(\frac{1}{n}\sum_{k=1}^n\delta_{k/n},\text{Leb}_{|[0,1]}).
    \end{align*}
    The last expression converges as $n\to +\infty$ to $\frac{1}{2}\int_0^1x^2dx=\frac{1}{6}<\frac{1}{3}$. Hence, $\limsup_n \widetilde{U}^\e(0,m_n)<\widetilde{U}^\e(0,\delta_0)$.

\section{Technical proofs}\label{tech}

\begin{proof}[Proof of Proposition \ref{Nregularity}]
The second estimate \eqref{notuniform} follows as in \cite[Theorem 3.3]{gangbo2021finite} (note that we may assume that $\|\bm x-\bm y\|_{2-r}/N^{\frac{1}{2-r}}=(\sum_{i=1}^N(x_i-y_i)^{2-r})^{\frac{1}{2-r}}/N^{\frac{1}{2-r}}={\bf d}_{2-r}(m^N_{\bm x},m^N_{\bm y})$ because $V^N_\e(t,\bm x)$ is symmetric in $x_i,x_j$ since $G(m^N_{\bm x})$ is symmetric). To prove \eqref{epsilonunifinorm} we argue as follows.
\vspace{1mm}

\nd
Since $G$ is ${\bf d}_2$-Lipschitz, by  \cite[Theorem 4.2]{cosso2023smooth}, we may find a sequence of functions $G_{k}\colon\mathcal{P}_2\to \R$, $k\in\mathbb{N}$, which are smooth and ${\bf d}_2$-Lispchitz continuous with the same Lipschitz constant as $G$ such that
$$\lim_{k\to \infty}\sup_{m\in B}|G_k(m)-G(m)|=0,$$
for every compact set $B\subset\mathcal{P}_2$. We will show the desired estimate when the terminal condition in \eqref{HJB2} is $V^N_{\e,k}(T,\bm x)=G_k(m^N_{\bm x})$ for any $k\in\mathbb{N}$ and then the result will follow by sending $k\to \infty$ due to the stability of viscosity solutions.
Let $m\in \mathcal{P}_2$, $\phi:\R^d\to \R^d$ measurable with at most linear growth and set $m_t:=(\text{Id}+t\phi)_\#m$, for $t>0$. We have
\begin{align*}
    \left|\frac{G_k(m_t)-G_k(m)}{t}\right|\le \frac{C_G}{t}{\bf d}_2(m_t,m)\le \frac{C_G}{t}\|t\phi\|_{L^2_m}=C_G\|\phi\|_{L^2_m}.
\end{align*}
Sending $t\to 0^+$ yields
$$\left|\int_{\R^d} D_mG_k(m,x)\cdot \phi(x) m(dx)\right|\le C_G\|\phi\|_{L^2_m}\stackrel{\phi\text{ arbitrary}}{\implies} \int_{\R^d}|D_mG_k(m,x)|^2m(dx)\le C_G.$$
Therefore, if $m=m^N_{\bm x}$ for some $\bm x\in (\R^d)^N$, we derive
$$\frac{1}{N}\sum_{i=1}^N|D_mG(m^N_{\bm x},x^i)|^2\le C_G.$$

\nd
For convenience, we reverse the time and we show the result for the unique solution $V_{\e,k}^N\colon [0,+\infty)\times (\R^d)^N\to \R$ of the PDE
\be\label{HJBNrevtime}
\begin{cases}
    \partial_tV^N_{\e,k}+\frac{1}{N}\sum_{i=1}^NH(\frac{x^i}{\e},x^i,ND_{x^i}V^N_{\e,k},m_{\bm x}^N)=0, &(t,\bm x)\in (0,+\infty)\times (\R^d)^N,\\
    V^N_{\e,k}(0,\bm x)= G_k(m_{\bm x}^N), & \bm x\in (\R^d)^N.
\end{cases}
\ee
Note that, in view of \eqref{quadgrowth}, for all $\e>0$ and $\bm x\in (\R^d)^N$ we have
\begin{align}
    \left|\frac{1}{N}\sum_{i=1}^NH\left(\frac{x^i}{\e},D_mG(m^N_{\bm x},x^i),m_{\bm x}^N\right)\right|\le C_H\frac{N+\sum_{i=1}^N\left|D_mG_k(m^N_{\bm x},x^i)\right|^2}{N}\le 1+C_G.
\end{align}
This implies that $V^{N,+}_{\e,k}(t,\bm x)=Ct+G_k(m^N_{\bm x})$ and $V^{N,-}_{\e,k}(t,\bm x)=-Ct+G_k(m^N_{\bm x})$ are a classical supersolution and a classical subsolution of \eqref{HJBNrevtime}, for some constant $C>0$ large enough and independent of $N,\e,k$, respectively. By comparison principle, we get
$$-Ct+G_k(m^N_{\bm x})\le V^N_{\e,k}(t,\bm x)\le Ct+G_k(m^N_{\bm x}),\;\text{for all }(t,\bm x)\in (0,+\infty)\times (\R^d)^N,$$
which gives the Lipschitz regularity in time at $t=0$. To get the Lipschitz regularity in time for any $s>0$, we notice that $(t,\bm x)\mapsto V^N_{\e,k}(s+t,\bm x)$ is also a viscosity solution of \eqref{HJBNrevtime} with initial condition $V^{N}_{\e,k}(s,\bm x)$. Then, in light of the inequality
$$V^N_{\e,k}(s,\bm x)-\|G_k(m^N_{\cdot})-V^N_{\e,k}(s,\cdot)\|_{\infty}\le G_k(m^N_{\bm x})\le V^N_{\e,k}(s,\bm x)+\|G_k(m^N_{\cdot})-V^N_{\e,k}(s,\cdot)\|_{\infty}$$
and the comparison principle, we get
$$V^N_{\e,k}(t+s,\bm x)-\|G_k(m^N_{\cdot})-V^N_{\e,k}(s,\cdot)\|_{\infty}\le V^N_{\e,k}(t,\bm x)\le V^N_{\e,k}(t+s,\bm x)+\|G_k(m^N_{\cdot})-V^N_{\e,k}(s,\cdot)\|_{\infty}$$
or
$$\left|V^N_{\e,k}(t+s,\bm x)- V^N_{\e,k}(t,\bm x)   \right|\le \|G_k(m^N_{\cdot})-V^N_{\e,k}(s,\cdot)\|_{\infty}\le Cs,$$
for any $(t,\bm x)\in [0,+\infty)\times (\R^d)^N$.
\vspace{2mm}

\nd
We further know that $V^N_{\e,k}$ is Lipschitz continuous in both variables (see \cite[Theorem 1.34]{tran_hamilton-jacobi_2021}), therefore $V^N_{\e,k}$ satisfies \eqref{HJBNrevtime} a.e. Thus, we have from the Lipschitz continuity with respect to $t$
\begin{align*}
    C\ge |\partial_tV^N_{\e,k}(t,\bm x)|\ge \frac{1}{N}\sum_{i=1}^NH\left(\frac{x^i}{\e},x^i,ND_{x^i}V^N_{\e,k},m_{\bm x}^N\right)&\ge \frac{c_H}{N}\left(-N+\sum_{i=1}^N|ND_{x^i}V^N_{\e,k}(t,\bm x)|^{2}\right)\\
    &\ge -c_H +c_HN\sum_{i=1}^N|D_{x^i}V^N_{\e,k}(t,\bm x)|^{2},\;\;\text{ a.e.}
\end{align*}
This gives a bound of the form $C_0\ge N\sum_{i=1}^N|D_{x^i}V^N_{\e,k}|^{2}$ almost everywhere. Now let $\bm x,\bm y\in (\R^d)^N$ and $t\ge 0$. Since $V^N_{\e,k}$ is symmetrical in the $\bm x$ variables, we may assume that ${\bf d}_{2}^2(m^N_{\bm x}, m^N_{\bm y})=N^{-1}\sum_{i=1}^N|x^i-y^i|^2=N^{-1}\|\bm x-\bm y\|_2^2$. We have
\begin{align*}
    |V^N_{\e,k}(t,\bm x)-V^N_{\e,k}(t,\bm y)|= \int_0^1\frac{\text{d}}{\text{d}s} V^N_{\e,k}(t, s\bm x+(1-s)&\bm y)ds\le \esssup_{\bm z\in(\R^d)^N}\;\sum_{i=1}^N|D_{x^i}V^N(t,\bm z)|\;|x^i-y^i|\\
    &\le \esssup_{\bm z\in(\R^d)^N}\;\left(\sum_{i=1}^N |D_{x^i}V^N(t,\bm z)|^{2}  \right)^{\frac{1}{2}}\|\bm x-\bm y\|_2\\
    &\le C_0^{\frac{1}{2}}N^{-\frac{1}{2}}\|\bm x-\bm y\|_2=C_0^{\frac{1}{2}}{\bf d}_2(m^N_{\bm x},m^N_{\bm y}).
\end{align*}
The proof is complete.
\end{proof}

\begin{proof}[Proof of Corollary \ref{cor:lipschitzestimate}]
    The uniform convergence and the viscosity property were explained in the discussion preceding the statement of this Corollary. We now prove \eqref{uniforminepsilon}.
    \vspace{1mm}

    \nd
    Fix $t,s\in [0,T]$ and $m_1,m_2\in \mathcal{P}_2$. Let $(\widetilde{\Omega},\widetilde{\mathcal{F}},\widetilde{\mathbb{P}})$ be a probability space supporting two sequence of independent and identically distributed random variables $(X_k)_{k\in\mathbb{N}}\in L^2$ and $(Y_k)_{k\in\mathbb{N}}\in L^2$ such that $\mathcal{L}(X_k)=m_1$ and $\mathcal{L}(Y_k)=m_2$, for all $k\in\mathbb{N}$. We consider the empirical random measures
    $$m^N_1=\frac{1}{N}\sum_{k=1}^N\delta_{X_k}\quad\text{and}\quad m^N_2=\frac{1}{N}\sum_{k=1}^N\delta_{Y_k}.$$
    By virtue of the Glivenko-Cantelli law of large numbers (see the footnote below for a proof\footnote{The proof follows that of \cite[Theorem 1.1]{achdou2019mean}, which shows the convergence ${\bf d}_1(m_1^N,m_1)\to 0$ assuming that $X_k\in L^1$ for all $k\in \mathbb{N}$. By \cite[Remark 7.1.11]{ambrosio2005gradient}, it suffices to show $\int \varphi(x) m_1^N(dx)\xrightarrow{N\to +\infty}\int\varphi(x)m_1(dx)$ for all bounded and continuous $\varphi\colon \R^d\to \R$ and $\int|x|^2m_1^N(dx)\xrightarrow{N\to +\infty} \int |x|^2m_1(dx)$, almost surely. The first limit holds almost surely for each $\varphi$ by the law of large numbers. By the discussion in the beginning of \cite[Section 5.1]{ambrosio2005gradient} and a separability argument, we may select the set of measure zero can be chosen independently of $\varphi$. The second limit holds almost surely by the law of large numbers since $X_k\in L^2$, hence by throwing away a set of measure zero, both limits hold almost surely.}), we know that
    \be\label{empiricalconvergence}
    {\bf d}_{2}(m_1^N,m_1)+{\bf d}_{2}(m_2^N,m_2)\xrightarrow{N\to +\infty}0,\;\;\text{almost surely}
    \ee
   in the probability space $(\widetilde{\Omega},\widetilde{\mathcal{F}},\widetilde{\mathbb{P}})$. We write, due to the triangle inequality,
    \be\label{triangle}
    \begin{split}
        \left|U^\e(t,m_1)-U^\e(s,m_2)\right|\le& \left| U^\e(t,m_1)-U_\e^N(t,m_1^N)\right|\\
        &\qquad+\left|U_\e^N(t,m_1^N)-U^N_\e(s,m_2^N)  \right| +\left|U^N_\e(s,m_2^N)-U^\e(s,m_2) \right|.
    \end{split}
    \ee
    We observe by the ${\bf d}_{2-r}$-Lipschitz continuity of $U^N_\e$
    \begin{align}\label{firstterm}
 \left| U^\e(t,m_1)-U_\e^N(t,m_1^N)\right| &\le \left|U^\e(t,m_1)-U^N_\e(t,m_1) \right|+ \left|U_\e^N(t,m_1)-U^N_\e(t,m_1^N) \right|\nonumber\\
&\le \left|U^\e(t,m_1)-U^N_\e(t,m_1) \right|+ C{\bf d}_{2-r}(m_1,m_1^N).
    \end{align}
    Similarly,
        \begin{align}\label{secondterm}
 \left| U^\e(s,m_2^N)-U_\e^N(s,m_2^N)\right| \le \left|U^\e(s,m_2)-U^N_\e(s,m_2) \right|+ C{\bf d}_{2-r}(m_2,m_2^N).
    \end{align}
    By the pointwise convergence $U^N_\e\xrightarrow{N\to +\infty}U^\e$, the bounded convergence theorem and \eqref{empiricalconvergence}, inequalities \eqref{firstterm} and \eqref{secondterm} imply
    \be\label{firstthrird}
 \left| U^\e(t,m_1)-U_\e^N(t,m_1^N)\right| + \left| U^\e(s,m_2^N)-U_\e^N(s,m_2^N)\right| \xrightarrow{N\to +\infty}0,
    \ee
    since convergence in ${\bf d}_2$ gives convergence in ${\bf d}_{2-r}$. Finally, by the definition of $U^N_\e$ in \eqref{extension} and \eqref{epsilonunifinorm} we have
\begin{align}\label{secondintriangle}
\left|U_\e^N(t,m_1^N)-U^N_\e(s,m_2^N)\right|&=\left|V_\e^N(t,X_1,\ldots,X_N)-V^N_\e(s,Y_1,\ldots,Y_N)\right|\nonumber\\
&\le C_0\left( |t-s|+{\bf d}_2(m_1^N,m_2^N)  \right)\nonumber\\
&\qquad\qquad\qquad\xrightarrow{N\to \infty}C_0\left( |t-s|+{\bf d}_2(m_1,m_2) \right),
\end{align}
where the convergence is almost sure. Sending $N\to +\infty$ in \eqref{triangle} , the desired \eqref{uniforminepsilon} follows from \eqref{firstthrird} and \eqref{secondintriangle}.
\end{proof}

\section{Measurable selection}\label{app:measselect}

\begin{lemma}\label{lem:measapp}
Assume (A1) and (A2). Suppose that \(0\leq t_1<t_2\leq T\) and fix \(\varepsilon>0\). For any $w,y,z \in \R^d$, $\mu \in \mathcal{P}_2$, and sufficiently large $n\in \mathbb{N}$, there exists a measurable path $\gamma_n:[t_1,t_2]\times\mathbb{R}^d \times\mathbb{R}^d \times\mathbb{R}^d\to\mathbb{R}^d$ such that
\[
\forall \, w,y,z \in \R^d,\quad \gamma(\cdot, w,y,z) \in \mathrm{AC}([t_1,t_2];\R^d),\quad \gamma_n(t_1,w,y,z)=y, \quad \gamma_n(t_2,w,y,z)=z,
\]
and
\[
\int_{t_1}^{t_2}
L \left(\frac{\gamma_n(t,w,y,z)}{\varepsilon}, w, \dot{\gamma}_n(t,w,y,z),\mu\right)\,dt
<
h^\e\left(w,\mu;t_1, t_2, y, z\right)+\frac{C}{\sqrt{n}}\left(1+|y-z|^2\right),
\]
where $C=C(\ep,C_{1,H}, C_{2, H}, C_H, K_0, t_2-t_1)>0$.
\end{lemma}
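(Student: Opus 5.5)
We will build $\gamma_n$ by a discretize-and-select argument: pick finitely many near-optimal curves on a grid of $(w,y,z)$, assign to each point the curve from a grid cell, and correct its endpoints by linear pieces, exactly as in the proof of Proposition~\ref{prop:mmeas}. The whole construction is piecewise constant in $(w,y,z)$ on Borel cells plus affine corrections, so measurability is automatic. All constants $C$ below may depend on $\varepsilon$, $t_2-t_1$ and the data (A1).

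First we fix the grid. For $n$ large, let $\mathcal{Q}_n$ be the partition of $(\R^d)^3$ into half-open cubes $Q$ of side $n^{-1/2}$. Choose a representative point $(w_Q,y_Q,z_Q)$ in each cube; this is a countable family. For each $Q$, pick by existence of minimizers (the reference used in Proposition~\ref{prop:mmeas}) an exact minimizer $\gamma^Q$ of $h^\e(w_Q,\mu;t_1,t_2,y_Q,z_Q)$. The minimizer satisfies the velocity bound \eqref{eqn:velbdgst}, i.e.
\[
\int_{t_1}^{t_2}|\dot\gamma^Q|^2\le C(1+|y_Q-z_Q|^2).
\]
For $(w,y,z)\in Q$, define $\gamma_n(\cdot,w,y,z)$ as the curve $\tilde\gamma$ obtained from $\gamma^Q$ by the modification on $[t_1,t_1+\delta]$ and $[t_2-\delta,t_2]$ with $\delta=(t_2-t_1)/4$, which moves the endpoints from $(y_Q,z_Q)$ to $(y,z)$. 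Since this modification is affine in $(y,z)$ with $\gamma^Q$ fixed on each cell, the map $(t,w,y,z)\mapsto\gamma_n$ is Borel. It is absolutely continuous in $t$, and it has the prescribed endpoints.

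Next we estimate the cost. The computation of terms I, II, III in the proof of Proposition~\ref{prop:mmeas}, with $(x,y,c)\to(y_Q,z_Q,w_Q)$ and $(x',y',c')\to(y,z,w)$, gives
\[
\int L(\gamma_n/\e,w,\dot\gamma_n,\mu)\le h^\e(w_Q,\mu;\dots,y_Q,z_Q)+C\rho(1+\rho+|y_Q-z_Q|),
\]
where $\rho=|w-w_Q|+|y-y_Q|+|z-z_Q|\le 3\sqrt d\, n^{-1/2}$. We then use \eqref{eqn:mlip} once more to compare $h^\e$ at $(w_Q,y_Q,z_Q)$ with $h^\e$ at $(w,y,z)$. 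This costs another $C\rho(1+\rho+|y-z|+|y_Q-z_Q|)$. Since $|y_Q-z_Q|\le|y-z|+2\rho$ and $1+|y-z|\le 2(1+|y-z|^2)$, the total error is at most $\frac{C}{\sqrt n}(1+|y-z|^2)$. To get the strict inequality, we either enlarge $C$ slightly or take a $\frac{1}{n}$-minimizer instead of an exact one.

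The main point to check is that the constant in \eqref{eqn:mlip} is uniform: it depends only on $\varepsilon,C_{1,H},C_{2,H},C_H,K_0,t_2-t_1$ and not on the base point. This holds because the proof of Proposition~\ref{prop:mmeas} uses only \eqref{eqn:LmLip} and \eqref{eqn:velbdgst}. Note also that $\mu$ is fixed throughout, so it never enters the grid. If one prefers to avoid exact minimizers, Borel measurability still holds because each cell uses a single fixed curve.
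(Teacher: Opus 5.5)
Your proof is correct, but it takes a different route from the paper's.

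\textbf{How the paper proceeds.} The paper works on the finer lattice $\frac1n\mathbb{Z}^d$. It does not use the fixed-window linear endpoint correction from Proposition~\ref{prop:mmeas}. Instead it uses the Tran--Yu reparametrization trick:
\begin{enumerate}
\item It finds a window $[d,d+\frac1n]$ on which the lattice minimizer has kinetic energy at most $Cn^{-1/2}(1+|y-z|^2)$. This pigeonhole bound \eqref{eqn:gammadotnbd} is where the $n^{-1/2}$ comes from.
\item It runs the minimizer at double speed on that window, which frees up time $\frac{1}{2n}$.
\item It spends $\frac{1}{4n}$ at each end, with $O(1)$ velocity, connecting $y$ to $\phi_n(y)$ and $\phi_n(z)$ to $z$.
\end{enumerate}
Only then does it apply \eqref{eqn:mlip} to pass from lattice points to $(w,y,z)$, exactly as you do.

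\textbf{What each route buys.} In the paper's construction, the bound comparing the cost of $\gamma_n$ with $h^\e$ at the lattice point has a constant independent of $\e$. This is because the corrections last only $O(1/n)$, and the time shift and acceleration leave the trajectory, hence the fast variable $\gamma/\e$, unchanged. Your linear correction, by contrast, shifts the curve by up to $\rho$ and so moves $\gamma/\e$ by $\rho/\e$. The paper's $\e$-uniformity is lost anyway once \eqref{eqn:mlip}, whose constant depends on $\e$, is invoked, so both proofs end with the same dependence.

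Your route is shorter. It also makes measurability transparent: each Borel cell carries one fixed curve plus a correction affine in $(y,z)$. Moreover, your error is $C\rho(1+\rho+|y-z|)$, linear in the mesh, so a mesh of size $1/n$ would even give $O(1/n)$.

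One cosmetic point: $\rho\le 3\sqrt d\,n^{-1/2}$ makes your $C$ depend on $d$. This is harmless, and the paper has the same hidden dependence, since its $|\phi_n(y)-y|\le\frac1n$ should read $\sqrt d/n$.
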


\begin{proof}
Let $n \in \mathbb{N}$ and consider the lattice $\frac{1}{n}\mathbb{Z}^d$. For each $i,j,k \in \frac{1}{n}\mathbb{Z}^d$, let $\gamma_{\left(i,j,k\right)} \in {\rm AC} \left([t_1,t_2];\mathbb{R}^d\right)$ satisfy
\[
\gamma_{\left(i,j,k\right)} \in \operatorname*{argmin}_{\substack{\gamma\in \text{AC}\left([t_1,t_2];\mathbb{R}^d\right)\\
\gamma(t_1)=j,\gamma(t_2)=k}}\int^{t_2}_{t_1}L\left(\frac{\gamma(t)}{\ep}, i, \dot{\gamma}(t),\mu\right)dt\,,
\]
that is,
\[
h^\ep\left(i,\mu;t_1, t_2, j, k\right)=\int_{t_1}^{t_2}
L \left(\frac{\gamma_{\left(i,j,k\right)}(t)}{\varepsilon},i,\dot{\gamma}_{\left(i,j,k\right)}(t),\mu\right)\,dt,\quad\
\gamma_{\left(i,j,k\right)}(t_1)=j,\quad \gamma_{\left(i,j,k\right)}(t_2)=k.
\]
Define $\phi_n:\R^d \to \R^d$ by
\[
\phi_n(x):=\frac{1}{n} \left(\lfloor nx_1\rfloor, \lfloor nx_2\rfloor, \cdots, \lfloor nx_d\rfloor\right).
\]
Then, $\left|\phi_n(y)-y\right|\leq \frac{1}{n}$ and
\begin{equation}\label{eqn:phiest}
\left|\phi_n(y)-\phi_n\left(z\right)\right|^2 \leq \left(\left|\phi_n(y)-y\right|+\left|y-z\right|+\left|z-\phi_n\left(z\right)\right|\right)^2 \leq \frac{8}{n^2} +2\left|y-z\right|^2.
\end{equation}
Define $\tilde{\gamma}:[t_1,t_2]\times\R^d \times \R^d \times \R^d \to \R^d$ by
\[\tilde{\gamma}(t,w,y,z):=\gamma_{\left(\phi_n(w), \phi_n(y),\phi_n(z)\right)}(t).\]
For each $(w,y,z)$, the map $t \mapsto \tilde{\gamma}(t,w,y,z)$ is absolutely continuous. For each $t \in [t_1,t_2]$, $(w,y,z) \mapsto \tilde{\gamma}(t,w,y,z)$ is measurable since it is piecewise continuous. Hence $\tilde{\gamma}$ is a Carath\'eodory function, thus jointly measurable in $(t,w,y,z)$ by \cite[Lemma 4.51]{aliprantis2006infinite}. Since the straight line connecting $\phi_n(y)$ and $\phi_n(z)$
\[
\eta(t):=\phi_n(y)+\frac{t-t_1}{t_2-t_1} \left(\phi_n(z)-\phi_n(y)\right)
\]
is an admissible path for $m^\ep(\phi_n(w),\mu;t_1,t_2,\phi_n(y),\phi_n(z))$. By \eqref{eqn:Lmquad},
\[
\begin{aligned}
-K_0(t_2-t_1)+\frac{1}{C_{2,H}}\int_{t_1}^{t_2} \left|\dot{\tilde{\gamma}}(t,w,y,z)\right|^2\, dt &\leq h^\ep(\phi_n(w),\mu;t_1,t_2,\phi_n(y),\phi_n(z))\\
&\leq \frac{1}{C_{1,H}}\cdot \frac{\left|\phi_n(y)-\phi_n(z)\right|^2}{(t_2-t_1)}+K_0(t_2-t_1)
\end{aligned}
\]
Hence,
\begin{equation} \label{eqn:gammasbd}
      \int_{t_1}^{t_2} \left|\dot{\tilde{\gamma}}(t,w,y,z)\right|^2\, dt\leq C(1+|y-z|^2),
\end{equation}
for some constant $ C:=C(C_{1,H}, C_{2, H}, K_0,t_2-t_1)>0$. We claim that for $n$ large enough, there exists \[
d\in \left\{t_1, t_1+\frac{1}{n}, t_1+\frac{2}{n}, \cdots, t_1+\frac{1}{n} \left(\left\lfloor n\left(t_2-t_1\right) \right\rfloor-1\right)\right\}
\]
such that
\begin{equation}\label{eqn:gammadotnbd}
   \int_{d}^{d+\frac{1}{n}} \left|\dot{\tilde{\gamma}}(t,w,y,z)\right|^2\,dt\leq  \frac{C}{\sqrt{n}} \left(1+|y-z|^2\right).
\end{equation}
If not,
\[
    \int_{t_1}^{t_1+\frac{1}{n}\left\lfloor n\left(t_2-t_1\right) \right\rfloor} \left|\dot{\tilde{\gamma}}(t,w,y,z)\right|^2\,dt\geq \frac{\left\lfloor n\left(t_2-t_1\right) \right\rfloor C}{\sqrt{n}} \left(1+|y-z|^2\right)\geq 4C\left(1+|y-z|^2\right),
\]
for $n$ large enough, which contradicts \eqref{eqn:gammasbd}.

Define $\gamma_n:[t_1, t_2] \times \R^d  \times \R^d \times \R^d\to \R^d$ by
\[
   \gamma_n(t,w,y,z) = \left\{\begin{aligned}
   & y+4n(t-t_1)(\phi_n(y)-y), \quad  \text{ if } t\in \left[t_1, t_1+\frac{1}{4n}\right], \\
   &\tilde{\gamma}\left(t-\frac{1}{4n},w,y,z\right), \qquad  \text{ if } t \in \left[t_1+\frac{1}{4n},d+\frac{1}{4n}\right],\\
   &\tilde{\gamma}\left(
2\left(t-d -\frac{1}{4n}\right) +d,w,y,z\right), \qquad  \text{ if } t \in \left[d+\frac{1}{4n},d+\frac{3}{4n}\right],\\
   &\tilde{\gamma} \left(t +\frac{1}{4n},w,y,z\right), \qquad  \text{ if } t \in \left[d+\frac{3}{4n},t_2-\frac{1}{4n}\right],\\
   &\phi_n(z)+4n\left(t-t_2+\frac{1}{4n}\right)(z-\phi_n(z)), \quad  \text{ if } t\in \left[t_2-\frac{1}{4n}, t_2\right].
   \end{aligned}
   \right.
\]
Here, we accelerate the trajectory $\tilde{\gamma}$ over the time period $\left[d, d+\frac{1}{n}\right]$ to save a total time of $\frac{1}{2n}$, and we use a time interval of length $\frac{1}{4n}$ to connect $y$ and $\phi_n(y)$, and similarly to connect $z$ and $\phi_n(z)$. Note that $\gamma_n(t_1, w,y,z)=y, \gamma_n(t_2,w,y,z)=z$. Moreover, for each $t$, $(w,y,z)\mapsto \gamma_n(t,w,y,z)$ is measurable. For each $(w,y,z)$, $t \mapsto \gamma_n(t,w,y,z)$ is absolutely continuous. Hence, $ \gamma_n$ is a Carath\'eodory function and is measuarable. Then,
\[
\begin{aligned}
\int_{t_1}^{t_2}
L \left(\frac{\gamma_n(t,w,y,z)}{\varepsilon},w, \dot{\gamma}_n(t, w,y,z), \mu\right)\,dt=\mathrm{I}+\mathrm{II}+\mathrm{III},
\end{aligned}
\]
where
\[
\mathrm{I}:=\int_{t_1}^{t_1+\frac{1}{4n}}
L \left(\frac{\gamma_n}{\varepsilon},w,4n(\phi_n(y)-y),\mu\right)\,dt+
\int_{t_2-\frac{1}{4n}}^{t_2}
L \left(\frac{\gamma_n}{\varepsilon},w,4n(z-\phi_n(z)),\mu\right)\,dt,
\]

\[
\mathrm{II}:=\int_{[t_1, t_2]\setminus[d, d+\frac{1}{n}]}
L \left(\frac{\tilde{\gamma}}{\e},w, \dot{\tilde{\gamma}},\mu\right)\,dt, \qquad
\mathrm{III}: =\frac{1}{2}\int_{d}^{d+\frac{1}{n}}
L \left(\frac{\tilde{\gamma}}{\e},w,2\dot{\tilde{\gamma}},\mu\right)\,dt.
\]
Using \eqref{eqn:Lmquad} and \eqref{eqn:gammadotnbd}, we obtain
\begin{equation}
\begin{aligned}\label{eqn:Iest}
|\mathrm{I}| & \leq \frac{1}{4C_{1, H}}\int_{t_1}^{t_1+\frac{1}{4n}}\left|4n(\phi_n(y)-y)\right|^2\,dt+ \frac{1}{4C_{1, H}}\int_{t_2-\frac{1}{4n}}^{t_2}\left|4n\left(z-\phi_n(z)\right)\right|^2\,dt +\frac{K_0}{2n}\\
&\leq \frac{1}{n}\left(\frac{2}{C_{1,H}}+\frac{K_0}{2}\right),
\end{aligned}
\end{equation}
and
\begin{equation}\label{eqn:IIIest}
|\mathrm{III}|\leq \frac{1}{2C_{1,H}} \int_{d}^{d+\frac{1}{n}}\left|\dot{\tilde{\gamma}}(t,w,y,z)\right|^2\,dt+\frac{K_0}{2n} \leq \frac{C}{\sqrt{n}} \left(1+|y-z|^2\right) + \frac{K_0}{2n},
\end{equation}
for some constant $C:=C(C_{1,H}, C_{2, H}, K_0,t_2-t_1)>0$. Write
\[
m^\ep\left(\phi_n(w), \mu;t_1, t_2, \phi_n(y), \phi_n(z)\right)= \tilde{{\rm II}}+\tilde{{\rm III}},
\]
where
\[\tilde{{\rm II}}=\int_{[t_1, t_2]\setminus[d, d+\frac{1}{n}]} L \left(\frac{\tilde{\gamma}}{\varepsilon},\phi_n(w),\dot{\tilde{\gamma}},\mu\right)\,dt, \qquad \tilde{{\rm III}}=\int_d^{d+\frac{1}{n}}L \left(\frac{\tilde{\gamma}}{\varepsilon},\phi_n(w),\dot{\tilde{\gamma}},\mu\right)\,dt.
\]
Then by \eqref{eqn:LmLip} and H\"oder's inequality,
\[
\begin{aligned}
\left|{\rm II}-{\rm \tilde{II}}\right|  &\leq  C_{L} \int_{[t_1, t_2]\setminus[d, d+\frac{1}{n}]}\left(1+2\left|\dot{\tilde{\gamma}}(t,w,y,z)\right|\right)\left|\phi_n(w)-w\right| \, dt \\
&\leq \frac{C_L(t_2-t_1)}{n}+\frac{2C_L}{n} \int_{t_1}^{t_2} \left|\dot{\tilde{\gamma}}(t,w,y,z)\right|\,dt \leq\frac{C}{n^\frac{5}{4}}\left(1+|z-y|^2\right)^\frac{1}{2}+\frac{C}{n},
\end{aligned}
\]
and by \eqref{eqn:LmLip} and \eqref{eqn:gammadotnbd},
\[
\left|\tilde{{\rm III}}\right| \leq \frac{1}{C_{1,H}}\int_{d}^{d+\frac{1}{n}}\left|\dot{\tilde{\gamma}}(t,w,y,z)\right|^2\,dt+\frac{K_0}{n} \leq \frac{C}{\sqrt{n}} \left(1+|y-z|^2\right)+\frac{K_0}{n}.
\]
Combining
\eqref{eqn:gammadotnbd}, \eqref{eqn:Iest}, and \eqref{eqn:IIIest}, for $n$ large enough, we have
\begin{equation}
\begin{aligned}\label{eqn:gammanphi}
&\left|\int_{t_1}^{t_2}
L \left(\frac{\gamma_n}{\e},w, \dot{\gamma}_n, \mu\right)\,dt- h^\ep\left(\phi_n(w), \mu;t_1, t_2, \phi_n(y), \phi_n(z)\right)\right|\\
=&\left|
\int_{t_1}^{t_2}
L \left(\frac{\gamma_n}{\varepsilon},w,\dot{\gamma}_n, \mu\right)\,dt-
\int_{t_1}^{t_2} L \left(\frac{\tilde{\gamma}}{\varepsilon},\phi_n(w),\dot{\tilde{\gamma}},\mu\right)\,dt,\right|\\
\leq & |\mathrm{I}|+ |\mathrm{III}|+\left|{\rm II}-\tilde{{\rm II}}\right|+\left|\tilde{{\rm III}}\right|\\
\leq & \frac{C}{\sqrt{n}} \left(1+|y-z|^2\right) .
\end{aligned}
\end{equation}
By \eqref{eqn:mlip} and \eqref{eqn:phiest},
\begin{equation}
\begin{aligned}\label{eqn:phifex}
&\left|h^\ep\left(\phi_n(w), \mu;t_1, t_2, \phi_n(y), \phi_n(z)\right)-h^\ep\left(w, \mu;t_1, t_2, y, z\right)\right| \\
\leq\;&
C\Bigl[\Bigl(\left|w-\phi_n(w)\right|+\left| y-\phi_n(y)\right|+\lvert z-\phi_n(z)\rvert \Bigr)\Bigr. \\
& \Bigl. \qquad  \qquad \qquad \qquad \cdot
\Bigl(
1
+\lvert y-\phi_n(y)\rvert
+\lvert z-\phi_n(z)\rvert
+\lvert y-z\rvert^2
+\lvert \phi_n(y)-\phi_n(z)\rvert^2
\Bigr)\Bigr]\\
\leq & \frac{3C}{n}\left(1+\frac{2}{n}+\frac{8}{n^2}+3\left| y-z\right|^2\right),
\end{aligned}
\end{equation}
where $C=C(\ep,C_{1,H}, C_{2, H}, C_H, K_0, t_2-t_1)>0$. Combining \eqref{eqn:gammanphi} and \eqref{eqn:phifex}, we obtain, for $n$ sufficiently large,
\[
\int_{t_1}^{t_2}
L \left(\frac{\gamma_n}{\e},w, \dot{\gamma}_n, \mu\right)\,dt \leq h^\ep\left(w, \mu;t_1, t_2, y, z\right) +   \frac{C}{\sqrt{n}}\left(1+|y-z|^2\right),
\]
where $\gamma_n:[t_1, t_2]\times \R^d \times \R^d\times \R^d\to \R^d$ is jointly measurable.
\end{proof}

\begin{remark}
   The same conclusion remains valid under assumptions (A1') and (A2), namely when $L$ and $h^\e$ are independent of $w, \mu$.
\end{remark}

\bibliographystyle{alpha}
\bibliography{ref}

\end{document}